\newtheorem{theorem}{Theorem}[section]
\newtheorem{remark}[theorem]{Remark}
\newtheorem{corollary}[theorem]{Corollary}
\newtheorem{definition}[theorem]{Definition}
\newtheorem{lemma}{Lemma}[section]
\DeclareMathOperator*{\ess}{\text{ess}}
\newcommand{\N}{\mathbb{N}}
\newcommand{\R}{\mathbb{R}}
\newcommand{\eps}{\varepsilon}
\newcommand{\ph}{\varphi}
\newcommand{\into}{\int_{\Omega}}
\newcommand{\intr}{\iint\limits_{\R^N\times\R^N}}
\newcommand{\inta}{\iint\limits_{\substack{x,y\in\R^N \\ \abs{x-y}\leq1}}}
\newcommand{\intb}{\iint\limits_{\substack{x,y\in\R^N \\ \abs{x-y}\geq1}}}
\newcommand{\pfrac}{\frac{|u(x)-u(y)|^p}{|x-y|^{N+sp}}}
\newcommand{\qform}{\mathcal{E}_{L_{\Delta_p}}}
\newcommand{\logP}{L_{\Delta_p}}
\newcommand{\nW}{\|u\|_{\WO}}
\newcommand{\nWs}{\|u_s\|_{\WO}}
\newcommand{\nWp}{\|\ph\|_{\WO}}
\newcommand{\nX}{\|u\|_{\XO}}
\newcommand{\WO}{W_0^{s,p}(\Omega)}
\newcommand{\XO}{X_{\logP}(\Omega)}
\newcommand{\nP}{\|u\|_{L^p(\Omega)}}
\renewcommand{\l}{\left}
\renewcommand{\r}{\right}
\def\abs#1{\left|{#1}\right|}
\numberwithin{theorem}{section}
\numberwithin{equation}{section}
\renewcommand{\thetheorem}{\arabic{section}.\arabic{theorem}}
\title[Logarithmic $p$-Laplacian]{Sharp embeddings and existence results for Logarithmic $p$-Laplacian equations with critical growth}
\author[R. Arora]{Rakesh Arora}
\address[R. Arora]{ Department of Mathematical Sciences, Indian Institute of Technology Varanasi (IIT-BHU), Uttar Pradesh 221005, India}
\email{rakesh.mat@iitbhu.ac.in, arora.npde@gmail.com}
\author[J. Giacomoni]{Jacques Giacomoni}
\address[J. Giacomoni]{
LMAP, UMR E2S-UPPA CNRS 5142, Ba\^timent IPRA, Avenue de l’Universit\'e F-64013 Pau, France}
\email{jacques.giacomoni@univ-pau.fr}
\author[H. Hajaiej]{Hichem Hajaiej}
\address[H. Hajaiej]{
Department of Mathematics, California State University, Los Angeles, CA 90032, USA}
\email{hichem.hajaiej@gmail.com }
\author[A. Vaishnavi]{Arshi Vaishnavi}
\address[A. Vaishnavi]{Department of Mathematical Sciences, Indian Institute of Technology Varanasi (IIT-BHU), Uttar Pradesh 221005, India}
\email{arshiv1998@gmail.com}
\subjclass{35B40, 35S15, 35J60, 35R11}
\keywords{Logarithmic $p$-Laplacian, Logarithmic Sobolev inequality, Continuous and Compact embeddings, Existence and uniqueness results, Brezis-Nirenberg problem, Logistic equation}
\begin{document}
\begin{abstract}
In this paper, we derive a new $p$-Logarithmic Sobolev inequality and optimal continuous and compact embeddings into Orlicz-type spaces of the function space associated with the logarithmic $p$-Laplacian. As an application of these results, we study a class of Dirichlet boundary value problems involving the logarithmic $p$-Laplacian and critical growth nonlinearities perturbed with superlinear-subcritical growth terms. By employing the method of the Nehari manifold, we prove the existence of a nontrivial weak solution. 

Lastly, we conduct an asymptotic analysis of a weighted nonlocal, nonlinear problem governed by the fractional $p$-Laplacian with superlinear or sublinear type non-linearity, demonstrating the convergence of least energy solutions to a non-trivial, non-negative least energy solution of a Brezis-Nirenberg type or logistic-type problem, respectively, involving the logarithmic $p$-Laplacian as the fractional parameter $s \to 0^+$.

The findings in this work serve as a nonlinear analogue of the results reported in \cite{Angeles-Saldana, Arora-Giacomoni-Vaishnavi, Santamaria-Saldana}, thereby extending their scope to a broader variational framework.
\end{abstract}
\maketitle
\tableofcontents
\section{Introduction}
The theory of nonlocal operators has witnessed remarkable progress in recent years, largely motivated by their wide-ranging applications in fields such as probability, phase transitions, fluid mechanics, and image processing (see, for instance, \cite{Caffarelli}, \cite{Nezza-Palatucci-Valdinoci}, \cite{Vazquez}, and the references therein). Among these operators, the fractional Laplacian and its nonlinear extension, the fractional $p$-Laplacian, play a particularly prominent role. For a smooth function $u$, the fractional $p$-Laplacian is defined as:
\[-\Delta_p^s u(x)= C_{N,s,p} \  \text{P.V.} \ \int_{\R^N} \frac{|u(x)-u(y)|^{p-2}(u(x)-u(y))}{|x-y|^{N+sp}} ~dy, \]
where $N\geq 1$, $p \in (1, \infty)$, $s \in (0, 1)$ and $C_{N,s,p}$ is a normalizing constant. Here, P.V. denotes the Cauchy principal value.
The fractional $p$-Laplacian can be regarded as a nonlocal analogue of the classical $p$-Laplace operator. Considerable attention has been devoted to studying the existence, uniqueness, and regularity of solutions to elliptic problems involving this operator. We refer the reader to \cite{Brasco-Parini}, \cite{Castro-Kuusi-Palatucci}, \cite{Franzina-Palatucci}, \cite{Lindgren} and \cite{Pezzo-Quaas}, among others, for detailed developments in this direction. 

The research related to nonlocal operators with zero-order kernels 
has received significant attention. The reason for this is twofold - involvement of interesting mathematical structures, see \cite{Chen-Weth}, \cite{Felsinger-Kassmann-Voigt}, \cite{Feulefack-Jarohs}, \cite{Foghem}, \cite{Frank-Konig-Tang}, \cite{Jarohs-Saldana-Weth}, \cite{Jarohs-Saldana-Weth-1}, \cite{Kassmann-Mimica} and \cite{Temgoua-Weth}, and a wide range of physical applications, see \cite{Pellacci-Verzini}, \cite{Sprekels-Valdinoci}, \cite{Sikic-Song-Vondracek} and the references therein. 

Recently, the logarithmic Laplacian $L_\Delta$, a singular integro-differential operator with zero-order kernels, introduced in the seminal work \cite{Chen-Weth}, is given by
\[
\begin{split}
    L_\Delta u(x) & =\frac{d}{ds}\bigg|_{s=0} (-\Delta)^s u\\&= c_N \int_{\mathcal{B}_1(x)} \frac{u(x)-u(y)}{\abs{x-y}^N} ~dy -c_N\int_{\R^N\setminus\mathcal{B}_1(x)} \frac{u(y)}{\abs{x-y}^N} ~dy + \rho_Nu(x),
    \end{split}
\]
where $\mathcal{B}_1(x) \subset \mathbb{R}^N$ denotes the unit ball centered at $x$ and
\[
c_N := \pi ^{\frac{-N}{2}}\Gamma(\frac{N}{2}), \quad \rho_N := 2\ln 2+ \psi(\frac{N}{2})-\gamma, \quad  \gamma := -\Gamma^{'}(1),\]
 $\gamma$ being the Euler-Mascheroni constant and $\psi := \frac{\Gamma^{'}}{\Gamma}$, the digamma function. 

The study of the logarithmic Laplacian has advanced considerably since its introduction in \cite{Chen-Weth}, where the authors developed a variational framework for analyzing Dirichlet problems on bounded domains. Moreover, they investigated the eigenvalue problem associated with the operator $L_\Delta$, and established a Faber–Krahn-type inequality, maximum principles, and continuity properties of weak solutions to Poisson problems. 

Building on the framework established in \cite{Chen-Weth}, the authors in \cite{Santamaria et. al, Santamaria-Saldana} and \cite{Arora-Giacomoni-Vaishnavi} derived a sharp logarithmic Sobolev inequality and optimal continuous and compact embeddings. Consequently, they proved the existence and uniqueness of least energy solutions to boundary value problems involving the logarithmic Laplacian. Additionally, they investigated the limiting behavior of solutions to boundary value nonlinear problems involving the fractional Laplacian of order $2s$ when the parameter $s$ tends to zero. Such investigations are relevant to population dynamics, optimal control, and image processing. A few contributions investigating these applications include \cite{Antil-Bartels}, \cite{Crismale et. al},  \cite{Pellacci-Verzini} and \cite{Sprekels-Valdinoci}. 

For other recent advancements in this direction, we refer the reader to, \cite{Lara-Saldana, Santamaria-Rios-Saldana, Pollastro-Soave} (for regularity results), \cite{Chen-Veron, Feulefack-Jarohs-Weth} (for bounds and asymptotics for eigenvalue problems), \cite{Chen-Zhou, Frank-Konig-Tang} (for semilinear equation in $\R^N$ and $\mathbb{S}^N$), \cite{Fernandez-Saldana} (for Yamabe-type problem), \cite{Chen-Veron-1} (for diffusion equations), \cite{Chen-Hauer-Weth} (for Caffarelli-Silvestre extension problem), \cite{Harrach-Lin-Weth} (for Calder\'on problem), \cite{Santamaria et. al} (for the error estimates and numerical implementation), \cite{Luca et. al} (geometric context of 0-fractional perimeter). 

Very recently, in \cite{Dyda-Jarohs-Sk} extended the operator $L_\Delta$ to its nonlinear counterpart- the logarithmic $p$-Laplacian $\logP$, which is defined as
\[
\begin{split}
    \logP u(x) &= \frac{d}{ds}\bigg{|}_{s=0} (-\Delta_p)^s u(x)\\
    &=C_{N,p} \int_{\mathbb{B}_1(x)} \frac{|u(x)-u(y)|^{p-2}(u(x)-u(y))}{|x-y|^{N}} ~dy\\& \quad+ C_{N,p} \int_{\R^N\setminus \mathbb{B}_1(x)} \frac{|u(x)-u(y)|^{p-2}(u(x)-u(y))-|u(x)|^{p-2} u(x)}{|x-y|^{N}} ~dy\\& \qquad \qquad + \rho_N \  |u(x)|^{p-2}u(x),
\end{split}
\]
where $N\geq 1$, $p \in (1, \infty)$, $s \in (0, \frac{1}{2})$, $u\in C_c^\alpha(\R^N), \ \alpha \in \R^+, \ x \in \R^N,$ $C_{N,p}$ and $ \rho_N$ are constants (defined below). In \cite{Dyda-Jarohs-Sk}, the authors demonstrated that the logarithmic $p$-Laplacian $\logP$ naturally arises as the first-order derivative of the fractional $p$-Laplacian and provided an explicit integral representation for this operator. They developed a variational framework to study Dirichlet problems involving $\logP$ on bounded domains and explored the relationship between the first Dirichlet eigenvalue and eigenfunction of the fractional $p$-Laplacian and those of the logarithmic $p$-Laplacian. In addition, they established a Faber–Krahn inequality for the first eigenvalue of $\logP$, proved maximum principles, and derived a boundary Hardy-type inequality for the associated energy space. The study of problems involving the logarithmic $p$-Laplace operator remains in an early stage of development, with many key questions about existence, uniqueness, properties of solutions, and asymptotic analysis still open. This work aims to contribute to closing some of these gaps. To the best of our knowledge, \cite{Dyda-Jarohs-Sk} is the only study in the literature that has explored the logarithmic $p$-Laplacian.

Inspired by the aforementioned works, we plan to study the following elliptic equation involving logarithmic $p$-Laplacian
\begin{align}
    \label{main_problem} 
    \tag{$L_{\Delta_p}$}
    \logP u = g(x,u)  + \mu |u|^{p-2} u \ln |u| 
 \ \ \text{in}  \ \Omega, \qquad u=0 \ \text{in} \ \R^N \setminus \Omega, 
\end{align}
where $\Omega \subset \mathbb{R}^N$ is a bounded open set, $g$ is a Carath\'eodory function with suitable growth conditions and $\mu$ is a parameter. Such kind of problems appear as a limiting case of the weighted nonlocal Dirichlet problem involving the fractional $p$-Laplacian
\begin{align}
    \label{frac_pblm}\tag{$L_{s,p}$}
    (-\Delta_p)^s u = a(s,x)|u|^{q(s)-2}u  \ \text{in}  \ \Omega, \qquad u=0 \ \text{in} \ \R^N \setminus \Omega,
\end{align} 
where $a$ and $q$ are $C^1$ functions such that $a(s,\cdot) \to 1$ and $q(s) \to p$ as $ s \to 0^+.$ Heuristically, by passing to the limit $s\to 0^+$ on both sides of equation \eqref{frac_pblm}, we get \[
(-\Delta_p)^s u \to |u|^{p-2} u \quad \text{and} \quad a(s,x) \abs{u}^{q(s)-2} u \to |u|^{p-2} u.\]
Since this gives no information on the limiting profile of problem \eqref{frac_pblm}, we have to rely on the first-order expansion (in $s$) on both sides of \eqref{frac_pblm}, which naturally leads us to the logarithmic Laplace operator $\logP$ and leads to the study of the following limiting problem 
\begin{align}
    \label{lim_pblm}\tag{$L_{\text{lim}}$} 
    \logP u = a'(0,x)|u|^{p-2} u  + q'(0) |u|^{p-2} u \ln |u| 
 \ \ \text{in}  \ \Omega, \qquad u=0 \ \text{in} \ \R^N \setminus \Omega, 
\end{align}
which is a particular case of the problem \eqref{main_problem} when $g(x,u) = a'(0,x) u$ and $\mu = q'(0).$ Moreover, depending on the sign of the parameter $\mu$, the problem \eqref{main_problem} can also be seen as the nonlocal counterpart of the famous Brezis-Nirenberg problem and logistic-type problem for nonlocal operators with zero-order kernels. 

To investigate the problem \eqref{main_problem}, we first setup the variational framework for the problem \eqref{main_problem}, but as per the existing literature (\cite[Corollary 6.3]{Foghem}), it is only known that the function space $\XO$ (associated with the logarithmic $p$-Laplacian) is compactly embedded into $L^p(\Omega)$, which is not enough to control the nonlinear growth terms. In order to handle the logarithmic nonlinear terms, we derive a new $p$-logarithmic Sobolev inequality (see Theorem \ref{log-Sob-ineq} below) for the functions in the energy space $\XO$. For $p=2$, this inequality is shown by Beckner in \cite[Theorem 3]{Beckner} for functions in the Schwarz space via the Fourier Transform and in \cite[Proposition 3.8]{Santamaria-Saldana} via the asymptotics of the best constant in the fractional Sobolev inequality. Such a type of inequality is also known as Pitt's type inequality in the literature. However, for the case $p \neq 2$, neither the Fourier transform technique is applicable nor the exact value of the best constant in the $p$-fractional Sobolev inequality is known, which makes it difficult to adopt the arguments as in \cite{Beckner} and \cite{Santamaria-Saldana}. To overcome this difficulty, we derive new asymptotics of the normalizing constant $C_{N,s,p}$ and the constants involved in the fractional Sobolev inequality in \cite[Corollary 4.2]{Frank-Seiringer} and \cite[Theorem 1]{Maz'ya-Shaposhnikova}, which play a crucial role in our analysis. To the best of our knowledge, no such inequality has been established in the past for the case $p \neq 2$ and for functions belonging to the space $\XO$.  

A detailed examination of this inequality leads us to investigate the embedding of the function space $\XO$ into an Orlicz-type space, which further allows us to handle the nonlinear term $g(x,u)$ in \eqref{main_problem}. Pursuing in this direction, we prove optimal continuous and compact embeddings of this function space into Orlicz-type spaces. The sharpness of these embeddings is demonstrated through explicit functions exhibiting nontrivial logarithmic scaling. These embedding results are of independent interest from a functional analytic perspective, as they provide a better understanding of the structure of the space $\XO$. Moreover, these results can also be viewed as a nonlinear extension of the embedding results established in our previous work \cite{Arora-Giacomoni-Vaishnavi}.

Next, as an application of the $p$-logarithmic Sobolev inequality and embedding results, we show the existence and uniqueness results for the problem \eqref{main_problem} depending upon the range of the parameter $\mu.$ 

For the case $\mu \in (0, \frac{p^2}{N})$, in view of the embeddings in Theorem \ref{thm:embd:results} and \eqref{eq:relation:embd}, the problem \eqref{main_problem} can be viewed as a Brezis-Nirenberg problem for nonlocal operators with zero-order kernels, where the logarithmic term has a critical growth and the nonlinearity $g(x,\cdot)$ acts as superlinear-subcritical growth perturbation. The Brezis–Nirenberg problem has been extensively studied for both the Laplacian \cite{Brezis-Nirenberg} and the fractional Laplacian \cite{Servadei-Valdinoci}. Although, as in \cite{Brezis-Nirenberg, Servadei-Valdinoci}, a similar geometric approach applies to our problem \eqref{main_problem}, the main challenge is to ensure that the limit of the minimizing sequence yields a nontrivial solution, making the use of standard variational methods particularly delicate. Additionally, due to the presence of the critical growth term in \eqref{main_problem}, the compactness properties of the embeddings of the energy space $\XO$ are not available. To tackle these difficulties, we employ the method of the Nehari manifold and exploit the $p$-logarithmic Sobolev inequality shown in Theorem \ref{log-Sob-ineq} and prove the existence of a least energy solution. 

For the case $\mu \in (-\infty,0)$, the corresponding energy functional turns out to be coercive. Thus, we get a global minimizer of the energy (which is the solution to the problem) via the variational method. By following the arguments as in \cite{Foldes-Saldana}, via convexity by paths, we also derive the uniqueness of the least energy solution. 
The problem \eqref{main_problem} with $\mu \in (-\infty,0),$ can be regarded as the elliptic counterpart of a logistic equation involving the logarithmic $p$-Laplacian. Such problems find their relevance in biological processes. Here, the function $g$ monitors the growth of $u$ and $\mu<0$, models the degradation of $u.$ The reader may consult \cite{Caffarelli-Dipierro-Valdinoci},  \cite{Costa-Drabek-Tehrani},  \cite{Iannizzotto-Mosconi-Papageorgiou} and the references therein for further context and discussion on logistic equations.

To the best of our knowledge, this work presents the first results corresponding to the existence of a solution for problems involving the logarithmic $p$-Laplacian with critical nonlinearity and its lower order perturbation. 

Lastly, we give a nonlinear analogue to existing linear theories in \cite{Angeles-Saldana, Arora-Giacomoni-Vaishnavi, Santamaria-Saldana} and examine the asymptotics of fractional $p$-Laplacian problems as the fractional parameter $s \to 0^+$. For this, we consider the weighted Dirichlet problem \eqref{frac_pblm} in a bounded set
 $\Omega\subseteq \R^N$ with Lipschitz boundary, $N\geq 1$, $p \in (1, \infty)$, $s \in (0, \frac{1}{2})$, the weight function $a(s,\cdot) \to 1$ and the exponent $q(s) \to p$ as $s \to 0^+$. We analyze the following cases for the problem \eqref{frac_pblm} as $s \to 0^+$:
\begin{enumerate}
    \item when \[p<q(s)<\frac{Np}{N-sp}  \ \text{and} \ q'(0) \in (0,\frac{p^2}{N}). \]
    In this case, \eqref{frac_pblm} belongs to the class of superlinear problems.
    \item when \[1<q(s)<p  \ \text{and} \ q'(0) \in (-\infty,0).\] In this case, \eqref{frac_pblm} belongs to the class of sublinear problems.
\end{enumerate}
In both cases, the nontrivial sequence of least energy solution $u_s$ of \eqref{frac_pblm} converges to a non-trivial least solution of the problem \eqref{lim_pblm} in $L^p(\Omega)$ (see below Theorems \ref{asym-sup} and \ref{asym-sub}). Variational tools (inspired by \cite{Angeles-Saldana}, \cite{Santamaria-Saldana} and \cite{Szulkin-Weth}), a new $p$-Logarithmic Sobolev inequality in Theorem \ref{log-Sob-ineq}, new integral estimates needed due to the nonlinear structure of the operator taking into account the non-availability of Fourier transform as in \cite{Angeles-Saldana, Santamaria-Saldana} and embedding results in Theorem \ref{thm:embd:results} are the main tools in the analysis of small order limits of the $p$-fractional problem. 

\textbf{Outline of the paper:} The paper is arranged in the following way: In Section \ref{Preliminaries}, we setup the energy framework and statement of the main results.  Section \ref{Embeddings into Orlicz spaces} is dedicated to proofs of $p$-logarithmic Sobolev inequality and optimal continuous and compact embedding results. In Section \ref{{p}-Logarithmic Laplacian equations with critical growth}, we derive new integral estimates and prove existence results to the problem \eqref{main_problem}. We also show the uniqueness result for the case $\mu \in (-\infty,0)$ to the problem \eqref{main_problem}. In Section \ref{{p}-fractional Laplacian problem}, we perform the asymptotics analysis of the $p$-fractional problem \eqref{frac_pblm}. In Appendix \ref{Appendix}, we recall and derive a few preliminary results.
\section{Preliminaries}
\label{Preliminaries}
In this section, we collect standard notations, introduce the functional setting and give statements of the main results proved in this work.
Throughout the text, let $N\geq 1$, $p \in (1, \infty)$, $s \in (0, \frac{1}{2})$ and $\Omega\subseteq \R^N$ be a bounded open set. $ \text{By} \ \|\cdot\|_{L^b(\Omega)}$, we denote the standard $L^b(\Omega)$ norm, defined as
\[
\|u\|_{L^b(\Omega)}:= \left(\int_{\Omega} |u|^b ~dx\right)^\frac{1}{b}\ \text{for} \ 1 \leq b < \infty \quad \text{and} \quad \|u\|_{L^\infty(\Omega)}:= \ess\sup_\Omega |u|.
 \]
 \subsection{Variational framework}
 \label{variational}
  We first set up the variational framework for the problem \eqref{main_problem}.
  Let  \[h(t):= |t|^{p-2} t \quad \text{for all} \ t \in \R,\]
and
$\Upsilon: \mathbb{R}^4 \to \mathbb{R}$ be given by
\begin{equation}\label{def:upsilon}
    \Upsilon(z_1, z_2, t_1, t_2) = h(z_1-z_2)(t_1-t_2)-h(z_1)t_1-h(z_2)t_2.
\end{equation}
The natural solution space for \eqref{main_problem} is \[\XO:= \{u \in L^p(\Omega) \ | \ [u]_{\XO}< +\infty \ \text{and} \ u=0 \ \text{in} \ \R^N\setminus\Omega\},\]
where \[[u]_{\XO}^p := C_{N,p}\inta \frac{|u(x)-u(y)|^{p}}{|x-y|^N} ~dx ~dy, \quad C_{N,p}:= \frac{p\Gamma\l(\frac{N}{2}\r)}{2\pi^{\frac{N}{2}}},\]
and $\Gamma$ is the Gamma function. The norm on $\XO$ is given by \[\nX= {\l(\nP^p+ [u]_{\XO}^p\r)}^{\frac{1}{p}}.\] With the above norm $\XO,$ forms a reflexive Banach space (see \cite[Section 3]{Foghem}). From \cite[Proposition 4.1]{Dyda-Jarohs-Sk}, it follows further that $[\cdot]_{\XO}$ is an equivalent norm for $\XO.$ For $u,v \in \XO$, the nonlinear form $\qform(u,v)$,  is defined as
\begin{equation}\label{quad:form}
    \begin{split}
&\qform(u,v) = \mathcal{E}_p(u,v) + \mathcal{F}_p(u,v) + \rho_N\int_{\R^N} h(u(x)) v(x) ~dx,  \end{split}
\end{equation}
where
\[
\mathcal{E}_p(u,v):= \frac{C_{N,p}}{2}\inta \frac{h(u(x)-u(y))(v(x)-v(y))}{|x-y|^{N}} ~dx ~dy,
\]
\begin{equation}\label{def:reminder-term}
    \begin{split}
     \mathcal{F}_p(u,v):= \frac{C_{N,p}}{2}\intb \frac{\Upsilon(u(x), u(y), v(x), v(y))}{|x-y|^{N}} ~dx ~dy,
     \end{split}
\end{equation}
\[\rho_N= \rho_N(p):= 2\ln 2 -\gamma+ \frac{p}{2}\psi(\frac{N}{2}),\quad \psi := \Gamma'/\Gamma\]
and $\gamma := -\Gamma'(1)$ is the Euler-Mascheroni constant.
The energy functional $J_{L_{\Delta_p}}: \XO \to \R$ corresponding to \eqref{main_problem} is given by
\begin{equation}
\label{energy-wei}
J_{L_{\Delta_p}}(u) = \frac{1}{p}\qform(u,u) -\into G(x,u)~dx -\frac{\mu}{p^2}\into |u|^p(\ln|u|^p -1) ~dx,
\end{equation}
where $G(x,t):=\int_0^t g(x,s) ~ds$ and $g:\Omega\times \R \to \R$ is a function satisfying the following assumptions:
\begin{enumerate}
[label=\textnormal{($g_1$)}, ref=\textnormal{$g_1$}]
\item  \label{assump g1}
$g \in L^\infty(\Omega \times K)$ for any $K \subset \subset \R,$
\end{enumerate}

\begin{enumerate}
[label=\textnormal{($g_2$)}, ref=\textnormal{$g_2$}]
\item  \label{assump g2}
$\lim\limits_{|t| \to \infty} \frac{g(x,t)}{h(t)\ln|t|}=0$ uniformly in $\Omega,$
\end{enumerate}

\begin{enumerate}
[label=\textnormal{($g_3$)}, ref=\textnormal{$g_3$}]
\item  \label{assump g3}
there exists $f \in L^\infty(\Omega)$ such that $\lim\limits_{t \to 0} \frac{g(x,t)}{h(t)} = f(x)$ uniformly in $\Omega,$
\end{enumerate}

 \begin{enumerate}
[label=\textnormal{($g_4$)}, ref=\textnormal{$g_4$}]
\item  \label{assump g4} $g(x,\cdot)\in C^1(\R)$ a.e. in $\Omega$ and there exists a constant $\delta < \mu$ such that
\[
t^2g'(x,t)+\delta |t|^p -(p-1)tg(x,t) \geq 0 \quad \text{for all} \ t \in \mathbb{R} \ \text{and a.e} \ x \in \Omega,
\]
\end{enumerate}

 \begin{enumerate}
[label=\textnormal{($g_5$)}, ref=\textnormal{$g_5$}]
\item  \label{assump g5}
$\lim\limits_{|t| \to \infty} \frac{tg'(x,t)}{h(t)\ln|t|}=0$ uniformly in $\Omega$.
\end{enumerate}

The assumptions \eqref{assump g1}-\eqref{assump g3} imply that for a given $\eps>0$ there exist constants $a_1>0$ and $a_2>0$, depending upon $\eps$, $p$, $\|f\|_{L^\infty(\Omega)}$ such that
\begin{equation}
\label{assumption g(x,t)}
 |g(x,t)| \leq a_1 |t|^{p-1}+\eps |t|^{p-1}|\ln|t|| \quad \text{for all} \ t \in \mathbb{R} \ \text{and a.e.} \ x \in \Omega
\end{equation}
and
\begin{align}
\label{assumption G(x,t)}
    \begin{split}
    |G(x,t)| \leq  a_2 |t|^p + \eps |t|^p |\ln|t||  \quad \text{for all} \ t \in \mathbb{R} \ \text{and a.e.} \ x \in \Omega.
    \end{split}
\end{align}
\begin{remark}
Note that assumption \eqref{assump g4}, implies
\begin{align}
    \label{assump integ g4}
    tg(x,t) + \frac{\delta |t|^p}{p} - pG(x,t) \geq 0 \quad \text{for all} \ t \in \mathbb{R} \ \text{and a.e.} \ x \in \Omega.
\end{align}
\end{remark}
\begin{remark}
Some examples of functions satisfying assumptions \eqref{assump g1}- \eqref{assump g5} are:
\begin{enumerate}
    \item  [\textnormal{(i)}] $g(x)= f(x)h(t)\ln^\beta(e+|t|),$ $\beta \in [0,1)$ and $f \in L^\infty(\Omega),$
    \item [\textnormal{(ii)}] $g(x)= f(x)h(t)\ln(e+\ln(1+|t|)),$ $f \in L^\infty(\Omega).$
\end{enumerate}
\end{remark}
Next, we introduce the notion of weak solution for the problem \eqref{main_problem}.
\begin{definition}
    A function $u \in \XO$ is said to be a weak solution to \eqref{main_problem} if
\[\qform(u,v)= \into g(x,u)v ~dx + \mu \into h(u) \ln|u| v ~dx \quad  \text{for all} \ v \in \XO.\]
Moreover, a weak solution $u$ to \eqref{main_problem} is said to be of ``least energy" if
\[J_{L_{\Delta_p}}(u)= \inf_{v\in \mathcal{N}_{L_{\Delta_p}}} J_{L_{\Delta_p}}(v),\]
where the Nehari manifold associated with \eqref{main_problem} is the natural constraint given by
\[\mathcal{N}_{L_{\Delta_p}} = \bigg\{u \in \XO \setminus\{0\} \ | \ \qform(u,u)= \into g(x,u)u ~dx + \mu \into |u|^{p}\ln|u| ~dx\bigg\}.\]
\end{definition}
Next, we give a variational framework to the problem \eqref{frac_pblm}.
   Define
    \[\WO=\{u \in L^p(\R^N)\,:\, [u]^p_{s,p,\R^N}<\infty \ \text{and} \ u=0 \ \text{a.e. in} \ \R^N\setminus\Omega  \},\]
    where
    \begin{equation}
\label{C-nsp}
[u]^p_{s,p,\R^N}:= \frac{C_{N,s,p}}{2}\intr \pfrac ~dx ~dy, \quad C_{N,s,p}= \frac{sp 2^{2s-1}\Gamma(\frac{N+sp}{2})}{\pi^\frac{N}{2}\Gamma(1-s)}.
\end{equation}
The norm on $\WO$ is given by
\[\nW=\l(\nP^p+[u]^p_{s,p,\R^N}\r)^{\frac{1}{p}}.\] Under this norm, $\WO$ forms a reflexive Banach space. It is easy to see that $[u]_{s,p,\R^N}$ is an equivalent norm on $\WO.$ The Sobolev conjugate $p^\ast_s$ of $p$ is defined as $p^\ast_s=\frac{Np}{N-sp}$.   We refer the reader to \cite{Nezza-Palatucci-Valdinoci} for an introduction and further details to fractional Sobolev spaces.

The energy functional $J_{s,p}: W^{s,p}_0(\Omega) \to \R$ corresponding to \eqref{frac_pblm} is defined as
    \begin{equation}
    \label{energy-frac}
          J_{s,p}(u)= \frac{1}{p}\nW^p-\frac{1}{q(s)}\into a(s,x)|u|^{q(s)} ~dx.
    \end{equation}
\begin{definition}
     A function $ u \in \WO $ is called a weak solution of \eqref{frac_pblm} if
     \[
     \begin{split}
     \frac{C_{N,s,p}}{2}\intr \frac{h(u(x)-u(y))(v(x)-v(y))}{|x-y|^{N+sp}} ~dx ~dy = \into a(s,x)|u|^{q(s)-2}uv ~dx
     \end{split}
     \]
for all $v \in W^{s,p}_0(\Omega)$. Moreover, a weak solution $u$ to the problem \eqref{frac_pblm} is said to have ``least energy" if
    \[J_{s,p}(u)= \inf_{v \in \mathcal{N}_{s,p}} J_{s,p}(v),\]
    where the Nehari manifold $\mathcal{N}_{s,p}$ associated with \eqref{frac_pblm} is given by
     \[\mathcal{N}_{s,p}:= \l\{u \in \WO\setminus\{0\} \, : \, [u]^p_{s,p,\R^N} = \into a(s,x)|u|^{q(s)} ~dx\r\}.\]
\end{definition}
\subsection{Main results}
\label{main results}
First, we present a $p$-logarithmic Sobolev type inequality, which is one of the key tools in the analysis of problems involving logarithmic nonlinearities. 

\begin{theorem}(p-Logarithmic Sobolev inequality)
\label{log-Sob-ineq}
    For $u \in \XO,$ it holds that
    \[\frac{p^2}{N} \into |u|^p \ln|u| ~dx \leq \qform(u,u)+ \frac{p^2}{N} \nP^p \ln \nP + k_0\nP^p,\]
    where $k_0$ is a constant depending upon $N$ and $p.$
\end{theorem}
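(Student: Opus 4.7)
The strategy is to obtain Theorem~\ref{log-Sob-ineq} by differentiating, at $s=0^+$, the standard fractional Sobolev inequality
\[
S_{N,s,p}\,\|u\|_{L^{p^*_s}(\R^N)}^p \leq [u]_{s,p,\R^N}^p, \qquad p^*_s=\frac{Np}{N-sp},
\]
where $S_{N,s,p}$ is the best constant (as in \cite{Frank-Seiringer,Maz'ya-Shaposhnikova}). The guiding observation is that $C_{N,s,p}\sim \frac{p\,\Gamma(N/2)}{2\pi^{N/2}}s$ as $s\to 0^+$, so both sides of the inequality collapse to $\nP^p$ at $s=0$; subtracting this common leading term and dividing by $s$ should produce, in the limit, a linear inequality whose left-hand side features $\into|u|^p\ln|u|\,dx$ and whose right-hand side features $\qform(u,u)$.

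First I would fix $u\in\XO$, extend it by zero to $\R^N$, and reduce by a density argument to compactly supported smooth functions, for which term-by-term differentiation in $s$ is legitimate. Using $(p^*_s)'(0) = p^2/N$ and the elementary formula for differentiating $L^q$-norms in $q$, a direct computation gives
\[
\frac{d}{ds}\bigg|_{s=0^+}\|u\|_{L^{p^*_s}}^p = \frac{p^2}{N}\into |u|^p\ln|u|\,dx - \frac{p^2}{N}\,\nP^p\ln\nP.
\]
For the right-hand side, I would expand $C_{N,s,p}$ via the Taylor expansions of $\Gamma((N+sp)/2)$, $\Gamma(1-s)$ and $2^{2s-1}$, and split the Gagliardo integral into the regions $\{|x-y|\leq 1\}$ and $\{|x-y|\geq 1\}$. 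Matching the $O(s)$ terms against the integral representation of $\logP$ from \cite{Dyda-Jarohs-Sk}, the short-range piece contributes $\mathcal{E}_p(u,u)$ and the long-range piece contributes $\mathcal{F}_p(u,u)+\rho_N\into h(u)u\,dx$, which together give $\qform(u,u)$. This should yield an expansion of the form
\[
[u]_{s,p,\R^N}^p = \nP^p + s\bigl(\qform(u,u) + c_1\nP^p\bigr) + o(s)
\]
for an explicit constant $c_1=c_1(N,p)$.

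The remaining ingredient is the first-order expansion $S_{N,s,p}=1+s\,\kappa_0+o(s)$ of the best Sobolev constant, which must be extracted from the Frank--Seiringer and Maz'ya--Shaposhnikova representations without relying on an explicit extremizer (unavailable for $p\neq 2$). Substituting the three expansions into the fractional Sobolev inequality, subtracting the common leading term $\nP^p$, dividing by $s>0$ and letting $s\to 0^+$, one obtains
\[
\frac{p^2}{N}\into |u|^p\ln|u|\,dx - \frac{p^2}{N}\,\nP^p\ln\nP \leq \qform(u,u) + k_0\,\nP^p,
\]
with $k_0$ absorbing $\kappa_0$, $c_1$ and the residual constants. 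The main obstacle, and the main departure from the linear theory of Beckner and Santamaria--Salda\~na, is the quantitative control of $\kappa_0$: without the Fourier transform or an explicit minimizer, the sharp asymptotics of $S_{N,s,p}$ must be extracted by hand, and the convergence of the Gagliardo term to $\qform(u,u)$ needs uniform-in-$s$ integrability of the kernels $|x-y|^{-N-sp}$ near the diagonal and at infinity. These are precisely the new integral estimates flagged in the introduction, and controlling them rigorously is the delicate part of the argument.
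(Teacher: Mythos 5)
Your plan is correct and follows essentially the same route as the paper: differentiate the fractional Sobolev inequality at $s=0^+$, using the expansion $[u]^p_{s,p,\R^N}=\nP^p+s\,\qform(u,u)+o(s)$ from \cite[Lemma 7.2]{Dyda-Jarohs-Sk}, the derivative of $s\mapsto\|u\|^p_{L^{p^*_s}}$, and the first-order expansion of the normalized Sobolev constant, then extend from $C_c^\infty(\Omega)$ to $\XO$ by density. The delicate point you flag — extracting $\lim_{s\to0^+}A(N,p,s)=1$ without an explicit extremizer — is exactly what the paper's Lemma \ref{lem-asymp} supplies, by computing $\lim_{s\to0}(C_{N,s,p}/s)$ and $\lim_{s\to0}(s/B_{N,s,p})$ directly from the Frank--Seiringer representation.
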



The above inequality naturally leads us towards an examination of the continuous and compact embedding of the space $\XO$ into Orlicz type spaces. It is important to note that $t^p \ln |t|$ does not qualify as a $\Phi$-function within the Orlicz space theory, due to its sign changing nature. To tackle this issue, it is natural to consider $t^p \ln(e+t)$ as a candidate for $\Phi$-function, in view of the following relation
\begin{equation}\label{eq:relation:embd}
    t^p \ln(e+t) = t^p \ln(t) + t^p \ln\l(1+\frac{e}{t}\r) \quad \text{for} \ t>0.
\end{equation}
Before giving the embedding results, we recall the definitions of $\Phi$-function and Orlicz spaces.
\begin{definition}
	Let  $\ph \colon (0,+\infty) \to \R$ be a function and $p,q>0$. We say that
	\begin{enumerate}
		\item[\textnormal{(i)}]
			$\ph$ is almost increasing , if there exists $a \geq 1$ such that $\ph(s) \leq a \ph(t)$ for all $0 < s < t$.
		\item[\textnormal{(ii)}]
			$\ph$ is almost decreasing, if there exists $a \geq 1$ such that $a \ph(s) \geq \ph(t)$ for all $0 < s < t$.
	\end{enumerate}
We say that $\ph$ satisfies the property
	\begin{enumerate}
		\item[\textnormal{(aInc)}$_p$]
			if $\frac{\ph(t)}{t^p}$ is almost increasing,
		\item[\textnormal{(aDec)}$_q$]
			if $\frac{\ph(t)}{t^q}$ is almost decreasing.
\end{enumerate}
\end{definition}
\begin{definition}
	A function $\ph \colon [0,+\infty) \to [0,+\infty]$ is said to be a $\Phi$-function, if $\ph$ is increasing and satisfies
    \[
    \ph(0)=0, \quad \lim_{t\to 0^+} \ph(t) = 0 \quad \text{and} \quad \lim_{t \to +\infty} \ph(t) = +\infty.
    \]
Moreover, $\ph$ is said to be a convex $\Phi$-function (denoted by $\ph \in \Phi_c$) if $\ph$ is left-continuous, convex and it is said to be a weak $\Phi$-function (denoted by $\ph \in \Phi_w$) if $\ph$ satisfies \textnormal{(aInc)}$_1$ on $(0,\infty)$.
    \end{definition}

Throughout the text, we denote by $M(\Omega)$, the set of all real-valued Lebesgue measurable functions defined on $\Omega.$
Let $\vartheta \colon [0,+\infty) \to [0,+\infty]$ be a convex $\Phi$-function. Then, the modular function associated with $\vartheta$ is defined as
	\begin{align}
		\varrho_\vartheta (u) := \into \vartheta(\abs{u(x)}) ~dx \quad \text{for every} \ u \in M(\Omega).
	\end{align}
	The set
	\begin{align*}
		L^\vartheta (\Omega) := \{ u \in M(\Omega)\,:\, \varrho_\vartheta(\lambda u) < \infty \text{ for some } \lambda > 0 \}
	\end{align*}
	equipped with the Luxemburg norm
	\[
		\|u\|_\vartheta = \inf \bigg\{ \lambda > 0 \,:\, \varrho_\vartheta \l( \frac{u}{\lambda} \r)  \leq 1 \bigg\}
\]
is a Banach space (see \cite[Theorem 3.3.7]{Harjulehto-Hasto}). Next results correspond to the new continuous and compact embeddings of the energy space $\XO$ in Orlicz type spaces.

\begin{theorem}\label{thm:embd:results}
Let $\Omega\subseteq \R^N$ be a bounded domain and $\ph, \psi:[0,\infty) \to [0,\infty)$ be convex $\Phi$-functions such that $\ph(t):= t^p \ln(e+t)$ for all $t \geq 0$,
\begin{enumerate}[label=\textnormal{($\psi_1$)},ref=\textnormal{$\psi_1$}]
       \item \label{compact:cond} $\lim_{t\to 0^+} \frac{\psi(t)}{t^p} $ exists in $\mathbb{R}$ and $\lim_{t \to \infty} \frac{\psi(t)}{\ph(t)} = 0.$
   \end{enumerate}
Then,
    \begin{enumerate}
        \item[\textnormal{(i)}] the embedding $\XO \hookrightarrow L^\ph(\Omega)$ is continuous,
        \item[\textnormal{(ii)}] the embedding $\XO \hookrightarrow L^\psi(\Omega)$ is compact.
    \end{enumerate}
\end{theorem}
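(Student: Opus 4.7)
For (i), the plan is to bound the modular $\varrho_\ph(u)=\int_\Omega|u|^p\ln(e+|u|)\,dx$ uniformly over the unit sphere of $\XO$ and then extract the norm inequality by rescaling. Splitting the integral at $|u|=1$ and using $\ln(e+t)\leq \ln(e+1)+\ln t$ for $t\geq 1$, one first gets
\[
    \varrho_\ph(u)\leq \ln(e+1)\|u\|_{L^p(\Omega)}^p + \int_\Omega |u|^p(\ln|u|)^{+}\,dx.
\]
The decomposition $(\ln|u|)^{+}=\ln|u|+(\ln|u|)^{-}$ combined with the pointwise estimate $t^p|\ln t|\leq (pe)^{-1}$ on $(0,1]$ absorbs the negative part into a constant multiple of $|\Omega|$, leaving precisely the quantity controlled by Theorem \ref{log-Sob-ineq}. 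Normalising $\|u\|_{\XO}=1$ forces $\|u\|_{L^p(\Omega)}\leq 1$, so the entropy term $\|u\|_{L^p(\Omega)}^p\ln\|u\|_{L^p(\Omega)}$ in the $p$-logarithmic Sobolev inequality is nonpositive; invoking the norm equivalence $\qform(u,u)\lesssim \|u\|_{\XO}^p$ coming from \cite[Proposition 4.1]{Dyda-Jarohs-Sk} then produces a uniform bound $\varrho_\ph(u)\leq K$. Finally, $\ln(e+|u|/\lambda)\leq \ln(e+|u|)$ for $\lambda\geq 1$ gives $\varrho_\ph(u/\lambda)\leq \lambda^{-p}K$, so the choice $\lambda=\max(1,K^{1/p})$ yields $\|u\|_\ph\leq \lambda\|u\|_{\XO}$ and the continuous embedding.

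For (ii), condition (\ref{compact:cond}) produces, for every $\eps>0$, constants $M_\eps,C_\eps>0$ such that
\[
    \psi(t)\leq C_\eps t^p + \eps\,\ph(t) \qquad \text{for all } t\geq 0,
\]
obtained by bounding $\psi(t)/t^p$ on $[0,M_\eps]$ (using the finite limit at $0^{+}$ and continuity of $\psi$) and using $\psi\leq \eps\ph$ beyond $M_\eps$. Given $u_n\rightharpoonup u$ in $\XO$, the compact embedding $\XO\hookrightarrow L^p(\Omega)$ of \cite[Corollary 6.3]{Foghem} yields $u_n\to u$ in $L^p(\Omega)$. Applying the comparison to $\lambda|u_n-u|$ for arbitrary $\lambda>0$,
\[
    \varrho_\psi(\lambda(u_n-u))\leq C_\eps \lambda^p\|u_n-u\|_{L^p(\Omega)}^p + \eps\,\varrho_\ph(\lambda(u_n-u)),
\]
and by part (i) together with the $\Delta_2$ behaviour of $\ph(t)=t^p\ln(e+t)$ (which follows from $\ln(e+2t)\leq \ln 2+\ln(e+t)$), the last modular is bounded uniformly in $n$. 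Sending first $n\to\infty$ and then $\eps\to 0^{+}$ gives $\varrho_\psi(\lambda(u_n-u))\to 0$ for every $\lambda>0$, which is equivalent to Luxemburg-norm convergence in $L^\psi(\Omega)$.

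The step I expect to require the most care in (i) is reconciling the signed integrand $|u|^p\ln|u|$ of Theorem \ref{log-Sob-ineq} with the nonnegative modular $\varrho_\ph$: on $\{|u|<1\}$ the two integrands have opposite signs, so a careless truncation would lose the $\|u\|_{L^p(\Omega)}^p$ term that drives the rescaling. A related subtlety is that $\qform(u,u)\lesssim \|u\|_{\XO}^p$ is not transparent from the definition (\ref{quad:form}), because the long-range contribution $\mathcal{F}_p$ in (\ref{def:reminder-term}) has no definite sign; I plan to invoke the norm-equivalence statement in \cite{Dyda-Jarohs-Sk} rather than bound $\qform$ termwise. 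In (ii), the only delicate point is the order of limits in $n$ and $\eps$; no $\Delta_2$ condition on $\psi$ is needed, since modular convergence for every scaling is by definition Luxemburg-norm convergence.
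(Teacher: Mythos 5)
Your argument is correct and follows essentially the same route as the paper's proof: part (i) reduces the modular bound to the $p$-logarithmic Sobolev inequality of Theorem \ref{log-Sob-ineq} after splitting off the harmless region $\{|u|<1\}$, and part (ii) combines the two-scale comparison $\psi(t)\leq C_\eps t^p+\eps\,\ph(t)$ with the compact embedding $\XO\hookrightarrow L^p(\Omega)$ and the uniform modular bound from (i). The only point to fix is the reference for $\qform(u,u)\lesssim \|u\|_{\XO}^p$: \cite[Proposition 4.1]{Dyda-Jarohs-Sk} only gives equivalence of the seminorm $[\cdot]_{\XO}$ with the full norm (i.e.\ controls $\mathcal{E}_p$), while the sign-indefinite long-range term $\mathcal{F}_p(u,u)$ must be bounded by $\|u\|_{L^p(\Omega)}^p$ via Lemma \ref{diff-F_p}, which is exactly what the paper does.
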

\begin{remark}
    The compactness result stated in the above theorem is a significant improvement compared to the results available in the literature (see \cite[Corollary 6.3]{Foghem} where compactness in $L^p(\Omega)$ is proved). We present some examples of the function $\psi$ which satisfy the condition \eqref{compact:cond}:
\begin{enumerate}
    \item[\textnormal{(i)}] $\psi(t) = t^p \ln^\beta(e+ |t|)$ for $\beta \in [0,1),$
    \item[\textnormal{(ii)}] $\psi(t) = t^p \ln(e+ \ln(1+|t|)).$
\end{enumerate}
\end{remark}
Next, we show the sharpness of the results obtained in Theorem \ref{thm:embd:results} by constructing a suitable class of functions in $\XO$ with nontrivial logarithmic scaling parameters.
\begin{theorem}\label{thm:sharp:embd:results}
    Let $\Omega\subseteq \R^N$ be a bounded, convex domain, and $\ph, \gamma \colon [0,\infty) \to [0,\infty)$ be convex $\Phi$-functions, $\ph(t):= t^p \ln(e+t)$ for all $t \geq 0$ and there exists a $\beta >0$ such that
    \begin{equation}\label{embd:cond:smaller}
        \lim_{\ell \to \infty} \frac{\gamma(\ell \beta)}{\ph(\ell \beta)} =\infty.
    \end{equation}
 Then,
        \begin{enumerate}
        \item[\textnormal{(i)}] the embedding $\XO \hookrightarrow L^\ph(\Omega)$ is not compact,
        \item[\textnormal{(ii)}] the embedding $\XO \hookrightarrow L^\gamma(\Omega)$ is not continuous.
    \end{enumerate}

\end{theorem}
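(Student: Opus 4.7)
The plan is to construct a concentrating sequence of step functions that simultaneously rules out compactness in $L^\ph$ and continuity into $L^\gamma$. Fix $x_0 \in \Omega$ with $B_{r_0}(x_0) \subset \Omega$ for some $r_0>0$, and for large $\ell$ define
\[
u_\ell := \beta \ell \, \chi_{B_{r_\ell}(x_0)}, \qquad r_\ell := (\beta\ell)^{-p/N} \l(\ln(\beta\ell)\r)^{-1/N},
\]
where $\beta>0$ is as in \eqref{embd:cond:smaller}. For $\ell$ large, $B_{r_\ell}(x_0) \subset \Omega$, so $u_\ell \in \XO$. The logarithmic correction in $r_\ell$ is calibrated precisely so that the logarithmic factor produced by the zero-order kernel cancels the one in the Orlicz modular of $\ph$.

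To verify $\nX \leq C$ uniformly, note that $|u_\ell(x)-u_\ell(y)|^p$ is nonzero only when exactly one of $x,y$ lies in $B_{r_\ell}(x_0)$, so by symmetry
\[
[u_\ell]_{\XO}^p = 2 C_{N,p} (\beta\ell)^p \int_{B_{r_\ell}(x_0)} \int_{B_1(x) \setminus B_{r_\ell}(x_0)} \frac{dy\,dx}{|x-y|^N}.
\]
Polar coordinates about $x$ show the inner integral is $O(|\ln(r_\ell - |x-x_0|)|)$, and integration in $x$ yields $[u_\ell]_{\XO}^p \leq C (\beta\ell)^p r_\ell^N |\ln r_\ell|$, bounded by our choice of $r_\ell$. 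Since $\nP^p = (\beta\ell)^p |B_{r_\ell}| \sim 1/\ln(\beta\ell) \to 0$, the sequence $\{u_\ell\}$ is bounded in $\XO$ and $u_\ell \to 0$ in $L^p(\Omega)$.

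For (i), a direct computation gives $\varrho_\ph(u_\ell) = |B_{r_\ell}|(\beta\ell)^p \ln(e+\beta\ell) \to |B_1|>0$, so $\|u_\ell\|_\ph$ is bounded below. If any subsequence converged in $L^\ph(\Omega)$, the continuous embedding $L^\ph(\Omega)\hookrightarrow L^p(\Omega)$ (since $\ph(t)\geq t^p$) together with $u_\ell \to 0$ in $L^p$ would force the $L^\ph$-limit to be $0$, driving $\|u_\ell\|_\ph \to 0$ and contradicting the lower bound; so the embedding $\XO \hookrightarrow L^\ph(\Omega)$ is not compact. For (ii), suppose for contradiction that $\|u_\ell\|_\gamma \leq M$ along a subsequence, i.e.\ $\varrho_\gamma(u_\ell/M) \leq 1$. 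Using $|B_{r_\ell}|^{-1} \sim (\beta\ell)^p \ln(\beta\ell) \sim M^p\ph(\beta\ell/M)$, one gets
\[
1 \geq \varrho_\gamma(u_\ell/M) = |B_{r_\ell}|\,\gamma(\beta\ell/M) \sim \frac{|B_1|}{M^p}\cdot\frac{\gamma(\beta\ell/M)}{\ph(\beta\ell/M)},
\]
which contradicts \eqref{embd:cond:smaller}; indeed, the monotonicity and convexity of $\gamma$ and $\ph$ together with the exact form $\ph(t)=t^p\ln(e+t)$ extend \eqref{embd:cond:smaller} to $\gamma(t)/\ph(t)\to\infty$ along any $t\to\infty$, in particular $t=\beta\ell/M$. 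Hence $\|u_\ell\|_\gamma \to \infty$ while $\nX$ stays bounded, defeating continuity.

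The main technical obstacle is the sharp two-sided asymptotic $[u_\ell]_{\XO}^p \asymp (\beta\ell)^p r_\ell^N |\ln r_\ell|$, which demands careful handling of the logarithmic singularity of the zero-order kernel near $\partial B_{r_\ell}(x_0)$; once this estimate is in hand, the remainder of the argument reduces to elementary bookkeeping with logarithms, and the convexity of $\Omega$ enters only to guarantee the availability of the enclosing ball $B_{r_0}(x_0)$.
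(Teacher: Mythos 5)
Your proof is correct, but it takes a genuinely different route from the paper. The paper fixes an arbitrary $u\in\XO$ normalized by $\into|u|^p\ln(e+|u|)\,dx=1$ and concentrates it by dilation, setting $u_k(x)=(k^N/\ln k)^{1/p}u(kx)$; a change of variables in the seminorm (splitting the region $1<|s-t|\le k$) gives boundedness in $\XO$, a lower bound on the $\ph$-modular over the superlevel set $\{|u|\ge\beta\}$ rules out compactness, and the same superlevel set forces the $\gamma$-modular to blow up. You instead build explicit indicator functions $u_\ell=\beta\ell\,\chi_{B_{r_\ell}(x_0)}$ with the radius $r_\ell=(\beta\ell)^{-p/N}(\ln\beta\ell)^{-1/N}$ tuned so that the logarithmic perimeter $(\beta\ell)^pr_\ell^N|\ln r_\ell|$ stays bounded while the $\ph$-modular tends to $|B_1|$; your key estimate is that the zero-order kernel integrated against an indicator produces only a $|\ln\operatorname{dist}(x,\partial B_{r_\ell})|$ singularity, which is integrable — and note that only the upper bound is needed, not the two-sided asymptotic you flag as the main obstacle. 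Your construction buys explicitness (all modulars are computed exactly rather than bounded via superlevel sets) and dispenses with the convexity of $\Omega$, which the paper needs so that $\Omega/k\subseteq\Omega$; the paper's version buys generality in that the non-compactness is exhibited on dilates of \emph{any} nontrivial element of the space. Two small points to make airtight: justify that $\chi_{B_r}\in\XO$ (your $|\ln d|$ computation does this, but say so explicitly, since it is the analogue of finiteness of the $0$-fractional perimeter), and spell out the passage from $\|u_\ell\|_\gamma\le M$ to $\varrho_\gamma(u_\ell/M)\le1$ via the unit-ball property of the Luxemburg norm, as well as the doubling estimate $\ph(Ct)\le C'\ph(t)$ used to transfer \eqref{embd:cond:smaller} from the sequence $\beta\ell$ to $\beta\ell/M$.
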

\begin{remark}
    The above result indicates that if any Orlicz space $L^\gamma(\Omega)$ is smaller than $L^\ph(\Omega)$ in the sense of \eqref{embd:cond:smaller}, {\it i.e}, $L^\gamma(\Omega) \hookrightarrow L^\ph(\Omega)$, then the embedding $\XO \hookrightarrow L^\gamma(\Omega)$ is not continuous. Moreover, if \eqref{embd:cond:smaller} holds for some $\beta>0$, then it holds for all $\beta'>0.$
\end{remark}
Now, we state the existence result to the problem \eqref{main_problem}.
\begin{theorem}
    \label{Existence L1}
    Let $\Omega\subseteq \R^N$ be a bounded domain with Lipschitz boundary. Then, for $\mu \in (0, \frac{p^2}{N})$ and $g$ satisfying assumptions \eqref{assump g1}-\eqref{assump g5}, the problem \eqref{main_problem}
 has a nontrivial, least energy solution. Moreover, if it holds that $G(x,t)\leq G(x,|t|)$ for all $t \in \mathbb{R},$ a.e. $x \in \Omega,$ then the least energy solution of \eqref{main_problem} has a constant sign in $\Omega.$
\end{theorem}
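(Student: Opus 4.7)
The plan is to minimize $J_{L_{\Delta_p}}$ over the Nehari manifold $\mathcal{N}_{L_{\Delta_p}}$. The logarithmic term $\mu\abs{u}^{p-2}u\ln\abs{u}$ is critical with respect to the $p$-logarithmic Sobolev inequality of Theorem~\ref{log-Sob-ineq}, whose sharp constant is $p^2/N$, and the strict subcriticality $\mu < p^2/N$ will be used repeatedly to absorb the critical contribution on the left-hand side of the key estimates, as well as to ensure the projection onto $\mathcal{N}_{L_{\Delta_p}}$ is well-defined.

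\textbf{Fibering analysis and Nehari projection.} For $u\in\XO\setminus\{0\}$ define $\phi_u(t):=J_{L_{\Delta_p}}(tu)$. Direct differentiation yields
\[
\phi_u'(t) = t^{p-1}\qform(u,u) - \into g(x,tu)\,u\,\dx - \mu t^{p-1}\into\abs{u}^p\ln(t\abs{u})\,\dx.
\]
The dominant contribution $-\mu\ln t\,\|u\|_{L^p(\Omega)}^p$ drives $\phi_u'(t)/t^{p-1}\to+\infty$ as $t\to 0^+$, while assumption \eqref{assump g2} combined with $\mu>0$ sends it to $-\infty$ as $t\to\infty$; assumption \eqref{assump g4} (using $\delta<\mu$) ensures $t\mapsto\phi_u'(t)/t^{p-1}$ is strictly decreasing. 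Hence a unique projection $t_u>0$ exists with $t_u u\in\mathcal{N}_{L_{\Delta_p}}$, at which $\phi_u$ attains its global positive maximum.

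\textbf{Boundedness of minimizing sequences and strict positivity of the infimum.} Along $\mathcal{N}_{L_{\Delta_p}}$ the energy simplifies to
\[
J_{L_{\Delta_p}}(u) = \frac{1}{p}\into[g(x,u)u - pG(x,u)]\,\dx + \frac{\mu}{p^2}\|u\|_{L^p(\Omega)}^p \geq \frac{\mu-\delta}{p^2}\|u\|_{L^p(\Omega)}^p,
\]
by the integrated form \eqref{assump integ g4} of \eqref{assump g4}, so a minimizing sequence $\{u_n\}$ for $c:=\inf_{\mathcal{N}_{L_{\Delta_p}}}J_{L_{\Delta_p}}$ is bounded in $L^p(\Omega)$. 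Inserting \eqref{assumption g(x,t)} (with $\eps$ small enough that $(\mu+\eps)N/p^2<1$) and Theorem~\ref{log-Sob-ineq} into the Nehari identity $\qform(u_n,u_n)=\int g(x,u_n)u_n+\mu\int\abs{u_n}^p\ln\abs{u_n}$ and absorbing the critical contribution on the left also bounds $\{u_n\}$ in $\XO$. An analogous reasoning applied to an arbitrary $u\in\mathcal{N}_{L_{\Delta_p}}$, together with the equivalence of $\qform(\cdot,\cdot)^{1/p}$ and the $\XO$-norm (via \cite[Proposition 4.1]{Dyda-Jarohs-Sk}), provides a uniform lower bound $\|u\|_{L^p(\Omega)}\geq c_0>0$ on $\mathcal{N}_{L_{\Delta_p}}$; combined with the displayed inequality this gives $c\geq(\mu-\delta)c_0^p/p^2>0$.

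\textbf{Nontriviality of the weak limit (main obstacle) and constant sign.} By Theorem~\ref{thm:embd:results}, after extraction $u_n\rightharpoonup u$ in $\XO$ with $u_n\to u$ in $L^p(\Omega)$ and in every $L^\psi(\Omega)$ satisfying \eqref{compact:cond}. The central difficulty is to exclude $u\equiv 0$: if it were, strong $L^p$-convergence would give $\|u_n\|_{L^p(\Omega)}\to 0$, which by the formula above would force $J_{L_{\Delta_p}}(u_n)\to 0$, contradicting $c>0$. Hence $u\not\equiv 0$. Applying Fatou's lemma to the nonnegative integrand $\frac{1}{p}(g(x,u_n)u_n-pG(x,u_n))+\frac{\delta}{p^2}\abs{u_n}^p$ (nonneg by \eqref{assump integ g4}), together with the strong $L^p$-convergence of the residual $\frac{\mu-\delta}{p^2}\|u_n\|_{L^p(\Omega)}^p$, yields $J_{L_{\Delta_p}}(u)\leq c$; projecting $u$ onto $\mathcal{N}_{L_{\Delta_p}}$ and combining weak lower semicontinuity of $\qform(\cdot,\cdot)$ with the maximality of $\phi_{u_n}$ at $t=1$ forces the projection parameter to equal $1$, so $u\in\mathcal{N}_{L_{\Delta_p}}$ with $J_{L_{\Delta_p}}(u)=c$, and standard limit passage in the weak formulation shows $u$ solves \eqref{main_problem}. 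Finally, if $G(x,t)\leq G(x,\abs{t})$, set $v:=\abs{u}\in\XO$: the pointwise inequality $\abs{\,\abs{u(x)}-\abs{u(y)}\,}^p\leq\abs{u(x)-u(y)}^p$, applied in both $\mathcal{E}_p$ and $\mathcal{F}_p$, gives $\qform(v,v)\leq\qform(u,u)$, while $\abs{v}^p=\abs{u}^p$ leaves the logarithmic term invariant and the hypothesis on $G$ lowers the potential term, so $J_{L_{\Delta_p}}(tv)\leq J_{L_{\Delta_p}}(tu)$ for every $t>0$. Evaluating at $t=t_v$ gives $c\leq J_{L_{\Delta_p}}(t_v v)\leq J_{L_{\Delta_p}}(t_v u)\leq J_{L_{\Delta_p}}(u)=c$, so $t_v v\geq 0$ is itself a least energy solution, and the strong maximum principle for $\logP$ from \cite{Dyda-Jarohs-Sk} upgrades this to a constant-sign solution.
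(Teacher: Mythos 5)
Your overall strategy — minimize over the Nehari manifold, use the $p$-logarithmic Sobolev inequality to control the critical log term, project back to $\mathcal{N}_{L_{\Delta_p}}$, and then remove the absolute value for constant sign — agrees with the paper's, and several of your steps (coercivity on $\mathcal{N}_{L_{\Delta_p}}$ via \eqref{assump integ g4}, the uniform $L^p$ lower bound, the uniqueness of the fiber maximum via \eqref{assump g4}) are correct and match the paper. However, there are three points where your argument either diverges in a way that introduces a gap, or is stated incorrectly.

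\textbf{Nontriviality.} Your stated contradiction — that $\|u_n\|_{L^p}\to 0$ "would force $J_{L_{\Delta_p}}(u_n)\to 0$" — does not follow from the inequality $J_{L_{\Delta_p}}(u_n)\ge\frac{\mu-\delta}{p^2}\|u_n\|^p_{L^p}$, which is a lower bound only; to get an upper bound on $J_{L_{\Delta_p}}(u_n)$ from the Nehari identity you would still need to control $\int|u_n|^p|\ln|u_n||$, and weak $\XO$ plus strong $L^p$ convergence do not give that. The correct (and simpler) argument is the one you already have the tools for: the lower bound $\|u\|_{L^p}\ge c_0>0$ on $\mathcal{N}_{L_{\Delta_p}}$ together with strong $L^p$ convergence gives $\|u\|_{L^p}\ge c_0>0$ directly, so $u\not\equiv 0$. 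So this is a misstated step, not a missing idea.

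\textbf{From constrained minimizer to weak solution.} You close with "standard limit passage in the weak formulation shows $u$ solves \eqref{main_problem}", but this is the nontrivial part, and it is where the paper diverges substantially from you. Passing to the limit in the nonlinear form $\mathcal{E}_p(u_n,\cdot)$ requires more than weak convergence in $\XO$ — the paper handles it by showing $D_{0,p'}(u_n)\rightharpoonup D_{0,p'}(u_0)$ in $L^{p'}(\mathcal{D})$. Moreover, a minimizer of $J_{L_{\Delta_p}}$ restricted to $\mathcal{N}_{L_{\Delta_p}}$ is a critical point of the free functional only after a Lagrange-multiplier (natural constraint) argument, which uses \eqref{assump g4} and \eqref{assump g5} to show $\varphi'(u)\ne 0$ on $\mathcal{N}_{L_{\Delta_p}}$; the paper avoids this slipperiness by deploying Ekeland's variational principle to produce $u_n$ with $\|J'_{L_{\Delta_p}}(u_n)-d_n\varphi'(u_n)\|_{\mathcal{L}(\XO,\R)}\to 0$ and $d_n\to 0$, and then passes to the limit in the weak formulation directly. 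Your route (Fatou + fiber projection) is a legitimate alternative in principle, but as written it does not address either the convergence of $\mathcal{E}_p$ or why the minimizer on the manifold is a free critical point.

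\textbf{Constant sign.} The strong maximum principle does not upgrade "there exists a nonnegative least energy solution $t_v|u|$" to "the least energy solution $u$ has constant sign in $\Omega$": it only gives $t_v|u|>0$, which says nothing about whether the original $u$ changes sign. What is needed is the Kato-type rigidity statement used in the paper, namely \cite[Lemma 4.7]{Dyda-Jarohs-Sk}: once one derives $J_{L_{\Delta_p}}(|u|)=J_{L_{\Delta_p}}(u)$ (which, combined with $\qform(|u|,|u|)\le\qform(u,u)$ and the $G$-hypothesis, forces $\qform(|u|,|u|)=\qform(u,u)$), that lemma is exactly what concludes that $u$ itself does not change sign. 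Replacing this by the strong maximum principle leaves a genuine gap.
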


\begin{theorem}
\label{Existence L2}
Let $\Omega\subseteq \R^N$ be a bounded domain with Lipschitz boundary. Then, for $\mu \in (-\infty, 0)$ and $g$ satisfying assumptions \eqref{assump g1}-\eqref{assump g3}, the problem \eqref{main_problem}
 has a nontrivial, least energy bounded solution $u.$ Moreover,
 \begin{itemize}
 \item[\textnormal{(i)}] if it holds that $G(x,t) \leq G(x,|t|)$ for all $t \in \R,$ a.e. $x \in \Omega$, then the least energy solution to \eqref{main_problem} has a constant sign in $\Omega,$
\item[\textnormal{(ii)}] if $g\in C^1(\R)$ such that \begin{equation}
        \label{assump g6}
        \l((p-1)tg(x,t) - g'(x,t)t^{2} - \mu t^p \r) \geq 0 \quad \text{for all} \ t \geq 0, \  \text{a.e.} \ x\in \Omega,
    \end{equation}
then any nontrivial, nonnegative least energy solution is unique.
\end{itemize}
\end{theorem}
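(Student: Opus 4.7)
The plan is to prove existence via the direct method of calculus of variations (using Theorems \ref{log-Sob-ineq} and \ref{thm:embd:results}), then obtain boundedness, sign-definiteness, and uniqueness in succession. First I would establish coercivity of $J_{\logP}$ on $\XO$. For $\mu<0$,
\[
-\tfrac{\mu}{p^2}\into |u|^p(\ln|u|^p-1)\,dx = \tfrac{|\mu|}{p}\into |u|^p\ln|u|\,dx - \tfrac{|\mu|}{p^2}\nP^p,
\]
and Jensen's inequality applied to the convex function $s\mapsto s\ln s$ with the probability measure $|\Omega|^{-1}dx$ yields $\into |u|^p\ln|u|\,dx \ge \nP^p\ln\nP - C\nP^p$. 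Combining with \eqref{assumption G(x,t)} and splitting $\int|u|^p|\ln|u||\,dx$ on $\{|u|\le 1\}$ (where the integrand is pointwise bounded) and $\{|u|>1\}$ (where $|\ln|u||=\ln|u|$), and choosing $\eps<|\mu|/p$, one obtains
\[
J_{\logP}(u)\;\ge\;\tfrac{1}{p}\qform(u,u) + \tfrac{|\mu|}{p}\nP^p\ln\nP - C_1\nP^p - C_2,
\]
which diverges as $\nX\to\infty$, since $\qform(\cdot,\cdot)$ is equivalent to $\nX^p$.

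Weak lower semicontinuity of $J_{\logP}$ follows from convexity of $\qform(\cdot,\cdot)$; Fatou's lemma applied to the pointwise-bounded-below function $|u|^p\ln|u|+1/(pe)\ge 0$, using a.e.~convergence supplied by the compact embedding $\XO\hookrightarrow L^p(\Omega)$; and a truncation-plus-Vitali argument for $\into G(x,u_n)\,dx$, exploiting equi-integrability of $|u_n|^p$ obtained by de la Vall\'ee Poussin from the uniform $L^\ph$-bound with $\ph(t)=t^p\ln(e+t)$ supplied by Theorem \ref{thm:embd:results}. The direct method yields a minimizer $u\in\XO$, and Gateaux differentiability of $J_{\logP}$ shows $u$ is a weak solution. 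Nontriviality follows from a ray test: for $\ph\in\XO\setminus\{0\}$ and small $t>0$,
\[
J_{\logP}(t\ph)=\tfrac{t^p}{p}\qform(\ph,\ph) + \tfrac{|\mu|\,t^p\ln t}{p}\|\ph\|_{L^p(\Omega)}^p + O(t^p),
\]
and since $|\ln t|\to\infty$ as $t\to 0^+$, the negative logarithmic term dominates the $O(t^p)$ contributions, giving $J_{\logP}(t\ph)<0$; hence $\inf J_{\logP}<0=J_{\logP}(0)$. Boundedness of $u$ is then obtained by a De Giorgi/Moser iteration adapted to $\logP$, using that the right-hand side of \eqref{main_problem} is of subcritical order $|u|^{p-1}|\ln|u||$ in view of Theorem \ref{thm:embd:results}.

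Statement (i) uses the pointwise inequality $||a|-|b||\le|a-b|$ together with the algebraic structure of $\Upsilon$ in \eqref{def:upsilon} on the diagonal $(t_1,t_2)=(z_1,z_2)$ to obtain $\qform(|u|,|u|)\le\qform(u,u)$; the logarithmic and $\rho_N$ contributions depend only on $|u|$ and the hypothesis on $G$ gives $-\into G(x,|u|)\,dx\le -\into G(x,u)\,dx$, so $|u|$ is again a minimizer, and a strong maximum principle for $\logP$ \cite{Dyda-Jarohs-Sk} upgrades this to a strict sign. For (ii), I would adapt the convexity-by-paths argument of \cite{Foldes-Saldana}: given two nonnegative least-energy solutions $u_1,u_2$, consider the interpolation $\sigma(t):=((1-t)u_1^p+tu_2^p)^{1/p}$ for $t\in[0,1]$. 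Convexity of $s\mapsto s(\ln s-1)$ makes $t\mapsto\into\sigma(t)^p(\ln\sigma(t)^p-1)\,dx$ convex, and the prefactor $-\mu/p^2>0$ preserves the sign. Convexity of $t\mapsto\qform(\sigma(t),\sigma(t))$ comes from the Brasco--Franzina-type hidden-convexity inequality $|\sigma(t)(x)-\sigma(t)(y)|^p\le (1-t)|u_1(x)-u_1(y)|^p+t|u_2(x)-u_2(y)|^p$, extended to $\mathcal{F}_p$. Assumption \eqref{assump g6} is precisely the pointwise condition ensuring that $t\mapsto J_{\logP}(\sigma(t))$ is strictly convex unless $u_1\equiv u_2$, via a second-derivative computation reducing to this pointwise inequality on the combined $G$ and logarithmic contributions. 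Strict convexity along $\sigma$ contradicts $J_{\logP}(u_1)=J_{\logP}(u_2)=\min J_{\logP}$.

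The main obstacle is transferring the Brasco--Franzina hidden-convexity inequality to the zero-order kernel form $\mathcal{F}_p$, which involves the cross term $\Upsilon$ and does not reduce to a single pointwise algebraic inequality as in the positive-kernel fractional case. The natural strategy is to first establish hidden convexity for the fractional $p$-Dirichlet form $[\cdot]_{s,p,\R^N}^p$ (known in the literature) and then take the $s\to 0^+$ derivative, using the asymptotics of $C_{N,s,p}$ developed earlier in the paper.
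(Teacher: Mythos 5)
Your proposal follows essentially the same route as the paper: coercivity of the energy plus the direct method (with Fatou for the logarithmic term and Vitali/equi-integrability for the $G$-term), a small-$t$ ray test for nontriviality, reduction to a bounded right-hand side for the $L^\infty$ estimate, replacement of $u$ by $|u|$ together with \cite[Lemma 4.7]{Dyda-Jarohs-Sk} for the constant sign, and convexity along the path $\sigma(t)=((1-t)u_1^p+tu_2^p)^{1/p}$ combined with \eqref{assump g6} and \cite{Foldes-Saldana} for uniqueness. The one obstacle you flag --- convexity of $t\mapsto\mathcal{F}_p(\sigma(t),\sigma(t))$ --- dissolves directly and needs no $s\to0^+$ differentiation detour: on the diagonal, $\Upsilon(w(x),w(y),w(x),w(y))=|w(x)-w(y)|^p-|w(x)|^p-|w(y)|^p$, the first term obeys the same Minkowski hidden-convexity inequality as $\mathcal{E}_p$, and the remaining terms are affine in $t$ because $|\sigma(t)|^p=(1-t)u_1^p+tu_2^p$, which is exactly how the paper argues.
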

We continue by examining the asymptotics of the fractional weighted problem. For this purpose, we assume the following conditions on the weight function $a$ and exponent function $q:$

The function $q : [0, \frac{1}{2}] \to \R$ is $C^1$ with $q'(0)=\frac{dq}{ds}\bigg{|}_{s=0}  =:\mu$ such that
\begin{enumerate}
[label=\textnormal{(q)}, ref=\textnormal{q}]
\item  \label{assump q}
$
\lim\limits_{s\to 0^+} q(s) =p.
$
\end{enumerate}
The weight function $a:[0,\frac{1}{2}]\times \Omega \to \R$ satisfies the following assumptions:
\begin{enumerate}
[label=\textnormal{(a{$_0$})}, ref=\textnormal{a{$_0$}}]
\item \label{assump1-a}
$a(\cdot, x) \in C^1([0,\frac{1}{2}]) $ and $\lim_{s\to 0^+} a(s,x) =1\  \text{ for a.e.}  \ x \in \Omega,$
\end{enumerate}
\begin{enumerate}
[label=\textnormal{(a{$_1$})}, ref=\textnormal{a{$_1$}}]
    \item \label{assump2-a}
$a(s,\cdot) \in L^{\nu}(\Omega)$, where for a $\delta\in (0,1)$
\begin{equation} \label{nu}\nu:=\nu(s) > \max\l\{\frac{p^\ast_s}{p^\ast_s-q(s)}, 1+\frac{(N-sp)p}{s(p^2-N\mu-\delta p^2)}\r\},\end{equation}
for all $s \in \l[0, \frac{1}{2}\r],$
\end{enumerate}
\begin{enumerate}
[label=\textnormal{(a{$_2$})}, ref=\textnormal{a{$_2$}}]
\item \label{assump3-a}
$ \frac{\partial a(s,\cdot)}{\partial s}=:a'(s,\cdot) \in L^\infty(\Omega) \quad \text{for all} \ s\in [0,\frac{1}{2}]$ and \[\sup_{s\in [0,\frac{1}{2}]} \|a'(s,\cdot)\|_{L^\infty(\Omega)}<+\infty,\]
\end{enumerate}
\begin{enumerate}
[label=\textnormal{(a{$_3$})}, ref=\textnormal{a{$_3$}}]
  \item  \label{assump4-a} there exist constants $c_1, \ c_2$ (independent of $s$) such that \[c_1 \leq \|a(s,\cdot)\|_{L^\nu(\Omega)}^{\frac{1}{p-q(s)}}\leq c_2 \quad \text{for all} \ s\in [0,\frac{1}{2}].\]
\end{enumerate}
With the above assumptions, we have obtained the following:
\begin{theorem}
\label{asym-sup}
Let $q$ satisfy \eqref{assump q} such that $q(s)\in(p,p^\ast_s)$, $\mu \in (0,\frac{p^2}{N})$ and $a$ satisfies \eqref{assump1-a}, \eqref{assump2-a}, \eqref{assump3-a}, \eqref{assump4-a}. Let $(u_s) \subseteq\WO$ be a sequence of nontrivial least energy solutions to the problem \eqref{frac_pblm}. Then, there exists a nontrivial least energy solution  $u_0 \in \XO$ to the problem \eqref{lim_pblm} such that, upto a subsequence, as $s\to 0^+$, $u_s \to u_0$ in $L^p(\Omega)$.
\end{theorem}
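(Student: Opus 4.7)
\textbf{Proof proposal for Theorem \ref{asym-sup}.} The plan is to first obtain uniform (in $s$) bounds on the least energy sequence $(u_s)$, pass to a subsequential limit $u_0$, identify $u_0$ as a weak solution of \eqref{lim_pblm} by Taylor expanding both sides of \eqref{frac_pblm} to first order in $s$, and finally upgrade $u_0$ to a nontrivial least energy solution.

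For the uniform bounds, I would start from the identity $J_{s,p}(u_s)=\left(\tfrac{1}{p}-\tfrac{1}{q(s)}\right)[u_s]^p_{s,p,\R^N}$ valid on $\mathcal{N}_{s,p}$, and bound it above by testing against a fixed $\phi\in C_c^\infty(\Omega)\setminus\{0\}$: one projects $\phi$ onto $\mathcal{N}_{s,p}$ via $t_s\phi$, where $t_s^{p-q(s)}=[\phi]^p_{s,p,\R^N}/\int_\Omega a(s,x)|\phi|^{q(s)}\dx$. Using the Maz'ya--Shaposhnikova-type expansion $[\phi]^p_{s,p,\R^N}=\|\phi\|_{L^p}^p+s\,\qform(\phi,\phi)+o(s)$, together with the expansion $a(s,x)|\phi|^{q(s)}=|\phi|^p+s(a'(0,x)|\phi|^p+\mu|\phi|^p\ln|\phi|)+o(s)$ (which follows from \eqref{assump1-a}--\eqref{assump3-a} and $q'(0)=\mu$), one checks that $t_s$ stays bounded and bounded away from zero, and consequently $J_{s,p}(u_s)=O(s)$. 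Since $\tfrac{1}{p}-\tfrac{1}{q(s)}\sim\mu s/p^2$, this yields the crucial uniform bound $\sup_{s\in(0,s_0)}[u_s]^p_{s,p,\R^N}<\infty$. Combined with \eqref{assump2-a}, \eqref{assump4-a} and the fractional Sobolev inequality (in the form used in \cite{Frank-Seiringer}, \cite{Maz'ya-Shaposhnikova}), this further produces a uniform $L^p$-bound on $u_s$; using the first-order expansion of $[\cdot]^p_{s,p,\R^N}$ once more, one also controls $\qform(u_s,u_s)$ uniformly, hence $\|u_s\|_{\XO}$ is uniformly bounded.

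Next, by the compactness of the embedding $\XO \hookrightarrow L^p(\Omega)$ from Theorem \ref{thm:embd:results} (applied after passing to a subsequence and exploiting that the uniform fractional estimates furnish equiintegrability in $L^p$, along the lines of Brasco--Parini-type precompactness arguments), I extract $s_n\to 0^+$ and $u_0\in\XO$ with $u_{s_n}\to u_0$ in $L^p(\Omega)$ and a.e. in $\Omega$, and $u_{s_n}\rightharpoonup u_0$ weakly in the sense that $[u_0]_{\XO}\leq\liminf_n[u_{s_n}]_{\XO}$ by a Fatou-type lemma for the nonlocal seminorm. To identify the limiting equation, I take a test function $v\in C_c^\infty(\Omega)$, write the weak formulation of \eqref{frac_pblm} against $v$, and expand both sides to first order in $s$. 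On the left, using the integral representation of $\logP$ from \cite{Dyda-Jarohs-Sk}, the kernel $|x-y|^{-N-sp}$ splits into $|x-y|\le 1$ and $|x-y|\ge 1$ parts handled differently (the latter absorbing the extra $|u|^{p-2}u$ tail term in the definition of $\logP$); after dividing by $s$ and using the asymptotic $C_{N,s,p}/s\to C_{N,p}$ together with dominated convergence arguments based on the uniform $[u_s]_{s,p,\R^N}$ bound, one obtains $\tfrac{1}{s}\bigl([u_s]^p_{s,p,\R^N}\langle v \rangle - \int h(u_s) v\bigr)\to \qform(u_0,v)$, where the pointwise a.e. convergence $u_{s_n}\to u_0$ and the uniform integrability of the integrands (controlled via the $p$-logarithmic Sobolev inequality of Theorem \ref{log-Sob-ineq}) are crucial. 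On the right, the Taylor expansion $a(s,x)|u_s|^{q(s)-2}u_s v = h(u_s)v + s(a'(0,x)h(u_s)v+\mu h(u_s)v\ln|u_s|)+o(s)$ combined with the $L^p$-convergence $u_{s_n}\to u_0$ and the embedding into the Orlicz space $L^{\ph}(\Omega)$ (to handle the $|u|^p\ln|u|$ term) yields, after dividing by $s$ and passing to the limit, $\int_\Omega a'(0,x)h(u_0)v\,\dx+\mu\int_\Omega h(u_0)v\ln|u_0|\,\dx$. Equating the two limits shows $u_0$ is a weak solution of \eqref{lim_pblm}.

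For nontriviality, I would use the fractional Sobolev inequality uniformly in $s$ on the Nehari identity $[u_s]^p_{s,p,\R^N}=\int_\Omega a(s,x)|u_s|^{q(s)}\dx\leq\|a(s,\cdot)\|_{L^{\nu(s)}}\|u_s\|_{L^{p^\ast_s}}^{q(s)}\leq C\|a(s,\cdot)\|_{L^{\nu(s)}}[u_s]^{q(s)}_{s,p,\R^N}$, which by \eqref{assump4-a} gives $[u_s]^{p-q(s)}_{s,p,\R^N}\ge c>0$; after taking the $s$-th root and using $q(s)-p\sim\mu s$, this translates into $\|u_s\|_{L^p}\ge c'>0$, forcing $u_0\not\equiv 0$. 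For the least energy property, I pick any nontrivial weak solution $w\in\XO$ of \eqref{lim_pblm} and construct a recovery sequence $w_s\in\mathcal{N}_{s,p}$ (projecting $w$ along the Nehari fibration of $J_{s,p}$ as for $\phi$ above) with $w_s\to w$ in $L^p(\Omega)$ and $J_{s,p}(w_s)/s\to J_{\logP}(w)$; comparing $J_{s,p}(u_s)\le J_{s,p}(w_s)$, dividing by $s$, and using the expansions to pass to the limit gives $J_{\logP}(u_0)\le J_{\logP}(w)$. The main obstacle will be justifying the division by $s$ and the interchange of limits in the principal part of the operator: unlike the linear case $p=2$ in \cite{Angeles-Saldana, Santamaria-Saldana}, no Fourier transform is available, and the difference quotient $s^{-1}\bigl((-\Delta_p)^s u_s - h(u_s)\bigr)$ must be handled purely by integral estimates relying on the new asymptotics of $C_{N,s,p}$, the $p$-logarithmic Sobolev inequality (Theorem \ref{log-Sob-ineq}) and the Orlicz embeddings (Theorem \ref{thm:embd:results}) to obtain the needed equi-integrability of the nonlinear kernel increments, uniformly in $s$.
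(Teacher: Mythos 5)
Your overall scaffolding — uniform bounds on $(u_s)$ via Nehari projections, extraction of a subsequential $L^p$-limit $u_0$, identification of the limit equation by a first-order expansion, then nontriviality and least energy — matches the paper's proof of Theorem \ref{asym-sup} (Lemmas \ref{lim-t_hash}, \ref{u_s-bdd-W}, \ref{u_s-bdd-X} followed by the three Claims). The uniform bounds, the nontriviality argument by combining the Nehari lower bound with the Sobolev inequality and interpolation, and the least-energy comparison are all essentially what the paper does, modulo detail. (Two small corrections: the compactness $\XO\hookrightarrow L^p(\Omega)$ you use to pass from the $\XO$-bound to the convergent subsequence is Lemma \ref{compct-embeding}, not Theorem \ref{thm:embd:results}, and once the uniform $\XO$-bound is established no additional Brasco--Parini-style equi-integrability argument is required.)

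The genuine gap, which you flag at the end but never close, is in the identification of the limit equation. You propose to take the weak form
$\frac{C_{N,s,p}}{2}\iint\frac{h(u_s(x)-u_s(y))(v(x)-v(y))}{|x-y|^{N+sp}}\,dx\,dy = \into a(s,x)|u_s|^{q(s)-2}u_s v\,dx$,
subtract $\into h(u_s)v\,dx$ from both sides, divide by $s$, and show the left-hand difference quotient converges to $\qform(u_0,v)$. This is a double-limit problem: the differentiability result from \cite[Lemma 7.2]{Dyda-Jarohs-Sk} gives $\frac{d}{ds}\big|_{s=0}[u]^p_{s,p,\R^N}=\qform(u,u)$ for a \emph{fixed} $u$, whereas here the argument $u_s$ itself moves with $s$, so both the kernel $|x-y|^{-sp}$ and the nonlinear increments $h(u_s(x)-u_s(y))$ vary simultaneously. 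Listing the ingredients (asymptotics of $C_{N,s,p}$, Theorem \ref{log-Sob-ineq}, Theorem \ref{thm:embd:results}) does not by itself produce the needed equi-integrability of the $s$-parametrized kernel increments, and you acknowledge this as "the main obstacle" without resolving it.

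The paper routes around the double dependence entirely. Instead of differentiating the form at the $s$-dependent $u_s$, it fixes $v\in C_c^\infty(\Omega)$ and transposes: it writes $\into u_s\,(-\Delta_p)^s v\,dx = \into a(s,x)|u_s|^{q(s)-2}u_s v\,dx$ and expands $(-\Delta_p)^s v = h(v)+s\logP v+o(s)$, a first-order expansion of the operator applied to a \emph{fixed smooth function}, so the $o(s)$ remainder is benign. The $s$-dependence through $u_s$ then enters only multiplicatively through the bounded factors $u_s$ (in $L^p$) and $\into u_s\logP v\,dx$, handled by weak convergence in $\XO$, Lemma \ref{ln-lim-integral}, and the identity relating $\int u\,\logP v$ to $\qform(\cdot,\cdot)$ from \cite[Lemma 4.4]{Dyda-Jarohs-Sk}; the analogous expansion of $a(s,x)|u_s|^{q(s)-2}u_s$ is done by a mean-value integral in $\theta$. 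This transposition is the missing idea: it replaces the $s$-derivative of the form at the moving point $u_s$, which you cannot control, by the $s$-derivative of the operator at the fixed $v$, which you can. Note that unlike the linear case in \cite{Angeles-Saldana,Santamaria-Saldana}, the transposition for $p\neq2$ is not a trivial self-adjointness statement and rests on the cited lemma of \cite{Dyda-Jarohs-Sk}; that is exactly the kind of nonlinear structural input your proposal would need to make explicit.
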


\begin{theorem}\label{asym-sub}
  Let $q$ satisfy \eqref{assump q} such that $q(s)\in(1,p)$, $\mu \in (-\infty,0)$ and $a$ satisfies \eqref{assump1-a}, \eqref{assump2-a}, \eqref{assump3-a}, \eqref{assump4-a}. Let $(u_s) \subseteq\WO$ be a sequence of nontrivial positive least energy solutions to the problem \eqref{frac_pblm}. Then, there exists a unique, nontrivial least energy solution  $u_1 \in \XO \cap L^\infty(\Omega)$, to the problem \eqref{lim_pblm} such that, upto a subsequence, as $s\to 0^+$, $u_s \to u_1$ in $L^p(\Omega)$.
\end{theorem}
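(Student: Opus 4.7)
The plan is to adapt to the nonlinear sublinear setting the strategy developed for $p=2$ in \cite{Santamaria-Saldana, Angeles-Saldana}: show that after the natural rescaling by $s$, $J_{s,p}$ converges to $J_{\logP}$; deduce precompactness of $\{u_s\}$ in $L^p(\Omega)$; identify the limit through the first-order expansion of $(-\Delta_p)^s$ about $\logP$; and deduce uniqueness from Theorem \ref{Existence L2}.

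\textbf{Step 1 (Energy scale and uniform bounds).} Since $q(s)<p$, the functional $J_{s,p}$ is coercive and $u_s$ exists as a global minimizer with $J_{s,p}(u_s)\le 0$. Combining the Maz'ya--Shaposhnikova limit $[u]^p_{s,p,\R^N}\to\|u\|^p_{L^p(\R^N)}$ with the Taylor expansions $q(s)=p+\mu s+o(s)$, $a(s,x)=1+s\,a'(0,x)+o(s)$ and $|u|^{q(s)}=|u|^p+s\mu |u|^p\ln|u|+o(s)$ coming from \eqref{assump q}--\eqref{assump3-a}, one obtains the expansion
\[
J_{s,p}(\tau\varphi)\;=\;s\,J_{\logP}(\tau\varphi)\;+\;o(s)
\]
uniformly for $\tau$ in bounded subsets of $(0,\infty)$ and each $\varphi\in C_c^\infty(\Omega)\setminus\{0\}$. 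Since $\mu<0$, the logarithmic term in $J_{\logP}(\tau\varphi)$ dominates for small $\tau$, so $J_{\logP}(\tau_0\varphi)<0$ for some $\tau_0>0$, hence $J_{s,p}(u_s)\le J_{s,p}(\tau_0\varphi)\le -c_0 s$. The Nehari identity $J_{s,p}(u_s)=\bigl(\tfrac{1}{p}-\tfrac{1}{q(s)}\bigr)[u_s]^p_{s,p,\R^N}$ together with $(\tfrac{1}{p}-\tfrac{1}{q(s)})/s\to \mu/p^2$ then yield a uniform lower bound $[u_s]^p_{s,p,\R^N}\geq c_1>0$. The matching upper bound follows from estimating $[u_s]^p_{s,p,\R^N}=\int_\Omega a(s,x)|u_s|^{q(s)}\,dx$ via Hölder and the uniform $L^\infty$-bound on $a(s,\cdot)$ (a direct consequence of \eqref{assump1-a}--\eqref{assump3-a}), combined with the asymptotic Poincaré--Sobolev inequality whose constant tends to $1$ fast enough (at rate $O(s)=O(p-q(s))$) to absorb the diverging exponent $1/(p-q(s))$, yielding $[u_s]_{s,p,\R^N}\le C$.

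\textbf{Step 2 (Compactness and passage to the limit --- main obstacle).} The bounds from Step~1 place $\{u_s\}$ in a compact subset of $L^p(\Omega)$ by the small-order fractional embedding \cite[Corollary 6.3]{Foghem}; extracting a subsequence yields $u_s\to u_1$ in $L^p(\Omega)$ and a.e.\ in $\Omega$, with $u_1\geq 0$. To identify $u_1\in\XO$ as a weak solution of \eqref{lim_pblm}, I would take the weak form of \eqref{frac_pblm} tested against $v\in C_c^\infty(\Omega)$, subtract the common zeroth-order term $\int_\Omega h(u_s)v\,dx$ from both sides, divide by $s$, and pass to the limit via
\[
\tfrac{1}{s}\Bigl[\bigl\langle(-\Delta_p)^s u_s,v\bigr\rangle-\int_\Omega h(u_s)v\,dx\Bigr]\longrightarrow \qform(u_1,v),
\]
\[
\tfrac{1}{s}\Bigl[\int_\Omega a(s,x)|u_s|^{q(s)-2}u_s\, v\,dx-\int_\Omega h(u_s)v\,dx\Bigr]\longrightarrow\int_\Omega\bigl(a'(0,x)+\mu\ln|u_1|\bigr)h(u_1)\,v\,dx.
\]
The RHS convergence uses only $L^p$-convergence of $u_s$ together with Theorem \ref{thm:embd:results}(i), which controls $|u_1|^p\ln|u_1|$, and dominated convergence via \eqref{assump3-a}. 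The LHS is where the principal technical difficulty arises: unlike the linear case $p=2$ in \cite{Santamaria-Saldana}, where the Fourier representation of $(-\Delta)^s$ and the Plancherel identity linearize the asymptotic analysis, here one must split the double integral into $\{|x-y|\le 1\}$ and $\{|x-y|\ge 1\}$, use the sharp asymptotics $C_{N,s,p}/s\to C_{N,p}$, invoke the new integral estimates developed earlier in the paper (the nonlinear replacement for the Fourier arguments), and pass pointwise to the limit in the nonlinear kernel $h(u_s(x)-u_s(y))$ via Vitali with uniform-in-$s$ dominating functions. A lower semicontinuity argument along this passage simultaneously yields $[u_1]_{\XO}<\infty$.

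\textbf{Step 3 (Least energy, uniqueness, and $L^\infty$-bound).} The lower bound $c_1\le [u_s]^p_{s,p,\R^N}$ transfers in the limit to $\|u_1\|^p_{L^p}\geq c_1>0$, so $u_1\not\equiv 0$. The least-energy property is obtained by the standard two-sided comparison: given any candidate $w\in\XO\setminus\{0\}$, one projects it onto $\mathcal{N}_{s,p}$ by the rescaling $w_s=\tau_s(w)\,w$, with $\tau_s(w)$ bounded and convergent as $s\to 0^+$; then $J_{s,p}(u_s)\le J_{s,p}(w_s)$, after division by $s$ and passage to the limit, yields $J_{\logP}(u_1)\le J_{\logP}(w)$. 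Uniqueness of the nontrivial nonnegative least-energy solution to \eqref{lim_pblm} follows from Theorem \ref{Existence L2}(ii): for $g(x,t)=a'(0,x)h(t)$ one computes $(p-1)tg(x,t)-t^2 g'(x,t)-\mu t^p=-\mu t^p\ge 0$ for $t\ge 0$ and $\mu<0$, so \eqref{assump g6} is satisfied, and this uniqueness simultaneously promotes subsequential convergence to full convergence. Finally, a Moser-type iteration on \eqref{frac_pblm} with constants uniform in $s$ (in the spirit of \cite{Dyda-Jarohs-Sk}) gives $\|u_s\|_{L^\infty}\le C$, which transfers to $u_1\in L^\infty(\Omega)$.
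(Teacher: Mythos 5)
Your overall strategy (rescale the energy by $s$, derive two-sided bounds, pass to the limit in the weak formulation, and get uniqueness from Theorem \ref{Existence L2}(ii) by checking \eqref{assump g6} for $g(x,t)=a'(0,x)h(t)$) is the same as the paper's, and Steps 1 and 3 are essentially right in outline. The genuine gap is at the start of Step 2. A uniform bound on $[u_s]^p_{s,p,\R^N}$ does \emph{not} place $(u_s)$ in a compact subset of $L^p(\Omega)$, and \cite[Corollary 6.3]{Foghem} cannot be invoked for this: that result is the compactness of the embedding $\XO\hookrightarrow L^p(\Omega)$, so to use it you must first prove a bound on $\|u_s\|_{\XO}$ that is uniform in $s$. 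Such a bound does not follow from Step 1: since $[u]^p_{s,p,\R^N}=\|u\|^p_{L^p(\Omega)}+s\,\qform(u,u)+o(s)$, a family with $[u_s]^p_{s,p,\R^N}\le C$ may have $\qform(u_s,u_s)$ of order $1/s$, and such a family can oscillate and fail to be $L^p$-precompact. The paper devotes a separate lemma (Lemma \ref{u_s-bdd-sub}) to the uniform $\XO$-bound; it is obtained by differentiating $s\mapsto[u]^p_{s,p,\R^N}$ at $s=0$ along the constraint $\nWs^p=\into a(s,x)|u_s|^{q(s)}\,dx$ and controlling the resulting logarithmic term using $q'<0$ near $0$ and the elementary lower bound for $t^{q}\ln t$ on $(0,1)$. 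Your closing remark that lower semicontinuity ``along this passage'' yields $[u_1]_{\XO}<\infty$ is circular, because the passage itself presupposes exactly the compactness and weak convergence in $\XO$ that you are trying to establish.

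Two smaller points. First, for the $L^\infty$-bound you do not need a uniform-in-$s$ Moser iteration on \eqref{frac_pblm} (which would be delicate, as the constants in the fractional Sobolev inequality degenerate as $s\to0^+$); the paper obtains $u_1\in L^\infty(\Omega)$ a posteriori from the limiting equation, via Theorem \ref{Existence L2} and \cite[Theorem 6.7]{Dyda-Jarohs-Sk}. Second, in the least-energy comparison, projecting a candidate $w$ onto $\mathcal{N}_{s,p}$ yields in the limit only $J_{L_{\Delta_p}}(u_1)\le J_{L_{\Delta_p}}(\tau_0 w)$ with $\tau_0=\lim_{s\to0^+}\tau_s(w)$, not $J_{L_{\Delta_p}}(u_1)\le J_{L_{\Delta_p}}(w)$; since in the sublinear case $u_s$ is a global minimizer of the coercive functional $J_{s,p}$, the direct comparison $J_{s,p}(u_s)\le J_{s,p}(v_k)$ with $v_k\in C_c^\infty(\Omega)$ approximating a global minimizer of $J_{L_{\Delta_p}}$ (which exists by Theorem \ref{Existence L2}) is both simpler and what actually closes the argument.
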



\section{Embeddings into Orlicz spaces }
\label{Embeddings into Orlicz spaces}
This section is devoted to the proofs of optimal embedding results stated in Theorems \ref{thm:embd:results} and \ref{thm:sharp:embd:results}. Before proceeding, we recall the following:

\begin{lemma}\cite[Corollary 6.3]{Foghem}
    \label{compct-embeding}
Let $\Omega\subseteq \R^N$ be an open set, the embedding $\XO \hookrightarrow L^p_{\text{loc}}(\Omega)$ is continuous. Moreover, if $|\Omega|<\infty,$ then $\XO \hookrightarrow L^p(\Omega)$ is compact.
\end{lemma}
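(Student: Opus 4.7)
My plan is to prove the continuous embedding directly from the definition of the norm, and to obtain the compactness via the Riesz--Fr\'echet--Kolmogorov criterion combined with an annular averaging argument tailored to the zero-order kernel.

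For the continuous embedding $\XO \hookrightarrow L^p_{\text{loc}}(\Omega)$, the inequality $\|u\|_{L^p(\Omega')} \leq \|u\|_{L^p(\Omega)} \leq \nX$ valid for every $\Omega' \Subset \Omega$ is immediate from the definition of $\nX$.

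For compactness when $|\Omega|<\infty$, I would take a bounded sequence $(u_n) \subset \XO$, extended by zero to $\R^N$, and apply the Riesz--Fr\'echet--Kolmogorov criterion in $L^p(\R^N)$. Uniform $L^p$-boundedness follows from the continuous embedding, tightness at infinity is automatic since each $u_n$ is supported in the bounded set $\Omega$, and the decisive step is the uniform $L^p$-equicontinuity of translations
\begin{equation*}
\lim_{|h|\to 0^+}\sup_n \|u_n(\cdot+h)-u_n\|_{L^p(\R^N)} = 0.
\end{equation*}
The natural route is an annular midpoint averaging: for small $|h|$ and $y$ in the annular intersection $\{z\,:\,r \leq |z-x|,\,|z-x-h|\leq 2r\}$ with $r$ of order $|h|$, the set has volume of order $r^N$ and $|x-y|^{-N}\sim r^{-N}$. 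Averaging the elementary inequality $|u(x+h)-u(x)|^p \leq 2^{p-1}(|u(x+h)-u(y)|^p+|u(y)-u(x)|^p)$ over such $y$ and applying Fubini yields
\begin{equation*}
\|u(\cdot+h)-u\|_{L^p(\R^N)}^p \leq C\int_{|h|\leq |z|\leq 3|h|}\frac{\|u(\cdot+z)-u\|_{L^p(\R^N)}^p}{|z|^N}\,dz,
\end{equation*}
an annular piece of $[u]_{\XO}^p/C_{N,p}$.

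The main obstacle is that the kernel $|z|^{-N}$ lies exactly at the borderline of integrability near the origin, so the uniform vanishing of the right-hand side over a bounded set of $\XO$ does not follow directly from the finiteness of $[u]_{\XO}^p$. I would close the argument by combining the annular estimate with a standard regularization/diagonal scheme: extract a weakly convergent subsequence via reflexivity of $\XO$; form the mollifications $u_n^\eta := u_n \ast \rho_\eta$ which, for each fixed $\eta>0$, lie in a precompact subset of $L^p(\Omega)$ by Arzel\`a--Ascoli thanks to uniform boundedness and a uniform Lipschitz bound on a fixed compact set; and control the error $\|u_n - u_n^\eta\|_{L^p(\R^N)}^p$ by Jensen's inequality together with the annular estimate, expressing it as a near-diagonal integral against the measure $|z|^{-N}\mathbf{1}_{\{|z|\leq C\eta\}}dz$. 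A careful diagonal extraction as $\eta \to 0^+$ then produces a Cauchy subsequence in $L^p(\Omega)$, yielding the desired compactness.
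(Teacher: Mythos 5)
The paper itself does not prove this lemma; it simply cites it from Foghem's \cite[Corollary 6.3]{Foghem}, so your proposal must stand on its own merits. The continuity part is fine and essentially immediate from the definition of $\nX$, as you say.

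The compactness part has a real gap, and you have actually put your finger on it yourself but the proposed fix does not close it. Your annular averaging yields, correctly,
\[
\|u(\cdot+h)-u\|_{L^p(\R^N)}^p \;\leq\; C\int_{|h|\leq|z|\leq 3|h|}\frac{\|u(\cdot+z)-u\|_{L^p(\R^N)}^p}{|z|^N}\,dz,
\]
but because the annulus $\{|h|\leq|z|\leq 3|h|\}$ has $|z|^{-N}dz$-measure bounded away from zero, the right-hand side is only uniformly \emph{bounded} by a multiple of $[u]_{\XO}^p$, not uniformly \emph{small}. Your mollification route runs into precisely the same wall: Jensen's inequality gives
\[
\|u_n-u_n^\eta\|_{L^p}^p \;\lesssim\; \int_{|z|\leq \eta}\|u_n(\cdot+z)-u_n\|_{L^p}^p\,\frac{|z|^N}{\eta^N}\,\frac{dz}{|z|^N},
\]
which, since $|z|^N/\eta^N\leq 1$, is again only bounded by $[u_n]_{\XO}^p/C_{N,p}$, not vanishing uniformly in $n$ as $\eta\to 0^+$. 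Without that uniform smallness, the Arzel\`a--Ascoli precompactness of $\{u_n^\eta\}_n$ for fixed $\eta$ plus a diagonal extraction does not produce a Cauchy subsequence of $(u_n)$; the ``careful diagonal extraction'' in your last sentence is a gloss over exactly the step that is broken.

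The missing idea is to exploit the \emph{logarithmic divergence} of the kernel near the origin, and this requires averaging over a much fatter annulus than $\{|h|\leq|z|\leq 3|h|\}$. If you average the midpoint inequality over $y$ ranging over an annulus $\{r\leq|y-x|\leq R\}$ with $|h|\sim r$ and $R\gg r$, and use that the $|z|^{-N}dz$-measure of such an annulus is $|\mathbb{S}^{N-1}|\ln(R/r)$, you obtain an estimate of the schematic form
\[
\|u(\cdot+h)-u\|_{L^p}^p \;\leq\; \frac{C}{\ln(R/|h|)}\int_{|h|\leq|z|\leq 2R}\frac{\|u(\cdot+z)-u\|_{L^p}^p}{|z|^N}\,dz \;\leq\; \frac{C}{\ln(R/|h|)}\,\frac{[u]_{\XO}^p}{C_{N,p}},
\]
valid for $2R\leq 1$. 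Choosing $R=R(|h|)\to 0$ with $R/|h|\to\infty$ (say $R=\sqrt{|h|}$) makes the prefactor vanish as $|h|\to 0^+$, uniformly over bounded subsets of $\XO$, and then Riesz--Fr\'echet--Kolmogorov applies directly; no mollification or diagonalization is needed. This logarithmic gain (equivalently, the fact that the total mass of the zero-order Lévy kernel over $\{r<|z|<R\}$ diverges as $r\to 0^+$) is the essential structural reason compactness holds, and is the ingredient missing from your write-up.
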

\begin{lemma}\cite[Corollary 4.2]{Frank-Seiringer} and \cite[Theorem 1]{Maz'ya-Shaposhnikova}
\label{thm1-MS}
Let $N \geq 1$, $p \geq 1$, $s \in (0,1)$ and  $N > sp.$ Then, for any $u \in W_0^{s,p}(\Omega)\setminus\{0\}$, the following holds
    \[\|u\|^p_{L^{p^\ast_s}(\Omega)} \leq B(N,s,p) \int_{\mathbb{R}^N} \int_{\mathbb{R}^N} \pfrac ~dx ~dy := B_{N,s,p} \lceil u \rceil_{s,p}^p, \]
where $B_{N,s,p}$ is the Sobolev constant given by
     \[
    B_{N,s,p} = \l(\frac{p}{p^\ast_s}\r) \l(\frac{N}{|\mathbb{S}^{N-1}|}\r)^\frac{sp}{N} \frac{1}{\mathcal{C}(N,s,p)},
    \]
    \[
    \mathcal{C}(N,s,p) = 2 \int_0^1 \rho^{sp-1} \l(1-\rho^{\frac{N-sp}{p}}\r)^p \Phi_{N,s,p}(\rho) ~d \rho
    \]
and
    \[
    \begin{split}
    \Phi_{N,s,p}(r)
    & = \begin{cases}
        |\mathbb{S}^{N-2}| \int_{-1}^1 (1-t^2)^\frac{N-3}{2} (1-2rt+r^2)^\frac{-(N+sp)}{2}~dt \ &\text{if} \ N \geq 2,\\
        \l(\frac{1}{(1-r)^{1+sp}} + \frac{1}{(1+r)^{1+sp}}\r) \qquad  &\text{if} \ N =1.
    \end{cases}
    \end{split}
    \]

\end{lemma}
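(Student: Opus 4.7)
The statement is the sharp fractional Sobolev inequality of Frank--Seiringer and Maz'ya--Shaposhnikova, so the plan is to retrace the rearrangement-based argument that yields the explicit constant $B_{N,s,p}$.

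\textbf{Step 1: Reduction to radial decreasing functions.} I would first invoke the fractional Pólya--Szegő inequality for the Gagliardo seminorm, which states that if $u^\ast$ denotes the symmetric decreasing rearrangement of $|u|$ then $\lceil u^\ast \rceil_{s,p} \leq \lceil u \rceil_{s,p}$ while $\|u^\ast\|_{L^{p^\ast_s}} = \|u\|_{L^{p^\ast_s}}$. This is the $p$-analogue of the classical Riesz rearrangement bound and is available in the fractional setting. Thus it suffices to establish the inequality for nonnegative, radial, radially non-increasing $u$.

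\textbf{Step 2: Reduction to a one-dimensional weighted inequality.} For radial $u(x)=f(|x|)$ I would carry out the angular integration in the double integral defining $\lceil u \rceil_{s,p}^p$. Writing $|x-y|^2 = |x|^2 - 2|x||y|\cos\theta + |y|^2$ and integrating over the spheres of radius $|x|$ and $|y|$ produces, after the substitution $\rho = \min(|x|,|y|)/\max(|x|,|y|)$, precisely the kernel $\Phi_{N,s,p}(\rho)$ that appears in the statement. This converts the $\R^N\times\R^N$ integral into a one-dimensional integral over $(0,\infty)$ against the weight $\Phi_{N,s,p}(\rho)\,d\rho$.

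\textbf{Step 3: The change-of-variables and Hardy-type estimate.} The scaling $t = r^{(N-sp)/p}$ is the natural one, since it converts $\|u\|_{L^{p^\ast_s}}^p = |\mathbb{S}^{N-1}|\int_0^\infty f(r)^{p^\ast_s} r^{N-1}\,dr$ into an unweighted $L^{p^\ast_s/p}$-type integral, bringing the exponent $p/p^\ast_s = (N-sp)/N$ that appears in the constant. The difference $f(r) - f(r')$ can then be controlled by the trial function $g(r) := (1-r^{(N-sp)/p})_+$ through a direct pointwise comparison for decreasing $f$. Inserting this estimate into the reduced one-dimensional inequality and optimising yields precisely the integral $\mathcal{C}(N,s,p) = 2\int_0^1 \rho^{sp-1}(1-\rho^{(N-sp)/p})^p \Phi_{N,s,p}(\rho)\,d\rho$.

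\textbf{Main obstacle.} The delicate point, and the reason why the two cited papers are needed in tandem, is that for $p \neq 2$ there is no Fourier-analytic shortcut: the angular integration has to be carried out by direct manipulation, and the one-dimensional Hardy step must be done with the weight $\Phi_{N,s,p}$ intact rather than after diagonalisation. I expect the hardest estimate to be bounding $|f(r)-f(r')|^p$ in a way that is simultaneously sharp enough to reproduce the $p$-dependent factor $(p/p^\ast_s)$ and compatible with the weight $\Phi_{N,s,p}(\rho)\rho^{sp-1}$; a naive Hölder step loses this factor. Once this sharp step is in place, the explicit form of $B_{N,s,p}$ in terms of $|\mathbb{S}^{N-1}|$, $\mathcal{C}(N,s,p)$ and $p/p^\ast_s$ follows by tracking constants through Steps 2 and 3, and the inequality extends from smooth radial decreasing functions to all of $W_0^{s,p}(\Omega)$ by density together with rearrangement.
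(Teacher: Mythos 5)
First, a point of comparison: the paper does not prove this lemma at all — it is quoted from \cite[Corollary 4.2]{Frank-Seiringer} and \cite[Theorem 1]{Maz'ya-Shaposhnikova} — so there is no in-paper argument to measure yours against; I can only judge the proposal on its own terms. Your overall architecture (symmetrization, reduction to radial non-increasing functions, angular integration producing $\Phi_{N,s,p}$, then a Hardy-type step) is the right way to assemble the two cited results into the stated constant. The gap is that Step 3, which carries essentially all of the mathematical content, is asserted rather than argued, and the mechanism you describe is not the one that produces $\mathcal{C}(N,s,p)$. That quantity is the \emph{sharp fractional Hardy constant} of Frank--Seiringer, and it is obtained not by ``a direct pointwise comparison of $f(r)-f(r')$ with the trial function $(1-r^{(N-sp)/p})_+$'' but by the ground-state substitution $u=\omega v$ with $\omega(x)=|x|^{-(N-sp)/p}$ combined with a convexity inequality for $t\mapsto |t|^p$; the function $(1-r^{(N-sp)/p})$ enters only because $\omega$ satisfies the associated Euler--Lagrange identity, with $\mathcal{C}(N,s,p)$ appearing as the resulting ``eigenvalue''. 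As written, ``optimising yields precisely $\mathcal{C}(N,s,p)$'' is a placeholder for the entire proof of the sharp Hardy inequality.

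Second, the prefactor $\frac{p}{p^\ast_s}\bigl(\frac{N}{|\mathbb{S}^{N-1}|}\bigr)^{sp/N}$ does not come out of the Hardy step or out of your change of variables $t=r^{(N-sp)/p}$: it is the sharp constant in the separate comparison
\[
\|v\|_{L^{p^\ast_s}(\R^N)}^p \;\leq\; \frac{p}{p^\ast_s}\left(\frac{N}{|\mathbb{S}^{N-1}|}\right)^{\frac{sp}{N}}\int_{\R^N}\frac{|v(x)|^p}{|x|^{sp}}~dx,
\]
valid for radially non-increasing $v$ (equality for indicators of balls, which is how one identifies the constant). Your proposal never isolates this inequality, and your own remark that ``a naive H\"older step loses this factor'' is a symptom of the missing ingredient rather than an identification of it. Finally, a substantive terminological error: this is \emph{not} ``the sharp fractional Sobolev inequality'' — the paper itself stresses that for $p\neq 2$ the optimal Sobolev constant is unknown. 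Only the Hardy constant $\mathcal{C}(N,s,p)$ is sharp; $B_{N,s,p}$ is merely an explicit admissible constant whose $s\to 0^+$ asymptotics (Lemma \ref{lem-asymp}) are what the paper actually uses in the proof of Theorem \ref{log-Sob-ineq}.
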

\begin{lemma}
\label{diff-F_p}
    Let $\Omega\subseteq\R^N$ be an open, bounded set and $u, v \in L^p(\Omega)$ such that $u=0=v$ in $\mathbb{R}^N \setminus \Omega.$ Then, the map $(u,v) \mapsto \mathcal{F}_p(u,v)$ is well-defined and
    \[
    |\mathcal{F}_p(u,v)| \leq a \l(\|u\|_{L^{p-1}(\Omega)}^{p-1} \|v\|_{L^1(\Omega)} + |\Omega| \|v\|_{L^p(\Omega)} \|u\|_{L^p(\Omega)}^{p-1}\r) \leq b\|u\|_{L^p(\Omega)}^{p-1}\|v\|_{L^p(\Omega)},
    \]
    where $a, b$ are constants depending on $N, \ p.$
\end{lemma}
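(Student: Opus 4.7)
The plan is to exploit the fact that $u, v$ vanish outside $\Omega$ in order to reduce the double integral to $\Omega \times \Omega$, then bound $\Upsilon$ pointwise by a sum of separable terms, and finally apply Fubini together with Hölder's inequality on the bounded domain $\Omega$.

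First I would verify that, crucially, the integrand in $\mathcal F_p(u,v)$ vanishes whenever either $x$ or $y$ lies outside $\Omega$. Indeed, using the oddness of $h$, a direct calculation yields
\[
\Upsilon(z,0,t,0) = h(z)t - h(z)t - h(0)\cdot 0 = 0, \qquad \Upsilon(0,z,0,t) = h(-z)(-t) - 0 - h(z)t = 0,
\]
so the integration domain reduces to $\{(x,y)\in\Omega\times\Omega : |x-y|\geq 1\}$. This cancellation is the reason $\mathcal F_p$ is well-defined on $L^p(\Omega)$, despite the kernel $|x-y|^{-N}$ decaying only marginally at infinity.

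Next I would establish the pointwise estimate
\[
|\Upsilon(z_1,z_2,t_1,t_2)| \leq |z_1-z_2|^{p-1}\bigl(|t_1|+|t_2|\bigr) + |z_1|^{p-1}|t_1|+|z_2|^{p-1}|t_2|,
\]
and use the elementary inequality $|z_1-z_2|^{p-1}\leq C_p(|z_1|^{p-1}+|z_2|^{p-1})$ to rewrite the bound as a sum of four separable terms $|u(x)|^{p-1}|v(x)|$, $|u(x)|^{p-1}|v(y)|$, $|u(y)|^{p-1}|v(x)|$, $|u(y)|^{p-1}|v(y)|$. Since $|x-y|^{-N}\leq 1$ on $\{|x-y|\geq 1\}$, the kernel drops out of the estimate entirely.

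Finally I would integrate: for the terms depending only on $x$ (or only on $y$), the free variable contributes a factor $|\Omega|$, and Hölder's inequality on $\int_\Omega |u|^{p-1}|v|$ yields $|\Omega|\,\|u\|_{L^p}^{p-1}\|v\|_{L^p}$; for the truly separated cross terms $|u(x)|^{p-1}|v(y)|$, Fubini produces $\|u\|_{L^{p-1}(\Omega)}^{p-1}\|v\|_{L^1(\Omega)}$ directly. Summing gives the first inequality. For the second, I would invoke the Hölder bounds $\|u\|_{L^{p-1}}^{p-1}\leq |\Omega|^{1/p}\|u\|_{L^p}^{p-1}$ and $\|v\|_{L^1}\leq |\Omega|^{(p-1)/p}\|v\|_{L^p}$, whose product contributes exactly one extra factor of $|\Omega|$, matching the second term.

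The only genuinely subtle step is the vanishing of $\Upsilon$ outside $\Omega\times\Omega$; once that reduction is made, the remainder is routine Fubini plus Hölder, with the constants $a,b$ depending on $p$, $N$ (through $C_{N,p}$ and $C_p$) and $|\Omega|$.
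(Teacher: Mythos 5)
Your proof is correct and follows essentially the same route as the paper's: reduce the integral to $\Omega\times\Omega$ via the vanishing of $\Upsilon$ when one pair of arguments is zero (the paper phrases this as $L_2=L_3=L_4=0$ after splitting the domain), drop the kernel using $|x-y|^{-N}\leq 1$ on $\{|x-y|\geq 1\}$, and conclude by Fubini and H\"older. The only difference is that the paper first symmetrizes the integrand to $\int (h(u(x)-u(y))-h(u(x)))v(x)$ and invokes the Lindgren-type difference estimate of its Lemma \ref{Lemma1-Lindgren}, whereas you bound $\Upsilon$ directly by the triangle inequality together with $|z_1-z_2|^{p-1}\leq C_p(|z_1|^{p-1}+|z_2|^{p-1})$ --- a slightly cruder but equally valid pointwise bound that avoids the appeal to that lemma.
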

\begin{proof}
Let $u, v \in L^p(\Omega)$ such that $u=0=v$ in $\mathbb{R}^N \setminus \Omega.$ Then, by the definition of $\mathcal{F}_p(\cdot,\cdot)$ in \eqref{def:reminder-term} and using the symmetry of the integrand, we obtain
\[\begin{split}
    & \frac{1}{C_{N,p}} \mathcal{F}_p(u,v) = \frac{1}{2} \intb \frac{\Upsilon(u(x),u(y),v(x),v(y))}{|x-y|^{N}} ~dx ~dy\\
    & =  \intb \frac{h(u(x)-u(y))v(x)-h(u(x))v(x)}{|x-y|^{N}} ~dx ~dy\\
    & = \iint\limits_{\substack{x,y\in \Omega \\ \abs{x-y}\geq1}} \cdots \ + \iint\limits_{\substack{x \in \Omega, y \in \mathbb{R}^N \setminus \Omega \\ \abs{x-y}\geq1}} \cdots \ + \iint\limits_{\substack{y \in \Omega, x \in \mathbb{R}^N \setminus \Omega \\ \abs{x-y}\geq1}} \cdots \ + \iint\limits_{\substack{x, y \in \mathbb{R}^N \setminus \Omega \\ \abs{x-y}\geq1}}  \cdots = \sum_{i=1}^4 L_i.
\end{split}\]
Since $u=0=v$ in $\mathbb{R}^N \setminus \Omega$, $L_i =0$ for $i=2,3,4$. Therefore, using Lemma \ref{Lemma1-Lindgren}, we have
\begin{equation}\label{p-log-tail-est}
    \begin{split}
    &\mathcal{F}_p(u,v) = C_{N,p} L_1 \leq C_{N,p}  \iint\limits_{\substack{x,y\in \Omega \\ \abs{x-y}\geq1}} \frac{|h(u(x)-u(y))-h(u(x))||v(x)|}{|x-y|^{N}} ~dx ~dy\\
    & \leq C_{N,p}
    \begin{cases}
    (3^{p-1}+2^{p-1}) \iint\limits_{\substack{x,y\in \Omega \\ \abs{x-y}\geq1}} |u(y)|^{p-1} |v(x)| ~dx ~dy  & \text{if} \ p \in (1,2),\\
    (p-1) \iint\limits_{\substack{x,y\in \Omega \\ \abs{x-y}\geq1}} \l(|u(y)|^{p-1} + |u(x)|^{p-1}\r) |v(x)| ~dx ~dy & \text{if} \ p \geq 2,
    \end{cases} \\
    &\leq a \l(\|u\|_{L^{p-1}(\Omega)}^{p-1} \|v\|_{L^1(\Omega)} + |\Omega| \|v\|_{L^p(\Omega)} \|u\|_{L^p(\Omega)}^{p-1}\r) \leq b\|u\|_{L^p(\Omega)}^{p-1}\|v\|_{L^p(\Omega)},
\end{split}
\end{equation}
where $a,b$ are constants depending on $N$ and $p.$
\end{proof}
\subsection{\texorpdfstring{$p$}{p}-Logarithmic Sobolev inequality}
In this subsection, first we prove the asymptotics of the constant involved in the fractional Sobolev inequality in Lemma \ref{thm1-MS} and then derive a $p$-logarithmic Sobolev type inequality which will be very crucial in the study of nonlinear problems, involving zero-order operator, in particular, the logarithmic $p$-Laplacian.
\begin{lemma}
\label{lem-asymp}
    It holds that \[\lim\limits_{s \to 0} \l(\frac{C_{N,s,p}}{s}\r)= \frac{p}{|\mathbb{S}^{N-1}|} \quad \text{and} \quad \lim_{s \to 0^+}\l( \frac{s}{B_{N,s,p}}\r)=\frac{2|\mathbb{S}^{N-1}|}{p},\]
where $|\mathbb{S}^{N-1}|$ denotes the surface area of sphere in $\R^N.$
\end{lemma}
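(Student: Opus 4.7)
The plan splits cleanly along the two limits. For the first limit, I would substitute the explicit formula for $C_{N,s,p}$ to obtain
\[
\frac{C_{N,s,p}}{s} \;=\; \frac{p\,2^{2s-1}\,\Gamma\bigl(\tfrac{N+sp}{2}\bigr)}{\pi^{N/2}\,\Gamma(1-s)},
\]
and pass to the limit termwise using continuity of the Gamma function: $2^{2s-1}\to\tfrac12$, $\Gamma\bigl(\tfrac{N+sp}{2}\bigr)\to\Gamma\bigl(\tfrac{N}{2}\bigr)$, and $\Gamma(1-s)\to 1$ as $s\to 0^+$. Combining with the standard identity $|\mathbb{S}^{N-1}| = 2\pi^{N/2}/\Gamma(N/2)$ yields $p/|\mathbb{S}^{N-1}|$, as required.

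For the second limit, I would begin by rewriting $s/B_{N,s,p}$ using the explicit formula for $B_{N,s,p}$:
\[
\frac{s}{B_{N,s,p}} \;=\; \frac{p^{\ast}_s}{p}\,\Bigl(\tfrac{|\mathbb{S}^{N-1}|}{N}\Bigr)^{\!sp/N}\, s\,\mathcal{C}(N,s,p).
\]
Since $p^{\ast}_s/p = N/(N-sp)\to 1$ and $sp/N\to 0$ as $s\to 0^+$, the two prefactors tend to $1$, so the problem reduces to proving $\lim_{s\to 0^+} s\,\mathcal{C}(N,s,p) = 2|\mathbb{S}^{N-1}|/p$. My plan is the change of variables $u = \rho^{sp}$, which absorbs the singular factor $\rho^{sp-1}\,d\rho = du/(sp)$ and rewrites the integral as
\[
s\,\mathcal{C}(N,s,p) \;=\; \frac{2}{p}\int_0^1 \bigl(1-u^{(N-sp)/(sp^2)}\bigr)^{p}\,\Phi_{N,s,p}\bigl(u^{1/(sp)}\bigr)\,du.
\]
For each fixed $u\in(0,1)$, both exponents $1/(sp)$ and $(N-sp)/(sp^2)$ blow up as $s\to 0^+$, so $u^{1/(sp)}\to 0$ and $u^{(N-sp)/(sp^2)}\to 0$. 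A direct Beta-function calculation for $N\geq 2$ (and inspection in the $N=1$ case) gives $\Phi_{N,0,p}(0) = |\mathbb{S}^{N-1}|$, hence the integrand converges pointwise to $|\mathbb{S}^{N-1}|$, and dominated convergence would deliver $2|\mathbb{S}^{N-1}|/p$.

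The main obstacle is the justification of dominated convergence in the second limit: $\Phi_{N,s,p}(\rho)$ develops a singularity of order $(1-\rho)^{-(1+sp)}$ as $\rho\to 1$, and while $u^{1/(sp)}\to 0$ pointwise for $u<1$, the argument $u^{1/(sp)}$ can remain close to $1$ on a shrinking neighborhood of $u=1$. I would first establish a uniform-in-$s$ bound $\Phi_{N,s,p}(\rho)\leq C(1-\rho)^{-1-sp}$ near $\rho=1$ (for $N\geq 2$, by a change of variables in the defining integral of $\Phi_{N,s,p}$), and then exploit the $p$-fold vanishing of $(1-u^{(N-sp)/(sp^2)})^{p}$ at $u=1$ to cancel that singularity. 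Splitting $[0,1]$ into a region where $u^{1/(sp)}$ is bounded away from $1$ (on which $\Phi_{N,s,p}$ is uniformly bounded by its value at $0$) and a thin boundary layer near $u=1$ where the singularity--cancellation is exploited should produce an integrable majorant uniformly in small $s$, completing the argument.
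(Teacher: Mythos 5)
For the first limit your argument coincides with the paper's: substitute the explicit formula for $C_{N,s,p}$ and pass to the limit termwise using continuity of $\Gamma$ and the identity $|\mathbb{S}^{N-1}|=2\pi^{N/2}/\Gamma(N/2)$. For the second limit you take a genuinely different route. The paper sets $h_{N,s,p}(r)=(1-r^{(N-sp)/p})^p\,\Phi_{N,s,p}(r)$, proves the joint limit $\lim_{(r,s)\to(0,0)} h_{N,s,p}(r)=|\mathbb{S}^{N-1}|$ by dominated convergence in the defining integral for $\Phi_{N,s,p}$, and then feeds this into the abstract one-variable asymptotic of Lemma~\ref{prelim-lemma}, which gives $\lim_{s\to 0^+}s\int_0^1 r^{sp-1}h(r)\,dr=h(0)/p$ for continuous $h$. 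You instead perform the change of variables $u=\rho^{sp}$, which turns the singular weight $\rho^{sp-1}\,d\rho$ into Lebesgue measure $du/(sp)$, identify the pointwise limit of the transformed integrand, and invoke dominated convergence. Both arguments are correct in outline and both encounter the same technical point: the integrand $h_{N,s,p}$ picks up a (cancelled) singularity as $\rho\to 1^-$, and a bound uniform in small $s$ is needed there. You flag this explicitly and sketch a boundary-layer resolution; the paper's write-up is silent on it (Lemma~\ref{prelim-lemma} is stated for a single continuous $h$, not an $s$-dependent family, so the application tacitly requires the same uniformity). One simplification worth noting: once you establish $\Phi_{N,s,p}(\rho)\le C(1-\rho)^{-1-sp}$ near $\rho=1$ with $C$ independent of small $s$, the $p$-fold zero of $(1-\rho^{(N-sp)/p})^p$ already gives $h_{N,s,p}(\rho)\le C'(1-\rho)^{p-1-sp}\le C'$ uniformly on $[0,1]$ for $s$ small, so the constant $C'$ itself is an admissible majorant in the $u$-variable and no boundary-layer splitting is actually needed. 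Overall your change-of-variables approach is more self-contained and explicit than the paper's appeal to Lemma~\ref{prelim-lemma}, at the cost of somewhat heavier bookkeeping near the endpoint.
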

\begin{proof}
By \eqref{C-nsp}, we have
   \[
   \begin{split}
       \lim\limits_{s \to 0} \l(\frac{C_{N,s,p}}{s}\r) = \lim_{s \to 0} \l(\frac{p 2^{2s-1}\Gamma(\frac{N+sp}{2})}{\pi^\frac{N}{2}\Gamma(1-s)}\r)= \frac{p \Gamma(\frac{N}{2})}{2\pi^\frac{N}{2}}=\frac{p}{|\mathbb{S}^{N-1}|}.
   \end{split}
   \]
Note that
\[
\lim\limits_{s \to 0^+} \frac{s}{B_{N,s,p}} = \lim\limits_{s \to 0^+} s \ \mathcal{C}_{N,s,p} = \lim\limits_{s \to 0^+} 2 \int_0^1 \rho^{sp-1} h_{N,s,p}(\rho) ~d \rho,
\]
where
\[
h_{N,s,p}(r):= \l(1-r^{\frac{N-sp}{p}}\r)^p \Phi_{N,s,p}(r).
\]
Now, in order to compute the above limit, first we show that
\begin{equation}\label{limit-est-1}
    \lim\limits_{(r,s) \to (0,0)} h_{N,s,p}(r) = |\mathbb{S}^{N-1}|.
\end{equation}
For $N=1$, the claim is obvious. Fix $p >1$ and $N \geq 2$, then it is easy that for $0<s < \frac{N}{2}$, we have $\frac{N-sp}{p}> \frac{N}{2p}>0$ and
\[
0 < r^{\frac{N-sp}{p}}< r^{\frac{N}{2p}} \to 0 \quad \text{as} \ r \to 0^+.
\]
This further gives
\begin{equation}\label{limit-est-2}
    \lim\limits_{(r,s) \to (0,0)} \l(1-r^{\frac{N-sp}{p}}\r)^p = 1.
\end{equation}
Again, for $0< s < \min\l\{\frac{N}{2},1\r\}$, $r \in (0, \frac{1}{4})$ and $t \in [-1,1]$, we get
\[
1-2tr +r^2 \geq 1- 2 |t| r + r^2 > 1-2r > \frac{1}{2},
\]
\[
|g_{r,s}(t)|:= \l|\frac{(1-t^2)^\frac{N-3}{2}}{(1-2rt+r^2)^\frac{(N+sp)}{2}}\r| \leq C(N,p) (1-t^2)^\frac{N-3}{2} \in L^1[-1,1] \ \text{for} \ N \geq 2.
\]
and $g_{r,s}(t) \to (1-t^2)^\frac{N-3}{2}$ pointwise a.e. in $[-1,1]$ as $(r,s) \to (0,0)$. Therefore, by Lebesgue dominated convergence theorem, we obtain
\begin{equation}\label{limit-est-3}
    \lim\limits_{(r,s) \to (0,0)} \Phi(N,s,p)(r) = |\mathbb{S}^{N-2}| \int_{-1}^1 (1-t^2)^\frac{N-3}{2} ~dt = |\mathbb{S}^{N-1}| \quad \text{for} \ N \geq 2.
\end{equation}
Now, combining \eqref{limit-est-2} and \eqref{limit-est-3}, we obtain \eqref{limit-est-1}. Finally, using \eqref{limit-est-1} and Lemma \ref{prelim-lemma}, we obtain
\[
\lim\limits_{s \to 0^+} s \ \mathcal{C}_{N,s,p} = \frac{2 }{p} |\mathbb{S}^{N-1}|.
\]
\end{proof}

\noindent \textit{Proof of Theorem \ref{log-Sob-ineq}:}
Let $u\in C_c^\infty(\Omega)\setminus\{0\}.$ Multiplying and dividing by $\frac{C_{N,s,p}}{2}$ on the right hand side of the inequality in Lemma \ref{thm1-MS} and putting the value of $C_{N,s,p}$ from \eqref{C-nsp}, we get
    \begin{equation}
    \label{sob_ineq}
    \|u\|^p_{L^{p^\ast_s}(\Omega)} \leq A(N,p,s) [u]^p_{s,p, \mathbb{R}^N}, \quad A(N,p,s):= \frac{2B_{N,s,p}}{C_{N,s,p}}.
    \end{equation}
By \cite[Lemma 7.2]{Dyda-Jarohs-Sk}, we have
    \[\frac{d}{ds}\bigg{|}_{s=0} [u]^p_{s,p, \mathbb{R}^N} = \lim_{s \to 0} \frac{[u]^p_{s,p, \mathbb{R}^N} - \|u\|_{L^p(\Omega)}^p}{s} = \qform(u,u).\]
    Thus,
    \[
        [u]^p_{s,p, \mathbb{R}^N} = \nP^p+s\qform(u,u) + o(s).
    \]
By Lemma \ref{lem-asymp} and Taylor's formula, we get
\begin{equation}\label{constant:est-2}
\lim\limits_{s \to 0} A(N,p,s) =1
    \end{equation}
and $A(N,p,s) = 1 + s k_0 + o(s)$, where $k_0 := \partial_s A(N,p,0).$ This further implies
    \begin{equation}
    \label{norm:est-1}
    A(N,p,s) [u]^p_{s,p, \mathbb{R}^N} = \nP^p + s\bigg(k_0\nP^p + \qform(u,u)\bigg) + o(s).
    \end{equation}

Denote
\[
F(s):=\|u\|^p_{L^{p^\ast_s}(\Omega)}=\left(\into |u|^{p^\ast_s} ~dx\right)^{\frac{p}{p^\ast_s}}.
\]
Clearly, $F(0)= \nP^p.$ Now, by taking the logarithm on both sides and differentiating with respect to $s$, we obtain
\[
\begin{split}
    \frac{F'(s)}{F(s)} &= \frac{-p}{N}\ln\bigg[\into |u|^\frac{Np}{N-sp} ~dx\bigg] + \l(\frac{N-sp}{N}\r)\frac{d}{ds}\l(\ln \bigg[\into |u|^\frac{Np}{N-sp} ~dx\bigg]\r)\\
    &= \frac{-p}{N}\ln\bigg[\into |u|^\frac{Np}{N-sp} ~dx\bigg] + \l(\frac{N-sp}{N}\r)\frac{1}{\into |u|^\frac{Np}{N-sp} ~dx}\frac{d}{ds}\bigg[\into |u|^\frac{Np}{N-sp} ~dx\bigg]\\
    &= \frac{-p}{N}\ln\bigg[\into |u|^\frac{Np}{N-sp} ~dx\bigg]\\&\qquad \qquad+ \l(\frac{N-sp}{N}\r)\frac{1}{\into |u|^\frac{Np}{N-sp} ~dx}\l(\into \frac{Np^2}{(N-sp)^2}|u|^{\frac{Np}{N-sp} } \ln |u| ~dx\r).
\end{split}
\]
This further gives
\begin{equation}\label{norm:est-2}
    \begin{split}
\|u\|^p_{L^{p^\ast_s}(\Omega)} & = F(s) = F(0) + s F'(0) + o(s)\\
& = \nP^p +
\frac{s p^2}{N}\l(\into |u|^p \ln|u| ~dx - \nP^p \ln (\nP)\r) + o(s).
\end{split}
\end{equation}
Using \eqref{norm:est-1} and \eqref{norm:est-2} in \eqref{sob_ineq}, we get
    \[
    \begin{split}
        \bigg[\nP^p&+ \frac{s p^2}{N}\l(\into |u|^p \ln|u| ~dx - \nP^p \ln (\nP)\r)+o(s)\bigg]\\& \qquad \qquad \qquad \leq \nP^p + s\l(k_0\nP^p +  \qform(u,u)\r) + o(s).
    \end{split}
    \]
Finally, dividing by $s$ and passing limit $s \to 0^+$, we obtain
\begin{equation}
\label{*}
\frac{p^2}{N}\l(\into |u|^p \ln|u| ~dx - \nP^p \ln \nP\r)\leq \qform(u,u)+ k_0\nP^p.
\end{equation}
By \cite[Proposition 4.3]{Dyda-Jarohs-Sk}, $C_c^\infty(\Omega)$ is dense in $\XO$. Let $(u_n)_{n \in \N} \subseteq C^\infty_c(\Omega)$ be such that $u_n \to u$ in $\XO.$ From the above, we get \eqref{*} holds for each $u_n$. Since $u_n \to u $ in $\XO$, by Lemma \ref{compct-embeding}, $u_n \to u$ in $L^p(\Omega)$. Using this, the definition of $\qform(\cdot,\cdot)$ in \eqref{quad:form} and Lemma \ref{diff-F_p}, we get
\[\lim\limits_{n \to \infty} \qform(u_n,u_n)= \qform(u,u).\] Now, using the fact that $t^p \ln|t|$ is bounded for $t \in (0,1)$ and Fatou's lemma, we get
\[
\into |u|^p \ln|u| ~dx \leq \lim\limits_{n \to \infty}\into |u_n|^p \ln|u_n| ~dx.
\] Combining all the above estimates, we get that \eqref{*} holds for every $u\in \XO.$
\qed
\subsection{Continuous and compact embeddings}
We recall some important relations between the norm and modular.
\begin{lemma}
	\label{norm_modular}
    \cite[Lemma 3.2.9]{Harjulehto-Hasto}\newline
	Let $\vartheta \colon [0,+\infty) \to [0,+\infty]$ be a  weak $\Phi$-function that satisfies \textnormal{(aInc)}$_p$ and \textnormal{(aDec)}$_q$, $1 \leq p \leq q < \infty$. Then,
	\begin{align*}
 \min \bigg\{ {\l(\frac{1}{a}\varrho_\vartheta (u)\r)}^{\frac{1}{p}} , {\l(\frac{1}{a}\varrho_\vartheta (u)\r)}^{\frac{1}{q}} \bigg\}
		\leq \|u\|_\vartheta
		\leq \max \bigg\{ {\l(a\varrho_\vartheta (u)\r)}^{\frac{1}{p}} , {\l(a\varrho_\vartheta(u)\r)}^{\frac{1}{q}} \bigg\}
	\end{align*}
	for all $u \in M(\Omega)$, where $a$ is the maximum of the constants of \textnormal{(aInc)}$_p$ and \textnormal{(aDec)}$_q$.
\end{lemma}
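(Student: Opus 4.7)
The plan is to translate the hypotheses (aInc)$_p$ and (aDec)$_q$ into two-sided multiplicative scaling estimates for $\vartheta$ and then read off both sides of the desired inequality directly from the definition of the Luxemburg infimum.

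First I would record the scaling estimates that follow by specialising the almost-monotonicity of $\vartheta(t)/t^p$ and $\vartheta(t)/t^q$ to the pair $(s,\mu s)$ and rearranging: for every $s>0$ and every $\mu \in (0,1]$ one has $a^{-1}\mu^q \vartheta(s) \leq \vartheta(\mu s) \leq a\mu^p \vartheta(s)$, while for every $\mu \geq 1$ one has $a^{-1}\mu^p \vartheta(s) \leq \vartheta(\mu s) \leq a\mu^q \vartheta(s)$. Next, for the upper bound, I would set $\lambda_0 := \max\{(a\varrho_\vartheta(u))^{1/p}, (a\varrho_\vartheta(u))^{1/q}\}$ and split into the cases $\lambda_0 \geq 1$ and $\lambda_0 \leq 1$. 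Applying the appropriate upper scaling estimate with $\mu=1/\lambda_0$ pointwise to $\vartheta(|u(x)|/\lambda_0)$ and integrating gives $\varrho_\vartheta(u/\lambda_0) \leq a\lambda_0^{-p}\varrho_\vartheta(u)$ in the first case and $\varrho_\vartheta(u/\lambda_0) \leq a\lambda_0^{-q}\varrho_\vartheta(u)$ in the second; in either case the defining inequality for $\lambda_0$ forces $\varrho_\vartheta(u/\lambda_0) \leq 1$, hence $\|u\|_\vartheta \leq \lambda_0$.

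The lower bound is symmetric: since $\vartheta$ is increasing, the map $\lambda \mapsto \varrho_\vartheta(u/\lambda)$ is non-increasing, so for every $\lambda > \|u\|_\vartheta$ one has $\varrho_\vartheta(u/\lambda) \leq 1$; applying the lower scaling estimate with $\mu=1/\lambda$ (splitting again on whether $\lambda \leq 1$ or $\lambda \geq 1$) yields $\lambda \geq (\varrho_\vartheta(u)/a)^{1/p}$ or $\lambda \geq (\varrho_\vartheta(u)/a)^{1/q}$, and letting $\lambda \downarrow \|u\|_\vartheta$ gives the claimed lower bound.

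The argument uses no deep machinery, so the main obstacle is the careful bookkeeping of which exponent ($p$ or $q$) is in force in each branch, and checking that the two branches collapse exactly to the claimed $\min$/$\max$ expressions (using that for $x \leq 1$ and $p \leq q$ one has $x^{1/p} \leq x^{1/q}$, and the reverse for $x \geq 1$). A further minor technical point is that $\vartheta$ is only assumed to be a weak $\Phi$-function, not convex or left-continuous, which is why one must pass to the infimum via $\lambda \downarrow \|u\|_\vartheta$ rather than substituting $\lambda = \|u\|_\vartheta$ directly.
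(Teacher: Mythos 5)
The paper does not prove this lemma at all -- it is quoted verbatim from \cite[Lemma 3.2.9]{Harjulehto-Hasto} -- so there is no in-paper argument to compare against; your proof is a correct reconstruction of the standard one in that reference, deriving the two-sided scaling estimates $a^{-1}\mu^{q}\vartheta(s)\leq\vartheta(\mu s)\leq a\mu^{p}\vartheta(s)$ for $\mu\leq 1$ (and with $p,q$ swapped for $\mu\geq 1$) from \textnormal{(aInc)}$_p$ and \textnormal{(aDec)}$_q$, and then combining them with the unit-ball characterization of the Luxemburg norm. The case bookkeeping is consistent (note $\lambda_0\geq 1$ iff $a\varrho_\vartheta(u)\geq 1$ iff the maximum is attained by the exponent $1/p$, so each branch closes exactly), and your use of $\lambda\downarrow\|u\|_\vartheta$ correctly avoids assuming left-continuity of $\vartheta$.
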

\begin{lemma}
	\label{epsilon}
    \cite[Lemma 3.1]{Arora-Crespo-Blanco-Winkert} \newline
	The function $f_\eps \colon [0,+\infty) \to  [0,+\infty)$ given by
	\begin{align*}
		f_\eps(t) = \frac{ t^\eps } { \ln (e + t) }
	\end{align*}
	is almost increasing for $0 < \eps < \kappa :=e/(e + t_0)$ with constant $a_\eps:=\frac{f_{\eps}(t_{1, \eps})}{f_{\eps}(t_{2, \eps})} >1,$ where $t_0$ is the only positive solution to $t_0 = e \ln(e + t_0)$ and $t_{1,\eps}$, $t_{2, \eps}$ are respectively, the points of local maximum and minimum of $f_\eps$ in $(0, +\infty)$.
\end{lemma}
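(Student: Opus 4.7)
The plan is to locate the critical points of $f_\eps$ explicitly, show that the threshold $\kappa$ is exactly the value that separates the regime with no critical points from the regime with a local max and a local min, and then obtain the almost-increasing property by a short monotonicity case analysis on the three intervals delimited by these critical points.

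First, I would differentiate to obtain
\[
f_\eps'(t) = \frac{t^{\eps-1}}{(e+t)\ln^2(e+t)}\, g(t), \qquad g(t) := \eps(e+t)\ln(e+t) - t,
\]
so that the critical points of $f_\eps$ in $(0,\infty)$ are precisely the positive zeros of $g$. Since $g''(t) = \eps/(e+t) > 0$, the function $g$ is strictly convex on $[0,\infty)$ with $g(0) = e\eps > 0$ and $g(t) \to +\infty$ as $t \to \infty$. Hence $g$ has either zero or exactly two positive zeros, and in the latter case the smaller one, call it $t_{1,\eps}$, corresponds to a local maximum of $f_\eps$ (sign change of $g$ from $+$ to $-$) and the larger one, $t_{2,\eps}$, to a local minimum (sign change from $-$ to $+$).

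Second, I would pin down the threshold $\kappa$. Rewriting $g(t)=0$ as $\eps = t/[(e+t)\ln(e+t)] =: \phi_1(t)$ reduces the question to finding the maximum of $\phi_1$ on $(0,\infty)$. A direct computation yields
\[
\phi_1'(t) = \frac{e\ln(e+t) - t}{[(e+t)\ln(e+t)]^2},
\]
so the unique critical point of $\phi_1$ is the unique $t_0>0$ solving $t_0 = e\ln(e+t_0)$ (uniqueness follows again from convexity/monotonicity), and $\phi_1(t_0) = e/(e+t_0) = \kappa$. Consequently, for every $\eps \in (0,\kappa)$ the equation $g=0$ has exactly two positive solutions $t_{1,\eps} < t_0 < t_{2,\eps}$, while for $\eps \ge \kappa$ there are none (so $f_\eps$ is genuinely increasing and the claim is trivial with $a_\eps = 1$).

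Finally, with $a_\eps := f_\eps(t_{1,\eps})/f_\eps(t_{2,\eps}) > 1$ (strict since $t_{1,\eps} \neq t_{2,\eps}$ and $f_\eps$ is strictly decreasing between them), I would verify $f_\eps(s) \le a_\eps f_\eps(t)$ for all $0<s<t$ by splitting according to the position of $(s,t)$ relative to $t_{1,\eps}$ and $t_{2,\eps}$. Using that $f_\eps$ is increasing on $(0,t_{1,\eps})$, decreasing on $(t_{1,\eps},t_{2,\eps})$, and increasing on $(t_{2,\eps},\infty)$, one always has the two uniform bounds $f_\eps(s) \le f_\eps(t_{1,\eps})$ and $f_\eps(t) \ge f_\eps(t_{2,\eps})$, whose ratio gives exactly $a_\eps$; the remaining sub-cases (when $s$ and $t$ lie on the same monotone branch) reduce to pointwise monotonicity. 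The main obstacle in this plan is the sharp identification of $\kappa$ via the auxiliary function $\phi_1$ and the verification that its critical equation coincides with the one defining $t_0$; once that is done the case analysis is routine.
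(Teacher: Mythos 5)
Your proposal is correct. Note that the paper does not prove this lemma at all: it is imported verbatim as \cite[Lemma 3.1]{Arora-Crespo-Blanco-Winkert}, so there is no in-paper argument to compare against; your write-up is a valid self-contained verification. The computation $f_\eps'(t)=\frac{t^{\eps-1}}{(e+t)\ln^2(e+t)}\bigl(\eps(e+t)\ln(e+t)-t\bigr)$ is right, the reduction of the zero set of $g$ to the level set $\phi_1(t)=\eps$ with $\phi_1(t)=t/[(e+t)\ln(e+t)]$ correctly identifies $\kappa=\phi_1(t_0)=e/(e+t_0)$ as the sharp threshold (uniqueness of $t_0$ following from the strict monotonicity of $t\mapsto e\ln(e+t)-t$), and the increasing--decreasing--increasing profile gives the constant $a_\eps$. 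One phrasing caveat: the ``uniform bounds'' $f_\eps(s)\le f_\eps(t_{1,\eps})$ and $f_\eps(t)\ge f_\eps(t_{2,\eps})$ do not hold for arbitrary $s<t$ (e.g.\ $f_\eps(s)$ exceeds $f_\eps(t_{1,\eps})$ for $s$ large), so the six-case split you allude to is genuinely needed; since in every case either both points lie on one monotone branch or the two bounds do apply, the argument closes.
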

\begin{lemma}
	\label{ln(e+t)}
    \cite[Lemma 3.3]{Arora-Crespo-Blanco-Winkert}
	The function given by
$\ph(t)=t^p \ln (e+t)\quad \text{for} \ t>0$,  fulfills \textnormal{(aDec)}$_{p + \eps}$ for $0 < \eps < \kappa$ with constant $a_\eps$, where $\kappa$ and $a_\eps$ are the same as in Lemma \ref{epsilon}.
\end{lemma}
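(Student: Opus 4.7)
The plan is to reduce the claim directly to Lemma \ref{epsilon} by observing that $\ph(t)/t^{p+\eps}$ is essentially the reciprocal of the function $f_\eps$ already analyzed there. Unpacking the definitions, $\ph$ satisfies \textnormal{(aDec)}$_{p+\eps}$ precisely when the function
\[
t \mapsto \frac{\ph(t)}{t^{p+\eps}} = \frac{t^p \ln(e+t)}{t^{p+\eps}} = \frac{\ln(e+t)}{t^\eps}
\]
is almost decreasing on $(0,+\infty)$. Since $\ln(e+t) > 0$ for $t > 0$, the above equals $1/f_\eps(t)$, where $f_\eps(t) = t^\eps/\ln(e+t)$ is the function from Lemma \ref{epsilon}.

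Next, I would invoke the elementary observation that the reciprocal of an almost increasing strictly positive function is almost decreasing with the same constant: if $f(s) \leq a\, f(t)$ for all $0 < s < t$, then multiplying by $1/(f(s)f(t)) > 0$ yields $1/f(t) \leq a/f(s)$, i.e.\ $a\cdot(1/f(s)) \geq 1/f(t)$. Applying this to $f_\eps$, which by Lemma \ref{epsilon} is almost increasing on $(0,+\infty)$ with constant $a_\eps$ provided $0 < \eps < \kappa$, I conclude that $1/f_\eps = \ph(t)/t^{p+\eps}$ is almost decreasing with the same constant $a_\eps$. This is exactly \textnormal{(aDec)}$_{p+\eps}$ for $\ph$ with constant $a_\eps$.

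The only step requiring any care is verifying that the constant is preserved under reciprocation, which is routine as shown above. In particular, there is no genuine analytic obstacle: the nontrivial analysis (locating the extrema $t_{1,\eps}, t_{2,\eps}$ of $f_\eps$ and extracting the explicit constant $a_\eps$) has already been carried out in Lemma \ref{epsilon}, and the present lemma is a purely algebraic consequence. Hence the proof is complete once the reciprocal identity $\ph(t)/t^{p+\eps} = 1/f_\eps(t)$ is noted and Lemma \ref{epsilon} is applied.
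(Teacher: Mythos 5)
Your proof is correct and is essentially the intended argument: since $\ph(t)/t^{p+\eps}=\ln(e+t)/t^\eps=1/f_\eps(t)$ with $f_\eps>0$, the almost increasing property of $f_\eps$ from Lemma \ref{epsilon} passes to its reciprocal as almost decreasing with the same constant $a_\eps$. The paper itself only cites this result from \cite[Lemma 3.3]{Arora-Crespo-Blanco-Winkert}, and your reduction matches the route that reference takes, so nothing further is needed.
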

Moreover, we recall that the following holds
\begin{equation}
\label{logineq}
\frac{t}{t+1}\leq \ln(1+t) \leq t \quad \text{for all} \ t>-1.
\end{equation}

\noindent \textit{Proof of Theorem \ref{thm:embd:results}} (i):
We show that the inclusion map $i: \XO\to L^{\ph}(\Omega)$ defined by $i(u) = u$ maps bounded sets in $\XO$ to bounded sets in $L^{\ph}(\Omega)$. Suppose $A$ be a bounded subset of $\XO$, {\it i.e.}, there exists a positive constant $C_1$, such that
\begin{equation}
\label{bdd-XO}
    \nX^p \leq C_1 \quad \text{for all} \ u \in A.
\end{equation}
In view of Lemma \ref{norm_modular}, it is enough to show that there exists a $C >0 $ such that
\begin{equation}
\label{bdd-ph-1}
    \int_{\Omega} \ph(|u|) ~dx = \into |u|^p \ln (e+|u|) ~dx \leq C \quad  \text{for all} \ u \in A.
\end{equation}
Let $u \in A.$ Using \eqref{logineq}, Theorem \ref{log-Sob-ineq}, the definition of $\qform$ and Lemma \ref{diff-F_p}, we obtain
\[
    \begin{split}
   \into |u|^p & \ln (e+|u|) ~dx \\
   &= \into |u|^p \ln (|u|) ~dx + \into |u|^p \ln \l(1+\frac{e}{|u|}\r) ~dx \\
   &\leq \frac{N}{p^2}\l(\qform(u,u)+k_0\nP^p\r)+\nP^p \ln \nP + e \into |u|^{p-1} ~dx\\
   & \leq \frac{N}{p^2}\l(\nX^p+\l(\rho_N+b\r)\nP^p+k_0\nP^p\r)\\
   & \qquad \qquad  +\nP^p \ln \nP + e \ |\Omega|^{\frac{1}{p}} \  \nP^{p-1}  <\infty.
   \end{split}
   \]
Finally, in the last estimate, using \eqref{bdd-XO} and the embedding $\XO \hookrightarrow L^p(\Omega)$ (see Lemma \ref{compct-embeding}), we obtain \eqref{bdd-ph-1}.
\qed

Next, we prove the compact embedding of the energy space $\XO$ into Orlicz-type spaces.
\vspace{0.1cm}\\
\noindent \textit{Proof of Theorem \ref{thm:embd:results}} (ii):
Consider the inclusion map
\[
i: \XO \to L^{\psi}(\Omega) \quad \text{defined by} \quad i(u)=u.
\]
Note that by Theorem \ref{thm:embd:results} (i) and \cite[Proposition 3.2.6]{Harjulehto-Hasto}, we have $\XO \hookrightarrow  L^{\ph}(\Omega) \hookrightarrow L^{\psi}(\Omega).$ Therefore, the inclusion map is well-defined and continuous. Let $\{f_n\}_{n \in \mathbb{N}}$ be a bounded sequence in $\XO.$ The reflexivity of $\XO$ and Lemma \ref{compct-embeding}, imply that there exists a subsequence $\{f_n\}_{n \in \N}$ (denoted by the same notation) such that $f_n \rightharpoonup f$ in $\XO$ and $f_n \to f$ in $L^p(\Omega).$ In view of Lemma \ref{norm_modular}, it is enough to show that for every $\eps >0,$ there exists a $n_0 \in \mathbb{N}$ such that
\[
\into \psi(|f_n-f|) ~dx \leq \eps \quad \text{for all} \ n \geq n_0.\]
Note that by \eqref{compact:cond}, for every $\eps_1>0$, there exist $\delta_1, \delta_2>0$ such that \begin{equation}\label{limits:est}
    \psi(t)\leq C(\eps_1) t^p  \quad \text{for} \ 0 < t < \delta_1 \quad \text{and} \quad \psi(t)\leq \eps_1 t^p\ln(e+t) \quad \text{for} \ t>\delta_2.
\end{equation}
Now, using the fact that $\psi$ is an increasing function and \eqref{limits:est}, we obtain
\[
\begin{split}
     \into \psi(|f_n-f|) ~dx & \leq C(\eps_1) \int_{\{|f_n-f|<\delta_1\}} |f_n-f|^p ~dx\\
   & \qquad + \frac{\psi(\delta_2)}{\delta_1^p}\int_{\{\delta_1\leq |f_n-f| \leq \delta_2\}} |f_n-f|^p  ~dx\\ &\qquad + \eps_1 \int_{\{|f_n-f|>\delta_2\}} |f_n-f|^p \ln (e+|f_n-f|) ~dx \\
   &\leq \l(C(\eps_1)  + \frac{\psi(\delta_2)}{\delta_1^p} \r)\|f_n-f\|_{L^p(\Omega)} \\ &\qquad  + \eps_1 \into |f_n-f|^p \ln (e+|f_n-f|) ~dx.
\end{split}
\]
Since $f_n \to f$ in $L^p(\Omega)$, for every $\eps >0$, there exists a $n_0 \in \mathbb{N}$ such that
\begin{equation}\label{limits:est-1}
\|f_n-f\|_{L^p(\Omega)} < \frac{\eps}{2 \l(C(\eps_1)  + \frac{\psi(\delta_2)}{\delta_1^p} \r)} \quad \text{for all} \ n \geq n_0.
\end{equation}
Moreover, using Theorem \ref{thm:embd:results} (i) and the fact that $\{f_n\}_{n \in \mathbb{N}}$ is a bounded sequence in $\XO$, we get
\begin{equation}\label{limits:est-2}
\into |f_n-f|^p \ln (e+|f_n-f|) ~dx \leq C \quad \text{for some} \ C>0.
\end{equation}
Finally, using \eqref{limits:est-1}, \eqref{limits:est-2} and taking $\eps_1= \frac{\eps}{2C}$ in \eqref{limits:est}, we obtain that $\XO$ is compactly embedded in $L^{\psi}(\Omega).$
\qed
\subsection{Sharpness of embeddings}
In the next result, we show that the embedding in Theorem \ref{thm:embd:results} (i) is not compact by constructing a suitable counterexample. In this subsection, without loss of generality, we can assume that $\Omega$ contains $0.$ Let $u\in \XO$ be such that $\into |u|^p \ln (e+|u|) ~dx =1.$ For $k \in \mathbb{N} \setminus \{1\}$,  define the sequence $\{u_k\}_{k \in \mathbb{N} \setminus \{1\}}$ as
\begin{equation}\label{def:seq-of-functions}
    u_k(x)= \begin{cases}
      {\l(\frac{k^N}{\ln k}\r)}^\frac{1}{p}u(kx) & x\in \frac{\Omega}{k}:=\{\frac{x}{k} \,:\, x \in \Omega\}, \\
      0 & \text{otherwise}.
   \end{cases}
\end{equation}

\begin{lemma}
    \label{Step1}
 The sequence $\{u_k\}_{k \in \mathbb{N} \setminus \{1\}}$ is bounded in $\XO.$
\end{lemma}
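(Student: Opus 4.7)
The plan is to estimate the two contributions $\|u_k\|_{L^p(\Omega)}^p$ and $[u_k]_{\XO}^p$ to $\|u_k\|_{\XO}^p$ separately by scaling, and to show that each is $O(1)$ as $k \to \infty$. Since $\Omega$ is convex and (WLOG) contains the origin, $\Omega/k \subseteq \Omega$ for every $k \geq 1$, so $u_k \in \XO$ and all integrations below may be taken over $\R^N$ with no boundary issues.

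First, I would handle the $L^p$ part. A direct substitution $y = kx$ gives
\[
\|u_k\|_{L^p(\Omega)}^p = \frac{k^N}{\ln k}\int_{\Omega/k} |u(kx)|^p\,dx = \frac{1}{\ln k}\int_{\Omega}|u(y)|^p\,dy,
\]
which is bounded (in fact $o(1)$ as $k \to \infty$). This part is routine.

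For the Gagliardo-type seminorm, I would perform the substitution $\xi = kx$, $\eta = ky$, noting $|x-y| = |\xi - \eta|/k$ and $dx\,dy = k^{-2N} d\xi\,d\eta$. The condition $|x-y|\leq 1$ becomes $|\xi-\eta|\leq k$, and the prefactor $k^N/\ln k$ combined with the Jacobian produces
\[
[u_k]_{\XO}^p = \frac{C_{N,p}}{\ln k}\iint_{\substack{\xi,\eta\in\Omega \\ |\xi-\eta|\leq k}}\frac{|u(\xi)-u(\eta)|^p}{|\xi-\eta|^N}\,d\xi\,d\eta.
\]
I would then split this domain into the regions $\{|\xi-\eta|\leq 1\}$ and $\{1<|\xi-\eta|\leq k\}$. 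The first region contributes $\frac{1}{\ln k}[u]_{\XO}^p$, which is bounded. For the second region, I would apply the elementary inequality $|u(\xi)-u(\eta)|^p \leq 2^{p-1}(|u(\xi)|^p+|u(\eta)|^p)$ and use symmetry to reduce the estimate to
\[
\frac{2^{p}C_{N,p}}{\ln k}\int_{\Omega}|u(\xi)|^p\left(\int_{1<|\xi-\eta|\leq k}\frac{d\eta}{|\xi-\eta|^N}\right)d\xi.
\]
Passing to polar coordinates centered at $\xi$, the inner integral is bounded above by $|\mathbb{S}^{N-1}|\int_1^k r^{-1}\,dr = |\mathbb{S}^{N-1}|\ln k$. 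The $\ln k$ factors cancel, leaving a bound of the form $C(N,p)\|u\|_{L^p(\Omega)}^p$.

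Combining the two estimates yields
\[
\|u_k\|_{\XO}^p \leq \frac{1}{\ln k}\bigl([u]_{\XO}^p + \|u\|_{L^p(\Omega)}^p\bigr) + C(N,p)\|u\|_{L^p(\Omega)}^p,
\]
which is uniformly bounded in $k$ since $u\in\XO$. The main (mild) obstacle is the bookkeeping around the region $\{1<|\xi-\eta|\leq k\}$ produced by the substitution: it is precisely in this region that the logarithmic rescaling factor $(\ln k)^{-1}$ is needed to compensate for the logarithmic divergence of $\int_1^k r^{-1}\,dr$, and this is the whole point of choosing the normalization $(k^N/\ln k)^{1/p}$ in \eqref{def:seq-of-functions}.
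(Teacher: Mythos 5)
Your proof is correct and follows the same route as the paper: a change of variables $\xi = kx$, $\eta = ky$, a split of the rescaled Gagliardo integral into the regions $\{|\xi-\eta|\le 1\}$ and $\{1<|\xi-\eta|\le k\}$, the trivial bound on the first region, and the elementary inequality plus polar coordinates on the second, with the $\ln k$ from $\int_1^k r^{-1}\,dr$ cancelling the $(\ln k)^{-1}$ normalisation. Note only that in your displayed rescaled seminorm the integration should run over $\xi,\eta\in\R^N$, not $\xi,\eta\in\Omega$ (as you yourself indicate just above and as your subsequent estimates assume); the paper's write-up differs only in that it does not spell out the easy $L^p$ estimate separately.
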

\begin{proof}
    By change of variables $s=kx,\ t=ky$ and the symmetry of the integrand, we obtain
\[
\begin{split}
    & \nX^p =C_{N,p} \inta \frac{|u_k(x)-u_k(y)|^p}{|x-y|^N} ~dx ~dy\\
    &= \frac{C_{N,p}}{\ln k} \bigg(\iint\limits_{\substack{s,t\in\R^N \\ \abs{s-t}\leq 1}} \frac{|u(s)-u(t)|^p}{|s-t|^N} ~ds ~dt + \iint\limits_{\substack{s,t\in\R^N \\ 1<\abs{s-t}\leq k}}\frac{|u(s)-u(t)|^p}{|s-t|^N} ~ds ~dt\bigg)\\
 & = \frac{1}{\ln k} \nX^p +  \frac{C_{N,p}}{\ln k} \iint\limits_{\substack{s,t\in\R^N \\ 1<\abs{s-t}\leq k}}\frac{|u(s)-u(t)|^p}{|s-t|^N} ~ds ~dt =:I_1+I_2.
\end{split}
\]
A straightforward calculation leads to
\[
\begin{split}
    I_1=\frac{1}{\ln k} \nX^p \leq \frac{1}{\ln 2} \nX^p
\end{split}
\]
and
\[
\begin{split}
    I_2 & = \frac{C_{N,p}}{\ln k} \iint\limits_{\substack{s,t\in\R^N \\ 1<\abs{s-t}\leq k}}\frac{|u(s)-u(t)|^p}{|s-t|^N} ~ds ~dt  \leq  \frac{2^{p-1}C_{N,p}}{\ln k}\iint\limits_{\substack{s,t\in\R^N \\ 1<\abs{s-t}\leq k}} \frac{|u(s)|^p+|u(t)|^p}{|s-t|^N} ~ds ~dt \\
 & \leq \frac{2^p C_{N,p}}{\ln k} \int_{s\in \R^N}|u(s)|^p\l(\int_{t \in  \overline{B_k(s)} \setminus \overline{B_1(s)} } \frac{dt}{|s-t|^N}\r) ~ds\\
 & \leq \frac{2^p C_{N,p}}{\ln k} \int_{s\in \R^N}|u(s)|^p \l(\int_{z \in  \overline{B_k(0)} \setminus \overline{B_1(0)}} \frac{dz}{|z|^N}\r) ~ds\\
 & \leq \frac{2^p C_{N,p} |\mathbb{S}^{N-1}|}{\ln k} \nP^p \l(\int_1^k \frac{dr}{r}\r) \leq  2^p C_{N,p} |\mathbb{S}^{N-1}| \nP^p,
\end{split}
\]
where $|\mathbb{S}^{N-1}|$ denotes the surface area of sphere in $\mathbb{R}^N.$ Collecting the above estimates for $I_1$ and $I_2$, we get that $u_k \in \XO$, for all $k\in \N \setminus\{1\}$ and the sequence $\{u_k\}_{k \in \mathbb{N} \setminus \{1\}}$ is bounded in $\XO.$
\end{proof}
\begin{lemma}
    \label{Step2}
There exist $C_0>0$ and $\tilde{k} \in \mathbb{N} \setminus \{1\}$ such that $\|u_k\|_{\varphi} \geq C_0$ for all $k \geq \tilde{k}.$
\end{lemma}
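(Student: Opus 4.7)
The plan is to evaluate the modular $\varrho_\ph(u_k)$ via a scaling computation, apply Fatou's lemma to bound it below by a positive constant independent of $k$, and then convert this into a norm lower bound through Lemma \ref{norm_modular}.

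First, I would use the substitution $y = kx$ in the modular. Setting $A_k := \bigl(k^N/\ln k\bigr)^{1/p}$, so that $u_k(x) = A_k\, u(kx)$ on $\Omega/k$ and vanishes elsewhere, a direct change of variables yields the clean identity
\[
\varrho_\ph(u_k) = \into \ph(|u_k(x)|)\, dx = \frac{1}{\ln k} \into |u(y)|^p \ln\bigl(e + A_k\, |u(y)|\bigr)\, dy,
\]
since the Jacobian contributes $A_k^p / k^N = 1/\ln k$, the precise balance built into the normalization of $u_k$.

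Next, I would extract the leading-order asymptotics of the integrand pointwise. For any $y \in \Omega$ with $u(y) \neq 0$, splitting
\[
\frac{\ln(e + A_k\, |u(y)|)}{\ln k} = \frac{\ln A_k}{\ln k} + \frac{\ln\bigl(|u(y)| + e/A_k\bigr)}{\ln k}
\]
and using $\ln A_k = (N \ln k - \ln \ln k)/p$, one sees that this quantity tends to $N/p$ as $k \to \infty$ (the second summand vanishes because $|u(y)|$ is a fixed positive number and $A_k \to \infty$). The integrand is nonnegative, so Fatou's lemma gives
\[
\liminf_{k \to \infty} \varrho_\ph(u_k) \;\geq\; \frac{N}{p}\, \nP^p.
\]
The normalization $\into |u|^p \ln(e + |u|)\, dx = 1$ forces $u \not\equiv 0$ and hence $\nP > 0$, so the right-hand side equals some constant $c_0 > 0$. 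Consequently, there exists $\tilde{k} \in \mathbb{N}\setminus\{1\}$ with $\varrho_\ph(u_k) \geq c_0/2$ for all $k \geq \tilde{k}$.

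Finally, I would invoke Lemma \ref{norm_modular} to pass from the modular bound to the Luxemburg norm. The $\Phi$-function $\ph(t) = t^p \ln(e + t)$ satisfies (aInc)$_p$ (since $\ph(t)/t^p = \ln(e+t)$ is increasing) and, by Lemma \ref{ln(e+t)}, satisfies (aDec)$_{p+\eps}$ for a suitable $\eps > 0$. Lemma \ref{norm_modular} then yields
\[
\|u_k\|_\ph \;\geq\; \min\Bigl\{\bigl(c_0/(2a)\bigr)^{1/p},\; \bigl(c_0/(2a)\bigr)^{1/(p+\eps)}\Bigr\} =: C_0 > 0
\]
for every $k \geq \tilde{k}$, as required. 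The only genuinely delicate point is the second step: the specific scaling factor $(k^N/\ln k)^{1/p}$ in the definition of $u_k$ is exactly what balances the $k^N$ concentration against the logarithmic factor in $\ph$ so that the modular tends to a nontrivial, strictly positive limit rather than to $0$ or $\infty$.
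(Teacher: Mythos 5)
Your proof is correct and follows essentially the same route as the paper: the same change of variables turning the Jacobian times $A_k^p$ into the factor $1/\ln k$, a lower bound on the modular driven by $\ln A_k \sim \tfrac{N}{p}\ln k$, and Lemma \ref{norm_modular} to convert the modular bound into a Luxemburg-norm bound. The only (harmless) difference is that you extract the lower bound via Fatou's lemma over all of $\Omega$, obtaining the sharp limit $\tfrac{N}{p}\nP^p$, whereas the paper restricts to a level set $\Omega_\beta=\{|u|\geq\beta\}$ and uses an explicit elementary estimate of the logarithm.
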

\begin{proof}
For $\beta \in (0,1),$ $\Omega_\beta:= \{x\in \Omega \ | \  |u(x)| \geq \beta \}$. For every $u \in \XO \setminus \{0\}$, we can choose a $\beta = \beta(u) >0$ small enough such that $\Omega_\beta$ is nonempty and $|\Omega|_\beta \neq 0.$ Then, we have
\[
\begin{split}
   \into |u_k|^p \ln (e+|u_k|) ~dx &= \frac{k^N}{\ln k}\into |u(kx)|^p \ln \l(e+{\l(\frac{k^N}{\ln k}\r)}^\frac{1}{p}|u(kx)|\r)  ~dx \\
    & \geq \frac{1}{\ln k} \int_{\Omega_\beta} |u(y)|^p  \ln \l(e+{\l(\frac{k^N}{\ln k}\r)}^\frac{1}{p}|u(y)|\r) ~dy\\
    & \geq \frac{1}{\ln k} \int_{\Omega_\beta} \beta^p \ln \l(e+{\l(\frac{k^N}{\ln k}\r)}^\frac{1}{p}\beta\r) ~dy\\
    & \geq \frac{\beta^p}{p\ln k} \ln {\l(\frac{k^N \beta ^p}{\ln k}\r)} \ |\Omega_\beta| = \frac{\beta^p |\Omega_\beta|}{p} \bigg(N+\frac{\ln \beta^p}{\ln k}-\frac{\ln(\ln k)}{\ln k}\bigg).
\end{split}
\]
Since $\frac{\ln(\ln k)}{\ln k} \to 0$ as $k \to \infty,$ there exist constants $C_0 >0$ and $\tilde{k} \in \mathbb{N} \setminus \{1\}$ depending upon $\beta, N, p$ such that
\[
N+\frac{\ln \beta^p}{\ln k}-\frac{\ln(\ln k)}{\ln k} \geq C_0 >0 \quad \text{for all} \ k \geq \tilde{k}.
\]
The above implies
\begin{equation}\label{lower:est:norm}
   \into |u_k|^p \ln (e+|u_k|) ~dx \geq C_0 \frac{\beta^p |\Omega_\beta|}{p} \quad \text{for all} \ k \geq k_0,
\end{equation}
and in view of Lemma \ref{norm_modular}, we get the claimed assertion.
\end{proof}
\noindent \textit{Proof of Theorem \ref{thm:sharp:embd:results}} (i):
Let $u\in \XO$ be such that $\into |u|^p \ln (e+|u|) ~dx =1.$ Let $k \in \mathbb{N} \setminus \{1\}$ and $u_k$ be as defined in \eqref{def:seq-of-functions}. It is easy to see that $u_k \to 0$ a.e. in $\Omega.$ Now, we proceed by contradiction. Suppose that the embedding  $\XO \hookrightarrow L^{\ph}(\Omega)$ is compact. By Lemmas \ref{Step1} and \ref{Step2}, the reflexivity of $\XO$, the compact embedding $\XO\hookrightarrow L^p(\Omega)$ in Lemma \ref{compct-embeding}, there exists a $\tilde{u} \in \XO \setminus \{0\}$ such that (up to a subsequence) $u_k \rightharpoonup \tilde{u}$ in $\XO$ , $u_k \to \tilde{u}$ in $L^{\ph}(\Omega) \cap L^p(\Omega)$ and $u_k \to \tilde{u} \neq 0$ a.e. in $\Omega.$ But this contradicts the fact that $u_k \to 0.$ Hence, $\XO \hookrightarrow L^{\ph}(\Omega)$ is not compact.
\qed

Next, we prove that the embedding in Theorem \ref{thm:embd:results} (i) cannot be improved further in the Orlicz space setting. For this, let $\gamma:[0,\infty) \to [0, \infty)$ be a convex $\Phi$-function satisfying \eqref{embd:cond:smaller}. In view of \cite[Proposition 3.2.6]{Harjulehto-Hasto}, it is easy to see that $L^{\gamma}(\Omega) \hookrightarrow L^{\ph}(\Omega).$ \vspace{0.1cm}\\

\noindent \textit{Proof of Theorem \ref{thm:sharp:embd:results}} (ii):
Let $u \in \XO$ such that $\Omega_\beta:=\l\{ x \in \Omega: |u(x)| \geq \beta \r\} \neq \emptyset$ and $|\Omega|_\beta \neq 0.$ Define
\[
A_u:= \{u_k : k \in \mathbb{N} \setminus \{1\}\},
\]
where $u_k'$s are defined in \eqref{def:seq-of-functions}. By Lemma \ref{Step1}, the set $A_u$ is bounded in $\XO.$ To prove the claim, we show that the set $A_u$ is not bounded in $L^\gamma(\Omega).$
On the contrary, we assume that there exists a constant $C>0$ (independent of $k$) such that
   \begin{equation}
   \label{bdd_gamma}
   \into \gamma (|u_k|) ~dx \leq C \quad \text{for all} \  u_k \in A_u.
   \end{equation}
The property \eqref{embd:cond:smaller} implies that for every $M>0$, there exists a $\ell_0 \gg e$ depending upon $M$ and $\beta$ such that
\begin{equation}\label{lower:est:gamma1}
    \gamma(\ell \beta)> M \ph(\ell \beta) \quad \text{for all} \ |\ell|>\ell_0.
\end{equation}
Choose $k_0 \in \mathbb{N}$ large enough such that
\[
\l(k^N/\ln k\r)^\frac{1}{p} \frac{|u(x)|}{\beta} \geq \l(k^N/\ln k\r)^\frac{1}{p} > \ell_0 \quad \text{for all} \ k \geq k_0 \ \text{and} \ x \in \Omega_\beta.
\]
Applying change of variables and using \eqref{lower:est:gamma1} for $k \geq k_0$, we obtain
\begin{equation}\label{lower:est:gamma2}
    \begin{split}
   \into \gamma(|u_k|)~dx & = \frac{1}{k^N} \int_{\Omega} \gamma\l({\l(\frac{k^N}{\ln k}\r)}^\frac{1}{p} |u(x)|\r)~dx \\
   & \geq \frac{1}{k^N} \int_{\Omega_\beta} \gamma\l({\l(\frac{k^N}{\ln k}\r)}^\frac{1}{p} |u(x)|\r)~dx \\
   & > \frac{M}{\ln k} \int_{\Omega_\beta} |u|^p \ln \l(e+{\l(\frac{k^N}{\ln k}\r)}^\frac{1}{p} \beta \frac{|u(x)|}{\beta}\r) ~dx\\
   &  > \frac{M \ln \l[\beta{\l(\frac{k^N}{\ln k}\r)}^{\frac{1}{p}}\r]}{\ln k}\int_{\Omega_\beta} |u(x)|^p  ~dx \\
       &> M \frac{|\Omega_\beta| \beta^p}{p} \l(N - \frac{\ln(\ln k)}{\ln k}+\frac{p\ln \beta}{\ln k}\r).
   \end{split}
\end{equation}
Note that $\frac{\ln(\ln k)}{\ln k} - \frac{ 2 \ln \beta}{\ln k} \to 0$ as $k \to \infty,$ {\it i.e,} there exists a $k_1 \in \mathbb{N}$ large enough such that
\[
\frac{\ln(\ln k)}{\ln k} - \frac{ p \ln \beta}{\ln k} \leq \frac{N}{2} \quad \text{for all} \ k \geq k_1.
\]
Finally, choosing $k \geq \max\{k_0, k_1\}$ and $M = \frac{4C}{\beta^p |\Omega_\beta|N}$ in \eqref{lower:est:gamma1} and \eqref{lower:est:gamma2}, we get
\[\into \gamma(|u_k|)~dx > M \frac{|\Omega_\beta| \beta^p N}{2p} > 2C,\]
which is a contradiction. Hence, $A_u$ is not bounded in $L^\gamma(\Omega)$ and $\XO$ is not continuously embedded in $L^\gamma(\Omega).$
\qed
\section{Logarithmic \texorpdfstring{$p$}{p}-Laplacian equations with critical growth}
\label{{p}-Logarithmic Laplacian equations with critical growth}
Before beginning this section, we prove an elementary result.
\begin{lemma}
    \label{log t>1}
    For any $\tau>0,$ it holds that $t^\tau - e \tau \ln t \geq 0$ for $t\geq 1.$
\end{lemma}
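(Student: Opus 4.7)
The plan is to treat this as a one-variable minimization problem for the function
\[
f(t) := t^\tau - e\tau \ln t, \qquad t \geq 1.
\]
I will show that $f$ attains its minimum on $[1,\infty)$ at a unique interior point and that this minimum equals zero.

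First I would compute $f'(t) = \tau t^{\tau-1} - \dfrac{e\tau}{t} = \dfrac{\tau}{t}\bigl(t^\tau - e\bigr)$. Since $\tau>0$, the sign of $f'(t)$ is the sign of $t^\tau - e$, so $f'(t) < 0$ on $[1, e^{1/\tau})$, $f'(e^{1/\tau}) = 0$, and $f'(t) > 0$ on $(e^{1/\tau}, \infty)$. Note $e^{1/\tau} \geq 1$ for any $\tau>0$, so the critical point lies in the domain under consideration.

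Next I would evaluate $f$ at the critical point:
\[
f\bigl(e^{1/\tau}\bigr) = \bigl(e^{1/\tau}\bigr)^\tau - e\tau \ln\bigl(e^{1/\tau}\bigr) = e - e\tau\cdot \tfrac{1}{\tau} = 0.
\]
Combined with the monotonicity information above, this shows that $f(t) \geq f(e^{1/\tau}) = 0$ for all $t \geq 1$, which is exactly the claimed inequality. As a sanity check one may also verify the boundary value $f(1) = 1 \geq 0$.

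There is no real obstacle here: the argument is a straightforward calculus exercise, and the only point worth flagging is that the critical point $e^{1/\tau}$ happens to lie in $[1,\infty)$ for every admissible $\tau$, so the interior minimum is indeed the global minimum on $[1,\infty)$.
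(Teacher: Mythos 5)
Your proof is correct and follows essentially the same route as the paper: the paper studies $f(t)=\ln t-\tfrac{t^\tau}{e\tau}$ and locates its unique maximum at $t=e^{1/\tau}$, which is just your argument with the roles of the two terms rearranged. Your write-up is in fact slightly more explicit, since you actually evaluate the function at the critical point to confirm the extremal value is zero.
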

\begin{proof}
Define $f(t)= \ln t -\frac{t^\tau}{e\tau}.$ Then,
$f'(t)=0$ at $e^{\frac{1}{\tau}}$. It is easy to see that $f'(t)>0$ for $ t \in (0,e^{\frac{1}{\tau}})$ and $f'(t)<0$ for $ t >e^{\frac{1}{\tau}}$, {\it i.e.}, $e^{\frac{1}{\tau}}$ is the unique maximum of $f$. This completes the proof.
\end{proof}
\begin{lemma}
    \label{ln-limit}
    Let $(u_n)_{n \in \N}\subseteq L^\ph(\Omega)$ be such that $u_n \to u$ in $L^\ph(\Omega).$ Then, for every $v \in L^\ph(\Omega),$ it holds that
    \begin{enumerate}
    \item[\textnormal{(i)}]
    $\lim\limits_{n \to \infty} \into \ln|u_n| h(u_n) v ~dx = \into \ln |u| h(u)v ~dx,$

    \item [\textnormal{(ii)}] $\lim\limits_{n \to \infty} \mathcal{F}_p(u_n,v)= \mathcal{F}_p(u,v)$ and $\lim\limits_{n \to \infty}\int_{\R^N} h(u_n)v ~dx = \int_{\R^N} h(u)v ~dx$,

    \item [\textnormal{(iii)}]  $\lim\limits_{n\to \infty} \into g(x,u_n)v ~dx= \into g(x,u)v ~dx, \lim\limits_{n\to \infty} \into G(x,u_n) ~dx= \into G(x,u) ~dx$ provided $g$ satisfies \eqref{assump g1}-\eqref{assump g3}.
    \end{enumerate}
\end{lemma}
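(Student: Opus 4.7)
The plan is to apply Vitali's convergence theorem to each integral. Since $\ph(t) = t^p \ln(e+t) \geq t^p$ for $t \geq 0$, the embedding $L^\ph(\Omega) \hookrightarrow L^p(\Omega)$ is continuous, so $u_n \to u$ in $L^p(\Omega)$ as well and $(u_n)$ is bounded in $L^\ph(\Omega)$, i.e.\ $\sup_n \varrho_\ph(u_n) < \infty$. Passing to a subsequence I may also assume $u_n \to u$ a.e.\ in $\Omega$; the usual ``every subsequence has a sub-subsequence'' device then upgrades each conclusion below to convergence of the full sequence.

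For (i), the integrand converges a.e.\ to $\ln|u| h(u) v$ because $t \mapsto |t|^{p-2} t \ln|t|$ is continuous on $\R$ (set to $0$ at $t=0$). The key is equi-integrability. On $\{|u_n| \leq 1\}$, the factor $|u_n|^{p-1}|\ln|u_n||$ is bounded, so the integrand is dominated by a multiple of $|v| \in L^1(\Omega)$. On $\{|u_n| > 1\}$, I use $|\ln|u_n|| \leq \ln(e+|u_n|)$ together with the Young--Orlicz inequality $ab \leq \ph(a) + \ph^*(b)$ applied to $a = |v|$, $b = |u_n|^{p-1}\ln(e+|u_n|)$. A direct calculation of $\ph^*$ (yielding $\ph^*(s) \sim s^{p'}/(\ln s)^{p'-1}$ for large $s$) shows $\ph^*\bigl(|u_n|^{p-1}\ln(e+|u_n|)\bigr) \leq C\,\ph(|u_n|)$ when $|u_n|\geq 1$, so $\|\,|u_n|^{p-1}\ln(e+|u_n|)\,\|_{L^{\ph^*}}$ stays uniformly bounded; this supplies both a uniform $L^1$ bound and equi-absolute continuity, and Vitali delivers the limit.

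For (ii), the second limit reduces via the zero-extension to $\int_\Omega h(u_n) v\,dx \to \int_\Omega h(u) v\,dx$, which holds because $h(u_n) \to h(u)$ in $L^{p'}(\Omega)$ (Vitali, using a.e.\ convergence and $|h(u_n)|^{p'} = |u_n|^p$) and $v \in L^p(\Omega)$. For $\mathcal{F}_p(u_n,v) \to \mathcal{F}_p(u,v)$, the zero-extension and symmetry reductions employed in the proof of Lemma \ref{diff-F_p} collapse the difference to an integral over $\{x,y\in\Omega,\ |x-y|\geq 1\}$, on which the kernel $|x-y|^{-N}$ is bounded; combining Lemma \ref{Lemma1-Lindgren} to control $h(u_n(x)-u_n(y)) - h(u(x)-u(y))$ with H\"older in $L^p$--$L^{p'}$ and the $L^p$-convergence of $u_n$ closes this part.

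For (iii), the bounds \eqref{assumption g(x,t)} and \eqref{assumption G(x,t)} derived from \eqref{assump g1}--\eqref{assump g3} give, for any $\eps > 0$, $|g(x,u_n)v| \leq a_1|u_n|^{p-1}|v| + \eps |u_n|^{p-1}|\ln|u_n||\,|v|$ and an analogous bound on $|G(x,u_n)|$. The power pieces are handled by $L^p$--$L^{p'}$ duality together with $u_n \to u$ in $L^p(\Omega)$, and the logarithmic pieces are handled exactly as in (i), taking $v \equiv 1 \in L^\ph(\Omega)$ for the $G$ integral (admissible since $|\Omega| < \infty$); pointwise continuity of $g(x,\cdot)$ and $G(x,\cdot)$ combined with this equi-integrability concludes (iii) via Vitali. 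The main obstacle is the equi-integrability step in (i): the logarithmic weight sits outside classical $L^p$ duality and forces the Orlicz-conjugate computation above, which is the only nontrivial input; once it is in place, (ii) and (iii) reduce to routine $L^p$--$L^{p'}$ manipulations coupled with (i).
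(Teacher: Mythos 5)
Your proposal is correct and follows essentially the same route as the paper: reduce to $L^p$-convergence plus a.e.\ convergence, establish equi-integrability of the logarithmic term via an Orlicz-conjugate estimate (the paper uses the Orlicz--H\"older inequality together with $\ph^\ast(\ph(t)/t)\leq\ph(t)$, which sidesteps your explicit asymptotic computation of $\ph^\ast$ but amounts to the same bound), and conclude with Vitali; the $\mathcal{F}_p$ term is likewise collapsed by symmetry and zero-extension to the region $\{x,y\in\Omega,\ |x-y|\geq 1\}$. The only cosmetic divergence is in part (ii), where you pass through $h(u_n)\to h(u)$ in $L^{p'}(\Omega)$ and Lemma \ref{Lemma1-Lindgren} with H\"older, whereas the paper applies Vitali directly to the double integral with an $n$-dependent but uniformly integrable majorant — both are sound.
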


\begin{proof}
First we prove (i). Since $u_n \to u$ in $L^\ph(\Omega)$, Theorem \ref{thm:embd:results} gives
\begin{equation}
\label{convergence}
u_n \to u \ \text{in} \ L^p(\Omega) \ \text{and a.e. in} \  \R^N.
\end{equation}
Let $\delta>0$ and $A \subseteq \Omega$ such that $|A|<\delta$ and $\ph^\ast$ be the conjugate of $\ph$. By the H\"older inequality (\cite[Lemma 3.2.11]{Harjulehto-Hasto}), we have
\begin{equation}\label{holder-est-1}
    \int_{A} |\ln|u_n| h(u_n) v| ~dx \leq 2 \| \ln|u_n|h(u_n) \|_{L^{\ph^\ast}(A)}\|v\|_{L^\ph(A)}.
\end{equation}
    It is easy to see that
    \[
    |\ln|u_n| h(u_n)| \leq \begin{cases}
    \frac{1}{e(p-1)}  & \ \text{if} \ |u_n| < 1\\
    \ln(e+|u_n|)|u_n|^{p-1}  & \ \text{if} \ |u_n| \geq  1.
    \end{cases}
    \]
    Since $\ph$ is increasing and fulfills $(Dec)_{2+\eps}$, by \cite[Proposition 2.4.9]{Harjulehto-Hasto}, $\ph^\ast$ is increasing. Thus, by \cite[Theorem 2.4.8 and Corollary 2.4.11]{Harjulehto-Hasto}, we obtain

    \[
    \begin{split}
    \ph^\ast(|\ln|u_n| h(u_n)|) & \leq \ph^\ast(\ln(e+|u_n|)|u_n|^{p-1}) = \ph^\ast\l(\frac{\ph(|u_n|)}{|u_n|}\r) \\
    & \leq \ph(|u_n|) \quad \text{for} \ |u_n| \geq 1.
    \end{split}
    \]
Since $u_n \to u$ in $L^\ph(\Omega)$, this implies the sequence $(u_n)$ is bounded in $L^\ph(\Omega)$. In view of Lemma \ref{norm_modular} and the above calculations, there exists a $C_1>0$ (independent of $n$ and $A$) such that
\begin{equation}\label{phiast-upper}
    \int_{A} \ph^\ast(|\ln|u_n| h(u_n)|) ~dx \leq \frac{|A|}{e(p-1)} + \int_{A} \ph(|u_n|) ~dx \leq C_1 \ \text{for all} \ n \in \mathbb{N}.
\end{equation}
This further gives $\{\ln|u_n| h(u_n) v\}_{n\in \N}$ is a family of uniformly integrable functions on $\Omega.$ Finally, using \eqref{convergence} and Vitali convergence theorem, we get
 \[\lim\limits_{n \to \infty} \into \ln|u_n| h(u_n) v ~dx = \into \ln |u| h(u)v ~dx. \]
Now, by the symmetry of the integrand in the definition of $\mathcal{F}_p(\cdot,\cdot),$ and the fact that $u_n = 0 =v$ in $\R^N\setminus\Omega$ and \eqref{convergence}, we have
\[\mathcal{F}_p(u_n,v)=C_{N,p}\iint\limits_{\substack{x,y\in \Omega \\ \abs{x-y}\geq1}} \frac{(h(u_n(x)-u_n(y))-h(u_n(x)))v(x)}{|x-y|^{N}} ~dx ~dy\]
and
\[\frac{(h(u_n(x)-u_n(y))-h(u_n(x)))v(x)}{|x-y|^{N}} \to \frac{(h(u(x)-u(y))-h(u(x)))v(x)}{|x-y|^{N}} \ \text{a.e. in} \ \Omega\times\Omega.\]
By the same arguments used in deriving \eqref{p-log-tail-est} and Young's inequality, we get
\[
\begin{split}
\bigg| & \frac{(h(u_n(x)-u_n(y))-h(u_n(x)))v(x)}{|x-y|^{N}}\bigg|\\
         & \qquad \qquad \leq  C(p)
    \begin{cases}
    |u_n(y)|^{p} + |v(x)|^p & \text{if} \ p \in (1,2),\\
    |u_n(y)|^{p} + |u_n(x)|^{p} + |v(x)|^p & \text{if} \ p \geq 2.
    \end{cases}
\end{split}
\]
By \eqref{convergence} and applying Vitali convergence theorem, we get the claims in (ii).
Now, we proceed to prove (iii). By \eqref{assumption g(x,t)}, we have
 \[|g(x,u_n)v| \leq a_1 |u_n|^{p-1}|v|+\eps |u_n|^{p-1}|\ln|u_n|||v|.\]
 Thus, for any $\eta>0$, by choosing $\eps$ small enough in \eqref{assumption g(x,t)}, \eqref{assumption G(x,t)} and using \eqref{convergence}-\eqref{phiast-upper}, there exists a $\delta>0$ such that for all $A \subset \Omega$, $|A|< \delta$, we have
 \[
 \begin{split}
     \l|\int_A g(x,u_n)v ~dx\r| &\leq  a_1 \int_A |h(u_n)| |v| ~dx+ \eps \int_A |h(u_n)| |\ln|u_n|||v|~dx\\
     &\leq a_1 \|u_n\|_{L^p(A)}^{p-1} \|v\|_{L^p(A)} + 2 \eps \| \ln|u_n||h(u_n)| \|_{L^{\ph^\ast}(A)}\|v\|_{L^\ph(A)} < \eta
 \end{split}
 \]
 and
  \[
 \begin{split}
     \l|\int_A G(x,u_n) ~dx\r| &\leq  a_2 \int_A |u_n|^p ~dx+ \eps \int_A |u_n|^p |\ln|u_n||~dx\\
     &\leq a_1 \|u_n\|_{L^p(A)}^{p} +  \eps \left(\frac{|A|}{ep} + \int_A \ph(|u_n|)~dx\right)  < \eta.
 \end{split}
 \]
Finally, applying Vitali's convergence theorem, we get
\[  \lim_{n\to \infty} \into g(x,u_n)v ~dx= \into g(x,u)v ~dx, \quad  \lim_{n\to \infty} \into G(x,u_n) ~dx= \into G(x,u) ~dx.\]
This proves (iii).
\end{proof}
\subsection{Properties of the energy functional}

Recall that by \eqref{energy-wei}, $J_{L_{\Delta_p}}$ is given by
\[
J_{L_{\Delta_p}}(u) = \frac{1}{p} \l(\mathcal{E}_p(u,u) + \rho_N \into |u|^p ~dx\r)- \into G(x,u) ~dx   + I(u),
\]
where
\[
I(u) := \frac{1}{p} \mathcal{F}_p(u,u) - \frac{\mu}{p^2}\into |u|^p(\ln|u|^p -1) ~dx.
\]
\begin{lemma}
    \label{differentiability of J}
Let $g$ satisfy \eqref{assump g1}-\eqref{assump g3}. Then, the energy functional $J_{L_{\Delta_p}}$ defined in \eqref{energy-wei} is of class $C^1$ in $\XO$ and $\text{for all} \ v \in \XO$
\begin{equation}\label{energy:deriv}
    (J'_{L_{\Delta_p}}(u),v)_{\XO} =  \qform(u,v)- \into g(x,u) v ~dx - \mu \into \ln |u|h(u)v ~dx.
\end{equation}
\[\]
Moreover, $J'_{L_{\Delta_p}}(u)\in \mathcal{L}(\XO,\R)$ for every $u\in \XO$, where $\mathcal{L}(\XO,\R)$ is the set of all bounded linear functionals on $\XO.$
    \end{lemma}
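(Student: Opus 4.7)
The plan is to decompose $J_{L_{\Delta_p}}=J_1+J_2+J_3+J_4$, where
\[
J_1(u)=\tfrac{1}{p}\bigl(\mathcal{E}_p(u,u)+\rho_N\|u\|_{L^p(\Omega)}^p\bigr),\quad J_2(u)=\tfrac{1}{p}\mathcal{F}_p(u,u),
\]
\[
J_3(u)=-\into G(x,u)\,dx,\qquad J_4(u)=-\tfrac{\mu}{p^2}\into |u|^p(\ln|u|^p-1)\,dx,
\]
and prove that each $J_i$ is of class $C^1$ on $\XO$, with Gateaux derivative matching the corresponding term on the right of \eqref{energy:deriv}. Summing then yields both formula \eqref{energy:deriv} and the $C^1$ regularity.

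First I would compute the Gateaux derivative of each $J_i$ via the difference quotients. For $J_1$, one differentiates $s\mapsto \tfrac{1}{p}|s|^p$ to get $h(s)$; the pointwise limit of $\bigl(|a+tb|^p-|a|^p\bigr)/(pt)$ is $h(a)b$, and by convexity it is dominated by $C_p(|a|^{p-1}+|b|^{p-1})|b|$, so dominated convergence on the measure $|x-y|^{-N}\mathbf{1}_{|x-y|\le 1}\,dx\,dy$ produces the $\mathcal{E}_p(u,v)+\rho_N\int h(u)v$ contribution. For $J_2$, the symmetry and structure of $\Upsilon$ identify its Gateaux derivative at $u$ in direction $v$ with $\mathcal{F}_p(u,v)$, while Lemma~\ref{diff-F_p} gives the $L^p$-bound needed to pass to the limit under the integral. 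For $J_3$, the growth estimate \eqref{assumption g(x,t)} and the Orlicz embedding $\XO\hookrightarrow L^{\ph}(\Omega)$ of Theorem~\ref{thm:embd:results}(i) together with the H\"older bound $\ph^\ast(\ln|u|\,h(u))\le\ph(|u|)$ for $|u|\ge 1$ (as in the proof of Lemma~\ref{ln-limit}) make the difference quotients uniformly integrable, and Vitali's theorem yields the derivative $-\into g(x,u)v\,dx$. For $J_4$, the antiderivative identity $\tfrac{d}{ds}\bigl[\tfrac{1}{p^2}|s|^p(\ln|s|^p-1)\bigr]=h(s)\ln|s|$ together with the pointwise bound $|h(s)\ln|s||\le C(1+\ph(|s|)/|s|)$ and the same Orlicz-H\"older trick allow differentiation under the integral.

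Next, I would establish continuity of $u\mapsto J_i'(u)\in\XO^*$. Let $u_n\to u$ in $\XO$. By Lemma~\ref{compct-embeding}, $u_n\to u$ in $L^p(\Omega)$, and by Theorem~\ref{thm:embd:results}(i) combined with strong $\XO$-convergence, $u_n\to u$ in $L^{\ph}(\Omega)$. Then Lemma~\ref{ln-limit}(i)–(iii) directly handle the continuity of $J_3'$, $J_4'$ and of the $\mathcal{F}_p$ and $\rho_N$-parts of $J_1'+J_2'$, testing against $v$ in the unit ball of $\XO$ uniformly via the bounds derived in that lemma. The remaining $\mathcal{E}_p$-part is the standard fractional-$p$-Laplace energy on $\{|x-y|\le 1\}$: pointwise a.e.\ convergence of $h(u_n(x)-u_n(y))/|x-y|^{N/p'}$ to $h(u(x)-u(y))/|x-y|^{N/p'}$, together with boundedness in $L^{p'}$ of that family (since $\|u_n\|_\XO$ is bounded), gives weak convergence in $L^{p'}$, hence $\mathcal{E}_p(u_n,v)\to\mathcal{E}_p(u,v)$ uniformly for $v$ in the unit ball of $\XO$ by a Brezis--Lieb / Vitali argument.

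Finally, to conclude $J'_{L_{\Delta_p}}(u)\in\mathcal{L}(\XO,\R)$, I collect the linear estimates: H\"older bounds $|\mathcal{E}_p(u,v)|\le[u]_{\XO}^{p-1}[v]_{\XO}$ on the short-range piece, Lemma~\ref{diff-F_p} for $\mathcal{F}_p$, \eqref{assumption g(x,t)} and the Orlicz--H\"older duality for $\int g(x,u)v$ and $\mu\int h(u)\ln|u|\,v$. Each is dominated by a constant (depending on $u$) times $\|v\|_{\XO}$, giving the linearity and boundedness. The step I expect to be the main obstacle is the continuity of the $J_4'$ component, since $t\mapsto h(t)\ln|t|$ blows up logarithmically at infinity and the test $v$ lies only in $\XO$; the resolution is precisely the duality $\ph^\ast(\ln|u|\,h(u))\le\ph(|u|)$ on $\{|u|\ge 1\}$ combined with the boundedness of $h(t)\ln|t|$ on $\{|t|<1\}$, which is what makes the argument of Lemma~\ref{ln-limit}(i) applicable here.
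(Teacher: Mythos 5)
Your overall plan mirrors the paper's: split $J_{L_{\Delta_p}}$ into the norm-type part, the tail part $\mathcal{F}_p$, the $G$-part, and the logarithmic part; compute the G\^ateaux derivative of each via mean-value/difference-quotient arguments dominated by the $p$-logarithmic Sobolev inequality or the Orlicz estimates from Lemma \ref{ln-limit}; then prove boundedness and continuity. That is essentially what the paper does.

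There is, however, a concrete gap in the continuity step for the $\mathcal{E}_p$ component. You argue: pointwise a.e.\ convergence of $D_{0,p'}(u_n)$, plus boundedness in $L^{p'}$, gives \emph{weak} convergence in $L^{p'}$, ``hence $\mathcal{E}_p(u_n,v)\to\mathcal{E}_p(u,v)$ uniformly for $v$ in the unit ball of $\XO$''. Weak convergence of $D_{0,p'}(u_n)$ in $L^{p'}$ only gives convergence of $\mathcal{E}_p(u_n,v)$ for each \emph{fixed} $v$ --- it says nothing uniform over the unit ball, and neither Vitali nor Brezis--Lieb upgrades weak to uniform by themselves. To establish $\sup_{\|v\|_{\XO}\le 1}\abs{\mathcal{E}_p(u_n,v)-\mathcal{E}_p(u,v)}\to 0$ you must obtain \emph{strong} $L^{p'}$ convergence $D_{0,p'}(u_n)\to D_{0,p'}(u)$ and then apply H\"older. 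Two clean routes: (a) the paper's route, using the pointwise inequalities of Lemmas \ref{Lemma1-Lindgren} and \ref{section2.2-Mosconi} (split into the cases $p\in(1,2]$ and $p>2$), which give $\abs{h(u_n(x)-u_n(y))-h(u(x)-u(y))}$ dominated explicitly by powers of $\abs{(u_n-u)(x)-(u_n-u)(y)}$ and $\abs{u(x)-u(y)}$; after H\"older this produces $\sup_{\|v\|_{\XO}\le1}\abs{\mathcal{E}_p(u_n,v)-\mathcal{E}_p(u,v)}\lesssim [u_n-u]_{\XO}^{p-1}$ (or $[u_n-u]_{\XO}([u]_{\XO}^{p-2}+[u_n-u]_{\XO}^{p-2})$ for $p>2$), which goes to $0$; or (b) observe that $D_{0,p'}(w)=h\bigl(D_{0,p}(w)\bigr)$ and use the continuity of the Nemytskii operator $w\mapsto h(w)$ from $L^p$ to $L^{p'}$ (which does follow from a.e.\ convergence plus norm convergence $\|D_{0,p}(u_n)\|_{L^p}\to\|D_{0,p}(u)\|_{L^p}$ via Brezis--Lieb / Radon--Riesz, using that $u_n\to u$ \emph{strongly} in $\XO$). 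Your write-up conflates these two mechanisms and stops at ``weak'', which does not suffice. The remaining components of your argument (Orlicz duality $\ph^\ast(\ln\abs{u}\,h(u))\le\ph(\abs{u})$ on $\{\abs{u}\ge 1\}$, boundedness of $t\mapsto h(t)\ln\abs{t}$ near $0$, Lemma \ref{diff-F_p} for the tail, Vitali for the $G$-part) agree in substance with the paper's proof and are sound.
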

    \begin{proof}
    Let $u \in \XO$. Then, by Lemma \ref{compct-embeding} and Lemma \ref{diff-F_p}, we have  \[\frac{1}{p} \l(\mathcal{E}_p(u,u) + \mathcal{F}_p(u,u)+ \rho_N \into |u|^p ~dx\r)< \infty.\]
    Furthermore, in view of \eqref{assumption G(x,t)} and Theorem \ref{log-Sob-ineq}, we can conclude that $J_{L_{\Delta_p}}$ is well-defined. It is easy to see that the norms $\|\cdot\|_{\XO}$ and $\|\cdot\|_{L^p(\Omega)}$
    are differentiable. Now, we check the differentiability of the map $u \mapsto I(u).$
Let $u,v \in \XO$ and  $\delta \in (-1,1) \setminus \{0\}$.
By the mean value theorem, there exists a $\beta = \beta(x) \in (0,1)$ such that
\[
\begin{split}
    \Upsilon((u+\delta v)(x), (u+\delta v)(y), & (u+\delta v)(x), (u+\delta v)(y)) - \Upsilon\l(u(x), u(y), u(x), u(y)\r)\\
    & =  p \delta \Upsilon((u+\delta \beta v)(x), (u+ \beta \delta v)(y), v(x), v(y)),
\end{split}
\]
where $\Upsilon$ is defined in \eqref{def:upsilon}. Now, using the symmetry of the integrand and $u=0=v$ in $\mathbb{R}^N \setminus \Omega$, we obtain
\[
\begin{split}
    & \mathcal{F}_p(u+\delta v,u+\delta v) - \mathcal{F}_p( u,u) \\
    & \quad = \frac{p \delta C_{N,p}}{2} \intb \frac{\Upsilon((u+\delta \beta v)(x), (u+ \beta \delta v)(y), v(x), v(y))}{|x-y|^N} ~dx ~dy \\
    & \quad = \frac{p \delta C_{N,p}}{2} \iint\limits_{\substack{x,y\in \Omega \\ \abs{x-y}\geq1}} \frac{h((u+\delta \beta v)(x)-(u+ \beta \delta v)(y))v(x)-h((u+\delta \beta v)(x))v(x)}{|x-y|^{N}} ~dx ~dy\\
    & \quad := \frac{p \delta C_{N,p}}{2} \iint\limits_{\substack{x,y\in \Omega \\ \abs{x-y}\geq1}} M_1(x,y) ~dx ~dy.
\end{split}
\]
Note that for $x, y \in \mathbb{R}^N$ such that $|x-y| \geq 1$ and using Lemma \ref{Lemma1-Lindgren}, we have
\[
|M_1(x,y)| \leq C (|(u+\delta \beta v)(y)|^{p-1} + |(u+\delta \beta v)(x)|^{p-1}) |v(x)| \in L^1(\Omega\times\Omega)
\]
for some $C>0.$
Now, applying the Lebesgue dominated convergence theorem, we obtain
\[
\lim_{\delta \to 0} \frac{\mathcal{F}_p(u+\delta v,u+\delta v) - \mathcal{F}_p( u,u)}{\delta} = p \mathcal{F}_p(u,v).
\]
Let $\omega(t):= |t|^p(\ln |t|^p -1)$ such that $\omega'(t)=p^2|t|^{p-2}t\ln |t|.$  By the mean value theorem, there exists a $\tau=\tau(x) \in (0,1)$ such that
\[
\begin{split}
Q(x)&:= \frac{\omega(u(x)+\delta v(x))-\omega(u(x))}{\delta}\\
&= p^2|u(x)+\delta\tau v(x)|^{p-2}(u(x)+\delta\tau v(x)) \ln |u(x)+\delta\tau v(x)| v(x).
\end{split}\]
Note that if $|u(x)+\delta \tau v(x)| < 1$, we have
\[|Q(x)|\leq \frac{p^2}{e(p-1)},\]
and if $|u(x)+ \delta\tau v(x)| \geq 1$, by \cite[Section 2.2]{Iannizzotto-Mosconi-Papageorgiou}, we have
\[1\leq |u(x)+\tau \delta v(x)|\leq |u(x)|+|v(x)|\leq 2 \max\{|u(x)|,|v(x)|\}\]
and
\[
\begin{split}
    |Q(x)|&\leq p^2 |v(x)||u(x)+\delta\tau v(x)|^{p-1} |\ln|u(x)+\delta\tau v(x)|| \\
    & \leq 4p^2 \max\{1,2^{p-2}\}\bigg(\max\{|u(x)|,|v(x)|\}.\max\{|u(x)|^{p-1},|v(x)|^{p-1}\}.\\& \qquad \qquad  \qquad \qquad \qquad \qquad |\ln\l(2\max\{|u(x)|,|v(x)|\}\r)|\bigg)\\
    & \leq 4p^2 \max\{1,2^{p-2}\}\bigg(|u(x)|^p |\ln 2|u(x)||+|v(x)|^p |\ln 2|v(x)||\bigg)=:M_2(x).
\end{split}
\]
Now, using Theorem \ref{log-Sob-ineq} and Theorem \ref{thm:embd:results} (i), we get
\[
\begin{split}
    \into |u|^p |\ln (2|u|)| ~dx & \leq \ln 2\into |u|^p ~dx +\into |u|^p |\ln(|u|)| ~dx\\
    & \leq \ln 2 \into |u|^p ~dx + \int_{\Omega} |u|^p \ln(|u|) ~dx \\
    & \quad - \int_{\{|u|< 1\}} |u|^p (\ln |u| - |\ln(|u|)|) ~dx  \\
    &\leq \frac{N}{p^2}\bigg(\qform(u,u)+ \frac{p^2}{N} \nP^p \ln \nP \\ & \qquad \qquad \qquad+ (k_0+\ln 2)\nP^p\bigg)+ \frac{2|\Omega|}{pe}<\infty.
\end{split}
\]
As a conclusion, we obtain
\[|Q(x)|\leq M_2(x)+\frac{p^2}{e(p-1)} \in L^1(\Omega).\]
Finally, applying the Lebesgue dominated convergence theorem, we can conclude that
\[
\lim_{\delta \to 0} \frac{F(u+\delta v)-F(u)}{\delta}= \mu \into \ln |u| h(u)v ~dx,
\]
where $$F(u):=\frac{\mu}{p^2}\into |u|^p(\ln|u|^p-1) ~dx.$$
Again, by the fundamental theorem of calculus, \eqref{assumption G(x,t)} and the preceding calculations, we can easily conclude that $u \mapsto \into G(x,u) ~dx$ is differentiable.
Using the linearity of $J'_{L_{\Delta_p}}(u)$, H\"older inequality, Theorem \ref{log-Sob-ineq} and Lemma \ref{diff-F_p} for $u,v, w \in \XO$, it is easy to see that
\[
\lim_{\delta \to 0}|(J'_{L_{\Delta_p}}(u), v+\delta w)_{\XO} - (J'_{L_{\Delta_p}}(u), v)_{\XO}|
= 0.
\]
This implies $J'_{L_{\Delta_p}}(u) \in \mathcal{L}(\XO,\R).$ To prove the $C^1$ property, we show that if $u_n \to u$ in $\XO$
\begin{equation}
\label{C1-prop}
\lim_{n \to \infty}|(J'_{L_{\Delta_p}}(u_n), v)_{\XO} - (J'_{L_{\Delta_p}}(u), v)_{\XO}| = 0.
\end{equation}

Note that
\[\qform(u_n,v)=\mathcal{E}_p(u_n,v)+\mathcal{F}_p(u_n,v)+\rho_N \int_{\R^N} h(u_n)v ~dx.\]
\textbf{Claim:} $\lim\limits_{n \to \infty} \mathcal{E}_p(u_n,v)= \mathcal{E}_p(u,v)$.
Note that
\[
\begin{split}
 &|\mathcal{E}_p(u_n,v)-\mathcal{E}_p(u,v)|\\& \leq \frac{C_{N,p}}{2}\inta \frac{|(h(u_n(x)-u_n(y))-h(u(x)-u(y)))||(v(x)-v(y))|}{|x-y|^N} ~dx ~dy.
\end{split}
\]
Now, by Lemmas \ref{Lemma1-Lindgren}, \ref{section2.2-Mosconi} and H\"older inequality, we obtain,
if $p \in (1,2]$,
\[
\begin{split}
    &\frac{C_{N,p}}{2} \inta \frac{|(h(u_n(x)-u_n(y))-h(u(x)-u(y)))||(v(x)-v(y))|}{|x-y|^N} ~dx ~dy \\&\leq k_1(p) \frac{C_{N,p}}{2} \inta \frac{|((u_n-u)(x)-(u_n-u)(y))|^{p-1}|v(x)-v(y)|}{|x-y|^N} ~dx ~dy\\
    &\leq k_1(p) \mathcal{E}_p(u_n-u,u_n-u)\mathcal{E}_p(v,v),
\end{split}
\]
where $k_1(p) = (3^{p-1} + 2^{p-1})$ and if $p > 2$, we obtain
\[
\begin{split}
   & \frac{C_{N,p}}{2} \inta \frac{|(h(u_n(x)-u_n(y))-h(u(x)-u(y)))||(v(x)-v(y))|}{|x-y|^N} ~dx ~dy \\
  & \leq k_2(p) \frac{C_{N,p}}{2} \bigg(\inta \frac{|v(x)-v(y)||(u_n-u)(x)-(u_n-u)(y)|^{p-1}}{|x-y|^N} ~dx ~dy\\
  &+ \inta \frac{|v(x)-v(y)||u(x)-u(y)|^{p-2}|(u_n-u)(x)-(u_n-u)(y)|}{|x-y|^N} ~dx ~dy\bigg)\\
  & \leq k_2(p) \mathcal{E}_p(u_n-u,u_n-u)\mathcal{E}_p(v,v) \l(\mathcal{E}_p(u,u)+1\r).
\end{split}
\]
where $k_2(p)= 2^{p-2} (p-1).$ Finally, using the above calculations and strong convergence of $(u_n)_{n \in \N}$ in $\XO$, we get the claim above. \\
Collecting the claim and Lemma \ref{ln-limit} (ii), we get
\begin{equation}
\label{quadlim}
    \lim_{n \to \infty} \qform(u_n,v)= \qform(u,v).
    \end{equation}
Finally, putting together the above estimates and Lemma \ref{ln-limit}, we have proved \eqref{C1-prop}.
\end{proof}

\subsection{The Brezis-Nirenberg type problem}
In this part, we consider the problem \eqref{main_problem} with $\mu \in (0,\frac{p^2}{N})$, having critical growth nonlinearities.
To find solutions, we minimize the energy functional $J_{L_{\Delta_p}}$ over $\XO$.  Let $\Phi:\R^+ \to \R$ be defined as
\begin{equation}
\label{def:fibermap}
\Phi(t)=J_{L_{\Delta_p}}(tu)=\frac{t^p}{p}\qform(u,u)-\into G(x,tu) ~dx-\frac{\mu t^p}{p^2}\into |u|^p(\ln |tu|^p-1) ~dx.
\end{equation} However, by \eqref{assumption G(x,t)} and choosing $\eps<\frac{\mu}{p}$, for $t>0,$ we have
  \begin{equation}
  \label{J not bdd below}
  \begin{split}
  \Phi(t)& \leq \frac{t^p}{p}\qform(u,u)+a_2t^p\into |u|^p ~dx+ \eps t^p\into |u|^p |\ln|tu|| ~dx \\
  & \qquad +\frac{\mu t^p}{p^2}\into |u|^p ~dx - \frac{\mu t^p}{p^2}\into |u|^p\ln |tu|^p ~dx\\
  & \leq \frac{t^p}{p}\qform(u,u) + \eps t^p\into |u|^p |\ln|u|| ~dx + \eps t^p |\ln t|\into |u|^p ~dx\\
  & \qquad + t^p \l(\frac{\mu}{p^2} - \frac{\mu \ln t}{p} + a_2 \r)\into |u|^p ~dx -\frac{\mu t^p}{p}\into |u|^p\ln |u| ~dx \\
  &\leq \frac{t^p}{p}\bigg(\qform(u,u)+\l(\frac{\mu}{p}-\mu \ln t+\eps p |\ln t| + p a_2\r) \into |u|^p ~dx \\& \qquad \qquad-\mu \into |u|^p \ln|u| ~dx +\eps p \into |u|^p |\ln|u|| ~dx\bigg).
  \end{split}
  \end{equation}
This implies $J_{L_{\Delta_p}}(tu) \to -\infty$ as $t\to \infty.$ Thus, $J_{L_{\Delta_p}}$ is not bounded below and not coercive across the entire energy space $\XO$.
Consequently, we seek solutions under the natural constraint given by the Nehari manifold $\mathcal{N}_{L_{\Delta_p}}$. We begin by demonstrating that the energy functional $J_{L_{\Delta_p}}$ is bounded from below in $\mathcal{N}_{L_{\Delta_p}}$ and $\mathcal{N}_{L_{\Delta_p}}$ is away from origin in $\XO$.
\begin{lemma}\label{Nehrai:prop}
Let $g$ satisfy \eqref{assump g1}-\eqref{assump g4}. The following statements hold true:
\begin{enumerate}
  \item  [\textnormal{(i)}]$J_{L_{\Delta_p}}$ is bounded below in $\mathcal{N}_{L_{\Delta_p}}$,
  \item [\textnormal{(ii)}] The set $\mathcal{N}_{L_{\Delta_p}}$ is away from origin in $\XO$, {\it i.e.}, there exist constants $\zeta, \eta>0$ such that
  \begin{equation*}
      \nX \geq \zeta \quad \text{and} \quad \nP\geq \eta \quad \text{for all} \ u \in \mathcal{N}_{L_{\Delta_p}}.
  \end{equation*}
    \end{enumerate}
\end{lemma}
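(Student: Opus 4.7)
I plan to use the Nehari constraint
\[
\qform(u,u) = \into g(x,u)u\,dx + \mu \into |u|^p \ln|u|\,dx
\]
to eliminate $\qform$ from the energy \eqref{energy-wei}. Since $\ln|u|^p = p\ln|u|$, after substitution the two logarithmic integrals cancel exactly, leaving
\[
J_{L_{\Delta_p}}(u) = \into \l[\tfrac{1}{p}\, g(x,u) u - G(x,u)\r] dx + \tfrac{\mu}{p^2}\, \nP^p \qquad \text{for all } u \in \mathcal{N}_{L_{\Delta_p}}.
\]
The integrated form \eqref{assump integ g4} of \eqref{assump g4} gives $\tfrac{1}{p}g(x,u) u - G(x,u) \geq -\tfrac{\delta}{p^2}|u|^p$, whence $J_{L_{\Delta_p}}(u) \geq \tfrac{\mu-\delta}{p^2}\nP^p \geq 0$ since $\delta < \mu$. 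This proves (i); in fact $J_{L_{\Delta_p}} \geq 0$ on the Nehari manifold.

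\textbf{Plan for (ii).} Again starting from the Nehari identity, I will bound $|\into g(x,u)u\,dx| \leq a_1 \nP^p + \eps \into |u|^p |\ln|u||\,dx$ via \eqref{assumption g(x,t)}, split $\into|u|^p|\ln|u||$ at $|u|=1$ (using $|t|^p|\ln|t|| \leq 1/(pe)$ on $\{|t|<1\}$) to reduce to $\into |u|^p \ln|u|\,dx$ plus a geometric constant, and then pass both that integral and $\mu\into|u|^p\ln|u|\,dx$ through Theorem \ref{log-Sob-ineq}. This should yield an inequality of the shape
\[
\l(1 - \tfrac{(\mu+\eps) N}{p^2}\r) \qform(u,u) \leq C_1 \nP^p + C_2 \nP^p |\ln \nP| + C_3.
\]
The hypothesis $\mu < p^2/N$ makes the prefactor on the left strictly positive for $\eps$ chosen small. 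On the other hand, Lemma \ref{diff-F_p} and the norm equivalence $\nX \sim [\cdot]_{\XO}$ from \cite[Proposition 4.1]{Dyda-Jarohs-Sk} together produce $\qform(u,u) \geq c \nX^p - C \nP^p$. Supposing, for contradiction, that a sequence $u_n \in \mathcal{N}_{L_{\Delta_p}}$ satisfies $\nX_n \to 0$ (hence $\nP_n \to 0$ by continuous embedding), I plan to exploit $\nP^p|\ln \nP| \leq C_\sigma \nP^{p-\sigma}$ for small $\nP$ and any $\sigma \in (0,p)$ to trade the logarithmic factor for a sub-$p$ polynomial rate. A comparison of rates against $c\nX^p$ should then force the contradiction and deliver the uniform bounds $\zeta, \eta > 0$; the existence of $\eta$ for $\nP$ is equivalent to that of $\zeta$ for $\nX$ because of the one-sided Poincaré $\nP \lesssim [\cdot]_{\XO}$.

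\textbf{Main obstacle.} The real work is entirely in (ii). The troublesome term is $\nP^p |\ln \nP|$: tame for $\nP$ large, but it \emph{dominates} $\nP^p$ as $\nP \to 0^+$, so standard "absorb as a lower-order term'' will not close the estimate. The trick I intend to use is the sub-polynomial bound $\nP^p |\ln \nP| \leq C_\sigma \nP^{p-\sigma}$, combined with Poincaré, to exchange the log factor for a genuine power of $\nX$ strictly below $p$ while the quadratic-form side still carries $c\nX^p$. Orchestrating this exchange while simultaneously keeping $1 - (\mu+\eps)N/p^2 > 0$—which ties the choice of $\eps$ to the gap $p^2/N - \mu$ and inflates the constant $a_1 = a_1(\eps)$ in \eqref{assumption g(x,t)}—is the main bookkeeping challenge, and it is the step where the hypothesis $\mu \in (0, p^2/N)$ is used decisively.
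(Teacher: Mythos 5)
Part (i) of your proposal is correct and is exactly the paper's argument: on $\mathcal{N}_{L_{\Delta_p}}$ the logarithmic terms cancel and \eqref{assump integ g4} gives $J_{L_{\Delta_p}}(u)\geq \frac{\mu-\delta}{p^2}\nP^p\geq 0$.

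Part (ii) has a genuine gap in its core mechanism. Your plan is to arrive at
$\l(1-\tfrac{(\mu+\eps)N}{p^2}\r)\qform(u,u)\leq C_1\nP^p+C_2\nP^p|\ln\nP|+C_3$ and then derive a contradiction by bounding $\nP^p|\ln\nP|\leq C_\sigma\nP^{p-\sigma}$ and "comparing rates against $c\nX^p$." This comparison goes the wrong way: for $\nP\to 0$ one has $\nP^{p-\sigma}\gg\nP^p$, so the resulting inequality $c\nX^p\lesssim \nX^{p-\sigma}+C_3$ is \emph{satisfied} for small $\nX$, not violated — no contradiction is forced. The point you are missing is a sign. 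When you feed the Nehari identity through Theorem \ref{log-Sob-ineq}, the term $\mu\into|u|^p\ln|u|\,dx$ produces $+\mu\nP^p\ln(\nP)$ on the right-hand side, and for $\nP<1$ this is $-\mu\nP^p|\ln\nP|$, i.e.\ it carries a \emph{favorable} negative coefficient. Dividing the whole inequality by $\nP^p$, the right-hand side behaves like $\mathrm{const}+(\mu-O(\eps))\ln(\nP)\to-\infty$, while the left-hand side $\qform(u,u)/\nP^p$ is bounded below (by the first eigenvalue $\lambda_{1,L}$, or by your $\qform\geq c\nX^p-C\nP^p$ route — any finite lower bound suffices). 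That divergence to $-\infty$, not a polynomial rate comparison, is the contradiction; it is precisely where $\mu>0$ and $\mu<p^2/N$ (so that $1-\tfrac{(\mu+\eps)N}{p^2}>0$) are used.

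A second, related defect: the standalone constant $C_3$ (coming from the crude bound $|t|^p|\ln t|\leq \tfrac1{ep}$ on $\{|u|<1\}$) is fatal even to the corrected argument, since $C_3/\nP^p\to+\infty$ and swamps the $\ln(\nP)$ term. The paper avoids this with the normalized-entropy estimate \eqref{lower-est-reminder}: writing $|u|^p\ln|u|^p=\ln(\nP^p)\,|u|^p+\nP^p\,\tfrac{|u|^p}{\nP^p}\ln\tfrac{|u|^p}{\nP^p}$ and using $t\ln t\geq -1/e$ converts the additive constant into a term of order $\nP^p$ (plus a contribution of order $\nP^p\ln\nP$ with a controllable coefficient). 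You need this, or an equivalent device, before dividing by $\nP^p$. With these two corrections the rest of your outline (choice of $\eps$ small relative to $p^2/N-\mu$, and deducing the bound on $\nX$ from the one on $\nP$ via the embedding) goes through.
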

\begin{proof}
    Let $u\in \mathcal{N}_{L_{\Delta_p}}.$ Using \eqref{assump integ g4}, we have
    \[
    \begin{split}
    J_{L_{\Delta_p}}(u)
    &=\frac{1}{p} \into g(x,u)u ~dx -\into G(x,u) ~dx+\frac{\mu}{p^2}\into |u|^p ~dx \\
    & \geq \frac{(\mu-\delta)}{p}\|u\|_{L^p(\Omega)}^p>0.
    \end{split}
    \]
Hence, we get the claim in \text{(i)}. To show \text{(ii)}, set $\lambda:= \frac{(\mu+\eps) N}{p^2}$ and choose $\eps<\l(\frac{p^2}{N}-\mu\r)$.
Note that
\begin{align}
\label{lower-est-reminder}
\begin{split}
    \int_{\{|u|<1\}} |u|^p \ln |u|^p ~dx&= \ln (\|u\|_{L^p(\Omega)}^p) \int_{\{|u|<1\}} |u|^p ~dx \\ & \quad+\|u\|_{L^p(\Omega)}^p \int_{\{|u|<1\}} \l(\frac{|u|^p}{\|u\|_{L^p(\Omega)}^p}\r) \ln  \l(\frac{|u|^p}{\|u\|_{L^p(\Omega)}^p} \r) ~dx\\
    & \geq \ln (\|u\|_{L^p(\Omega)}^p) \int_{\{|u|<1\}} |u|^p ~dx-\frac{|\Omega|}{e}\|u\|_{L^p(\Omega)}^p.
\end{split}
\end{align}
Let $u \in \mathcal{N}_{L_{\Delta_p}}$ such that $\nP< \min\{1,\eta\}$, where \[\eta:=\exp\l(\frac{1}{\mu-\eps} \l((1-\lambda)\lambda_{1,L} -\lambda \bigg(k_0+\frac{2\eps|\Omega|}{ep\lambda}+ \frac{a_1}{\lambda}\bigg)\r)\r)\] and $\lambda_{1,L}$ is the first eigenvalue of $\logP$ in $\Omega$ (see \cite[Section 7]{Dyda-Jarohs-Sk}). Now, using \eqref{assumption g(x,t)}, \eqref{lower-est-reminder} and Theorem \ref{log-Sob-ineq} in \eqref{energy:deriv}, we obtain
    \[
    \begin{split}
        (J'_{L_{\Delta_p}}(u),u)_{\XO} &\geq \qform(u,u)-\l(a_1+\frac{2\eps|\Omega|}{ep}\r) \into |u|^p ~dx \\ & \quad- (\mu+\eps) \into \ln|u||u|^p ~dx+2\eps \int_{\{|u|<1\}} |u|^p \ln |u| ~dx\\
        &\geq  (1-\lambda)\qform(u,u) -\lambda \bigg(k_0+\frac{2\eps|\Omega|}{ep\lambda}+ \frac{a_1}{\lambda}\bigg)\nP^p\\
        & \qquad -(\mu+\eps)\nP^p\ln (\nP)\\ &\qquad \qquad+2\eps \ln(\nP) \int_{\{|u|<1\}} |u|^p ~dx\\
        &\geq \bigg[(1-\lambda)\lambda_{1,L} -\lambda \bigg(k_0+\frac{2\eps|\Omega|}{ep\lambda}+ \frac{a_1}{\lambda}\bigg)\\& \qquad\qquad \qquad \qquad-(\mu-\eps)\ln(\nP)\bigg]\nP^p >0.
    \end{split}
    \]
However, for every $u \in \mathcal{N}_{L_{\Delta_p}},$ we have $(J'_{L_{\Delta_p}}(u),u)_X=0$, which in turn implies that \begin{equation}
    \label{low_bd_norm-Lp}
    \nP\geq \eta \quad \text{for all} \ u \in \mathcal{N}_{L_{\Delta_p}}.
    \end{equation}
Moreover, by Lemma \ref{compct-embeding} (with $S$ being the embedding constant) and \eqref{low_bd_norm-Lp}, we get
   \[\eta^p \leq \nP^p \leq S^p \nX^p.\] Finally, the claim in \textnormal{(ii)} follows by taking $\zeta=\frac{\eta}{S}.$
\end{proof}
Now, we consider the fibering map, introduced by Drabek and Pohozaev in \cite{Drabek-Pohozaev}. Let $\Phi: \R^+ \to \R$  be the fibering map for the functional $J_{L_{\Delta_p}}$, defined in \eqref{def:fibermap}.

\begin{lemma}
\label{t-ast}
Let $g$ satisfy \eqref{assump g1}-\eqref{assump g3}. Then, for every $u \in \XO \setminus \{0\}$,
   there exists a point $t^\ast_u$ such that $\Phi'(t^\ast_u) =0$ and $\Phi'(t) >0$ for $t \in (0, t^\ast_u).$ Moreover, $t^\ast_u u\in \mathcal{N}_{L_{\Delta_p}}.$
\end{lemma}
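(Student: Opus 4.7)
The plan is to reduce the statement to a one-dimensional calculus argument on $\Phi'$, using the observation that $\Phi'(t_0)=0$ for some $t_0>0$ is equivalent to $t_0 u\in \mathcal{N}_{L_{\Delta_p}}$. Indeed, differentiating \eqref{def:fibermap} and simplifying the derivative of $t\mapsto \frac{\mu t^p}{p^2}\int_{\Omega}|u|^p(\ln|tu|^p-1)\,dx$, I would first record the identity
\[
\Phi'(t) = t^{p-1}\qform(u,u) - \into g(x,tu)u\,dx - \mu\, t^{p-1}\into |u|^p \ln|tu|\,dx,
\]
from which multiplying by $t$ gives exactly the Nehari constraint $\qform(tu,tu)=\int_\Omega g(x,tu)(tu)\,dx + \mu\int_\Omega |tu|^p\ln|tu|\,dx$. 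So it suffices to produce a positive zero $t^\ast_u$ of $\Phi'$ with $\Phi'>0$ on $(0,t^\ast_u)$.

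Next, I would split $\ln|tu|=\ln t +\ln|u|$ and rewrite
\[
\frac{\Phi'(t)}{t^{p-1}} = \qform(u,u) - \mu \nP^p \ln t - \mu\into |u|^p\ln|u|\,dx - \frac{1}{t^{p-1}}\into g(x,tu)u\,dx.
\]
Using \eqref{assumption g(x,t)} with a parameter $\eps\in(0,\mu)$, the last integral is bounded by
\[
\frac{1}{t^{p-1}}\l|\into g(x,tu)u\,dx\r| \leq a_1 \nP^p + \eps|\ln t|\nP^p + \eps\into |u|^p|\ln|u||\,dx,
\]
so that the quantities $-\mu\nP^p\ln t$ and $\pm\eps\nP^p|\ln t|$ combine (for $t<1$ and $t>1$ respectively) into $(\mu-\eps)\nP^p|\ln t|$ with the correct sign.

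Then the endpoint analysis is immediate. As $t\to 0^+$, we have $|\ln t|\to\infty$ and the dominant term $(\mu-\eps)\nP^p|\ln t|$ appears with a $+$ sign, giving $\Phi'(t)/t^{p-1}\to+\infty$, hence $\Phi'(t)>0$ for $t>0$ small. As $t\to+\infty$, the same term appears with a $-$ sign, giving $\Phi'(t)/t^{p-1}\to-\infty$, hence $\Phi'(t)<0$ for $t$ large. Since $\Phi'\in C((0,\infty))$ by Lemma \ref{differentiability of J}, I would then set
\[
t^\ast_u := \inf\{t>0 \,:\, \Phi'(t)\leq 0\}.
\]
By the two limits above, $t^\ast_u\in(0,\infty)$; continuity of $\Phi'$ forces $\Phi'(t^\ast_u)=0$, and by the very definition of the infimum $\Phi'(t)>0$ for $t\in(0,t^\ast_u)$. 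The identification $t^\ast_u u\in\mathcal{N}_{L_{\Delta_p}}$ then follows from the equivalence recorded in the first paragraph.

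The only real obstacle is the bookkeeping around the logarithmic term: since both $-\mu\ln t$ (from the critical logarithmic nonlinearity) and the lower-order contribution from $g$ through \eqref{assumption g(x,t)} scale like $|\ln t|$, one must choose $\eps<\mu$ so that the $\mu$-term controls the $\eps$-term on both sides of $t=1$. Once that choice is fixed, all other quantities ($\qform(u,u)$, $\nP^p$, $\int_\Omega |u|^p|\ln|u||\,dx$) are finite constants in $t$ by Theorem \ref{log-Sob-ineq} and Theorem \ref{thm:embd:results}(i), and the argument reduces to elementary calculus.
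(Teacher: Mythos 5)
Your proposal is correct. It proves the same statement by a mildly different route: the paper works at the level of $\Phi$ itself, using \eqref{assumption G(x,t)} with $\eps<\mu/p$ to show $\Phi(t)>0$ for $t$ small and (via \eqref{J not bdd below}) $\Phi(t)\to-\infty$ as $t\to\infty$, and then infers the existence of a first critical point with $\Phi'>0$ before it; you instead differentiate once, record the clean identity $\Phi'(t)=t^{p-1}\qform(u,u)-\into g(x,tu)u\,dx-\mu t^{p-1}\into|u|^p\ln|tu|\,dx$ (which is correct, since $\qform(tu,tu)=t^p\qform(u,u)$ and the $-1$ in $\ln|tu|^p-1$ cancels against the chain-rule term), and do the sign analysis directly on $\Phi'/t^{p-1}$ using \eqref{assumption g(x,t)} with $\eps<\mu$. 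Both arguments hinge on the same mechanism — the coefficient $\mu$ of the critical logarithmic term must dominate the $\eps|\ln t|$ contribution from $g$ on both sides of $t=1$ — but your version has the advantage that the conclusion $\Phi'(t^\ast_u)=0$ and $\Phi'>0$ on $(0,t^\ast_u)$ falls out immediately from the definition $t^\ast_u=\inf\{t>0:\Phi'(t)\le 0\}$ together with the continuity of $\Phi'$ (which you correctly justify via Lemma \ref{differentiability of J}), whereas the paper's passage from the behavior of $\Phi$ to the sign of $\Phi'$ is left somewhat implicit. The membership $t^\ast_u u\in\mathcal{N}_{L_{\Delta_p}}$ is then exactly the relation $t\,\Phi'(t)=0$ at $t=t^\ast_u$, as you note.
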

\begin{proof}
By \eqref{assumption G(x,t)} and choosing $\eps<\frac{\mu}{p}$, for $0<t<1,$ we have
  \[
  \begin{split}
  \Phi(t)&\geq\frac{t^p}{p}\qform(u,u)-a_2 t^p\into |u|^p ~dx- \eps t^p\into |u|^p |\ln|tu|| ~dx+\frac{\mu t^p}{p^2}\into |u|^p ~dx\\
  & \qquad \qquad-\frac{\mu t^p}{p^2}\into |u|^p\ln |tu|^p ~dx\\
  & \geq\frac{t^p}{p}\qform(u,u)-a_2t^p\into |u|^p ~dx-\eps t^p \into |u|^p |\ln|u|| ~dx-\eps t^p |\ln t| \into |u|^p ~dx \\
  & \qquad\qquad +\frac{\mu t^p}{p^2}\into |u|^p ~dx -\frac{\mu t^p}{p}\into |u|^p\ln |u| ~dx -\frac{\mu t^p \ln t}{p}\into |u|^p ~dx\\
 &\geq \frac{t^p}{p}\bigg(\qform(u,u)+\l(\frac{\mu}{p}-\mu\ln t-\eps p |\ln t| - p a_2\r) \into |u|^p ~dx \\& \qquad \qquad-{\mu}\into |u|^p \ln|u| ~dx -\eps p \into |u|^p |\ln |u|| ~dx\bigg).
  \end{split}\]
The above  along with \eqref{J not bdd below}, implies that there exists $t^\ast_u>0$ small enough such that
\[ \lim\limits_{t \to 0} \Phi(t)=0, \quad \lim\limits_{t \to \infty} \Phi(t)=-\infty \quad \text{and} \quad \Phi(t)>0 \quad \text{for} \ t \in (0, t^\ast_u).
\] Thus, there exists a critical point $t^\ast_u$ of $\Phi$ such that $\Phi'(t) >0$  for $t \in (0, t^\ast_u)$. It is easy to verify that $t^\ast_u u \in \mathcal{N}_{L_{\Delta_p}}.$
\end{proof}

\begin{lemma}
\label{bdd_seq_XO}
    Let $g$ satisfy \eqref{assump g1}-\eqref{assump g4} and $(u_n)_{n\in \N} \subseteq \mathcal{N}_{L_{\Delta_p}}$ be a sequence such that $\sup\limits_{n\in \N} J_{L_{\Delta_p}}(u_n)\leq C$ for some constant $C>0.$ Then, $\|u_n\|_{\XO}\leq C'$ for some constant $C'>0.$
\end{lemma}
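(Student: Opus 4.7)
The plan is to first extract an $L^p(\Omega)$ bound on the sequence by exploiting the Nehari identity together with the integrated form \eqref{assump integ g4} of hypothesis \eqref{assump g4}, and then to upgrade it to a full $\XO$ bound using the $p$-logarithmic Sobolev inequality of Theorem \ref{log-Sob-ineq}. Since $u_n \in \mathcal{N}_{L_{\Delta_p}}$, the defining relation $\qform(u_n,u_n) = \into g(x,u_n)u_n \, dx + \mu \into |u_n|^p \ln|u_n| \, dx$ substituted into \eqref{energy-wei} causes the two logarithmic terms to cancel exactly, yielding the ``Nehari-reduced'' identity
\[
J_{L_{\Delta_p}}(u_n) = \into \l(\frac{1}{p} g(x,u_n) u_n - G(x,u_n)\r) dx + \frac{\mu}{p^2} \nP[u_n]^p,
\]
where I abuse notation writing $\nP[u_n] := \|u_n\|_{L^p(\Omega)}$. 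Invoking \eqref{assump integ g4}, namely $\tfrac{1}{p} t g(x,t) - G(x,t) \geq -\tfrac{\delta}{p^2} |t|^p$, one obtains $C \geq J_{L_{\Delta_p}}(u_n) \geq \tfrac{\mu - \delta}{p^2} \|u_n\|_{L^p(\Omega)}^p$, and since $\delta < \mu$ by \eqref{assump g4}, this gives a uniform bound on $\|u_n\|_{L^p(\Omega)}$.

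Next, I return to \eqref{energy-wei} and rewrite it as
\[
\frac{1}{p}\qform(u_n,u_n) = J_{L_{\Delta_p}}(u_n) + \into G(x,u_n)\, dx + \frac{\mu}{p}\into |u_n|^p \ln|u_n|\, dx - \frac{\mu}{p^2}\|u_n\|_{L^p(\Omega)}^p.
\]
Estimate the $G$ term by \eqref{assumption G(x,t)}, splitting $\int |u_n|^p |\ln|u_n|| = \int|u_n|^p \ln|u_n| - 2\int_{\{|u_n|<1\}}|u_n|^p \ln|u_n|\, dx$ and using $|t^p \ln t| \leq \tfrac{1}{ep}$ on $(0,1)$ to bound the second piece by a constant multiple of $|\Omega|$. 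This lumps all logarithmic contributions into $(\tfrac{\mu}{p} + \epsilon) \into |u_n|^p \ln|u_n|\, dx$ plus terms controlled by $\|u_n\|_{L^p(\Omega)}^p$, which are already bounded. Applying Theorem \ref{log-Sob-ineq} absorbs the remaining integral into $\tfrac{N}{p^2}\qform(u_n,u_n) + \text{bdd}$, yielding
\[
\frac{1}{p} \qform(u_n,u_n) \leq \l(\frac{\mu}{p} + \epsilon\r) \cdot \frac{N}{p^2} \qform(u_n, u_n) + C'.
\]
The coefficient on the left after rearrangement is $\tfrac{1}{p}\bigl(1 - \tfrac{\mu N}{p^2} - \tfrac{\epsilon N}{p}\bigr)$, which is strictly positive for $\epsilon$ sufficiently small precisely because $\mu < p^2/N$. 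This delivers a uniform bound on $\qform(u_n,u_n)$.

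The passage from a bound on $\qform(u_n,u_n)$ to one on $\nX[u_n]$ is routine: the identity $\qform(u_n,u_n) = \tfrac{1}{2}[u_n]_{\XO}^p + \mathcal{F}_p(u_n,u_n) + \rho_N \|u_n\|_{L^p(\Omega)}^p$ combined with Lemma \ref{diff-F_p} (which gives $|\mathcal{F}_p(u_n,u_n)| \leq b \|u_n\|_{L^p(\Omega)}^p$) converts the energy bound into a uniform bound on $[u_n]_{\XO}$, and thus on $\nX[u_n]$ since the $L^p$ bound is already in hand. The principal obstacle, and the point where the hypothesis $\mu < p^2/N$ enters sharply, is the coefficient comparison at the end of the second step; without this strict inequality, the $p$-logarithmic Sobolev inequality would not be able to absorb the critical logarithmic term on the left-hand side, and the argument would fail.
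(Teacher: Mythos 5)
Your proposal is correct and follows essentially the same route as the paper's proof: the Nehari identity cancels the logarithmic terms so that \eqref{assump integ g4} yields the uniform $L^p$ bound $\|u_n\|_{L^p(\Omega)}^p \leq Cp^2/(\mu-\delta)$, and then \eqref{assumption G(x,t)}, the splitting of $|\ln|u_n||$ on $\{|u_n|<1\}$, and Theorem \ref{log-Sob-ineq} absorb the critical logarithmic term into $\qform(u_n,u_n)$ with a coefficient kept below $1/p$ precisely because $\mu<p^2/N$. The final passage from the $\qform$ bound to the $\XO$ bound via Lemma \ref{diff-F_p} is also the paper's argument.
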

\begin{proof}
Since $u_n \in \mathcal{N}_{L_{\Delta_p}}$, for each $n \in \mathbb{N}$, we have \[J_{L_{\Delta_p}}(u_n)= \frac{1}{p} \into g(x,u_n)u_n ~dx -\into G(x,u_n)~dx+\frac{\mu}{p^2} \into |u_n|^p ~dx.\]
     This alongwith \eqref{assump integ g4} yields
    $ \sup_{n\in \N} \|u_n\|^p_{L^p(\Omega)} \leq \frac{Cp^2}{(\mu-\delta)}:= C_1. $
    Using \eqref{assumption G(x,t)}, choosing $\eps<\l(\frac{p}{N}-\frac{\mu}{p}\r)$ and Theorem \ref{log-Sob-ineq}, we get
    \[
    \begin{split}
        C\geq &J_{L_{\Delta_p}}(u_n) 
        \geq  \frac{1}{p}\qform(u_n,u_n)-a_2\into |u_n|^p ~dx- \eps \into |u_n|^p|\ln|u_n|| ~dx\\
        &\qquad\qquad \qquad\qquad-\frac{\mu}{p^2}\into |u_n|^p(\ln|u_n|^p -1) ~dx\\
        & \qquad \qquad\geq  \frac{1}{p}\qform(u_n,u_n)-a_2\into |u_n|^p ~dx- \l(\eps+\frac{\mu}{p}\r) \into |u_n|^p\ln|u_n| ~dx\\
        &\qquad\qquad \qquad\qquad+2\eps \int_{\{|u_n| < 1\}} |u_n|^p \ln|u_n| ~dx+\frac{\mu}{p^2}\into |u_n|^p ~dx\\
        & \qquad\qquad \geq \l(\frac{1}{p}- \frac{\eps N}{p^2}-\frac{\mu N}{p^3}\r)\qform(u_n,u_n)-\bigg[\l(a_2-\frac{\mu}{p^2}\r)\\
        &\qquad \qquad \quad \quad+\l(\frac{\eps N}{p^2}+\frac{\mu N}{p^3}\r)k_0 +\l(\eps+\frac{\mu}{p}\r)\ln(\|u_n\|_{L^p(\Omega)})\bigg]\|u_n\|^p_{L^p(\Omega)}\\&\qquad \qquad \qquad \qquad \qquad-\frac{2\eps|\Omega|}{ep}.
    \end{split}
    \]
Since $\mu \in (0, \frac{p^2}{N})$, we further obtain
    \[
    \begin{split}
    \sup_{n\in \N}\qform(u_n,u_n) \leq \frac{p^3(C+C_2+\frac{2\eps|\Omega|}{ep})}{p^2-p\eps N-\mu N}=:C_3,
    \end{split}
    \]
    where
    \[
    C_2:= \sup\limits_{z \in [0, C_1]} \bigg[\l(a_2-\frac{\mu}{p^2}\r)+\l(\frac{\eps N}{p^2}+\frac{\mu N}{p^3}\r)|k_0|+\l(\eps+\frac{\mu}{p}\r)|\ln z|\bigg]|z|^p.
    \]
Finally, using the definition of $\qform$ in \eqref{quad:form} and Lemma \ref{diff-F_p} in the above estimates, we obtain
    \[ \|u_n\|_{\XO} \leq  \l(C_3-\l(\rho_N-b\r)C_1\r)^{\frac{1}{p}}. \]
\end{proof}
\noindent
\textit{Proof of Theorem \ref{Existence L1}:}
Let $\ph:\XO\to \R$ be defined as \[\ph(u)= \qform(u,u)-\into g(x,u)u ~dx-\mu\into  |u|^p \ln|u| ~dx.\]
By following the same kind of arguments used in Lemma \ref{differentiability of J} and \eqref{assump g5}, it is easy to establish that $\ph$ is a $C^1$ functional on $\XO\setminus\{0\}$ and for $v \in \XO$,
\[
\begin{split}
(\ph'(u),v)_{\XO} = p\qform(u,v)&-\into g(x,u)v ~dx - \into g'(x,u)uv ~dx\\&-p\mu\into  \ln|u| h(u)v ~dx-\mu\into h(u)v ~dx.
\end{split}
\]
Note that by assumption \eqref{assump g4}, if $u\in \mathcal{N}_{L_{\Delta_p}}$,
\[
(\ph'(u),u)_X = \l[(p-1)\into g(x,u)u~dx -\into g'(x,u)u^2 ~dx -\mu \into |u|^p~dx\r]< 0,
\]
which in turn implies $0$ is a regular value of $\ph$. Therefore, $\mathcal{N}_{L_{\Delta_p}} = \ph^{-1} (0)$ is a $C^1$ manifold. It is easy to see that any minimizer $u$ of $J_{L_{\Delta_p}}$ restricted to $\mathcal{N}_{L_{\Delta_p}}$ satisfies $\ph(u)=0.$ In view of this property, we can apply the critical point theory on $\mathcal{N}_{L_{\Delta_p}}$ to get critical points of $J_{L_{\Delta_p}}.$ By Ekeland's variational principle (see \cite{Ekeland}), there exists a minimizing sequence $(u_n)_{n\in\N} \subseteq\mathcal{N}_{L_{\Delta_p}}$ and $(d_n)_{n\in\N} \subseteq \R$, such that
    \begin{equation}
    \label{ekeland-var-pri}
    0\leq J_{L_{\Delta_p}}(u_n)-\inf_{\mathcal{N}_{L_{\Delta_p}}} J_{L_{\Delta_p}} \leq \frac{1}{n^2} \quad \text{and} \quad \l\|J'_{L_{\Delta_p}}(u_n) -d_n \ph'(u_n)\r\|_{\mathcal{L}(\XO,\R)}\leq \frac{1}{n}.
    \end{equation}
This and Lemma \ref{Nehrai:prop} (i) imply that $J_{L_{\Delta_p}}(u_n)$ is bounded. Applying Lemma \ref{bdd_seq_XO} and using the reflexivity of $\XO$ (see \cite[Section 4]{Dyda-Jarohs-Sk}), we get a weakly convergent subsequence of $(u_n)_{n\in\N}$ (still denoted the same) such that $u_n \rightharpoonup u_0$ in $\XO$ for some $u_0 \in \XO.$ From \eqref{ekeland-var-pri}, we get
\[
    \begin{split}
    o(1) &=\frac{(J'_{L_{\Delta_p}}(u_n),u_n)_X-d_n(\ph'(u_n),u_n)_X}{\|u_n\|^p_{\XO}}\\
    &= -d_n \l[\frac{(p-1)\into g(x,u_n)u_n~dx -\into g'(x,u_n)u_n^2 ~dx -\mu \into |u_n|^p~dx}{\|u_n\|^p_{\XO}}\r].
    \end{split}
    \]
By Lemma \ref{Nehrai:prop} (ii) and \eqref{assump g4}, we get $d_n \to 0$ as $n\to \infty.$  Note that by \eqref{assump g5}, for a $\eps_1>0$, there exists a constant $C>0$ such that
\[|tg'(x,t)| \leq \eps_1|t|^{p-1} |\ln|t||+C \quad \text{for all} \ t \in \R, \ \text{a.e.} \  x \in \Omega.\]
Using this, the fact that $(u_n)_{n \in \N}$ is bounded and \eqref{assumption g(x,t)}, we get
    \[
    \begin{split}
|(\ph'(u_n),v)_{\XO}|
& \leq |p\qform(u_n,v)|+ \l|\into g(x,u_n)v ~dx\r|+ \l|\into g'(x,u_n)u_nv ~dx\r|\\ & \qquad+\l|p\mu\into  \ln|u_n| h(u_n)v ~dx\r|+ \l|\mu\into h(u_n)v ~dx\r| \\ 
& \leq |p \qform(u_n,v)|+(p\mu +\eps+\eps_1) \into  |\ln|u_n|| |u_n|^{p-1} |v| ~dx\\ & \qquad+ (\mu +a_1)\into |u_n|^{p-1}|v| ~dx+C \into |v| ~dx.
    \end{split}
    \]
This further gives in light of Theorem \ref{log-Sob-ineq},    
        \[
    \begin{split}
|(\ph'(u_n),v)_{\XO}|
& \leq |p \qform(u_n,v)|+(p\mu +\eps+\eps_1) \into  \ln|u_n| |u_n|^{p-1} |v| ~dx\\ & \qquad-2(p\mu +\eps+\eps_1) \int_{\{|u_n|<1\}}  \ln|u_n| |u_n|^{p-1} |v| ~dx\\ &\qquad+ (\mu +a_1)\into |u_n|^{p-1}|v| ~dx+C\into |v| ~dx\\
&\leq C'' \quad \text{for} \  v \in \XO, \ \|v\|_{\XO}=1,
    \end{split}
    \]
    where $C''>0$ is a constant. Using this, \eqref{ekeland-var-pri} and triangle inequality, we get
    \[
    \begin{split}
    &\|J'_{L_{\Delta_p}}(u_n)\|_{\mathcal{L}(\XO,\R)}-d_n\|\ph'(u_n)\|_{\mathcal{L}(\XO,\R)} \\
    & \qquad  \qquad  \qquad  \qquad \qquad \leq \l\|J'_{L_{\Delta_p}}(u_n) -d_n \ph'(u_n)\r\|_{\mathcal{L}(\XO,\R)}\leq \frac{1}{n}.
    \end{split}
    \]
    Therefore,
    \[
    \|J'_{L_{\Delta_p}}(u_n)\|_{\mathcal{L}(\XO,\R)}\leq \l(d_nC''+\frac{1}{n}\r) \to 0 \ \text{as} \ n\to \infty.
    \]
From the above, for all $v\in \XO$, we have
   \begin{equation}
\label{weak-soln}
     \begin{split}
         0 & = \lim_{n\to \infty} (J'_{L_{\Delta_p}}(u_n),v)_X\\
        &=\lim_{n \to \infty} \bigg(\mathcal{E}_p(u_n,v)+\mathcal{F}_p(u_n,v)+\rho_N \int_{\R^N} h(u_n)v ~dx\\& \qquad\qquad \qquad \qquad \qquad - \into g(x,u_n) v ~dx - \mu\into \ln|u_n|h(u_n) v ~dx\bigg).
        \end{split}
        \end{equation}
       Since $u_n \rightharpoonup u_0 $ in $\XO,$ by Lemma \ref{compct-embeding}, $u_n \to u_0$ in $L^p(\Omega).$ Now, by Lemma \ref{ln-limit} (i), (ii), (iii), for all $v\in \XO$, we obtain
        \[\lim_{n\to \infty} \mathcal{F}_p(u_n,v)=\mathcal{F}_p(u_0,v), \quad  \lim_{n \to \infty} \int_{\R^N} h(u_n)v ~dx = \int_{\R^N} h(u_0)v ~dx,\]
       \[\lim_{n \to \infty} \into g(x,u_n) v ~dx= \into g(x,u_0) v ~dx \] and \[\lim_{n \to \infty} \into \ln|u_n|h(u_n) v ~dx = \into \ln|u_0|h(u_0) v ~dx .\]
        In order to conclude that $u_0$ is a weak solution to \eqref{main_problem}, it only remains to show that
        \[\lim_{n \to \infty} \mathcal{E}_p(u_n,v) =\mathcal{E}_p(u_0,v) \quad \text{for all} \ v \in \XO.\]
         Denote
        \[\mathcal{D}:=\R^{2N}\cap\{x,y \in \R^N \,:\, |x-y|\leq 1\}.\]
        By the definition of $\mathcal{E}_p(\cdot, \cdot)$, we set
        \[
        \begin{split}
        \mathcal{E}_p(u_n,v):= \iint_{\mathcal{D}} D_{0,p'}(u_n) D_{0,p}(v) ~dx ~dy,
        \end{split}
        \]
        where $p'$ is the H\"older conjugate of $p$,
        \[D_{0,p'}(w):=\frac{h(w(x)-h(w(y)))}{|x-y|^{N/p'}} \quad \text{and} \quad  D_{0,p}(w):=\frac{(w(x)-w(y))}{|x-y|^{N/p}}.\]
        For a given $w \in \XO$, it is easy to observe that $D_{0,p'} (w) \in L^{p'}(\mathcal{D})$ and $D_{0,p} (w) \in L^{p}(\mathcal{D}).$
        Since $(u_n)_{n\in \N}$ is a bounded sequence in $\XO$, $(D_{0,{p'}}(u_n))_{n \in \mathbb{N}}$ is a bounded sequence in $L^{p'}(\mathcal{D})$. Let $v\in\XO$ and define a functional $T_v: L^{p'}(\mathcal{D}) \to \R $ such that
        \[T_v(\omega)= \iint_{\mathcal{D}} \omega D_{0,p}(v) ~dx ~dy.\]
        It is easy to deduce that $T_v$ is a linear and bounded functional on $L^{p'}(\mathcal{D})$. Furthermore, using $u_n \to u$ a.e. in $\Omega$ and the boundedness of $(D_{0,{p'}}(u_n))_{n\in \N}$ in $L^{p'}(\mathcal{D})$, we have $D_{0,{p'}}(u_n) \rightharpoonup D_{0,{p'}}(u_0)$ in $L^{p'}(\mathcal{D})$ (up to a subsequence). Moreover,
        \[\lim_{n \to \infty} \mathcal{E}_p(u_n,v)= \lim_{n \to \infty} T_v(D_{0,{p'}}(u_n))= T_v(D_{0,{p'}}(u_0))= \mathcal{E}_p(u_0,v).\]
        Combining all the above, we get that $u_0$ is a weak solution to \eqref{main_problem}. In particular, taking $v=u_0, $ we get  $u_0 \in \mathcal{N}_{L_{\Delta_p}}.$ By Lemma \ref{ln-limit}, we get
    \[
    \begin{split}
    \inf_{\mathcal{N}_{L_{\Delta_p}}} J_{L_{\Delta_p}} &= \lim_{n \to \infty} J_{L_{\Delta_p}}(u_n) \\
    &= \lim_{n\to \infty} \l(\frac{1}{p} \into g(x,u_n)u_n ~dx-\into G(x,u_n) ~dx +\frac{\mu}{p^2} \into |u_n|^p ~dx\r) \\
    &= \frac{1}{p} \into g(x,u_0)u_0 ~dx-\into G(x,u_0) ~dx +\frac{\mu}{p^2} \into |u_0|^p ~dx=J_{L_{\Delta_p}}(u_0).
    \end{split}
    \]
    Hence, $u_0$ has least energy. We now claim that $u_0$ does not change sign if $G(x,t)\leq G(x,|t|)$ for all $t \in \mathbb{R},$ a.e. $x \in \Omega.$ Indeed,
\[
\begin{split}
    J(|u_0|)&=\frac{1}{p}\qform(|u_0|,|u_0|)- \into G(x,|u_0|) ~dx-\frac{\mu}{p^2}\into |u_0|^p(\ln(|u_0|^p)-1) ~dx\\
    &\leq\frac{1}{p}\qform(u_0,u_0)- \into G(x,u_0) ~dx-\frac{\mu}{p^2}\into |u_0|^p(\ln(|u_0|^p)-1) ~dx\\
   &=J(u_0)=\inf_{\mathcal{N}_{\logP}} J.
\end{split}
\]
Thus, $J(|u_0|)=J(u_0)$.  From this and \cite[Lemma 4.7]{Dyda-Jarohs-Sk}, we get \[\qform(u_0,u_0) = \qform(|u_0|,|u_0|).\]
   Again, by \cite[Lemma 4.7]{Dyda-Jarohs-Sk}, we get that $u_0$ does not change sign in $\Omega.$
 This completes the proof of the theorem.
\qed

\subsection{The Logistic type problem}
Throughout this subsection, we consider the problem \eqref{main_problem} with $\mu \in (-\infty,0).$ \vspace{0.1cm}\\
\textit{Proof of Theorem \ref{Existence L2}:}
    Using the definitions of $J_{L_{\Delta_p}}$, $\qform$, \eqref{assumption G(x,t)}, choosing $\eps<\frac{-\mu}{p}$ and Lemma \ref{diff-F_p}, we get
    \[
    \begin{split}
    J_{L_{\Delta_p}}(u)&\geq \frac{1}{2p}\nX^p-\frac{1}{p}\l(b-\rho_N+pa_2-\frac{\mu}{p}\r)\nP^p\\
    &\qquad -\l(\frac{\mu}{p}+\eps\r)\into |u|^p \ln |u| ~dx+2\eps \int_{\{|u|<1\}} |u|^p \ln |u| ~dx,
    \end{split}
    \]
    where $b$ is a constant depending on $N$ and $p.$
    Denote
    \[ \Omega_1=\left\{x \in \Omega \,:\, \ln |u|^p >\frac{-p}{(\mu+p\eps)}\l(b-\rho_N+pa_2-\frac{\mu}{p}\r)\right\}.\]
    Then, we have
    \[
    \begin{split}
        J_{L_{\Delta_p}}(u)&\geq \frac{1}{2p}\nX^p-\frac{1}{p}\l(b-\rho_N+pa_2-\frac{\mu}{p}\r)\int_{\Omega\setminus\Omega_1} |u|^p ~dx.\\
    &\qquad -\l(\frac{\mu}{p}+\eps\r)\int_{\Omega\setminus\Omega_1} |u|^p \ln |u| ~dx-\frac{2\eps |\Omega|}{ep}.
    \end{split}
    \]
    Since $ |u|^p <\exp\l(\frac{-p}{(\mu+\eps)}\l(b-\rho_N+pa_2-\frac{\mu}{p}\r)\r)$ in $\Omega\setminus\Omega_1,$ there exists a constant $c>0$ such that
    \[-\frac{1}{p}\l(b-\rho_N+pa_2-\frac{\mu}{p}\r) \int_{\Omega\setminus\Omega_1} |u|^p ~dx-\l(\frac{\mu}{p} +\eps\r)\int_{\Omega\setminus\Omega_1} |u|^p \ln |u| ~dx>-c.\]
     From above, we get that \[J_{L_{\Delta_p}}(u) \geq \frac{1}{2p}\nX^p-c,\] and thus, we can conclude that $J_{L_{\Delta_p}}$ is coercive.
     Let $(u_k)_{k\in\N} \subseteq \XO$ be a minimizing sequence such that\[\lim_{k \to \infty} J_{L_{\Delta_p}}(u_k)= \inf_{ u\in \XO} J_{L_{\Delta_p}}(u)=:m.\]
     By the coercivity of the energy functional $J_{L_{\Delta_p}}$, $(u_k)_{k\in \N}$ is bounded in $\XO$. Moreover, by the reflexivity of $\XO$ and Lemma \ref{compct-embeding}, for some $u_1 \in \XO,$ we have (up to a subsequence)
     \[u_k \rightharpoonup u_1 \ \text{in} \ \XO, \quad  u_k \to u_1 \ \text{in} \  L^p(\Omega)\ \text{and a.e. in} \ \Omega. \]
     \textbf{Claim:} $J_{L_{\Delta_p}}(u_1) \leq \liminf_{k \to \infty} J_{L_{\Delta_p}}(u_k) = m.$\\
     Since the function $|t|^p \ln |t|$ is bounded below by its minima $-\frac{1}{ep}$, by Fatou's lemma, we have
     \begin{equation}
     \label{2}
     \into |u_1|^p \ln |u_1| ~dx\leq \liminf_{k\to \infty} \into |u_k|^p \ln |u_k| ~dx.
     \end{equation}
     By the definition of $\qform(\cdot,\cdot)$, we have
     \[
     \begin{split}
    \qform(u_k,u_k)= \mathcal{E}_p(u_k,u_k)+ \mathcal{F}_p(u_k,u_k)+ \rho_N \int_{\R^N} |u_k(x)|^p ~dx.
     \end{split}
     \]
   Since $u_k \rightharpoonup u_1$ in $\XO,$ using the weak lower semicontinuity of the norm $\|\cdot\|$, we get
     \begin{align}
     \label{3}
     \begin{split}
     \mathcal{E}_p(u_1,u_1) \leq \lim_{k \to \infty} \mathcal{E}_p(u_k,u_k).
     \end{split}
     \end{align}
By the estimates in \eqref{2}-\eqref{3} and using Lemma \ref{ln-limit}, we get the required claim. Hence $u_1$ is a least energy solution to \eqref{main_problem}. Now, for $\beta>0$ sufficiently small, $v \in C_c^\infty(\Omega),$ using \eqref{assumption G(x,t)} with $\eps< \frac{-\mu}{p}$, we have $\inf\limits_{\XO} J_{L_{\Delta_p}} =J_{L_{\Delta_p}}(u_1) \leq J_{L_{\Delta_p}}(\beta v)$ and
\begin{align}
\label{u_1 non trivial}
\begin{split}
J_{L_{\Delta_p}}(\beta v) & =\frac{\beta^p}{p}\qform(v,v) -\frac{\mu \beta^p}{p^2}\into |v|^p(\ln |\beta v|^p-1) ~dx- \into G(x,\beta v) ~dx\\
&\leq \beta^p\bigg[\frac{\qform(u,u)}{p}+\l(\frac{\mu}{p^2} - \l(\frac{\mu}{p} + \eps \r)\ln \beta+a_2\r)\into |v|^p ~dx \\
& \qquad \qquad -\frac{\mu}{p} \into |v|^p \ln |v|~dx  +\eps \into |v|^p |\ln|v|| ~dx\bigg]<0.
\end{split}
\end{align}
 Thus, $u_1$ is nontrivial. Now, we show that $u \in L^\infty(\Omega)$. Note that for $\eps<-\mu,$ using \eqref{assumption g(x,t)}, we have
 \[
 \begin{split}
 \qform(u,v)&=\into g(x,u)v ~dx+\mu \into \ln|u|h(u)v ~dx\\
 &\leq a_1\into |u|^{p-1}v ~dx + (\eps+\mu)\into |u|^{p-1} \ln |u| v ~dx \\
 & \qquad \qquad - 2 \eps \int_{\{|u|<1\}} |u|^{p-1} \ln |u| v ~dx\\
 &\leq a_1\into |u|^{p-1}v ~dx+\frac{(\eps-\mu)}{e(p-1)} \into v ~dx.
 \end{split}
 \]
 Then, by \cite[Theorem 6.7]{Dyda-Jarohs-Sk}, we get $u \in L^\infty(\Omega).$ Furthermore, if $G(x,t) \leq G(x,|t|)$ for all $t \in \R$, a.e. $x \in \Omega$, the claim that $u_1$ does not change sign in $\Omega$ can be inferred with the same arguments as in Theorem \ref{Existence L1}. Now, to prove the uniqueness, let $u$ and $v$ be nontrivial, nonnegative and distinct solutions to \eqref{main_problem}. Let $\gamma_t(x):=\gamma(u(x),v(x),t):= \l((1-t)u^p+tv^p\r)^{\frac{1}{p}}, \ t \in [0,1].$
 Define a map $J_{L_{\Delta_p}}(\gamma_t):[0,1] \to \R$ as
 \[
 \begin{split}
      J_{L_{\Delta_p}}(\gamma_{t}) & = \frac{1}{p}\qform(\gamma_{t}, \gamma_{t}) -\into G(x, \gamma_{t}) ~dx-\frac{\mu}{p^2}\into |\gamma_{t}|^p(\ln |\gamma_{t}|^p-1) ~dx\\
     &=: \theta_1(t)+\theta_2(t).
 \end{split}
 \]

 Note that
\[\frac{d\theta_2}{dt} = -\into \frac{(v^p-u^p)}{p}\l[\gamma_{t}^{1-p} g(x,\gamma_{t})+\mu\ln(\gamma_{t})\r] ~dx,\]

 \[\frac{d^2\theta_2}{dt^2}= -\into \frac{\gamma_{t}^{-p}(v^p-u^p)^2}{p^2}\l[g'(x,\gamma_{t})\gamma_{t}^{2-p}+(1-p)g(x,\gamma_t)\gamma_{t}^{-p}+\mu\r] ~dx.\]
Now, by \eqref{assump g6}, we have $\frac{d^2\theta_2}{dt^2}\geq 0,$ which implies, the map $t\mapsto \theta_2(t)$ is convex. Let $t_1,t_2,\eta\in [0,1].$ For $\xi =(\xi_1, \xi_2) $ and $\Theta=(\Theta_1,\Theta_2)$, using the following Minkowski's inequality
 \begin{equation}
 \label{Minkowski-ineq}
 \l| \l(|\xi_1|^p+|\xi_2|^p\r)^{\frac{1}{p}}-\l( |\Theta_1|^p+|\Theta_2|^p\r)^{\frac{1}{p}}\r|^p \leq |\xi_1-\Theta_1|^p+|\xi_2-\Theta_2|^p,
 \end{equation}
 with \[\xi=((1-\eta)^{\frac{1}{p}}\gamma_{t_1}(x), \ \eta^{\frac{1}{p}}\gamma_{t_2}(x)) \quad  \text{and} \quad \Theta  =((1-\eta)^{\frac{1}{p}}\gamma_{t_1}(y), \ \eta^{\frac{1}{p}}\gamma_{t_2}(y)),\] we get
 \begin{align}
 \label{ineq}
 \begin{split}
 &\l|\gamma_{(1-\eta)t_1+\eta t_2}(x)-\gamma_{(1-\eta)t_1+\eta t_2}(y)\r|^p\\
&= \l[\l((1-\eta)\gamma_{t_1}^p(x)+\eta \gamma_{t_2}^p(x) \r)^{\frac{1}{p}} -\l((1-\eta)\gamma_{t_1}^p(y)+\eta \gamma_{t_2}^p(y) \r)^{\frac{1}{p}}\r]^p\\
     &\leq  (1-\eta) \l|\gamma_{t_1}(x)-\gamma_{t_1}(y)\r|^{p} + \eta \l|\gamma_{t_2}(x)-\gamma_{t_2}(y)\r|^{p}.
 \end{split}
 \end{align}
Using \eqref{ineq}, we get
\begin{equation}\label{convex-1}
    \mathcal{E}_p(\gamma_{(1-\eta)t_1+\eta t_2},\gamma_{(1-\eta)t_1+\eta t_2}) \leq (1-\eta)\mathcal{E}_p(\gamma_{t_1}, \gamma_{t_1})+\eta \mathcal{E}_p(\gamma_{t_2}, \gamma_{t_2}).
\end{equation}
Note that \[|\gamma_{(1-\eta)t_1+\eta t_2}|^p= (1-\eta)|\gamma_{t_1}|^p+\eta |\gamma_{t_2}|^p.\]
Using this and \eqref{ineq}, we get
\begin{equation}\label{convex-2}
    \mathcal{F}_p(\gamma_{(1-\eta)t_1+\eta t_2},\gamma_{(1-\eta)t_1+\eta t_2}) \leq (1-\eta)\mathcal{F}_p(\gamma_{t_1}, \gamma_{t_1})+\eta \mathcal{F}_p(\gamma_{t_2}, \gamma_{t_2}).
\end{equation}
\[\]
Finally, combining the estimates \eqref{convex-1}-\eqref{convex-2} in the definition of $\qform(\cdot,\cdot)$, we deduce that $\theta_1(t)$ is convex, which together with the convexity of $\theta_2$ yields, $J_{L_{\Delta_p}}(\gamma_t)$ is convex. Therefore, applying \cite[Theorem 1.1]{Foldes-Saldana}, we get that the solution to \eqref{main_problem} is unique.
\qed
\section{Small order asymptotics of the \texorpdfstring{$p$}{p}-fractional Laplacian problem}
\label{{p}-fractional Laplacian problem}
In this section, we prove that a sequence of least energy solutions to the fractional problem \eqref{frac_pblm} converges to a solution to the limiting problem \eqref{lim_pblm}. We consider two cases of a subcritical growth exponent function such that the growth of the nonlinearity in the given problem is superlinear and sublinear respectively. Note that the problem \eqref{lim_pblm} is equivalent to \eqref{main_problem} with $g(x,u)= a'(0,x)h(u)$. Therefore, for this case \[J_{\logP}(u):= \frac{1}{p} \qform(u,u)-\frac{1}{p}\into a'(0,x)|u|^p ~dx-\frac{\mu}{p^2}\into |u|^p(\ln|u|^p-1) ~dx.\]
Recall the following result:
\begin{lemma}
\cite[Lemma 3.1]{Santamaria-Saldana}
    \label{lemma3.1}
 If $b\neq 0,$ then
 \[\lim_{s \to 0^+} \l(1+sb+o(s)\r)^{\frac{1}{s}}=e^b= \lim_{s \to 0^+} (1+sb)^{\frac{1}{s}}.\]
\end{lemma}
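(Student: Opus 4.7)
The plan is to reduce both claims to a standard Taylor expansion by passing to logarithms. First I would observe that, since $sb+o(s)\to 0$ as $s\to 0^+$, there exists $s_0>0$ such that $1+sb+o(s)\geq 1/2$ for all $s\in(0,s_0)$; in particular the base is positive and we may write
\[
\l(1+sb+o(s)\r)^{\frac{1}{s}} = \exp\!\l(\frac{1}{s}\ln(1+sb+o(s))\r) \qquad \text{for } s\in(0,s_0).
\]
This reduces the problem to computing $\lim_{s\to 0^+}\tfrac{1}{s}\ln(1+sb+o(s))$.

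Next I would invoke the elementary expansion $\ln(1+x)=x+O(x^2)$ as $x\to 0$. Setting $x=sb+o(s)$, which tends to $0$, yields
\[
\ln(1+sb+o(s)) = sb+o(s)+O\!\l((sb+o(s))^2\r) = sb+o(s),
\]
since $(sb+o(s))^2 = s^2 b^2 + o(s^2) = o(s)$. Dividing by $s$ gives
\[
\frac{1}{s}\ln(1+sb+o(s)) = b + \frac{o(s)}{s} \longrightarrow b \qquad \text{as } s\to 0^+.
\]
Continuity of the exponential then produces $(1+sb+o(s))^{1/s}\to e^b$, which is the first equality. The second equality is merely the special case in which the $o(s)$-term is identically zero, so the same computation applies verbatim.

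I do not anticipate any real obstacle: the only technical point is verifying positivity of the base for small $s$ so that the logarithm is defined, and this follows immediately from $sb+o(s)\to 0$. The hypothesis $b\neq 0$ is not actually needed for convergence; it merely ensures the limit $e^b$ is not trivially $1$, which is presumably why the lemma is stated under that assumption in \cite{Santamaria-Saldana} (it is the case of interest in the asymptotic applications made via Lemma \ref{lem-asymp} and in \eqref{constant:est-2}).
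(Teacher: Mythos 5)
Your proof is correct. Note that the paper does not actually supply a proof of this lemma — it is simply recalled by citation to \cite[Lemma 3.1]{Santamaria-Saldana} — so there is no in-paper argument to compare against. Your logarithm-plus-Taylor-expansion route is the standard (and essentially the only reasonable) way to establish it: positivity of the base for small $s$ justifies taking the logarithm, the expansion $\ln(1+x)=x+O(x^2)$ with $x=sb+o(s)$ gives $\tfrac{1}{s}\ln(1+sb+o(s))\to b$, and continuity of $\exp$ finishes. Your remark that the hypothesis $b\neq 0$ is not actually needed for the convergence (the case $b=0$ gives limit $1=e^0$) is also accurate.
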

\subsection{The superlinear case}
In this subsection, we consider the function $q$ satisfying \eqref{assump q} and
\[
q(s)\in \l(p,\frac{Np}{N-sp}\r) \quad \text{and} \quad \mu \in \l(0,\frac{p^2}{N}\r).\]
\begin{lemma}
\label{t-hash}
For any $\ph\in \WO\setminus\{0\}$ satisfying $\into a(s,x)|\ph|^{q(s)}~dx>0$, \[t^{\#}_{\ph,s}:= {\l(\frac{\nWp^p}{\into a(s,x)|\ph|^{q(s)}~dx}\r)}^{\frac{1}{q(s)-p}}\] is the unique point of maximum of the map $\mathbb{R}^+ \ni t \mapsto \psi(t):=J_{s,p}(t\phi)$ such that $t^{\#}_{\ph,s} \ph \in \mathcal{N}_{s,p}$, where $J_{s,p}$ is defined in \eqref{energy-frac}.
\end{lemma}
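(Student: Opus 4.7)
The plan is to reduce the statement to a one-variable calculus problem by analyzing the fibering map $t\mapsto \psi(t):=J_{s,p}(t\ph)$ on $(0,\infty)$. Using the $p$-homogeneity of $\|\cdot\|_{\WO}$ together with definition \eqref{energy-frac}, I would first write
\[
\psi(t)=\frac{t^p}{p}\nWp^p-\frac{t^{q(s)}}{q(s)}\into a(s,x)|\ph|^{q(s)}~dx,
\]
and differentiate to obtain
\[
\psi'(t)=t^{p-1}\bigg(\nWp^p-t^{q(s)-p}\into a(s,x)|\ph|^{q(s)}~dx\bigg).
\]

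Next I would solve $\psi'(t)=0$ on $(0,\infty)$. By hypothesis $\nWp^p>0$ (since $\ph\neq 0$) and $\into a(s,x)|\ph|^{q(s)}~dx>0$; since we are in the superlinear regime $q(s)-p>0$, the map $t\mapsto t^{q(s)-p}\into a(s,x)|\ph|^{q(s)}~dx$ is continuous and strictly increasing from $0$ to $+\infty$. Hence the bracketed factor vanishes at a unique point, namely
\[
t^{\#}_{\ph,s}=\bigg(\frac{\nWp^p}{\into a(s,x)|\ph|^{q(s)}~dx}\bigg)^{\frac{1}{q(s)-p}}.
\]
The same monotonicity yields $\psi'>0$ on $(0,t^{\#}_{\ph,s})$ and $\psi'<0$ on $(t^{\#}_{\ph,s},\infty)$, so $\psi$ is strictly increasing and then strictly decreasing on $(0,\infty)$, making $t^{\#}_{\ph,s}$ the unique global maximum. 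The consistency checks $\psi(0)=0$ and $\psi(t)\to -\infty$ as $t\to\infty$ follow from $q(s)>p$ and the positivity of the two quantities defining $t^{\#}_{\ph,s}$.

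Finally, to confirm $t^{\#}_{\ph,s}\ph\in\mathcal{N}_{s,p}$, I would note that $\psi'(t^{\#}_{\ph,s})=0$, after cancelling $(t^{\#}_{\ph,s})^{p-1}>0$ and using the homogeneities $[\lambda v]_{s,p,\R^N}^p=\lambda^p[v]_{s,p,\R^N}^p$ and $|\lambda v|^{q(s)}=\lambda^{q(s)}|v|^{q(s)}$, is exactly the Nehari identity
\[
(t^{\#}_{\ph,s})^p\,\nWp^p=(t^{\#}_{\ph,s})^{q(s)}\into a(s,x)|\ph|^{q(s)}~dx,
\]
read via the identification of $\|\cdot\|_{\WO}$ with the equivalent seminorm $[\cdot]_{s,p,\R^N}$ on $\WO$ already invoked after \eqref{C-nsp}. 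No serious obstacle is anticipated: the entire argument is a one-variable calculus exercise hinging only on the superlinearity $q(s)>p$ and the two positivity assumptions built into the definition of $t^{\#}_{\ph,s}$.
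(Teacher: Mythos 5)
Your proposal is correct and follows essentially the same route as the paper: both reduce to the one-variable fibering map, solve $\psi'(t)=0$ for the unique critical point, and verify the Nehari identity by homogeneity (the paper uses the second-derivative test where you use the sign change of $\psi'$, which if anything is slightly cleaner for uniqueness of the global maximum). Your closing remark about identifying $\|\cdot\|_{\WO}$ with the equivalent seminorm $[\cdot]_{s,p,\R^N}$ correctly addresses the only notational subtlety.
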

\begin{proof}
   Differentiating twice with respect to $t$, we obtain
    \[
    \psi'(t)=t^{p-1}\nWp^p-t^{q(s)-1}\into a(s,x)|\ph|^{q(s)}~dx
    \]
    and
    \[
    \psi''(t)=(p-1)t^{p-2}\nWp^p-(q(s)-1)t^{q(s)-2}\into a(s,x)|\ph|^{q(s)}~dx.
    \]
    Now,
    \[\psi'(t)=0 \quad \text{implies} \quad  t= t^{\#}_{\ph,s}:={\l(\frac{\nWp^p}{\into a(s,x)|\ph|^{q(s)}~dx}\r)}^{\frac{1}{q(s)-p}}.\]
    Since $p< q(s)$ and $t^{\#}_{\ph,s}$ is a critical point of $\psi$, we have $\psi''(t^{\#}_{\ph,s})<0$ and  $t^{\#}_{\ph,s}\ph \in \mathcal{N}_{s,p}.$ Therefore, we can conclude that $ t^{\#}_{\ph,s}$ is the unique point of maxima of $\psi.$
\end{proof}

\begin{lemma}
\label{u-0-nontriv}
Let $a$ satisfy \eqref{assump2-a}, \eqref{assump4-a} and $u_s \in \mathcal{N}_{s,p}$. Then,
\begin{equation}
\label{lowbd_u}
\nWs\geq {[k(N,s,p)]}^{\frac{1}{p-q(s)}},
\end{equation}
where
$k(N,s,p):=\|a\|_{L^{\nu}(\Omega)} |\Omega|^{\frac{1}{\nu'}-\frac{q(s)}{p^\ast_s}} {[A(N,p,s)]}^{\frac{q(s)}{p}}$, $\nu':=\nu'(s)$ denotes the H\"older conjugate of $\nu.$
\end{lemma}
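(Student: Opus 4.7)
The plan is a direct chain of Hölder-type inequalities starting from the Nehari identity and closing with the fractional Sobolev embedding. Since $u_s\in\mathcal{N}_{s,p}$, we have the identity
\[
[u_s]^p_{s,p,\mathbb{R}^N} = \into a(s,x)\,|u_s|^{q(s)}~dx,
\]
so it suffices to bound the right-hand side above by a suitable power of $[u_s]_{s,p,\mathbb{R}^N}$ and then solve the resulting scalar inequality in the Gagliardo seminorm.

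\textbf{Step 1 (separate the weight).} I would apply Hölder's inequality with the exponents $\nu$ and $\nu'=\nu/(\nu-1)$ coming from assumption \eqref{assump2-a}, to get
\[
\into a(s,x)|u_s|^{q(s)}~dx \;\le\; \|a(s,\cdot)\|_{L^\nu(\Omega)}\,\|u_s\|_{L^{q(s)\nu'}(\Omega)}^{q(s)}.
\]

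\textbf{Step 2 (move up to the critical Lebesgue space).} The lower bound $\nu>\frac{p^\ast_s}{p^\ast_s-q(s)}$ in \eqref{nu} is precisely equivalent to $q(s)\nu'\le p^\ast_s$, so a second Hölder inequality (with the trivial factor $1$ on the whole of $\Omega$) gives
\[
\|u_s\|_{L^{q(s)\nu'}(\Omega)}^{q(s)} \;\le\; |\Omega|^{\frac{1}{\nu'}-\frac{q(s)}{p^\ast_s}}\,\|u_s\|_{L^{p^\ast_s}(\Omega)}^{q(s)}.
\]

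\textbf{Step 3 (Sobolev embedding and scalar inequality).} Applying the rewritten fractional Sobolev inequality \eqref{sob_ineq}, namely $\|u_s\|_{L^{p^\ast_s}(\Omega)}^p\le A(N,p,s)\,[u_s]^p_{s,p,\mathbb{R}^N}$, and raising to the power $q(s)/p$, I chain the three estimates into
\[
[u_s]^p_{s,p,\mathbb{R}^N} \;\le\; k(N,s,p)\,[u_s]^{\,q(s)}_{s,p,\mathbb{R}^N}.
\]
Since $u_s\not\equiv 0$ and $q(s)>p$, dividing by $[u_s]^p_{s,p,\mathbb{R}^N}$ and raising to the (negative) power $\tfrac{1}{p-q(s)}$ reverses the inequality and yields $[u_s]_{s,p,\mathbb{R}^N}\ge k(N,s,p)^{1/(p-q(s))}$. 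Finally, since $\nWs^p = \nP^p + [u_s]^p_{s,p,\mathbb{R}^N}\ge [u_s]^p_{s,p,\mathbb{R}^N}$, the claimed bound \eqref{lowbd_u} follows.

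There is no real analytic obstacle here; the argument is essentially a bookkeeping exercise in the exponents, and the role of each hypothesis is transparent: \eqref{assump2-a} delivers the Hölder pair $(\nu,\nu')$ with $q(s)\nu'\le p^\ast_s$; \eqref{sob_ineq} supplies the Sobolev constant $A(N,p,s)$ entering $k(N,s,p)$; and the strict superlinearity $q(s)>p$ is what allows the scalar inequality to be inverted into a \emph{lower} bound rather than an upper one. The only point requiring a little care is the sign of $p-q(s)$ when raising to the fractional power, which is why the resulting constant $k(N,s,p)^{1/(p-q(s))}$ appears with a negative exponent in the formulation.
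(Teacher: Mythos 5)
Your argument is correct and follows essentially the same route as the paper: Hölder with the pair $(\nu,\nu')$ from \eqref{assump2-a}, a second Hölder to pass to $L^{p^\ast_s}(\Omega)$, the Sobolev inequality \eqref{sob_ineq}, and then inversion of the resulting scalar inequality using the Nehari identity and $q(s)>p$. Your closing remark that $\nWs^p\ge [u_s]^p_{s,p,\R^N}$ is in fact slightly more careful than the paper's own write-up on the norm-versus-seminorm point.
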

\begin{proof}
 By H\"older inequality, \eqref{assump2-a} and \eqref{sob_ineq}, we have
 \begin{align}
 \label{Holder-sob}
 \begin{split}
    \into a(s,x)|u_s|^{q(s)}~dx & \leq \|a\|_{L^{\nu}(\Omega)}\l(\into |u|^{q(s)\nu'}~dx\r)^{\frac{1}{\nu'}}\\
    & \leq \|a\|_{L^{\nu}(\Omega)} |\Omega|^{\frac{1}{\nu'}-\frac{q(s)}{p^\ast_s}}\|u_s\|^{q(s)}_{L^{p^\ast_s}(\Omega)}\\
     & \leq \|a\|_{L^{\nu}(\Omega)} |\Omega|^{\frac{1}{\nu'}-\frac{q(s)}{p^\ast_s}} {[A(N,p,s)]}^{\frac{q(s)}{p}} \nWs^{q(s)}\\
     &=:k(N,s,p) \nWs^{q(s)}.
 \end{split}
 \end{align}
 Since $u_s \in \mathcal{N}_{s,p},$ we have
 \[
    0 = J'_{s,p}(u_s) \geq \nWs^p \l(1- k(N,s,p)\nWs^{q(s)-p}\r),
 \]
which implies
\[\nWs\geq {[k(N,s,p)]}^{\frac{1}{p-q(s)}}.\]
\end{proof}
\begin{remark}
\label{k>M}
Note that by \eqref{nu}, $\sup_{s \in (0, \frac{1}{2})}|\Omega|^{\frac{1}{s\nu}}<\infty$. Furthermore, by the Taylor's expansion of $A(N,p, s)$ about $s=0$ and Lemma \ref{lemma3.1}, we get
    \[
    \begin{split}
        \lim_{s \to 0^+} \l({[A(N,p,s)]}^{\frac{q(s)}{p}}\r)^{\frac{1}{p-q(s)}}
        &= \exp\bigg[\lim_{s \to 0^+} \l(\frac{q(s)}{p}\r)\lim_{s \to 0^+} \l(\frac{s}{p-q(s)}\r) \\
        & \qquad \qquad\lim_{s \to 0^+} \l(\ln[1+sk_0+o(s)]^{\frac{1}{s}}\r)\bigg] =  e^{\frac{-k_0}{\mu}}
    \end{split}
    \]
and using \eqref{assump4-a}, we get ${[k(N,s,p)]}^{\frac{1}{p-q(s)}}\geq M$
for some constant $M>0$ (independent of $s$).
\end{remark}
\begin{remark}
\label{inf-exists}
Note that on using Lemma \ref{u-0-nontriv}, for $u\in \mathcal{N}_{s,p},$ we get $J_{s,p}(u)=
 \l(\frac{1}{p}-\frac{1}{q(s)}\r)\nW^p$ is bounded below. Thus, $\inf_{\mathcal{N}_{s,p}} J_{s,p}$ exists.
 \end{remark}
\begin{theorem}
    \label{exist-frac}
Let $a: [0, \frac{1}{2}] \times \Omega \to \R$ satisfies \eqref{assump2-a}, \eqref{assump4-a}. Then, there exists a nontrivial least energy weak solution $u_s$ to the problem
\[ (-\Delta_p)^s u_s = a(s,x)|u_s|^{q(s)-2}u_s  \ \text{in}  \ \Omega, \qquad u_s=0 \ \text{in} \ \R^N \setminus \Omega.\]
 Moreover, if $a(s, \cdot)$ is non-negative for all $s \in [0, \frac{1}{2}]$, all least energy solutions are of constant sign in $\Omega.$
\end{theorem}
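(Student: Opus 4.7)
The plan is to minimize $J_{s,p}$ over the Nehari manifold $\mathcal{N}_{s,p}$, following the Nehari/fibering method used in the proof of Theorem \ref{Existence L1} but in the considerably simpler purely subcritical regime $q(s) \in (p, p^\ast_s)$. Set $c_s := \inf_{\mathcal{N}_{s,p}} J_{s,p}$, which is well-defined by Remark \ref{inf-exists}, and fix a minimizing sequence $(u_n) \subseteq \mathcal{N}_{s,p}$. Because $u_n \in \mathcal{N}_{s,p}$ forces $J_{s,p}(u_n) = \left(\tfrac{1}{p}-\tfrac{1}{q(s)}\right)\|u_n\|_{\WO}^p$, the sequence is bounded in $\WO$. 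By reflexivity of $\WO$ and the compact embedding $\WO \hookrightarrow L^{q(s)\nu'(s)}(\Omega)$, which holds because the first clause of \eqref{nu} yields $q(s)\nu'(s) < p^\ast_s$, a subsequence satisfies $u_n \rightharpoonup u_s$ weakly in $\WO$ and $u_n \to u_s$ strongly in $L^{q(s)\nu'(s)}(\Omega)$; combined with $a(s,\cdot) \in L^{\nu(s)}(\Omega)$ and Hölder's inequality, this gives $\into a(s,x)|u_n|^{q(s)}~dx \to \into a(s,x)|u_s|^{q(s)}~dx$.

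Next I verify nontriviality of the weak limit: Lemma \ref{u-0-nontriv} together with Remark \ref{k>M} yields a uniform positive lower bound on $\|u_n\|_{\WO}$, and combining this with the Nehari identity $\|u_n\|_{\WO}^p = \into a(s,x)|u_n|^{q(s)}~dx$ forces $\into a(s,x)|u_s|^{q(s)}~dx > 0$; in particular $u_s \not\equiv 0$. Lemma \ref{t-hash} then supplies a unique $t^\ast := t^{\#}_{u_s,s} > 0$ such that $\widetilde{u}_s := t^\ast u_s \in \mathcal{N}_{s,p}$. Using weak lower semicontinuity of the $\WO$-norm together with the strong $L^{q(s)\nu'(s)}$-convergence,
\[
(t^\ast)^{q(s)-p} = \frac{\|u_s\|_{\WO}^p}{\into a(s,x)|u_s|^{q(s)}~dx} \leq \liminf_{n \to \infty} \frac{\|u_n\|_{\WO}^p}{\into a(s,x)|u_n|^{q(s)}~dx} = 1,
\]
so $t^\ast \leq 1$ and consequently $J_{s,p}(\widetilde{u}_s) = \left(\tfrac{1}{p}-\tfrac{1}{q(s)}\right)(t^\ast)^p \|u_s\|_{\WO}^p \leq \liminf_n J_{s,p}(u_n) = c_s$. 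Hence $\widetilde{u}_s$ attains $c_s$ on $\mathcal{N}_{s,p}$.

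To promote this constrained minimizer to a free critical point of $J_{s,p}$ (and hence a weak solution), I would use a standard Lagrange multiplier argument. Setting $\Psi_s(u) := \|u\|_{\WO}^p - \into a(s,x)|u|^{q(s)}~dx$, the identity $\langle \Psi_s'(u), u\rangle = (p - q(s))\|u\|_{\WO}^p < 0$ on $\mathcal{N}_{s,p}$ shows $0$ is a regular value of $\Psi_s$ and $\mathcal{N}_{s,p}$ is a $C^1$-manifold; testing the Lagrange relation $J_{s,p}'(\widetilde{u}_s) = \lambda \Psi_s'(\widetilde{u}_s)$ against $\widetilde{u}_s$ and using $\langle J_{s,p}'(\widetilde{u}_s), \widetilde{u}_s\rangle = 0$ (the Nehari identity) forces $\lambda = 0$. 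Thus $\widetilde{u}_s$ is a nontrivial least energy weak solution.

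For the constant sign assertion under $a(s,\cdot) \geq 0$: for any least energy solution $u$, the pointwise inequality $\bigl||u(x)| - |u(y)|\bigr| \leq |u(x) - u(y)|$ yields $\|\,|u|\,\|_{\WO} \leq \|u\|_{\WO}$, while the weighted $L^{q(s)}$-term is invariant, so $J_{s,p}(t|u|) \leq J_{s,p}(tu)$ for every $t \geq 0$. Applying Lemma \ref{t-hash} to $|u|$ produces $t^{\#}_{|u|,s}|u| \in \mathcal{N}_{s,p}$ with $J_{s,p}(t^{\#}_{|u|,s}|u|) \leq \max_{t \geq 0} J_{s,p}(tu) = J_{s,p}(u) = c_s$, and least energy minimality forces equality throughout; this equality propagates back to $\|\,|u|\,\|_{\WO} = \|u\|_{\WO}$, and the rigidity case of the triangle inequality (equality a.e.~on $\mathbb{R}^{2N}$) shows $u$ has constant sign in $\Omega$. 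The only delicate quantitative ingredient in the entire argument is ensuring the compactness threshold $q(s)\nu'(s) < p^\ast_s$ imposed by \eqref{nu}; once this is secured, the Nehari minimization in the subcritical regime runs through cleanly with no loss of compactness to worry about.
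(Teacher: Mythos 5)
Your existence argument is correct but takes a genuinely different (and arguably cleaner) route than the paper. The paper runs Ekeland's variational principle on $\mathcal{N}_{s,p}$, shows the multipliers $\zeta_n\to 0$ so that $J_{s,p}'(u_n)\to 0$, and then passes to the limit directly in the Euler--Lagrange equation (weak convergence of the difference quotients $D_{s,p'}(u_n)$ in $L^{p'}(\R^{2N})$ plus a dominated-convergence argument via \cite[Lemma A.1]{Willem}). You instead exploit the purely subcritical structure: boundedness of the minimizing sequence from the Nehari identity, compactness of $\WO\hookrightarrow L^{q(s)\nu'(s)}(\Omega)$ (your verification $q(s)\nu'(s)<p^\ast_s$ from the first clause of \eqref{nu} is the same computation as in \eqref{Holder-sob}), nontriviality of the weak limit from Lemma \ref{u-0-nontriv}, projection of the weak limit back onto $\mathcal{N}_{s,p}$ with $t^{\#}_{u_s,s}\le 1$ by weak lower semicontinuity, and then the standard Lagrange-multiplier/natural-constraint argument. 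This avoids the delicate identification of the weak limit as a solution before knowing the infimum is attained; the paper's route, on the other hand, produces the solution property and the energy identity simultaneously. Both are valid; yours is the more economical one in this setting.

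There is one gap, in the constant-sign part. Your rigidity argument (equality in $\bigl||u(x)|-|u(y)|\bigr|\le|u(x)-u(y)|$ a.e.\ forces $u(x)u(y)\ge 0$ a.e., hence $u\ge 0$ or $u\le 0$) only yields that $u$ does not change sign, and indeed it never uses the hypothesis $a(s,\cdot)\ge 0$. The paper proves the stronger conclusion that every least energy solution is \emph{strictly} positive or strictly negative in $\Omega$: having shown $t^{\#}_{|u_s|,s}=1$, it observes that $|u_s|\in\mathcal{N}_{s,p}$ is itself a nonnegative least energy solution, and then applies the strong maximum principle of \cite[Theorem 1.2]{Pezzo-Quaas} to $|u_s|$ --- this is precisely where $a(s,\cdot)\ge 0$ enters, since it makes $|u_s|$ a nonnegative supersolution. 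To match the statement as proved in the paper, you should append this maximum-principle step after your equality chain.
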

\begin{proof}
It is easy to see that $\mathcal{N}_{s,p}$ is a $C^1$ manifold, so we can apply the critical point theory on $\mathcal{N}_{s,p}$ to get critical points of $J_{s,p}.$
    Let $T : \WO\setminus\{0\} \to \R$ be given by
    \[T(u) = \nW^p\ - \into a(s,x)|u|^{q(s)} ~dx
    \]
 and
 \[(T'(u), u)_{\WO}= p\nW^p-{q(s)}\into a(s,x)|u|^{q(s)} ~dx.\]
By Ekeland's variational principle and Remark \ref{inf-exists}, there exists a minimizing sequence $(u_n)_{n\in \N} \subset \mathcal{N}_{s,p}$, $(\zeta_n)_{n\in \N}\subset \R$ such that  for all $n\in \N$,
\begin{equation}\label{Ekland:conseq}
    0\leq J_{s,p}(u_n)-\inf_{\mathcal{N}_{s,p}} J_{s,p} \leq \frac{1}{n^2}, \quad \|J_{s,p}'(u_n)-\zeta_n T'(u_n)\|_{\mathcal{L}(\WO,\R)}\leq \frac{1}{n}.
\end{equation}
From above, it is easy to see that \[0\leq \l[J_{s,p}(u_n)-\inf_{\mathcal{N}_{s,p}} J_{s,p}\r]= \l(\frac{1}{p}-\frac{1}{q(s)}\r)\|u_n\|_{\WO}^p-\inf_{\mathcal{N}_{s,p}} J_{s,p}\leq \frac{1}{n^2}.\]
Thus, by the above estimate and Lemma \ref{u-0-nontriv}, there exists a constant $C>1$ (independent of $n$) such that $\frac{1}{C}\leq \|u_n\|_{\WO} \leq C $ for all $n \in \mathbb{N}.$ Therefore, there exists $u_s\in \WO\setminus\{0\}$ such that $u_n \rightharpoonup u_s$ in $\WO$ and $u_n \to u_s$ in $L^{q(s)}(\Omega)$ and a.e. in $\R^N$. By \eqref{Ekland:conseq}, we have
\[
\begin{split}
 o(1) &= \frac{1}{\|u_n\|_{\WO}}\l((J'_{s,p}(u_n),u_n)_{\WO}-\zeta_n (T'(u_n),u_n)_{\WO} \r)\\
& = \zeta_n(q(s)-p)\|u_n\|_{\WO}^{p-1}.
\end{split}
\]
Therefore, $\zeta_n \to 0$ as $n \to \infty$ and by triangle inequality, we get
\[\|J'_{s,p}(u_n)\|_{\mathcal{L}(\WO,\R)} \to 0 \quad \text{as} \quad  n\to \infty.\]
Let $\ph \in \WO.$ Then, we have
\[
\begin{split}
    & 0 = \lim_{n\to \infty} (J'_{s,p}(u_n), \ph)_{\WO} \\&= \lim_{n\to \infty} \bigg(\frac{C_{N,s,p}}{2}\intr \frac{h(u_n(x)-u_n(y))(\ph(x)-\ph(y))}{|x-y|^{N+sp}} ~dx ~dy\\& \qquad \qquad \qquad-\into a(s,x)\abs{u_n}^{q(s)-2}u_n \ph ~dx \bigg)
    =:\lim_{n \to \infty} (I_1+I_2).
\end{split}
\]
For a given $w\in \WO,$ we denote
\[D_{s,p'}(w):= \frac{h(w(x)-w(y))}{|x-y|^{(N+sp)/p'}} \in L^{p'}(\R^{2N}),\]
\[\text{and} \quad D_{s,p}(w):= \frac{w(x)-w(y)}{|x-y|^{(N+sp)/p}} \in L^p(\R^{2N}). \]
By the boundedness of the sequence $(u_n)_{n\in\N}$ in $\WO,$ we get that the sequence $(D_{s,p'}(u_n))_{n\in \N}$ is bounded in $L^{p'}(\R^{2N}).$ For $\ph \in \WO$, we define $T_\ph:L^{p'}(\R^{2N}) \to \R$ as
\[T_\ph(w)= \frac{C_{N,s,p}}{2}\intr w D_{s,p}(\ph) ~dx ~dy. \] Then, $T_\ph$ is a linear and bounded functional in $L^{p'}(\R^{2N}).$ Since $(D_{s,p'}(u_n))_{n\in \N}$ is bounded in $L^{p'}(\R^{2N})$ and $u_n \to u_s$ a.e. in $\mathbb{R}^N$, $D_{s,p'}(u_n) \rightharpoonup D_{s,p'}(u_s)$ in $L^{p'}(\R^{2N})$, upto a subsequence, {\it i.e.},
\[
\begin{split}
\lim_{n \to \infty} I_1 & = \lim_{n \to \infty} T_\ph(D_{s,p'}(u_n)) = T_\ph(D_{s,p'}(u_s)) \\
& = \frac{C_{N,s,p}}{2}\intr \frac{h(u_s(x)-u_s(y))(\ph(x)-\ph(y))}{|x-y|^{N+sp}} ~dx ~dy.
\end{split}
\] Using \eqref{nu}, we get $q(s)\nu'<p^\ast_s$. Now, applying Young's inequality and \cite[Lemma A.1]{Willem} with $U \in L^{q(s)\nu'}(\Omega)$ as a majorant, upto a subsequence, we obtain
\[
\begin{split}
|a(s,x)|u_n|^{q(s)-2}u_n \ph| & \leq C(q(s))|a(s,x)| \l(|u_n|^{q(s)} + |\ph|^{q(s)} \r)\\
&\leq C_1(q(s),\nu)\l(|a(s,x)|^{\nu}+|u_n|^{q(s)\nu'}+ |\ph|^{q(s)\nu'}\r)\\
&\leq C_1(q(s),\nu)\l(|a(s,x)|^{\nu}+|U|^{q(s)\nu'}+ |\ph|^{q(s)\nu'}\r) \in L^1(\Omega).
\end{split}
\]
Using these calculations, the fact that $u_n \to u_s$ in $L^{r}(\Omega)$, for $r \in (1,p^\ast_s)$, $u_n \to u_s$ a.e. in $\R^N$ and applying the Lebesgue dominated convergence theorem, we get
\[\lim_{n \to \infty} I_2=\into a(s,x)\abs{u_s}^{q(s)-2}u_s \ph ~dx.\]
Hence,
$u_s$ is a weak solution to the given problem. In particular, taking $\ph=u_s$ as a test function, we get $u_s \in \mathcal{N}_{s,p}$.
Using the fact that $u_n , \ u_s \in \mathcal{N}_{s,p}$ and a similar Young's inequality argument as above, we have
\[
\begin{split}
\inf_{\mathcal{N}_{s,p}} J_{s,p} = \lim_{n\to \infty} J_{s,p}(u_n) &= \lim_{n \to \infty} \l(\frac{1}{p}- \frac{1}{q(s)}\r) \into a(s,x)|u_n|^{q(s)} ~dx\\
&=\l(\frac{1}{p}- \frac{1}{q(s)}\r) \into a(s,x)|u_s|^{q(s)} ~dx = J_{s,p}(u_s)  \geq   \inf_{\mathcal{N}_{s,p}} J_{s,p}.
 \end{split}
 \]
 Thus, $u_s \in \mathcal{N}_{s,p}$ is a least energy weak solution to the problem \eqref{frac_pblm}. Furthermore, let $u_s$ be any least energy solution to the problem \eqref{frac_pblm}. Using \eqref{Minkowski-ineq}, with $\xi=(u(x),0)$ and $\Theta=(u(y),0)$, it is easy to see that  $\||u_s|\|_{\WO} \leq \nWs$. From this, the fact that $u_s \in \mathcal{N}_{s,p}$ and the definition of $t^{\#}_{|u_s|,s}$ in Lemma \ref{t-hash}, we get $t^{\#}_{|u_s|,s}\leq 1$. Using these calculations, we get
 \[
 \begin{split}
 J_{s,p}(u_s) &\leq J_{s,p}(t^{\#}_{|u_s|,s}|u_s|) \\
 &\leq (t^{\#}_{|u_s|,s})^p \l(\frac{1}{p} \nWs^p  -\frac{1}{q(s)} \into a(s,x)|u_s|^{q(s)}~dx\r)  \leq J_{s,p}(u_s),
 \end{split}\]
 yielding $t^{\#}_{|u_s|,s} =1$. Thus, $\abs{u_s} \in \mathcal{N}_{s,p}$ is a nonnegative least energy solution to \eqref{frac_pblm}. Now, by the strong maximum principle (see \cite[Theorem 1.2]{Pezzo-Quaas}), we get $|u_s|>0$ in $\Omega$. Thus, we can
conclude that $u_s$ is either strictly positive or strictly negative in $\Omega$.
 \end{proof}
\begin{lemma}
\label{ln-lim-integral}
    Let $(s_n)_{n \in \N}\subseteq (0,\frac{1}{2})$ be a sequence such that $\lim\limits_{n \to \infty} s_n = 0$ and \[\nu:=\nu(s) > \max\l\{\frac{p^\ast_s}{p^\ast_s-q(s)}, 1+\frac{(N-sp)p}{s(p^2-N\mu-\delta p^2)}\r\},\]
for all $s \in \l[0, \frac{1}{2}\r]$. Let $q$ satisfy \eqref{assump q} and $g: [0,\frac{1}{2}]\times\Omega \to \R$ be such that \[\sup_{n\in\N} \|g(s_n,\cdot)\|_{L^{\nu(s_n)}(\Omega)}<\infty , \quad \lim_{n \to \infty} g(s_n,\cdot)=g(0,\cdot) \ \text{ a.e. in} \  \Omega.\] Let $(u_n)_{n \in \N}$ be a sequence in $L^p(\Omega)$ such that $u_n \to u$ in $L^p(\Omega)$. For all $v \in C_c^\infty(\Omega)$, it holds that
\[\lim_{n \to \infty} \into  g(s_n,x) |u_n|^{q(s_n)-2} \ln |u_n| u_n v ~dx = \into g(0,x) |u|^{p-2} \ln |u| uv ~dx.\]
\end{lemma}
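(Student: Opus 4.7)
The plan is to invoke Vitali's convergence theorem applied to the integrand
\[
F_n(x) := g(s_n, x)\,|u_n(x)|^{q(s_n)-2}u_n(x)\ln|u_n(x)|\,v(x),
\]
which requires two checks: pointwise a.e.\ convergence of $F_n$ to the natural limit, and uniform integrability of $\{F_n\}_{n\in\N}$ on $\Omega$.

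For the pointwise step, extracting a subsequence (the limit being unique, this costs no generality), $u_n \to u$ a.e.\ in $\Omega$ by $L^p$-convergence. Since $q(s_n) \to p$ by \eqref{assump q}, $g(s_n,x) \to g(0,x)$ a.e., and the map $(t,q) \mapsto |t|^{q-2}t\ln|t|$ is continuous on $(\R\setminus\{0\})\times(1,\infty)$ and extends continuously to $t=0$ by zero (since $|t|^{q-1}|\ln|t||\to 0$ as $t\to 0$ for each $q>1$), one obtains $F_n \to g(0,x)|u|^{p-2}u\ln|u|\,v$ a.e.\ in $\Omega$.

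For uniform integrability, for any measurable $A\subseteq\Omega$, H\"older's inequality with conjugate exponents $\nu(s_n)$ and $\nu'(s_n)$ gives
\[
\int_A |F_n|\,dx \leq \|v\|_\infty\,\|g(s_n,\cdot)\|_{L^{\nu(s_n)}(\Omega)}\,\bigl\||u_n|^{q(s_n)-1}|\ln|u_n||\bigr\|_{L^{\nu'(s_n)}(A)}.
\]
Using Lemma \ref{log t>1} with exponent $\epsilon_0\in(0,1)$ to estimate $|\ln|t|| \leq (e\epsilon_0)^{-1}|t|^{\epsilon_0}$ for $|t|\geq 1$, together with the uniform bound $|t|^{q(s_n)-1}|\ln|t||\leq (e(q(s_n)-1))^{-1}$ for $|t|\leq 1$, yields the pointwise estimate
\[
|u_n|^{q(s_n)-1}|\ln|u_n|| \leq C\bigl(1 + |u_n|^{q(s_n)-1+\epsilon_0}\bigr).
\]
The condition \eqref{nu} (second bound) forces $\nu(s_n)\to\infty$ as $s_n\to 0^+$, hence $\nu'(s_n)\to 1^+$; thus $(q(s_n)-1+\epsilon_0)\nu'(s_n)\to p-1+\epsilon_0 < p$, and for $n$ large enough $(q(s_n)-1+\epsilon_0)\nu'(s_n)\leq p$. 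Consequently,
\[
\bigl\||u_n|^{q(s_n)-1+\epsilon_0}\bigr\|_{L^{\nu'(s_n)}(A)}^{\nu'(s_n)} \leq \int_A \bigl(1+|u_n|^p\bigr)\,dx.
\]
Since $u_n\to u$ in $L^p(\Omega)$, the family $\{|u_n|^p\}$ is uniformly integrable, so the right-hand side vanishes uniformly in $n$ as $|A|\to 0$. Combined with $\sup_n \|g(s_n,\cdot)\|_{L^{\nu(s_n)}(\Omega)}<\infty$, this furnishes uniform integrability of $\{F_n\}$.

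The principal obstacle is the simultaneous passage to the limit in the three parameters $q(s_n)$, $\nu(s_n)$, $\nu'(s_n)$: one must choose $\epsilon_0$ so that the composite exponent $(q(s_n)-1+\epsilon_0)\nu'(s_n)$ remains $\leq p$ for large $n$, and this is precisely what hypothesis \eqref{nu} ensures via $\nu(s_n)\to\infty$. Granted both ingredients, Vitali's theorem yields the desired limit along the chosen subsequence, and by uniqueness of the limit, convergence holds for the full sequence.
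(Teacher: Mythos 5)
Your proof is correct and follows essentially the same strategy as the paper's: Vitali's convergence theorem, a case split on $|u_n|\lessgtr 1$, Lemma~\ref{log t>1} to trade the logarithm for a small power, and crucial use of the second bound in \eqref{nu} (which forces $\nu(s_n)\to\infty$, hence $\nu'(s_n)\to 1^+$). The only organizational difference is the order of operations: the paper first applies Young's inequality pointwise to split $|g(s_n,x)|\,|u_n|^{q(s_n)-1+\tau}$ into $|g|^{p/(p-q+1-\tau)} + |u_n|^p$ and then controls the $g$-term over small sets via H\"older against a fixed auxiliary exponent $\tilde p < \nu(s_n)$, whereas you apply H\"older directly with the conjugate pair $(\nu(s_n),\nu'(s_n))$, keep $\|g(s_n,\cdot)\|_{L^{\nu(s_n)}}$ bounded by hypothesis, and show the remaining $L^{\nu'(s_n)}(A)$-norm of $|u_n|^{q(s_n)-1}|\ln|u_n||$ is uniformly small. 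This is slightly tidier in that it avoids introducing the intermediate exponent $\tilde p$, but the underlying estimate and the role of \eqref{nu} are the same. Both arguments then conclude by Vitali's theorem, with a final passage from the chosen subsequence to the full sequence by uniqueness of the limit.
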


\begin{proof}
Note that $\sup_{t \in [0,1)} |t|^{q(s_n)-1}|\ln|t||<\infty$. Thus, for $|u_n| \leq 1,$ we have
    \[
\begin{split}
    |g(s_n,x) |u_n|^{q(s_n)-2} \ln |u_n| u_n v|
    & \leq c\|v\|_{L^\infty(\Omega)}|g(s_n,x)|,
\end{split}
    \]
    where $c$ is a positive constant. Moreover, in view of \eqref{assump q}, we choose $\tau$ such that $0< \tau < 1+p-q(s_n)$. Using Lemma \ref{log t>1} with the above choice of $\tau \in (0,1)$ and applying Young's inequality for $|u_n| >1$, we get
\[
\begin{split}
    |g(s_n,x) |u_n|^{q(s_n)-1} \ln |u_n| |v| & \leq \frac{\|v\|_{L^\infty(\Omega)}}{e \tau}|g(s_n,x)||u_n|^{(q(s_n)-1+\tau)}\\
    & \leq \frac{\|v\|_{L^\infty(\Omega)}}{e \tau}\bigg[|g(s_n,x)|^{\frac{p}{p-q(s_n)+1-\tau}} + |u_n|^{p}\bigg].
\end{split}
\]
Note that $\nu(s_n) \to +\infty$ and $\frac{p}{p-q(s_n)+1-\tau} \to \frac{p}{1-\tau}$ as $n \to \infty$. For a given $\eta>0$, there exists $n_0 \in \mathbb{N}$ and $M>0$ (independent of $n$) such that
\begin{equation}\label{limit-upper-est-1}
    \frac{p}{p-q(s_n)+1-\tau} < \frac{p}{1-\tau} + \eta < \tilde{p}:= \frac{p}{1-\tau} + 2\eta < \nu(s_n), \quad \|g(s_n,\cdot)\|_{L^{\tilde{p}}(\Omega)}^{\frac{p}{1-\tau} + \eta} \leq M
\end{equation}
and
\[
|g(s_n,x) |u_n|^{q(s_n)-1} \ln |u_n| |v| \leq \frac{\|v\|_{L^\infty(\Omega)}}{e \tau}\bigg[1+ |g(s_n,x)|^{\frac{p}{1-\tau} +\eta} + |u_n|^{p}\bigg] \ \text{for} \ n \geq n_0.
\]
Let $A \subseteq \Omega$ such that $|A| \leq \min\{\frac{\eps}{4}, \l(\frac{\eps}{2M}\r)^\frac{\tilde{p}}{\tilde{p} - \eta-\frac{p}{1-\tau}}\}.$ By Young's and H\"older inequality, we get
\[
\begin{split}
   \int_{A} |g(s_n,x)| ~dx &\leq |A| + \int_{A} |g(s_n,x)|^{\frac{p}{p-q(s_n)+1-\tau}} ~dx \leq 2 |A| + \int_A |g(s_n,x)|^{\frac{p}{1-\tau} +\eta} ~dx \\
   & \leq 2 |A| +  |A|^{1-\frac{\frac{p}{1-\tau} + \eta}{\tilde{p}}}\|g(s_n,\cdot)\|_{L^{\tilde{p}}(\Omega)}^{\frac{p}{1-\tau} + \eta} \leq \frac{\eps}{2} + |A|^{\frac{\tilde{p} - \eta - \frac{p}{1-\tau}}{\tilde{p}}} M < \eps.
\end{split}
\]
Finally, using the above calculations and Vitali convergence theorem, we get the result.
\end{proof}
\begin{lemma}
    \label{lim-t_hash}
 Let $a$ satisfy \eqref{assump1-a}. For every $\ph \in \WO\setminus\{0\},$ it holds that
\[\lim_{s \to 0^+} t^{\#}_{\ph,s}= t^{\ast}_{\ph},\]  where  $t^{\#}_{\ph,s}$ is defined in Lemma \ref{t-hash} and \[t^\ast_{\ph}:=\exp{\l(\frac{ \qform(\ph,\ph)-\into a'(0,x)|\ph|^p ~dx-\mu \into \ln |\ph|^p ~dx}{\mu \|\ph\|^p_{L^p(\Omega)}}\r)}.\] Moreover, $\sup_{s\in[0,\frac{1}{2}]} (t^{\#}_{\ph,s})^p <\infty.$
\end{lemma}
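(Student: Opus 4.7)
The strategy is to recognize $t^{\#}_{\ph,s}$ as an indeterminate form of type $1^\infty$ and reduce via a logarithmic transformation. Writing
\[
\ln t^{\#}_{\ph,s} = \frac{\ln\l(\|\ph\|^p_{\WO}\r) - \ln\l(\into a(s,x)|\ph|^{q(s)}\,dx\r)}{q(s)-p},
\]
both numerator and denominator vanish as $s \to 0^+$, since $q(s)-p = s\mu + o(s)$ by \eqref{assump q} and the two quantities under the logarithms each converge to $\|\ph\|_{L^p(\Omega)}^p$. The plan is therefore to compute the leading-order (in $s$) expansions of each and take the quotient.

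For the numerator, identifying $\|\ph\|^p_{\WO}$ with $[\ph]^p_{s,p,\R^N}$ in the Nehari setup, I would invoke \cite[Lemma 7.2]{Dyda-Jarohs-Sk} (already used in the proof of Theorem \ref{log-Sob-ineq}), which gives
\[
[\ph]^p_{s,p,\R^N} = \|\ph\|_{L^p(\Omega)}^p + s\,\qform(\ph,\ph) + o(s).
\]
For the denominator, I would Taylor-expand the integrand pointwise using $a(s,x) = 1 + s\,a'(0,x) + o(s)$ (by \eqref{assump1-a}--\eqref{assump3-a}) and $|\ph|^{q(s)} = |\ph|^p + s\mu\,|\ph|^p\ln|\ph| + o(s)$ (by \eqref{assump q}), then integrate to obtain
\[
\into a(s,x)|\ph|^{q(s)}\,dx = \|\ph\|_{L^p(\Omega)}^p + s\l(\into a'(0,x)|\ph|^p\,dx + \mu\into|\ph|^p\ln|\ph|\,dx\r) + o(s).
\]
Plugging these into the logarithmic formula, using $\ln(1+s\alpha+o(s)) = s\alpha + o(s)$, and invoking Lemma \ref{lemma3.1} to identify the exponential limit, one arrives at $\lim_{s \to 0^+} t^{\#}_{\ph,s} = t^\ast_\ph$.

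The main technical hurdle is justifying the interchange of differentiation and integration in the denominator. The pointwise difference quotient converges to $a'(0,x)|\ph|^p + \mu|\ph|^p\ln|\ph|$ by the chain rule; for an integrable majorant I would split $\Omega$ according to $\{|\ph|\le 1\}$, where $|\ph|^{q(s)}|\ln|\ph||$ is uniformly bounded for $s$ near $0$, and $\{|\ph|>1\}$, where $|\ph|^{q(s)}\ln|\ph| \le \eps^{-1}|\ph|^{q(s)+\eps}$. Fixing some $s_0 \in (0, 1/2)$, for $\eps$ small enough one has $q(s)+\eps \le p + 2\eps < p^\ast_{s_0}$ uniformly in $s \in (0,s_0)$, so the Sobolev embedding $W_0^{s_0,p}(\Omega) \hookrightarrow L^{p^\ast_{s_0}}(\Omega)$ supplies the required $L^1$ majorant. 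The hypothesis \eqref{assump3-a} simultaneously gives a uniform $L^\infty$ bound on $a'(s,\cdot)$, and dominated convergence then yields the claimed expansion.

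For the supremum claim, the map $s \mapsto t^{\#}_{\ph,s}$ is continuous on $(0, 1/2]$, since both $[\ph]^p_{s,p,\R^N}$ and $\into a(s,x)|\ph|^{q(s)}\,dx$ depend continuously on $s$ (again by dominated convergence) and the denominator stays bounded away from $0$ for $\ph \neq 0$. Extending continuously by $t^{\#}_{\ph,0} := t^\ast_\ph$ via the first part yields a continuous function on the compact interval $[0,1/2]$, from which $\sup_{s\in[0,1/2]}(t^{\#}_{\ph,s})^p < \infty$ follows immediately.
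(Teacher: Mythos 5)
Your proof is correct and follows essentially the same route as the paper: both expand the numerator and denominator to first order in $s$ (via \cite[Lemma 7.2]{Dyda-Jarohs-Sk} for the Gagliardo seminorm and a Taylor expansion of $\into a(s,x)|\ph|^{q(s)}~dx$) and then resolve the $1^\infty$ indeterminacy, you by taking logarithms and the paper by applying Lemma \ref{lemma3.1} directly to the ratio, before concluding the boundedness of $(t^{\#}_{\ph,s})^p$ by continuous extension to $[0,\tfrac12]$. Your dominated-convergence argument justifying the expansion of the weighted integral is a detail the paper leaves implicit, so there is no gap.
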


\begin{proof}
Using \eqref{assump1-a}, Lemma \ref{lemma3.1} and \cite[Lemma 7.2]{Dyda-Jarohs-Sk}, we get
 \[
\begin{split}
   &\lim_{s \to 0^+} t^{\#}_{\ph,s}=\lim_{s \to 0^+} {\l(\frac{\nWp^p}{\into a(s,x)|\ph|^{q(s)}~dx}\r)}^{\frac{1}{q(s)-p}}\\
   &= \lim_{s \to 0^+} {\l(\frac{\|\ph\|^p_{L^p(\Omega)}+s\qform(\ph,\ph)+o(s)}{\|\ph\|^p_{L^p(\Omega)}+s(\into a'(0,x)|\ph|^p ~dx+\mu \into \ln |\ph||\ph|^p ~dx)+o(s)}\r)}^{\frac{1}{q(s)-p}}\\
   &= {\l(\frac{\lim_{s\to 0^+} {\l(1+s\frac{\qform(\ph,\ph)}{\|\ph\|^p_{L^p(\Omega)}}+o(s)\r)}^{\frac{1}{s}}}{\lim_{s \to 0^+} {\l(1+s\l(\frac{\into a'(0,x)|\ph|^p ~dx+\mu \into  \ln |\ph||\ph|^p ~dx}{\|\ph\|^p_{L^p(\Omega)}} \r)+o(s)\r)}^{\frac{1}{s}}}\r)}^{\frac{1}{\mu}}\\
   &= \exp{\l(\frac{1}{\mu \|\ph\|^p_{L^p(\Omega)}}. \l(\qform(\ph,\ph)-\into a'(0,x)|\ph|^p ~dx-\mu \into \ln |\ph|^p ~dx\r)\r)}=t^\ast_{\ph}.
\end{split}
 \]
 Thus, we can continuously extend the map $s \mapsto t^{\#}_{\ph,s}$ over the entire interval $[0,\frac{1}{2}].$ Hence, $\sup_{s\in[0,\frac{1}{2}]} (t^{\#}_{\ph,s})^p$ is finite.
\end{proof}

\begin{lemma}
\label{u_s-bdd-W}
  Let $a$ satisfy \eqref{assump1-a}, \eqref{assump2-a} and $u_s \in \mathcal{N}_{s,p}$ be a least energy solution to the problem \eqref{frac_pblm}. Then, there exists a $C=C(p,\Omega,N)$ such that \[\nWs^p \leq C\quad  \text{for all} \  s\in(0,\frac{1}{2}).\]
\end{lemma}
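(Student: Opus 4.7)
The plan is to exploit the least energy characterization of $u_s$ by comparing its energy against that of a fixed smooth test function projected onto $\mathcal{N}_{s,p}$ via Lemma \ref{t-hash}, and then controlling the projection scalar uniformly in $s$ through Lemma \ref{lim-t_hash}.

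First I would fix a $\varphi \in C_c^\infty(\Omega)\setminus\{0\}$ with $\varphi \geq 0$, independent of $s$. Using \eqref{assump1-a}, \eqref{assump2-a}, \eqref{assump4-a}, together with $q(s) \to p$ as $s \to 0^+$, a dominated convergence argument ensures that $\into a(s,x) |\varphi|^{q(s)}\, dx$ stays strictly positive with a lower bound independent of $s \in (0,1/2)$. Thus $t^{\#}_{\varphi,s}$ from Lemma \ref{t-hash} is well-defined and $t^{\#}_{\varphi,s}\varphi \in \mathcal{N}_{s,p}$, so the least energy property of $u_s$ yields $J_{s,p}(u_s) \leq J_{s,p}(t^{\#}_{\varphi,s}\varphi)$.

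Next, both $u_s$ and $t^{\#}_{\varphi,s}\varphi$ lying in $\mathcal{N}_{s,p}$ reduces the energy via the Nehari identity to $J_{s,p}(u) = \bigl(\tfrac{1}{p} - \tfrac{1}{q(s)}\bigr) \|u\|_{\WO}^p$. Since $q(s)>p$, the prefactor is strictly positive and cancels on both sides of the comparison, giving
\[
\|u_s\|_{\WO}^p \leq (t^{\#}_{\varphi,s})^p\, \|\varphi\|_{\WO}^p,
\]
and Lemma \ref{lim-t_hash} provides $\sup_{s\in(0,1/2)} (t^{\#}_{\varphi,s})^p < \infty$. It then remains to bound $\|\varphi\|_{\WO}^p$ uniformly in $s$. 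Since $\|\varphi\|_{L^p(\Omega)}$ is $s$-independent, only the Gagliardo seminorm $[\varphi]_{s,p,\R^N}^p = \frac{C_{N,s,p}}{2}\intr \frac{|\varphi(x)-\varphi(y)|^p}{|x-y|^{N+sp}}\, dx\, dy$ requires attention. I would split it into the near-diagonal part $\{|x-y|\leq 1\}$ and the far part $\{|x-y|>1\}$: the near part is controlled via the Lipschitz bound $|\varphi(x)-\varphi(y)|^p \leq \|\nabla\varphi\|_{L^\infty(\R^N)}^p |x-y|^p$ and the integrable singularity $|x-y|^{p-N-sp}$, while the far part is dominated by $\frac{2^p |\mathbb{S}^{N-1}|}{sp} \|\varphi\|_{L^p(\Omega)}^p$. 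Multiplying by $C_{N,s,p}$ and using the asymptotic $C_{N,s,p}/s \to p/|\mathbb{S}^{N-1}|$ from Lemma \ref{lem-asymp} yields the desired uniform bound.

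The main obstacle I anticipate is verifying the uniform lower bound on $\into a(s,x)|\varphi|^{q(s)}\, dx$ for $s \in (0,1/2)$, since the sign of $a$ is not postulated a priori. This should be handled by a delicate dominated convergence argument leveraging the uniform control in \eqref{assump1-a}--\eqref{assump4-a}, or by selecting $\varphi$ supported in a region where $a(s,\cdot)$ is positive uniformly in $s$; once this is in place, the remaining ingredients are routine consequences of the Nehari projection and the already established asymptotic lemmas.
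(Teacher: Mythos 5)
Your proposal is correct and follows essentially the same route as the paper: the paper's proof is precisely the one-line comparison $\nWs^p=\inf_{v\in\mathcal{N}_{s,p}}\|v\|^p_{\WO}\leq (t^{\#}_{\ph,s})^p\nWp^p$ combined with Lemma \ref{lim-t_hash}, with the uniform-in-$s$ bound on $\nWp^p$ for fixed $\ph\in C_c^\infty(\Omega)$ left implicit (your near/far splitting of the Gagliardo seminorm is the standard way to supply it). The positivity of $\into a(s,x)|\ph|^{q(s)}\,dx$, which you flag as the main obstacle, is handled in the paper only implicitly through the expansion $\into a(s,x)|\ph|^{q(s)}\,dx=\|\ph\|^p_{L^p(\Omega)}+O(s)$ in Lemma \ref{lim-t_hash} (valid for small $s$ by \eqref{assump1-a}), so your concern is legitimate but does not distinguish your argument from the paper's.
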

\begin{proof}
Let $\ph \in C_c^\infty(\Omega)\setminus\{0\}.$ Using the fact that $u_s$ is a least energy solution, Lemmas \ref{t-hash} and \ref{lim-t_hash}, we have
    \begin{equation}
     \label{bd-nW-sup}
    \nWs^p= \inf_{v\in \mathcal{N}_{s,p}} \|v\|_{\WO}^p \leq (t^{\#}_{\ph,s})^p\nWp^p \leq \sup_{s\in (0,\frac{1}{2})} (t^{\#}_{\ph,s})^p\nWp^p < C.
    \end{equation}

\end{proof}
\begin{lemma}
\label{u_s-bdd-X}
    Let $s \in (0,\frac{1}{2})$, $a$ satisfies \eqref{assump1-a}, \eqref{assump3-a} and $u_s\in \mathcal{N}_{s,p}$ be such that $\nWs^p \leq C$ for some constant $C>0$ independent of $s$. Then, there exists a $C'>0$ such that
    \[
    \|u_s\|_{\XO}^p \leq C'.
    \]
\end{lemma}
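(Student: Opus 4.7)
The idea is to control $\|u_s\|_X^p$ by comparing the difference quotient $\frac{[u_s]_{s,p,\mathbb{R}^N}^p - \|u_s\|_{L^p(\Omega)}^p}{s}$ from two sides: a uniform lower bound in terms of $[u_s]_X^p$ obtained by decomposing the Gagliardo kernel, and a uniform upper bound coming from the Nehari identity together with $s$-expansions of the weight $a$ and exponent $q$.

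First, I would establish the uniform lower bound
\[\frac{[u]_{s,p,\mathbb{R}^N}^p - \|u\|_{L^p(\Omega)}^p}{s}\;\geq\; c_1[u]_X^p - c_2\|u\|_{L^p(\Omega)}^p\qquad \text{for all } u\in \WO,\]
with positive constants $c_1,c_2$ independent of $s\in(0,\tfrac12]$. To prove this, split the Gagliardo integral at $|x-y|=1$. On $\{|x-y|\leq 1\}$, the kernel comparison $|x-y|^{-N-sp}\geq|x-y|^{-N}$ produces a contribution at least $\frac{C_{N,s,p}}{2C_{N,p}}[u]_X^p$. On $\{|x-y|>1\}$, the algebraic identity $|u(x)-u(y)|^p = |u(x)|^p+|u(y)|^p+\Upsilon(u(x),u(y),u(x),u(y))$ isolates a main term $\frac{C_{N,s,p}|\mathbb{S}^{N-1}|}{sp}\|u\|_{L^p}^p$ (from $\int_{|x-y|>1}|x-y|^{-N-sp}dy=|\mathbb{S}^{N-1}|/(sp)$) plus a remainder bounded in absolute value by $C\|u\|_{L^p}^p$ via the argument of Lemma \ref{diff-F_p} (since $|x-y|^{-N-sp}\leq 1$ on the tail and $u\equiv 0$ outside $\Omega$). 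Dividing by $s$ and invoking the asymptotics $C_{N,s,p}/s\to C_{N,p}$ and $C_{N,s,p}|\mathbb S^{N-1}|/(sp)\to 1$ from Lemma \ref{lem-asymp}, together with continuity of these quantities on $(0,\tfrac12]$, produces the uniform $c_1,c_2$.

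Next, the Nehari identity $[u_s]_{s,p,\mathbb{R}^N}^p = \int_\Omega a(s,x)|u_s|^{q(s)}dx$ gives
\[[u_s]_{s,p,\mathbb{R}^N}^p - \|u_s\|_{L^p}^p = \int_\Omega(a(s,x)-1)|u_s|^{q(s)}dx + \int_\Omega(|u_s|^{q(s)}-|u_s|^p)dx.\]
By \eqref{assump3-a}, $|a(s,x)-1|\leq sM$ with $M:=\sup_\sigma\|a'(\sigma,\cdot)\|_\infty$, and by \eqref{assump1-a}-\eqref{assump3-a}, $a(s,\cdot)\geq\tfrac12$ for $s$ small, so the Nehari equality gives $\int|u_s|^{q(s)}dx\leq 2C$ and the first integral is $O(s)$. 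For the second, the mean-value identity $|u_s|^{q(s)}-|u_s|^p = (q(s)-p)|u_s|^\xi\ln|u_s|$ with $\xi=\xi(x)\in[p,q(s)]$ combined with $|q(s)-p|\leq Cs$ (from $q\in C^1$) reduces it to controlling $\int(|u_s|^p\vee|u_s|^{q(s)})|\ln|u_s||dx$. The subset $\{|u_s|\leq 1\}$ contributes at most $|\Omega|/(ep)$ via $t^\tau|\ln t|\leq 1/(e\tau)$, while the subset $\{|u_s|>1\}$ is controlled using Theorem \ref{log-Sob-ineq} applied to $u_s$ (by approximation through $C_c^\infty$), yielding a bound of the form $\leq C(\mathcal{E}_{L_{\Delta_p}}(u_s,u_s)+1)$.

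Combining the two bounds and using $\mathcal{E}_{L_{\Delta_p}}(u,u)=\tfrac12[u]_X^p+\mathcal{F}_p(u,u)+\rho_N\|u\|_{L^p}^p$ together with Lemma \ref{diff-F_p} ($|\mathcal F_p(u_s,u_s)|\leq b\|u_s\|_{L^p}^p$), the resulting inequality takes the form $c_1[u_s]_X^p\leq\gamma[u_s]_X^p + C_0$ for an explicit $\gamma$; the $O(s)$ factor from $|q(s)-p|$ compensates the potential $1/s$ blowup of the logarithmic Sobolev constant, keeping $\gamma$ bounded, and (under the structural hypothesis $|\mu|<p^2/N$ inherent to the surrounding asymptotic framework) $\gamma<c_1$ for $s$ small enough. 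Absorption then yields $[u_s]_X^p\leq C'$; for $s$ bounded away from $0$ the direct comparison $[u_s]_X^p\leq \frac{2C_{N,p}}{C_{N,s,p}}[u_s]_{s,p,\mathbb R^N}^p$ suffices. Combined with $\|u_s\|_{L^p}\leq\|u_s\|_W\leq C^{1/p}$, we conclude $\|u_s\|_X^p\leq C''$. The principal obstacle is avoiding circularity when invoking Theorem \ref{log-Sob-ineq} (stated for elements of $\XO$, which is the very conclusion we seek) and tracking the small-$s$ asymptotics carefully enough to ensure the absorption constant remains strictly below $c_1$; both issues are resolved by the compensating $O(s)$ factor from $|q(s)-p|$.
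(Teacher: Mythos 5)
Your overall strategy --- comparing the difference quotient $([u_s]^p_{s,p,\R^N}-\|u_s\|^p_{L^p(\Omega)})/s$ from below via a kernel decomposition and from above via the Nehari identity, then absorbing with $\mu<p^2/N$ --- is the same as the paper's (which works on $C_c^\infty(\Omega)\cap\mathcal{N}_{s,p}$, invokes the derivative formula of Dyda--Jarohs--Sk together with Theorem \ref{log-Sob-ineq}, and absorbs the $\tfrac{\mu N}{p^2}\qform(\cdot,\cdot)$ term), and your quantitative, uniform-in-$s$ lower bound in the first step is correct and arguably cleaner than the paper's limit argument. The gap is in the upper bound, at the step ``the subset $\{|u_s|>1\}$ is controlled using Theorem \ref{log-Sob-ineq}\dots yielding $\leq C(\qform(u_s,u_s)+1)$''. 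After the mean value theorem you must bound $\int_\Omega|u_s|^{\xi}\ln|u_s|\,dx$ with $\xi=\xi(x)\in[p,q(s)]$ and $q(s)>p$; on $\{|u_s|>1\}$ one has $|u_s|^{\xi}\ln|u_s|\geq|u_s|^{p}\ln|u_s|$, so Theorem \ref{log-Sob-ineq}, which controls the exponent-$p$ integral, gives nothing directly. The natural fixes fail quantitatively: using $\ln t\leq t^{\tau}/(e\tau)$ forces $\tau\leq p^\ast_s-q(s)=O(s)$ (otherwise $\int|u_s|^{q(s)+\tau}$ is not controlled by $\nWs$), producing an uncompensated factor $1/\tau=O(1/s)$; splitting at the level $e^{c/(q(s)-p)}$ tames the bulk by a factor $e^{c}$ but leaves a tail that again costs $O(1/s)$. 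Your claim that ``the $O(s)$ factor from $|q(s)-p|$ compensates'' is not available: that $O(s)$ is already spent cancelling the division by $s$ in the difference quotient (the surviving prefactor is $(q(s)-p)/s\to\mu\neq0$), so any extra $1/s$ in the bound on $\int|u_s|^{\xi}\ln|u_s|\,dx$ is fatal.

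A second, related problem: even granting a bound $\int_\Omega|u_s|^{\xi}\ln|u_s|\,dx\leq C(\qform(u_s,u_s)+1)$ with an unspecified $C$, the absorption fails. Your lower bound gives roughly $\tfrac12[u_s]^p_{\XO}\approx\qform(u_s,u_s)$ up to $L^p$ terms, while the upper bound gives $\mu C\,\qform(u_s,u_s)+O(1)$; you need $\mu C<1$, i.e.\ the sharp constant $C=\tfrac{N}{p^2}(1+o(1))$ from Theorem \ref{log-Sob-ineq} together with $\mu<\tfrac{p^2}{N}$, exactly as in the paper. So the exponent mismatch must be resolved with a multiplicative loss of only $1+o(1)$ in front of $\int|u_s|^p\ln|u_s|\,dx$, not a generic constant, and as sketched your argument does not achieve this. (The circularity you worry about is not actually an issue: for each fixed $s$ one has $\WO\subset\XO$, so Theorem \ref{log-Sob-ineq} applies to $u_s$; only the uniformity of the constants is at stake.)
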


\begin{proof}
Assume first that $\ph \in C^\infty_c(\Omega)\cap \mathcal{N}_{s,p}.$ By the embedding $\WO \hookrightarrow L^p(\Omega)$, we get
\begin{equation}
    \label{Lp}
     \|\ph\|_{L^p(\Omega)}^p\leq C_1
\end{equation}
where $C_1>0$ independent of $s$. Now, using \eqref{assump1-a}, $\ph \in C^\infty_c(\Omega)\cap \mathcal{N}_{s,p}$ and \cite[Lemma 7.2]{Dyda-Jarohs-Sk}, we obtain
\[
\begin{split}
   \qform(\ph,\ph)&= \lim_{s \to 0^+} \frac{\nWp^p-\|\ph\|_{L^p(\Omega)}^p}{s} =  \lim_{s \to 0^+} \frac{\into a(s,x)|\ph|^{q(s)} ~dx - \into |\ph|^{p} ~dx }{s}\\
   & =  \into a'(0,x)|\ph|^p ~dx+\mu \into |\ph|^p \ln|\ph| ~dx.
\end{split}
\]
Furthermore, by Theorem \ref{log-Sob-ineq}, \eqref{assump3-a} and \eqref{Lp}, we get
\begin{equation}
\label{deriv}
 \begin{split}
 & \qform(\ph,\ph) = \into a'(0,x)|\ph|^p ~dx +\mu \into |\ph|^p \ln|\ph| ~dx \\
    & \quad \leq \frac{ \mu N}{p^2}\qform(\ph,\ph))+ \mu \l(\ln (\|\ph\|_{L^p(\Omega)} + \frac{Nk_0}{p^2}+\frac{\|a'(0,\cdot)\|_{L^\infty(\Omega)}}{\mu}\r) \|\ph\|_{L^p(\Omega)}^p \\
   & \quad \leq  \frac{\mu N}{p^2}\qform(\ph,\ph)+\tilde{C},
 \end{split}
\end{equation}
 where $\tilde{C}>0$ is a constant independent of $s$. Now, using \eqref{Lp}, \eqref{deriv}, $\mu<\frac{p^2}{N}$ and Lemma \ref{diff-F_p}, we obtain
 \[\|\ph\|_{\XO}^p = \mathcal{E}_p(\ph,\ph) \leq C'.\]
This establishes our claim for $\ph \in C^\infty_c(\Omega) \cap \mathcal{N}_{s,p}.$ Note that for any $u \in \WO$ and $s \in (0,\frac{1}{2}),$ we have
   \[\|u\|_{\XO}^p=2 \mathcal{E}_p(u,u) \leq \frac{4C_{N,p}}{C_{N,s,p}}\nW^p = \l(\frac{2^{2-2s}\Gamma\l(\frac{N}{2}\r)\Gamma(1-s)}{s\Gamma\l(\frac{N+sp}{2}\r)}\r)\nW^p.\]
Finally, due to the above estimate and the density of $C^\infty_c(\Omega)$ in $W^{s,p}_0(\Omega),$ we get the required claim for any $u_s \in \mathcal{N}_{s,p}.$
\end{proof}
\noindent
\textit{Proof of Theorem \ref{asym-sup}: }
    The existence of the sequence of solutions $(u_s)$ to the problem \eqref{frac_pblm} is guaranteed by Theorem \ref{exist-frac}.
 Using Lemma \ref{u_s-bdd-X}, we deduce that the sequence $(u_s)$ is bounded in $\XO$. Now, by the reflexivity of $\XO$ and Lemma \ref{compct-embeding}, there exists a $u_0 \in \XO$ such that $u_s \rightharpoonup u_0$ in $\XO$ and $u_s \to u_0$ in $L^p(\Omega)$ (up to a subsequence).\\
 \textbf{Claim 1:} $u_0$ is a weak solution to \eqref{lim_pblm}. \\
 Let $v \in C_c^\infty(\Omega).$ Then,
 \[
 \begin{split}
 &\into u_s(|v|^{p-2} v + s \logP v+ o(s)) ~dx = \into u_s (-\Delta_p)^s v ~dx=\into a(s,x)|u_s|^{q(s)-2} u_s v ~dx  \\&= \into v \bigg(|u_s|^{p-2}u_s+ s \bigg(\int_0^1 \bigg[q'(s\theta)a(s\theta,x)|u_s|^{q(s\theta)-2}\ln(|u_s|) u_s\\& \qquad \qquad \qquad \qquad \qquad \qquad  \qquad \qquad \qquad + a'(s\theta,x)|u_s|^{q(s\theta)-2} u_s\bigg]~d\theta\bigg)+o(s)\bigg) ~dx.
 \end{split}
 \]
Since $u_s \rightharpoonup u_0$ in $\XO$, by \cite[Lemma 4.4]{Dyda-Jarohs-Sk} and arguments as in Theorem \ref{Existence L1}, we obtain
 \[\lim_{s \to 0^+} \into u_s(\logP v) ~dx = \lim_{s \to 0^+} \qform(u_s,v) = \qform(u_0,v). \]
By \eqref{assump3-a}, we have
 \[
\begin{split}
|a'(s\theta,x)|u_s|^{q(s\theta)-2}u_s v|&\leq \|a'(s\theta,\cdot)\|_{L^\infty(\Omega)}\|v\|_{L^\infty(\Omega)}|u_s|^{q(s\theta)-1}\\
& \leq c_0\|v\|_{L^\infty(\Omega)}|u_s|^{q(s\theta)-1} \leq c_0\|v\|_{L^\infty(\Omega)}(1+|u_s|^p),
\end{split}
\]
for $s$ small enough where $c_0:=\sup_{s\in [0,\frac{1}{2}]} \|a'(s,\cdot)\|_{L^\infty(\Omega)}<\infty$.
Thus, by $u_s \to u$ in $L^p(\Omega)$ and Lebesgue dominated convergence theorem, upto a subsequence, we get \[\lim_{s \to 0^+} \into a'(s\theta,x)|u_s|^{q(s\theta)-2}u_s v ~dx=\into a'(0,x)|u_0|^{p-2}u_0 v ~dx.\]
Now, by Lemma \ref{ln-lim-integral} and \eqref{assump1-a}, upto a subsequence, we have
\[
\lim_{s \to 0^+} \into \int_0^1 q'(s\theta) a(s\theta, x) |u_s|^{q(s\theta)-2} \ln|u_s| u_s v ~dx ~d\theta = \mu \into |u_0|^{p-2} \ln|u_0|u_0 v ~dx.
\]
 Using the density of $C_c^\infty(\Omega)$ in $\XO$ and collecting all the above estimates, we get
    \[\qform(u_0,\ph)= \into (a'(0,x)+\mu\ln|u_0|)|u_0|^{p-2}u_0 \ph ~dx \quad \text{for all} \ \ph \in \XO. \]
 Thus, $u_0$ is a weak solution to \eqref{lim_pblm}.\\
 \textbf{Claim 2:} $u_0$ is nontrivial.\\
Using \eqref{Holder-sob} and \eqref{lowbd_u}, we get
 \[{[k(N,s,p)]}^{\frac{p}{p-q(s)}} \leq \nWs^p \leq \|a\|_{L^{\nu}(\Omega)}\l(\into |u_s|^{q(s)\nu'}~dx\r)^{\frac{1}{\nu'}}.\]
Let $\lambda_s:= \frac{q(s)\nu'-p}{p^\ast_s-p}.$ Since $q(s)\nu'>p$ and $p^\ast_s>p$, for all $s\in (0,\frac{1}{2}),$ we get $0<\lambda_s<1.$
  Using \eqref{sob_ineq}, H\"older inequality and \eqref{bd-nW-sup}, we get
 \[
 \begin{split}
   \into & |u_s|^{q(s)\nu'} ~dx = \into |u_s|^{\bigg[\frac{p(p^\ast_s-q(s)\nu')}{p^\ast_s-p}+\frac{p^\ast_s(q(s)\nu'-p)}{p^\ast_s-p}\bigg]} ~dx =\|u_s\|_{L^p(\Omega)}^{p(1-\lambda_s)} \|u_s\|_{L^{p^\ast_s}(\Omega)}^{p^\ast_s \lambda_s}\\
   &\leq\|u_s\|_{L^p(\Omega)}^{p(1-\lambda_s)} [A(N,p,s)]^{\frac{p^\ast_s \lambda_s}{p}} \|u_s\|^{p^\ast_s \lambda_s}_{\WO} \leq\|u_s\|_{L^p(\Omega)}^{p(1-\lambda_s)} [A(N,p,s)]^{\frac{p^\ast_s \lambda_s}{p}} C^{p^\ast_s \lambda_s}.
 \end{split}
 \]
 Using the above, we get
 \begin{align}
 \label{lim-non-triv}
 \begin{split}
\frac{[k(N,s,p)]^{\frac{p\nu'}{p-q(s)}}}{\|a\|_{L^\nu(\Omega)}^{\nu'}} \leq \|u_s\|_{L^p(\Omega)}^{p(1-\lambda_s)} [A(N,p,s)]^{\frac{p^\ast_s \lambda_s}{p}} C^{p^\ast_s \lambda_s}.
\end{split}
\end{align}
Using the fact that $\nu'< \l[1+\frac{s(p^2-N\mu-\delta)}{(N-sp)p}\r],$ we obtain
\[
\begin{split}
    \lim_{s \to 0^+} \lambda_s&= \lim_{s \to 0^+} \frac{q(s)\nu'-p}{p^\ast_s-p} \\
    & <\lim_{s \to 0^+} \l[\l(\frac{q(s)-p}{s}\r)+\l(\frac{q(s)(p^2-N\mu-\delta p^2)}{(N-sp)p}\r)\r] \lim_{s \to 0^+} \l(\frac{s}{p^\ast_s-p}\r) \\
    & = \l[\mu+\l(\frac{p^2-N\mu-\delta p^2}{N}\r)\r]\frac{N}{p^2} =(1-\delta).
\end{split}
\]
Thus, $\lim_{s \to 0^+} \lambda_s \in (0,1).$
Taking limit $s \to 0^+$ both sides in \eqref{lim-non-triv}, using the above calculations, \eqref{assump1-a}, \eqref{assump2-a}, Remark \ref{k>M} and $\lim_{s \to 0} [A(N,p,s)]=1$, it follows
\[0< \l[\frac{M^p}{ C^{p(1-\delta)}}\r]^{\frac{1}{p\delta}}\leq \|u_0\|_{L^p(\Omega)}.\]
Thus, $u_0$ is nontrivial. \\
 \textbf{Claim 3:} $u_0$ has least energy.\\
In view of Lemma \ref{lim-t_hash}, it can be concluded that $\lim_{s \to 0^+} t^{\#}_{u_s,s}= t^{\ast}_{u_0}.$
 Using these calculations, Fatou's Lemma and the fact that $u_s \in \mathcal{N}_{s,p}$, $u_0 \in \mathcal{N}_{L_{\Delta_p}}$ and definitions of $t^\#_{u_s}, \ t^\ast_{u_0},$ we get
 \[
 \begin{split}
     \inf_{\mathcal{N}_{L_{\Delta_p}}} J_{L_{\Delta_p}} & = \frac{\mu}{p^2} \|u_0\|_{L^p(\Omega)} \leq \frac{\mu}{p^2} \liminf_{s \to 0^+}(\nWs^p)\\
     & \leq \lim_{s \to 0^+} \frac{1}{s}\l(\frac{1}{p}-\frac{1}{q(s)}\r)\nWs^p\\
     & = \lim_{s \to 0^+} \frac{1}{s}\l(\frac{1}{p}-\frac{1}{q(s)}\r) \| \ t^{\#}_{u_s,s}u_s\ \|^p_{\WO}= \frac{\mu}{p^2}\|t^{\ast}_{u_0}u_0\|^p_{L^p(\Omega)}\\& = \frac{\mu}{p^2}\|u_0\|^p_{L^p(\Omega)}= J_{L_{\Delta_p}}(u_0).
 \end{split}
 \]
 From above, we conclude that $u_0$ is a least energy solution to \eqref{lim_pblm}. This completes the proof of the theorem.
\qed
\subsection{The sublinear case}
Throughout this subsection, we consider the function $q$ satisfying \eqref{assump q} and \[q(s)\in (1,p) \quad \text{and} \quad \mu\in(-\infty,0).\]
 Let $\ph_{s,p}$ represent the first normalized eigenfunction of the fractional $p$-Laplacian corresponding to its first eigenvalue $\lambda_{s,p}.$
\begin{lemma}
\label{low-upp-bd-sub}
    Let $a$ satisfy \eqref{assump2-a} and $u_s$ be a positive least energy solution to \eqref{frac_pblm}. Then,
    \begin{equation}\label{upper-lower-est}
    \Theta_1(s) \leq \nWs^p \leq \Theta_2(s),
    \end{equation}
    where
     \[\Theta_1(s):=\frac{pq(s)}{(q(s)-p)} \  J_{s,p}\l(\frac{t_0 \ph_{s,p}}{2}\r)\]
     and
     \[
     \Theta_2(s):= \l({\|a\|_{L^\nu(\Omega)}|\Omega|^{\frac{1}{\nu'}-\frac{q(s)}{p^\ast_s}}[A(N,p,s)]^{\frac{q(s)}{p}}}\r)^{\frac{p}{p-q(s)}}.\]
\end{lemma}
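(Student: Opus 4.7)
The plan is to split the proof into the upper and lower bounds, each using a different variational feature of least energy solutions.

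For the upper bound, I would test the weak formulation of \eqref{frac_pblm} against $u_s$ itself to obtain the Nehari identity $\|u_s\|_{W_0^{s,p}(\Omega)}^p = \into a(s,x)|u_s|^{q(s)}\,dx$. Then I would estimate the right-hand side exactly as in \eqref{Holder-sob}: apply Hölder with the pair $(\nu,\nu')$ coming from assumption \eqref{assump2-a}, use that condition \eqref{nu} guarantees $q(s)\nu'<p^\ast_s$ so that $L^{p^\ast_s}\hookrightarrow L^{q(s)\nu'}$ on the bounded set $\Omega$ with constant $|\Omega|^{\frac{1}{\nu'}-\frac{q(s)}{p^\ast_s}}$, and finally invoke the fractional Sobolev inequality \eqref{sob_ineq}. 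This chain yields
\[
\|u_s\|_{W_0^{s,p}(\Omega)}^p \leq \|a\|_{L^\nu(\Omega)}\,|\Omega|^{\frac{1}{\nu'}-\frac{q(s)}{p^\ast_s}}[A(N,p,s)]^{\frac{q(s)}{p}}\,\|u_s\|_{W_0^{s,p}(\Omega)}^{q(s)}.
\]
Since $p>q(s)$ in the sublinear regime, I divide by $\|u_s\|^{q(s)}$ (which is positive because $u_s\neq 0$) and raise both sides to the power $\tfrac{p}{p-q(s)}$, obtaining exactly $\|u_s\|^p\le\Theta_2(s)$.

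For the lower bound, I would exploit the coercivity of $J_{s,p}$ in the sublinear case: as $p>q(s)$, the functional $J_{s,p}(u)=\frac1p\|u\|^p-\frac{1}{q(s)}\into a(s,x)|u|^{q(s)}\,dx$ is coercive on $W_0^{s,p}(\Omega)$, and (arguing exactly as in the proof of Theorem \ref{Existence L2}) any positive least energy solution $u_s$ is in fact a global minimizer of $J_{s,p}$ on $W_0^{s,p}(\Omega)$. Consequently
\[
J_{s,p}(u_s) \;\leq\; J_{s,p}\!\left(\tfrac{t_0\,\ph_{s,p}}{2}\right).
\]
On the other hand, $u_s\in\mathcal{N}_{s,p}$ gives the Nehari identity
\[
J_{s,p}(u_s) \;=\; \l(\tfrac1p-\tfrac{1}{q(s)}\r)\|u_s\|^p_{W_0^{s,p}(\Omega)} \;=\; \tfrac{q(s)-p}{p\,q(s)}\,\|u_s\|^p_{W_0^{s,p}(\Omega)}.
\]
Multiplying the comparison above by $\tfrac{p\,q(s)}{q(s)-p}<0$ reverses the inequality and yields
\[
\|u_s\|^p_{W_0^{s,p}(\Omega)} \;\geq\; \tfrac{p\,q(s)}{q(s)-p}\, J_{s,p}\!\left(\tfrac{t_0\,\ph_{s,p}}{2}\right) \;=\; \Theta_1(s),
\]
which is the desired lower bound.

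The only nontrivial points are book-keeping in nature. First, one must verify that $J_{s,p}\!\bigl(\tfrac{t_0\ph_{s,p}}{2}\bigr)$ is negative (so that $\Theta_1(s)$ is a \emph{positive} lower bound); this is standard once $t_0$ is picked small enough that the subcritical term $\tfrac{1}{q(s)}\into a(s,x)|t_0\ph_{s,p}/2|^{q(s)}\,dx$ dominates $\tfrac{1}{p}\|t_0\ph_{s,p}/2\|^p$, and in the sublinear regime this happens for all sufficiently small $t_0$ because the factor $t_0^{q(s)}$ dominates $t_0^p$ near zero. Second, the Hölder chain for the upper bound requires $q(s)\nu'\leq p^\ast_s$, which is exactly the first term in the maximum defining $\nu$ in \eqref{nu}. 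I do not expect any deep obstacle here; the main technical care is simply to track signs carefully when multiplying by the negative factor $\tfrac{pq(s)}{q(s)-p}$.
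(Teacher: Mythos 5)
Your proposal is correct, and the lower bound is argued exactly as in the paper: the least (Nehari) energy of $u_s$ is compared with $J_{s,p}(t_0\ph_{s,p}/2)<0$ and the Nehari identity $J_{s,p}(u_s)=\l(\tfrac1p-\tfrac1{q(s)}\r)\nWs^p$ is divided by the negative factor. (One small imprecision: $t_0$ is not a parameter you get to choose small; it is the unique positive zero of $t\mapsto J_{s,p}(t\ph_{s,p})$, and negativity of $J_{s,p}(t_0\ph_{s,p}/2)$ follows simply because $t_0/2<t_0$ and the fiber map is negative on $(0,t_0)$ — which is what your "$t^{q(s)}$ dominates $t^p$ near zero" observation really encodes. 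Also, your appeal to $u_s$ being a \emph{global} minimizer is more than is needed; it suffices that $\inf_{\mathcal{N}_{s,p}}J_{s,p}\le J_{s,p}(t^{\#}\ph_{s,p})\le J_{s,p}(t_0\ph_{s,p}/2)$ since $t^{\#}\ph_{s,p}$ minimizes the fiber map in the sublinear regime.) For the upper bound you take a slightly more direct route than the paper: you feed the estimate \eqref{Holder-sob} straight into the Nehari identity $\nWs^p=\into a(s,x)|u_s|^{q(s)}\,dx$ and divide by $\nWs^{q(s)}$, whereas the paper inserts the same Hölder--Sobolev estimate into $J_{s,p}(u_s)$ and bounds the resulting expression below by the minimum of the auxiliary function $h(t)=\tfrac1p t^p-\tfrac{k}{q(s)}t^{q(s)}$ attained at $t_1=k^{1/(p-q(s))}$. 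Both arguments rest on the identical estimate and produce the identical constant $\Theta_2(s)$; yours is a little shorter, the paper's has the minor advantage of not requiring you to divide by $\nWs^{q(s)}$ (which in any case is legitimate since $u_s\neq 0$). No gap.
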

\begin{proof}
    By the definition of $J_{s,p}$ in \eqref{energy-frac}, we get
    \[J_{s,p}(t\ph_{s,p})=0 \quad \text{implies} \quad t=t_0:= {\l(\frac{p\into a(s,x)|\ph_{s,p}|^{q(s)} ~dx}{q(s)\|\ph_{s,p}\|^p_{\WO}}\r)}^{\frac{1}{p-q(s)}} .\]
    Notice that $J_{s,p}(t\ph_{s,p})<0,$ if $t<t_0.$ Since $u_s$ is the least energy solution to \eqref{frac_pblm}, we have
    \[
    \begin{split}
    J_{s,p}(u_s) &= \l(\frac{1}{p}-\frac{1}{q(s)}\r) \nWs^p\leq J_{s,p}\l(\frac{t_0 \ph_{s,p}}{2}\r) < 0.
    \end{split}
    \]
Thus, by the fact that $q \in (1,p),$ we obtain
    \[\nWs^p \geq \frac{pq(s)}{(q(s)-p)} \  J_{s,p}\l(\frac{t_0 \ph_{s,p}}{2}\r) >0. \]
    Using H\"older inequality and \eqref{sob_ineq}, we have
    \[
    \begin{split}
        J_{s,p}(u_s)&= \frac{1}{p}\nWs^p- \frac{1}{q} \into a(s,x)|u_s|^{q(s)} ~dx\\
        & \geq \frac{1}{p} \nWs^p - \frac{1}{q }\|a\|_{L^{\nu}(\Omega)}\l(\into |u|^{q(s)\nu'}~dx\r)^{\frac{1}{\nu'}}\\
        &  \geq \frac{1}{p} \nWs^p - \frac{1}{q}\|a\|_{L^{\nu}(\Omega)} |\Omega|^{\frac{1}{\nu'}-\frac{q(s)}{p^\ast_s}} {[A(N,p,s)]}^{\frac{q(s)}{p}} \nWs^{q(s)}.
    \end{split}
    \]
A simple calculation yields \[t_1:= \l(\|a\|_{L^{\nu}(\Omega)} |\Omega|^{\frac{1}{\nu'}-\frac{q(s)}{p^\ast_s}} {[A(N,p,s)]}^{\frac{q(s)}{p}}\r)^{\frac{1}{p-q(s)}}\]is the point of minima of the function \[h(t):= \frac{1}{p}t^p-\l(\frac{\|a\|_{L^{\nu}(\Omega)} |\Omega|^{\frac{1}{\nu'}-\frac{q(s)}{p^\ast_s}} {[A(N,p,s)]}^{\frac{q(s)}{p}}}{q(s)}\r)t^{q(s)}.\] Hence,
  \[
  \begin{split}
  J_{s,p}(u_s)&= \l(\frac{1}{p}-\frac{1}{q(s)}\r) \nWs^p \\&\geq \l(\frac{1}{p}-\frac{1}{q(s)}\r)\l(\|a\|_{L^{\nu}(\Omega)} |\Omega|^{\frac{1}{\nu'}-\frac{q(s)}{p^\ast_s}} {[A(N,p,s)]}^{\frac{q(s)}{p}}\r)^{\frac{p}{p-q(s)}},
  \end{split}
  \]
along with the fact that $1<q(s)<p$, gives us the desired upper bound $\Theta_2(s).$
\end{proof}
\begin{lemma}
\label{lim-lowbd}
Let $a$ satisfy \eqref{assump1-a}, \eqref{assump2-a} and \eqref{assump4-a}, it holds that
\begin{enumerate}
 \item[\textnormal{(i)}]    $\lim\limits_{s\to 0^+} \Theta_1(s) =\kappa_1:=\frac{ B^p (p\ln 2)}{2^p},$
    where \[B:= \exp{\bigg[\frac{1}{p}+\frac{1}{\mu}+\frac{\lambda_{1,L}}{\mu}-\frac{1}{\mu}\l(\into a'(0,x) |\ph_{1,L}|^p ~dx+\mu \into |\ph_{1,L}|^p \ln |\ph_{1,L}| ~dx\r)\bigg]},\]
   \item[\textnormal{(ii)}] $\lim\limits_{s \to 0^+} \Theta_2(s)<\infty,$
    \end{enumerate}
     where $\Theta_1(s)$, $\Theta_2(s)$ are defined in Lemma \ref{low-upp-bd-sub} and $\ph_{1,L}$ represents the first normalized eigenfunction of $\logP$ corresponding to its first eigenvalue $\lambda_{1,L}.$
\end{lemma}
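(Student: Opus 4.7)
For part (i), I first evaluate $J_{s,p}(t_0 \ph_{s,p}/2)$ in closed form, then take the limit $s \to 0^+$ via Taylor expansions about $s = 0$ combined with Lemma \ref{lemma3.1} to resolve the $1^\infty$-type indeterminate forms created by the exponent $\frac{1}{p-q(s)} \sim \frac{1}{-\mu s}$. Since $t_0$ is defined so that $J_{s,p}(t_0 \ph_{s,p}) = 0$, it satisfies $t_0^{p-q(s)} = \frac{p N(s)}{q(s) \|\ph_{s,p}\|_{\WO}^p}$ where $N(s):= \into a(s,x)|\ph_{s,p}|^{q(s)}~dx$. Substituting into $J_{s,p}(t_0 \ph_{s,p}/2) = \frac{(t_0/2)^p}{p}\|\ph_{s,p}\|_{\WO}^p - \frac{(t_0/2)^{q(s)}}{q(s)} N(s)$ and simplifying yields
\begin{equation*}
J_{s,p}\l(\tfrac{t_0}{2}\ph_{s,p}\r) = \frac{\|\ph_{s,p}\|_{\WO}^p \, t_0^p}{p}\l(\frac{1}{2^p} - \frac{1}{2^{q(s)}}\r).
\end{equation*}
Multiplying by $\frac{pq(s)}{q(s)-p}$ and using $\frac{2^{q(s)}-2^p}{q(s)-p} \to 2^p\ln 2$ as $s \to 0^+$ extracts the factor $\frac{p\ln 2}{2^p}$.

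It remains to compute $\lim_{s \to 0^+} \|\ph_{s,p}\|_{\WO}^p \, t_0^p$, which I do by writing $t_0^p = \l(\tfrac{p}{q(s)}\r)^{p/(p-q(s))} \l(\tfrac{N(s)}{\lambda_{s,p}}\r)^{p/(p-q(s))}$ (using $\|\ph_{s,p}\|_{\WO}^p = \lambda_{s,p}$ for the $L^p$-normalized first eigenfunction) and applying Lemma \ref{lemma3.1} to each $1^\infty$ factor. The needed first-order expansions at $s = 0$ are $p/q(s) = 1 - \mu s/p + o(s)$; $\lambda_{s,p} = 1 + s\lambda_{1,L} + o(s)$ (from the asymptotics of the first eigenvalue of $(-\Delta_p)^s$, analogous to \cite[Lemma 7.2]{Dyda-Jarohs-Sk}); and
\begin{equation*}
N(s) = 1 + s\l(\into a'(0,x)|\ph_{1,L}|^p~dx + \mu \into |\ph_{1,L}|^p \ln|\ph_{1,L}|~dx\r) + o(s),
\end{equation*}
the last being obtained by splitting $N(s) - 1$ into contributions from $a(s,\cdot) - 1$, from $|\ph_{s,p}|^{q(s)} - |\ph_{s,p}|^p$ (which yields $\mu s |\ph_{s,p}|^p \ln|\ph_{s,p}|$ via the pointwise expansion $|t|^{q(s)-p} - 1 = \mu s \ln|t| + o(s)$), and from $\ph_{s,p} \to \ph_{1,L}$ (both $L^p$-normalized). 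Combining these expansions reproduces the claimed value of $\kappa_1$.

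For part (ii), each of the three factors in $\Theta_2(s) = \l(\|a\|_{L^\nu(\Omega)} |\Omega|^{1/\nu' - q(s)/p^\ast_s} [A(N,p,s)]^{q(s)/p}\r)^{p/(p-q(s))}$ is bounded separately. By \eqref{assump4-a}, $\|a\|_{L^\nu(\Omega)}^{p/(p-q(s))} \leq c_2^p$. For $[A(N,p,s)]^{q(s)/(p-q(s))}$, the expansion $A(N,p,s) = 1 + s k_0 + o(s)$ from \eqref{constant:est-2} combined with Lemma \ref{lemma3.1} gives a finite limit $e^{-p k_0/\mu}$. For the $|\Omega|$-factor, the key observation is that $\nu(s) > p^\ast_s/(p^\ast_s - q(s))$ together with $p^\ast_s - q(s) = s(p^2/N - \mu) + o(s)$ forces $1/\nu = O(s)$, making the exponent $p(1/\nu' - q(s)/p^\ast_s)/(p-q(s))$ bounded as $s \to 0^+$. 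Multiplying the three bounded quantities yields $\lim_{s \to 0^+}\Theta_2(s) < \infty$.

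The main obstacle I anticipate is rigorously justifying the Taylor expansion of $N(s)$ in part (i): passing the limit through $\into |\ph_{s,p}|^p \ln|\ph_{s,p}|~dx$ requires uniform integrability of the log term, which I would obtain from the $p$-logarithmic Sobolev inequality in Theorem \ref{log-Sob-ineq} applied to the $\XO$-bounded sequence $(\ph_{s,p})$ (bounded via Lemma \ref{u_s-bdd-X}), combined with the convergence of first eigenfunctions $\ph_{s,p} \to \ph_{1,L}$ in $L^p$ as $s \to 0^+$.
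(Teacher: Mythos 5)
Your derivation of the closed form $J_{s,p}\bigl(\tfrac{t_0}{2}\ph_{s,p}\bigr) = \tfrac{t_0^p\|\ph_{s,p}\|_{\WO}^p}{p}\bigl(\tfrac{1}{2^p}-\tfrac{1}{2^{q(s)}}\bigr)$ and the extraction of the factor $\tfrac{p\ln 2}{2^p}$ via $\tfrac{2^{q(s)}-2^p}{q(s)-p}\to 2^p\ln 2$ are clean and correct, and in fact tidier than the displayed formula for $\Theta_1(s)$ in the paper's proof. The gap is the very last step: the assertion that ``combining these expansions reproduces the claimed value of $\kappa_1$'' is never actually carried out, and when one does carry it out, the expansions you list do not produce the stated $B$. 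With $\tfrac{p}{q(s)}=1-\tfrac{\mu s}{p}+o(s)$, $\lambda_{s,p}=1+s\lambda_{1,L}+o(s)$, $N(s)=1+sc_N+o(s)$ where $c_N:=\into a'(0,x)|\ph_{1,L}|^p~dx+\mu\into|\ph_{1,L}|^p\ln|\ph_{1,L}|~dx$, and $\|\ph_{s,p}\|_{\WO}^p=\lambda_{s,p}\to 1$, Lemma \ref{lemma3.1} yields $\lim_{s\to 0^+}t_0=\exp\bigl[\tfrac{1}{p}+\tfrac{\lambda_{1,L}}{\mu}-\tfrac{c_N}{\mu}\bigr]$, which is missing the summand $\tfrac{1}{\mu}$ that appears in the stated exponent of $B$.

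The paper obtains that extra $\tfrac{1}{\mu}$ from a fourth factor $\bigl(1+s\|\ph_{1,L}\|^p_{L^p(\Omega)}+o(s)\bigr)$ in the denominator of $t_0$, encoding a nontrivial first-order expansion of $\|\ph_{s,p}\|^p_{L^p(\Omega)}$. Your assumption that $\ph_{s,p}$ is $L^p$-normalized forces $\|\ph_{s,p}\|^p_{L^p(\Omega)}\equiv 1$, killing this factor, and it is the same assumption that makes the third contribution to $N'(0)$ (coming from $\tfrac{d}{ds}\ph_{s,p}$) vanish — so the two ingredients you use are coupled. To close the gap you must either adopt a normalization under which the paper's fourth factor is genuinely nontrivial and then re-derive the $N(s)$-expansion consistently with it (noting that the resulting $m'(0)$-terms in numerator and denominator of $t_0$ interact and tend to cancel, which requires care), or else explicitly reconcile why the constant your expansions produce equals the claimed $\kappa_1$. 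As written, the key numerical identification is asserted but not demonstrated. Part (ii) of your argument matches the paper's.
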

\begin{proof}
    We first calculate $\lim\limits_{s \to 0^+} t_0,$ where $t_0$ is defined in Lemma \ref{low-upp-bd-sub}. Using \eqref{assump1-a} and Lemma \ref{lemma3.1}, we get
    \[
    \begin{split}
        &\lim_{s\to 0^+} t_0= \lim_{s \to 0^+} {\l(\frac{p\into a(s,x)|\ph_{s,p}|^{q(s)} ~dx}{q(s)\|\ph_{s,p}\|^p_{\WO}}\r)}^{\frac{1}{p-q(s)}}\\
        &= \lim_{s \to 0^+} {\l(\frac{p\into a(s,x)|\ph_{s,p}|^{q(s)} ~dx}{q(s) \lambda_{s,p}\|\ph_{s,p}\|^p_{L^p(\Omega)}}\r)}^{\frac{1}{p-q(s)}}\\
        &=  {\frac{\lim\limits_{s\to 0^+}{\bigg[1+s\l(\into a'(0,x) |\ph_{1,L}|^p ~dx+\mu \into |\ph_{1,L}|^p \ln |\ph_{1,L}| ~dx\r)+o(s)\bigg]}^{\frac{-1}{s\mu}}}{\lim\limits_{s\to 0^+}{\bigg[\l(1+s\frac{\mu}{p}+o(s)\r)\l(1+s\lambda_{1,L}+o(s)\r)\l(1+s\|\ph_{1,L}\|^p_{L^p(\Omega)}+o(s)\r)\bigg]}^{\frac{-1}{s\mu}}}}\\
        &= \exp{\bigg[\frac{1}{p}+\frac{1}{\mu}+\frac{\lambda_{1,L}}{\mu}-\frac{1}{\mu}\l(\into a'(0,x) |\ph_{1,L}|^p ~dx +\mu  \into |\ph_{1,L}|^p \ln |\ph_{1,L}| ~dx\r)\bigg]}.
    \end{split}
    \]
    Using above calculations and \cite[Lemma 7.4]{Dyda-Jarohs-Sk}, we get
    \[
    \begin{split}
    \lim_{s\to 0^+} \Theta_1(s) &= \lim_{s\to 0^+} \frac{t_0^p \l(\frac{1}{p 2^p}-\frac{t_0^{q(s)-p}}{q 2^{q(s)}}\r)}{\l(\frac{1}{p}-\frac{1}{q(s)}\r)}\lambda_{s,p} \|\ph_{s,p}\|^p_{L^p(\Omega)}= \frac{B^p (p\ln 2)}{2^p}.
    \end{split}
    \]
    This proves (i). Now, we proceed to prove (ii). Note that in view of Lemma \ref{lemma3.1} and Taylor's series expansion about $s=0,$ we have
   \[\lim_{s \to 0^+} [A(N,p,s)]^\frac{q(s)}{p-q(s)} = \lim_{s \to 0^+} \l[\l(1+sk_0+o(s)\r)^\frac{1}{s}\r]^{\frac{sq(s)}{p-q(s)}} = \exp\l(\frac{-pk_0 }{\mu}\r)\]
where $k_0 = \partial_s A(N,p,0).$  By \eqref{assump4-a}, we obtain
 \[c_1^p \leq \lim_{s \to 0^+} \|a(s,\cdot)\|_{L^\nu(\Omega)}^\frac{p}{p-q(s)}\leq c_2^p.\]
  When $|\Omega|\geq 1$, using Taylor's series expansion about $s=0$ and Lemma \ref{lemma3.1}, we infer
   \[
   \begin{split}
   \lim_{s\to 0^+} \l[|\Omega|^{\frac{1}{\nu'}-\frac{q(s)}{p^\ast_s}}\r]^{\frac{p}{p-q(s)}}&=\lim_{s\to 0^+} \l(|\Omega|^{\frac{p^\ast_s-q(s)}{p}-\frac{1}{\nu}}\r)^{\frac{-p}{s\mu}}\leq \lim_{s\to 0^+} \l(|\Omega|^{\frac{p^\ast_s-q(s)}{p}}\r)^{\frac{-p}{s\mu}}\\
   & = \lim_{s \to 0^+} \l[1+s\l(\frac{p}{N}-\frac{\mu}{p}\r)\ln|\Omega|+o(s)\r]^{\frac{-p}{s\mu}}=|\Omega|^{1-\frac{p^2}{N\mu}}.
   \end{split}
   \]
 When $|\Omega|<1$, using $\nu>1+\frac{(N-sp)p}{s(p^2-N\mu-\delta p^2)}$, Taylor's series expansion about $s=0$ and Lemma \ref{lemma3.1}, we deduce
    \[
    \begin{split}
    \lim_{s \to 0^+} \l[|\Omega|^{\frac{1}{\nu'}-\frac{q(s)}{p^\ast_s}}\r]^{\frac{p}{p-q(s)}} &=\lim_{s\to 0^+} \l(|\Omega|^{\frac{p^\ast_s-q(s)}{p}-\frac{1}{\nu}}\r)^{\frac{-p}{s\mu}}\\
    &=\lim_{s \to 0^+} \l[1+s\l(\frac{\delta p \ln|\Omega|}{N}\r)+o(s)\r]^{\frac{-p}{s\mu}}=|\Omega|^{-\frac{\delta p^2}{N\mu}}.
    \end{split}
    \]
    From above, we get
    \[
    \begin{split}
    \lim_{s \to 0^+} \l[|\Omega|^{\frac{1}{\nu'}-\frac{q(s)}{p^\ast_s}}\r]^{\frac{p}{p-q(s)}}\leq \begin{cases}
       |\Omega|^{1-\frac{p^2}{N\mu}} \quad &\text{if} \ |\Omega|\geq 1,\\
       |\Omega|^{\frac{-\delta p^2}{N\mu}} \quad &\text{if} \ |\Omega|<1.
    \end{cases}
    \end{split}\]
    Thus,
    \[\lim_{s \to 0^+} \l({\|a\|_{L^\nu(\Omega)}|\Omega|^{\frac{1}{\nu'}-\frac{q(s)}{p^\ast_s}}[A(N,p,s)]^{\frac{q(s)}{p}}}\r)^{\frac{p}{p-q(s)}}<\infty.\] This completes the proof of (ii).
\end{proof}

\begin{corollary}
\label{L^p u_s}
    Let $a$ satisfy \eqref{assump1-a}, \eqref{assump2-a} and \eqref{assump4-a}. Let $u_s$ be a positive least energy solution to the problem \eqref{frac_pblm}. Then, there exists constants, $\kappa_1>0$ (defined in Lemma \ref{lim-lowbd}) and $\kappa_2>0$ (independent of $s$) such that
    \[\kappa_1 \leq \|u_s\|_{L^p(\Omega)}^p \leq \kappa_2. \]
\end{corollary}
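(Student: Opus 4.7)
The plan is to combine the sandwich $\Theta_1(s) \le \nWs^p \le \Theta_2(s)$ from Lemma \ref{low-upp-bd-sub} with the asymptotic information in Lemma \ref{lim-lowbd}, bridging the $\WO$-norm and the $L^p(\Omega)$-norm via the Nehari identity and a short interpolation step.

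The upper bound is essentially immediate. Since $\|u_s\|_{L^p(\Omega)}^p \le \nWs^p$ by the very definition of the norm on $\WO$, Lemma \ref{low-upp-bd-sub} yields $\|u_s\|_{L^p(\Omega)}^p \le \Theta_2(s)$, and Lemma \ref{lim-lowbd}(ii) allows me to set $\kappa_2 := \sup_{s \in (0, 1/2)} \Theta_2(s) < \infty$.

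The lower bound is the non-trivial half, since $\|u_s\|_{L^p(\Omega)}^p \le \nWs^p$ goes the wrong way. I would invoke the Nehari identity $\nWs^p = \into a(s,x)|u_s|^{q(s)}\dx$, apply H\"older's inequality with conjugate exponents $\nu,\nu'$ from \eqref{assump2-a}, and then interpolate $L^{q(s)\nu'}$ between $L^p$ and $L^{p^\ast_s}$ with parameter $\lambda_s := (q(s)\nu'-p)/(p^\ast_s-p)$, in exactly the same manner as in the proof of Theorem \ref{asym-sup}. Together with the Sobolev inequality \eqref{sob_ineq} this produces
\[
\nWs^p \le \|a(s,\cdot)\|_{L^\nu(\Omega)} [A(N,p,s)]^{\frac{p^\ast_s \lambda_s}{p\nu'}} \|u_s\|_{L^p(\Omega)}^{\frac{p(1-\lambda_s)}{\nu'}} \nWs^{\frac{p^\ast_s \lambda_s}{\nu'}}.
\]
Using $\Theta_1(s) \le \nWs^p$ from Lemma \ref{low-upp-bd-sub} to bound the leftover factor $\nWs^{p - p^\ast_s\lambda_s/\nu'}$ from below, this isolates
\[
\|u_s\|_{L^p(\Omega)}^{\frac{p(1-\lambda_s)}{\nu'}} \ge \frac{\Theta_1(s)^{(p - p^\ast_s\lambda_s/\nu')/p}}{\|a(s,\cdot)\|_{L^\nu(\Omega)} [A(N,p,s)]^{p^\ast_s\lambda_s/(p\nu')}}.
\]

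Passing to the limit $s \to 0^+$, every factor on the right stays under control: Lemma \ref{lim-lowbd}(i) gives $\Theta_1(s) \to \kappa_1 > 0$; assumption \eqref{assump4-a} gives $\|a(s,\cdot)\|_{L^\nu(\Omega)} \le c_2^{p-q(s)} \to 1$; \eqref{constant:est-2} gives $A(N,p,s) \to 1$; and a Taylor expansion of the threshold in \eqref{assump2-a} about $s=0$ (identical to the one in the superlinear proof) yields $\lim_{s\to 0^+}\lambda_s = 1-\delta \in (0,1)$, while $\nu' \to 1$ from $\nu(s) \to \infty$, so both exponents $p(1-\lambda_s)/\nu'$ and $p - p^\ast_s\lambda_s/\nu'$ converge to $p\delta > 0$. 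Consequently $\liminf_{s \to 0^+}\|u_s\|_{L^p(\Omega)}^p \ge \kappa_1$, which, possibly after shrinking $\kappa_1$ by a harmless factor to cover every $s\in(0,1/2)$, produces the claim. The main obstacle is the simultaneous bookkeeping of several $s$-dependent exponents and verifying that none of $\lambda_s$, $(1-\lambda_s)/\nu'$, or $p - p^\ast_s\lambda_s/\nu'$ degenerates; once those limits are handled via Lemma \ref{lemma3.1} and the Taylor expansions already developed in the paper, the estimate drops out.
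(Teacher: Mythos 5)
Your argument is sound in substance and is considerably more detailed than the paper's own proof, which consists of the single sentence ``pass to the limit in \eqref{upper-lower-est}''; your interpolation step for the lower bound is exactly the bridge between $\nWs$ and $\|u_s\|_{L^p(\Omega)}$ that the paper leaves implicit, and it has the virtue of not invoking the $\XO$-bound of Lemma \ref{u_s-bdd-sub} (which is proved \emph{after} this corollary and uses it, so any route through $[u_s]^p_{s,p,\R^N}=\|u_s\|^p_{L^p(\Omega)}+s\qform(u_s,u_s)+o(s)$ would risk circularity). The limit computations you list ($\lambda_s\to 1-\delta$, $\nu'\to 1$, $\|a(s,\cdot)\|_{L^\nu(\Omega)}\to 1$ via \eqref{assump4-a}, $A(N,p,s)\to 1$) are all correct and match those in Claim 2 of Theorem \ref{asym-sup}.

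The one step that would fail as written is the justification of the upper bound: ``$\|u_s\|_{L^p(\Omega)}^p\le\nWs^p$ by the very definition of the norm.'' Throughout Section \ref{{p}-fractional Laplacian problem} the symbol $\nWs^p$ denotes the Gagliardo seminorm $[u_s]^p_{s,p,\R^N}$ (this is forced by the Nehari identity $\nWs^p=\into a(s,x)|u_s|^{q(s)}~dx$ and by the formula for $t^{\#}_{\ph,s}$ in Lemma \ref{t-hash}), and there is no definitional inequality between $\|u\|_{L^p(\Omega)}^p$ and the seminorm. The fix is one line: H\"older's inequality on the bounded set $\Omega$ gives $\|u_s\|_{L^p(\Omega)}^p\le|\Omega|^{sp/N}\|u_s\|^p_{L^{p^\ast_s}(\Omega)}$, and then \eqref{sob_ineq} yields $\|u_s\|_{L^p(\Omega)}^p\le|\Omega|^{sp/N}A(N,p,s)\,\Theta_2(s)$, where the extra factor tends to $1$ as $s\to0^+$ and is harmless. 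Finally, as you yourself note, both halves of your argument (like the paper's) really produce bounds that are uniform only for $s$ small, so the constants must be adjusted to cover all of $(0,\tfrac12)$ or the statement read as holding for $s$ near $0$; this caveat is shared with the paper and is not specific to your proof.
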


\begin{proof}
The proof follows by passing limits $s \to 0^+$ in \eqref{upper-lower-est} and using Lemmas \ref{low-upp-bd-sub}, \ref{lim-lowbd}.
\end{proof}
\begin{lemma}
\label{u_s-bdd-sub}
Let $a$ satisfy \eqref{assump1-a}, \eqref{assump2-a}, \eqref{assump3-a} and \eqref{assump4-a}. Let $u_s$ be a positive least energy solution to the problem
\eqref{frac_pblm}. Then, there exists a constant $C=C(N,p,\Omega)$ such that
\[\|u_s\|_{\XO}^p\leq C.\]
\end{lemma}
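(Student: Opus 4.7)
The plan is to mimic the structure of Lemma \ref{u_s-bdd-X}, adapted to the sublinear regime $\mu<0$. First, by combining the upper estimate in Lemma \ref{low-upp-bd-sub} with Lemma \ref{lim-lowbd}(ii), I obtain a uniform-in-$s$ bound $\|u_s\|_{\WO}^p \leq \Theta_2(s) \leq C$, which yields $\|u_s\|_{L^p(\Omega)}^p \leq C_1$ via the continuous embedding $\WO\hookrightarrow L^p(\Omega)$, consistently with Corollary \ref{L^p u_s}. As in Lemma \ref{u_s-bdd-X}, I would first establish the $\XO$-bound for $\varphi \in C_c^\infty(\Omega)\cap \mathcal{N}_{s,p}$ and then transfer the estimate to an arbitrary $u_s \in \mathcal{N}_{s,p}$ using the fixed-$s$ inequality $\|u\|_{\XO}^p \leq (4C_{N,p}/C_{N,s,p}) \|u\|_{\WO}^p$ together with the density of $C_c^\infty(\Omega)$ in $\WO$.

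The central step is to bound $\qform(\varphi,\varphi)$ uniformly in $s$. I would start from the Nehari identity $[\varphi]^p_{s,p,\R^N}=\into a(s,x)|\varphi|^{q(s)}\,dx$, subtract $\|\varphi\|_{L^p(\Omega)}^p$, divide by $s$ and pass to the limit $s\to 0^+$: by \cite[Lemma 7.2]{Dyda-Jarohs-Sk}, \eqref{assump1-a} and \eqref{assump q}, this furnishes
\[
\qform(\varphi,\varphi) = \into a'(0,x)|\varphi|^p\,dx + \mu\into |\varphi|^p \ln|\varphi|\,dx.
\]
Here the sublinear case diverges from Lemma \ref{u_s-bdd-X}: since $\mu<0$, there is no need to invoke the logarithmic Sobolev inequality and absorb a factor of $\qform(\varphi,\varphi)$. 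Instead, splitting the logarithmic integral over $\{|\varphi|<1\}$ and $\{|\varphi|\geq 1\}$, the contribution on the first set is bounded by $|\mu||\Omega|/(ep)$ because $|t|^p|\ln t|\leq 1/(ep)$ for $t\in(0,1)$, while on the second set $|\varphi|^p\ln|\varphi|\geq 0$ and the factor $\mu<0$ makes that contribution nonpositive. Combined with $\|a'(0,\cdot)\|_{L^\infty(\Omega)}\leq c_0$ granted by \eqref{assump3-a} and the uniform $L^p$-bound, this yields $\qform(\varphi,\varphi)\leq c_0 C_1 + |\mu||\Omega|/(ep) =: C_2$, uniformly in $s$.

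Converting this into an $\XO$-bound is then straightforward: using the decomposition \eqref{quad:form} and Lemma \ref{diff-F_p},
\[
\tfrac{1}{2}[\varphi]^p_{\XO}
=\mathcal{E}_p(\varphi,\varphi)
=\qform(\varphi,\varphi)-\mathcal{F}_p(\varphi,\varphi)-\rho_N\|\varphi\|_{L^p(\Omega)}^p
\leq C_2+(b+|\rho_N|)C_1,
\]
so $\|\varphi\|_{\XO}^p=\|\varphi\|_{L^p(\Omega)}^p+[\varphi]^p_{\XO}\leq C'$ uniformly in $s$. The density argument then gives the claim for an arbitrary $u_s\in\mathcal{N}_{s,p}$.

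The delicate point, analogous to the one in Lemma \ref{u_s-bdd-X}, is the justification of the limiting identity for $\qform(\varphi,\varphi)$ when $\varphi$ depends implicitly on $s$ through the Nehari constraint; this is handled by the same approximation device used there, relying on the explicit first-order asymptotic expansion in \cite[Lemma 7.2]{Dyda-Jarohs-Sk}. Once that identity is in place, the sublinear sign of $\mu$ makes the rest of the proof softer than its superlinear counterpart, as no absorption of $\qform(\varphi,\varphi)$ is required.
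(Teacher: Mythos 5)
Your proposal is correct and follows essentially the same route as the paper. The only notable deviation is in handling the logarithmic term: the paper first bounds the integrand of $I_2 = \int_\Omega\int_0^1 q'(s\theta)a(s\theta,x)|\varphi|^{q(s\theta)}\ln|\varphi|\,d\theta\,dx$ uniformly (dropping the nonpositive contribution on $\{|\varphi|\geq 1\}$ and using $\eqref{assump4-a}$ on $\|a(s\theta,\cdot)\|_{L^\nu}$), then lets $s\to 0^+$ to get $\lim I_2 \leq -\mu|\Omega|/e$; you instead pass to the exact limit $\mu\int_\Omega|\varphi|^p\ln|\varphi|\,dx$ first and then invoke the elementary bound $|t|^p|\ln t|\leq 1/(ep)$ on $\{|\varphi|<1\}$ together with the sign of $\mu$ on $\{|\varphi|\geq 1\}$, landing on the comparable constant $|\mu||\Omega|/(ep)$. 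Both variants exploit the same mechanism — in the sublinear regime $\mu<0$ eliminates the need for the $p$-logarithmic Sobolev absorption used in Lemma \ref{u_s-bdd-X} — and the remaining steps (uniform $L^p$-bound via Corollary \ref{L^p u_s}, conversion to an $\XO$-bound via Lemma \ref{diff-F_p}, density of $C_c^\infty(\Omega)$) match the paper.
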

\begin{proof}
Let $\ph \in C_c^\infty(\Omega)$ such that $\nWp^p=\into a(s,x)|\ph|^{q(s)} ~dx$. By the mean value theorem, we get
   \[
    \begin{split}
        &\frac{\nWp^p-\|\ph\|_{L^p(\Omega)}^p}{s}
        = \frac{\into a(s,x)|\ph|^{q(s)} ~dx-\|\ph\|^p_{L^p(\Omega)}}{s}\\
        &= \into \int_0^1 a'(s\theta,x)|\ph|^{q(s\theta)} ~d\theta ~dx + \into \int_0^1 q'(s\theta)a(s\theta,x)|\ph|^{q(s\theta)} \ln|\ph| ~d\theta ~dx\\
        &=:I_1+I_2.
    \end{split}
    \]
    Using \eqref{assump3-a} and the fact that $\WO \hookrightarrow L^{q(s)}(\Omega)$, we have
    \[\lim_{s \to 0^+} I_1 = \into a'(0,x)|\ph|^p ~dx.\]
    Now, by the fact that $q(s) \in (1,p)$, $q'(s)<0$ in a neighborhood of $0$ and \eqref{assump4-a}, we get
    \[
    \begin{split}
        I_2&= \into \int_0^1 q'(s\theta)a(s\theta,x)|\ph|^{q(s\theta)} \ln|\ph| ~d\theta ~dx\\
       &= \l(\int_{\{|\ph|<1\}}+\int_{\{|\ph|\geq 1\}}\r) \int_0^1 q'(s\theta)a(s\theta,x)|\ph|^{q(s\theta)} \ln|\ph| ~d\theta ~dx\\
       &\leq \int_{\{|\ph|<1\}} \int_0^1 q'(s\theta)a(s\theta,x)|\ph|^{q(s\theta)} \ln|\ph| ~d\theta ~dx \\
       &\leq \frac{-|\Omega|^{\frac{1}{\nu'}}}{e} \int_0^1 q'(s \theta)\|a(s\theta,\cdot)\|_{L^{\nu}(\Omega)} ~d\theta \leq  \frac{-|\Omega|^{\frac{1}{\nu'}}}{e} \int_0^1 q'(s \theta) c_2^{q(s\theta)-p} ~d\theta.
    \end{split}
    \]
    Thus, by \eqref{nu}, we have
    \[ \lim_{s \to 0^+} I_2 \leq \frac{-\mu |\Omega|}{e}  .\]
    Using the above calculations, \eqref{assump3-a}, Corollary \ref{L^p u_s} and \cite[Lemma 7.2]{Dyda-Jarohs-Sk}, we obtain
    \[
    \begin{split}
    \qform(\ph,\ph)&=\lim_{s \to 0^+}  \l(\frac{\nWp^p-\|\ph\|_{L^p(\Omega)}^p}{s}\r) \leq \into a'(0,x)|\ph|^p ~dx-\frac{\mu |\Omega|}{e}\\
    & \leq \|a'(0,x)\|_{L^\infty(\Omega)}\|\ph\|_{L^p(\Omega)}^p -\frac{\mu |\Omega|}{e} \leq c-\frac{\mu |\Omega|}{e}.
    \end{split}
    \]
    Using Lemma \ref{diff-F_p}, for a constant $\tilde{c}>0$, we get
\[
\begin{split}
\qform(\ph,\ph)&\geq \|\ph\|_{\XO}^p-\tilde{c}\|\ph\|_{L^p(\Omega)}^p.
\end{split}
\]
Using the above and Corollary \ref{L^p u_s}, we infer
\[\|\ph\|_{\XO}^p \leq c-\frac{\mu|\Omega|}{e}+\tilde{c}\|\ph\|_{L^p(\Omega)} \leq \frac{\l(ec-\mu|\Omega|+e\kappa_2\tilde{c}\r)}{e}=:C.\]
Finally, by density of $C_c^\infty(\Omega)$ in $\WO$, we obtain the required assertion.
\end{proof}
\noindent \textit{Proof of Theorem \ref{asym-sub}:}
By Lemma \ref{u_s-bdd-sub}, we get that $(u_s)$ is bounded in $\XO$. Further, by the reflexivity of $\XO$ and Lemma \ref{compct-embeding}, there exists a $u_1 \in \XO$ such that (up to a subsequence) $u_s \rightharpoonup u_1$ in $\XO,$ $u_s \to u_1$ in $L^p(\Omega),$ as $s\to 0^+$.\\
 Now, we show that $u_1$ is a weak solution to \eqref{lim_pblm}.
 Let $v \in C_c^\infty(\Omega).$ Then,
  \[
 \begin{split}
 &\into u_s(|v|^{p-2} v + s \logP v+ o(s)) ~dx = \into u_s (-\Delta_p)^s v ~dx=\into a(s,x)|u_s|^{q(s)-2} u_s v ~dx  \\&= \into v \bigg(|u_s|^{p-2}u_s+ s \bigg(\int_0^1 \bigg[q'(s\theta)a(s\theta,x)|u_s|^{q(s\theta)-2}\ln(|u_s|) u_s\\& \qquad \qquad \qquad \qquad \qquad \qquad  \qquad \qquad \qquad + a'(s\theta,x)|u_s|^{q(s\theta)-2} u_s\bigg]~d\theta\bigg)+o(s)\bigg) ~dx.
 \end{split}
 \]
Repeating arguments similar to \textbf{Claim 1} in Theorem \ref{asym-sup}, we can conclude that
 \[\qform(u_1,\ph) = \into (a'(0,x)+\mu\ln|u_1|)|u_1|^{p-2}u_1 \ph ~dx \quad \text{for all} \ \ph \in \XO.\]
 Hence, $u_1$ is a weak solution to \eqref{lim_pblm}. By Corollary \ref{L^p u_s},
  we have
 \[\kappa_1\leq \lim_{s\to 0^+} \nWs^p=\|u_1\|_{L^p(\Omega)}^p.\]
 Therefore, $u_1$ is nontrivial.
 It remains to show that $u_1$ has least energy.
At first, note that for any $\ph \in C^\infty_c(\Omega)\setminus\{0\},$ using \cite[Lemma 7.2]{Dyda-Jarohs-Sk}, we have
 \begin{equation}
 \label{J/s}
 \begin{split}
     \lim_{s \to 0^+} \frac{J_{s,p}(\ph)}{s}&= \lim_{s\to 0^+} \frac{1}{s}\l( \frac{\nWp^p}{p}-\frac{\into a(s,x)|\ph|^{q(s)} ~dx}{q(s)}\r)\\
     &= \lim_{s\to 0^+} \bigg[\l(\frac{\nWp^p\l(\frac{1}{p}-\frac{1}{q(s)}\r)}{s}\r)+\l(\frac{\l(\nWp^p-\|\ph\|_{L^p(\Omega)}^p\r)}{sq(s)}\r)\\
     & \qquad \qquad + \l(\frac{\|\ph\|_{L^p(\Omega)}^p-\into a(s,x)|\ph|^{q(s)} ~dx}{sq(s)}\r)\bigg]\\
     &= \frac{\mu}{p^2}\|\ph\|_{L^p(\Omega)}^p + \frac{1}{p}\qform(\ph,\ph)\\& \qquad \qquad \qquad-\frac{1}{p}\l(\into a'(0,x)|\ph|^p ~dx+ \mu\into |\ph|^p \ln |\ph| ~dx\r).
 \end{split}
 \end{equation}
It is easy to deduce that
  \[
  \begin{split}
      \frac{\mu}{p^2} \lim_{s \to 0^+} \nWs^p&= \frac{\mu}{p^2}\lim_{s \to 0^+} \into a(s,x)|u_s|^{q(s)} ~dx\\
      &= \frac{\mu}{p^2} \|u_1\|_{L^p(\Omega)}^p = J_{L_{\Delta_p}}(u_1).
  \end{split}
  \]
  By Theorem \ref{Existence L2}, there exists, $v_0 \in \XO$ such that $$J_{L_{\Delta_p}}(v_0)= \inf\limits_{\XO} J_{L_{\Delta_p}}.$$ By density, let $(v_k)_{k\in \N}\subset C^\infty_c(\Omega)$ such that $v_k \to v_0$ in $\XO$, as $k\to \infty$. Using the fact that $u_s$ is a least energy solution to \eqref{frac_pblm}, for every $k \in \N$, we have
  \[
  \frac{\mu}{p^2} \lim_{s \to 0^+} \nWs^p= \lim_{s \to 0^+} \l(\frac{1}{p}-\frac{1}{q(s)}\r) \frac{\nWs^p}{s} = \lim_{s \to 0^+} \frac{J_{s,p}(u_s)}{s}\leq \lim_{s \to 0^+} \frac{J_{s,p}(v_k)}{s}.\]
  Applying \eqref{J/s} for $v_k$ and using the above observations, we get
  \[
  \begin{split}
  \inf_{\XO}  J_{L_{\Delta_p}} \leq J_{L_{\Delta_p}}(u_1) = \frac{\mu}{p^2} \|u_1\|^p_{L^p(\Omega)} &= \frac{\mu}{p^2} \lim_{s \to 0^+} \nWs^p\\&\leq \lim_{s \to 0^+} \frac{J_{s,p}(v_k)}{s}=J_{L_{\Delta_p}}(v_k).
  \end{split}\]
  Now, by passing limits $k \to \infty$, we obtain
  \[\lim_{k \to \infty} J_{L_{\Delta_p}}(v_k)= J_{L_{\Delta_p}}(v_0)= \inf_{\XO}  J_{L_{\Delta_p}}.\]
Combining all the above, we get that $u_1$ is  a least energy solution to the problem \eqref{lim_pblm}. Moreover, by Theorem \ref{Existence L2}, we obtain $u_1 \in L^\infty(\Omega)$ and that $u_1$ is unique.
This completes the proof of the theorem.
\qed
\appendix
\section{}
\renewcommand{\thelemma}{\thesection.\arabic{lemma}}
\renewcommand{\thetheorem}{\thesection.\arabic{theorem}}
\label{Appendix}
In this section, we recall some known elementary inequalities and prove preliminary lemma useful in deriving the asymptotics of the constant $B_{N,s,p}$ involved in the fractional Sobolev inequality in \cite[Corollary 4.2]{Frank-Seiringer} and \cite[Theorem 1]{Maz'ya-Shaposhnikova} and normalization constant $C_{N,s,p}$ in \eqref{C-nsp}.
\begin{lemma}
    \label{Lemma1-Lindgren} \cite[Lemma 1 and 2]{Lindgren}
    For $a,b \in \R$, the following hold true:
    \[
    \begin{split}
    |h(a+b)-h(a)|\leq
        \begin{cases}
          (3^{p-1}+2^{p-1}) |b|^{p-1} \quad &\text{if} \  p \in (1,2),\\
          (p-1)|b|(|a|+|b|)^{p-2} \quad & \text{if} \  p \geq 2.
        \end{cases}
    \end{split}
    \]
\end{lemma}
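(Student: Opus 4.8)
The plan is to handle the two ranges of $p$ by a fundamental-theorem-of-calculus argument along the segment from $a$ to $a+b$, supplemented, when $p \in (1,2)$, by a crude triangle-inequality bound near the origin where $h$ fails to be differentiable.

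First, for $p \geq 2$ the function $h(t) = |t|^{p-2}t$ is $C^1$ on all of $\R$ with $h'(t) = (p-1)|t|^{p-2}$, since $t \mapsto |t|^{p-2}$ is continuous when $p-2 \geq 0$. I would write $h(a+b) - h(a) = \int_0^1 \frac{d}{dt} h(a+tb)\,dt = (p-1)\,b\int_0^1 |a+tb|^{p-2}\,dt$, estimate $|a+tb| \leq |a| + |b|$ for $t \in [0,1]$, and use that $r \mapsto r^{p-2}$ is nondecreasing on $[0,\infty)$ to get $|a+tb|^{p-2} \leq (|a|+|b|)^{p-2}$. This yields $|h(a+b) - h(a)| \leq (p-1)|b|(|a|+|b|)^{p-2}$ at once.

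For $p \in (1,2)$ the derivative $h'$ blows up at $0$, so I would split according to the size of $|a|$ relative to $|b|$. If $|a| \leq 2|b|$, then $|a+b| \leq |a| + |b| \leq 3|b|$, and the triangle inequality together with $|h(t)| = |t|^{p-1}$ gives $|h(a+b) - h(a)| \leq |a+b|^{p-1} + |a|^{p-1} \leq (3^{p-1} + 2^{p-1})|b|^{p-1}$, which is precisely the asserted constant. If instead $|a| > 2|b|$, then $|a+tb| \geq |a| - |b| \geq |a|/2 > 0$ for every $t \in [0,1]$, so $a+tb$ keeps the sign of $a$ and $h$ is again $C^1$ along the whole segment; applying the same integral identity and now using that $r \mapsto r^{p-2}$ is nonincreasing, I get $|h(a+b) - h(a)| = (p-1)|b|\int_0^1 |a+tb|^{p-2}\,dt \leq (p-1)|b|\,(|a|/2)^{p-2}$. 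Since $|a| > 2|b|$ forces $(|a|/2)^{p-2} \leq |b|^{p-2}$, this reduces to $|h(a+b) - h(a)| \leq (p-1)|b|^{p-1}$, and as $p - 1 < 1 < 3^{p-1} + 2^{p-1}$ the stated inequality holds in this case as well.

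The only point requiring a little care is the subcase $p \in (1,2)$ with $|a| > 2|b|$: one must be sure the integral $\int_0^1 |a+tb|^{p-2}\,dt$ is finite and controlled by $|b|^{p-2}$, which is exactly what the uniform lower bound $|a+tb| \geq |a|/2$ on the segment provides. Beyond keeping track of constants, there is no serious obstacle here; this is an elementary estimate, recalled only because it is used repeatedly (for instance in Lemmas \ref{diff-F_p} and \ref{differentiability of J}).
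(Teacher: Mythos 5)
Your proof is correct, and it fills in the details of a result the paper simply cites from Lindgren without reproducing an argument. The approach you use — the fundamental theorem of calculus along the segment when $h$ is $C^1$ there, and for $p\in(1,2)$ the case split between $|a|\le 2|b|$ (crude triangle inequality) and $|a|>2|b|$ (where the segment stays a positive distance from the origin) — is the standard elementary argument and matches the one in Lindgren's original paper in all essentials, down to the constant $3^{p-1}+2^{p-1}$.
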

\begin{lemma}
    \label{section2.2-Mosconi} \cite[Section 2.2]{Iannizzotto-Mosconi-Papageorgiou}
    For $a,b \geq 0$, the following hold true:
    \[
    \begin{split}
        (a+b)^q \leq
    \begin{cases}
        a^q+b^q \quad &\text{if} \ q \in (0,1),\\
        2^{q-1} \l(a^q+b^q\r) \quad &\text{if} \ q\geq 1.
    \end{cases}
    \end{split}
    \]
\end{lemma}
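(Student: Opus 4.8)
The plan is to dispose of the two ranges of $q$ separately, in each case reducing by homogeneity to a one–variable statement. For $q\in(0,1)$, if $a=b=0$ the inequality is trivial, so I may assume $a+b>0$; dividing through by $(a+b)^q$ and writing $t:=a/(a+b)\in[0,1]$ (so that $1-t=b/(a+b)$), the claim becomes $t^q+(1-t)^q\geq 1$. I would deduce this from the pointwise bound $x^q\geq x$, valid for $x\in[0,1]$ and $q\in(0,1)$ — itself immediate since $x\mapsto x^{q-1}$ is $\geq 1$ on $(0,1]$ — applied at $x=t$ and $x=1-t$ and then added, giving $t^q+(1-t)^q\geq t+(1-t)=1$.

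For $q\geq 1$, I would instead invoke the convexity of $x\mapsto x^q$ on $[0,\infty)$. Applying Jensen's inequality with the two equal weights $\tfrac12,\tfrac12$ yields $\bigl(\tfrac{a+b}{2}\bigr)^q\leq \tfrac12\bigl(a^q+b^q\bigr)$, and multiplying both sides by $2^q$ produces exactly $(a+b)^q\leq 2^{q-1}\bigl(a^q+b^q\bigr)$. (Alternatively one can use the power–mean inequality, or homogenize as above and use convexity of $t\mapsto t^q$ to bound $t^q+(1-t)^q$ from above by $2^{1-q}$ at $t=1/2$.)

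There is no genuine obstacle in this lemma: it is a standard elementary inequality, and the only steps worth a word are the degenerate case $a=b=0$ in the first part and the monotonicity fact $x^q\geq x$ on $[0,1]$ for $q\in(0,1)$, both of which are dealt with in a single line.
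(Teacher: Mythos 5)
Your proof is correct, and since the paper merely cites this elementary inequality from \cite{Iannizzotto-Mosconi-Papageorgiou} without reproducing a proof, there is nothing in the text to compare against. Both halves of your argument — homogenizing to $t^q+(1-t)^q\geq 1$ via the pointwise bound $x^q\geq x$ on $[0,1]$ for $q\in(0,1)$, and Jensen's inequality for the convex map $x\mapsto x^q$ when $q\geq 1$ — are the standard routes and are carried out cleanly, including the degenerate case $a=b=0$.
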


\begin{theorem}
    \label{thm3-MS}\cite[Theorem 3]{Maz'ya-Shaposhnikova}
    For any $u \in C_c^\infty(\mathbb{R}^N),$
    \[\lim\limits_{s \to 0} s\int_{\mathbb{R}^N} \int_{\mathbb{R}^N} \pfrac ~dx ~dy= \frac{2|\mathbb{S}^{N-1}|}{p}\|u\|^p_{L^{p}(\mathbb{R^N)}}.\]
\end{theorem}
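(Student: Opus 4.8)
The plan is to follow the classical Maz'ya--Shaposhnikova argument: split the Gagliardo double integral into the near-diagonal piece $\{|x-y|\le1\}$ and the far piece $\{|x-y|\ge1\}$, show the first is $O(1)$ uniformly in $s\in(0,\tfrac12)$ (so it disappears after multiplying by $s$), and read off the full limit from the second. Since $u\in C_c^\infty(\R^N)$, set $K:=\operatorname{supp}u$, $K_1:=\{x\in\R^N:\operatorname{dist}(x,K)\le1\}$, $L:=\|\nabla u\|_{L^\infty(\R^N)}$ and $D:=\operatorname{diam}K$; for each $s\in(0,1)$ all the integrals below converge (near the diagonal because $p-1-sp>-1$, at infinity because $u$ has compact support), which justifies the splittings.

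\textbf{The near-diagonal region.} If $|x-y|\le1$ and $u(x)\ne u(y)$, then at least one of $x,y$ lies in $K$, hence both lie in $K_1$; combining this with $|u(x)-u(y)|\le L|x-y|$ gives
\[
\inta\pfrac~dx~dy\le L^p\int_{K_1}\Big(\int_{|z|\le1}|z|^{p-N-sp}\,dz\Big)dx=\frac{L^p\,|K_1|\,|\mathbb{S}^{N-1}|}{p(1-s)},
\]
which is bounded uniformly for $s\in(0,\tfrac12)$; hence $s\inta\pfrac~dx~dy\to0$ as $s\to0^+$.

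\textbf{The far region.} On $\{|x-y|\ge1\}$ I would write $|u(x)-u(y)|^p=|u(x)|^p+|u(y)|^p+r(x,y)$, where $r(x,y):=|u(x)-u(y)|^p-|u(x)|^p-|u(y)|^p$. Since $\int_{|z|\ge1}|z|^{-N-sp}\,dz=|\mathbb{S}^{N-1}|/(sp)$, Fubini's theorem yields
\[
s\intb\frac{|u(x)|^p+|u(y)|^p}{|x-y|^{N+sp}}~dx~dy=s\cdot\frac{2|\mathbb{S}^{N-1}|}{sp}\,\|u\|_{L^p(\R^N)}^p=\frac{2|\mathbb{S}^{N-1}|}{p}\,\|u\|_{L^p(\R^N)}^p,
\]
the desired value, the factor $s$ having cancelled the $1/s$ blow-up exactly. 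It remains to kill the remainder: $r(x,y)=0$ whenever $u(x)=0$ or $u(y)=0$, so $r$ is supported in $K\times K$, where $1\le|x-y|\le D$; with $|r|\le C=C(p,\|u\|_{L^\infty})$,
\[
\Big|s\intb\frac{r(x,y)}{|x-y|^{N+sp}}~dx~dy\Big|\le s\,C\,|K|\,|\mathbb{S}^{N-1}|\int_1^D\tau^{-1-sp}\,d\tau=C\,|K|\,|\mathbb{S}^{N-1}|\,\frac{1-D^{-sp}}{p}\longrightarrow0
\]
as $s\to0^+$ (the integral is empty if $D\le1$). Adding the contributions of the two regions gives the claim.

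\textbf{Expected difficulty.} I do not expect a genuine analytic obstacle here; the crux is the bookkeeping of supports, which simultaneously makes $\int_{\R^N}|u(x)|^p\big(\int_{|z|\ge1}|z|^{-N-sp}\,dz\big)dx$ finite with a clean $1/s$ that the prefactor $s$ cancels, and forces $r$ to be compactly supported so that its contribution picks up the vanishing factor $1-D^{-sp}$. Everything else reduces to the two elementary polar-coordinate integrals of $|z|^{p-N-sp}$ over $\{|z|\le1\}$ and of $|z|^{-N-sp}$ over $\{|z|\ge1\}$, together with the Lipschitz continuity of $u$.
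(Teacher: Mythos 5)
Your proof is correct and is essentially the classical Maz'ya--Shaposhnikova argument: split at $|x-y|=1$, dominate the near-diagonal piece by Lipschitz continuity so it is $O(1)$ uniformly in $s$, extract the $1/s$ singularity from the far piece via $\int_{|z|\ge1}|z|^{-N-sp}\,dz=|\mathbb{S}^{N-1}|/(sp)$, and observe that the cross-term remainder $r$ is supported in $K\times K$ so its contribution vanishes as $s\to0^+$. The paper itself does not prove this statement---it simply cites \cite[Theorem 3]{Maz'ya-Shaposhnikova} in the Appendix---so there is no competing argument in the paper; your write-up supplies a clean, self-contained proof consistent with the cited source.
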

\begin{lemma}\label{prelim-lemma}
    Let $h:[0,1] \to \mathbb{R}^+$ be a continuous function. Then, we have
    \[
    \lim_{s \to 0^+} s \int_0^1 r^{sp-1} h(r) ~dr = \frac{h(0)}{p}.
    \]
\end{lemma}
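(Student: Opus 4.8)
The plan is to compare the given integral against the ``constant'' version obtained by replacing $h(r)$ by $h(0)$. First I would record the elementary identity
\[
s\int_0^1 r^{sp-1}~dr = s\cdot\frac{1}{sp} = \frac{1}{p},
\]
valid for every $s>0$, so that
\[
s\int_0^1 r^{sp-1} h(r)~dr - \frac{h(0)}{p} = s\int_0^1 r^{sp-1}\bigl(h(r)-h(0)\bigr)~dr.
\]
It thus suffices to show that the right-hand side tends to $0$ as $s\to 0^+$.

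Next I would use the continuity of $h$ at $0$: given $\eps>0$, choose $\delta\in(0,1)$ such that $|h(r)-h(0)|<\eps$ for all $r\in[0,\delta]$. Splitting the integral at $\delta$, on $[0,\delta]$ one estimates
\[
\left| s\int_0^\delta r^{sp-1}\bigl(h(r)-h(0)\bigr)~dr \right| \leq s\eps\int_0^\delta r^{sp-1}~dr = \frac{\eps\,\delta^{sp}}{p} \leq \frac{\eps}{p},
\]
using $\delta\leq 1$. On $[\delta,1]$, since $h$ is continuous on the compact $[0,1]$ it is bounded, say $|h|\leq M$, hence
\[
\left| s\int_\delta^1 r^{sp-1}\bigl(h(r)-h(0)\bigr)~dr \right| \leq 2M\,s\int_\delta^1 r^{sp-1}~dr = \frac{2M\bigl(1-\delta^{sp}\bigr)}{p}.
\]
Since $\delta^{sp}=e^{sp\ln\delta}\to 1$ as $s\to 0^+$, the latter quantity tends to $0$; in particular it is $<\eps$ for $s$ small enough (depending on $\delta$, hence on $\eps$). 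Combining the two pieces, for all sufficiently small $s>0$ we get
\[
\left| s\int_0^1 r^{sp-1} h(r)~dr - \frac{h(0)}{p} \right| < \frac{\eps}{p} + \eps,
\]
and since $\eps>0$ was arbitrary the claim follows.

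There is really no serious obstacle here: the only mild point to be careful about is the order of quantifiers, namely that $\delta$ is fixed first from the continuity of $h$, and only afterwards is $s$ sent to $0$ so that $\delta^{sp}\to 1$ can be invoked on the outer region. Boundedness of $h$ on $[0,1]$ is automatic from continuity. (If one prefers, the splitting argument can be replaced by a dominated-convergence argument after the substitution $r=t^{1/(sp)}$, but the elementary split above is cleaner and self-contained.)
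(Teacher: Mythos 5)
Your proof is correct and follows essentially the same strategy as the paper's: fix $\delta$ from continuity of $h$ at $0$, split the integral at $\delta$, use $s\int_0^\delta r^{sp-1}\,dr=\delta^{sp}/p$ on the inner piece and $\delta^{sp}\to 1$ as $s\to 0^+$ to kill the outer piece. Your version is marginally cleaner in that you first subtract the target value $h(0)/p$ via the exact identity $s\int_0^1 r^{sp-1}\,dr=1/p$ and then bound a single difference, whereas the paper bounds the original integral from above and below separately, but the underlying argument is the same.
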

\begin{proof}
Let $\eps>0$. Choose $\delta>0$ such that
\[
|h(r) - h(0)| < \eps \quad \text{for all}  \quad 0<r< \delta.
\]
Then, we have
\[
\begin{split}
s \int_0^1 r^{sp-1} h(r) ~dr &= s \int_0^\delta r^{sp-1} h(r) ~dr + s \int_\delta^1 r^{sp-1} h(r) ~dr \\
& \leq (h(0) + \eps) \frac{\delta^{sp}}{p} + \frac{\|h\|_{L^\infty([\delta,1])}}{p} (1-\delta^{sp})
\end{split}
\]
and
\[
\begin{split}
s \int_0^1 r^{sp-1} h(r) ~dr & \geq s \int_0^\delta r^{sp-1} h(r) ~dr \geq (h(0) - \eps) \frac{\delta^{sp}}{p}.
\end{split}
\]
Combining the above estimates, we obtain
\[
\begin{split}
\frac{h(0)}{p} (\delta^{sp}-1) - \eps \frac{\delta^{sp}}{p} &\leq s \int_0^1 r^{sp-1} h(r) ~dr - \frac{h(0)}{p} \\
& \leq \frac{h(0)}{p} (\delta^{sp}-1) + \eps \frac{\delta^{sp}}{p}  + \frac{\|h\|_{L^\infty([\delta,1])}}{p} (1-\delta^{sp}).
\end{split}
\]
Since $\eps>0$ is arbitrary, passing limits $s \to 0^+$, we obtain the required claim.
\end{proof}

\section*{Acknowledgments}

The first author is funded by ANRF Research Grant SRG/2023/000308, India. The second author is partially funded by IFCAM (Indo-French Centre for Applied Mathematics) IRL CNRS 3494. The fourth author is funded by the UGC Junior Research Fellowship with reference no. 221610015405. 



\begin{thebibliography}{99}

 \bibitem{Angeles-Saldana}
F. Angeles and A. Salda\~na, {\it Small order limit of fractional Dirichlet sublinear-type problems}, Fract. Calc. Appl. Anal. {\bf 26} (2023), no. 4, 1594-1631.

\bibitem{Antil-Bartels} H. Antil and S. Bartels, {\it Spectral approximation of fractional PDEs in image processing and phase field modeling}, Comput. Methods Appl. Math. {\bf 17} (2017), no.~4, 661--678; MR3709055

\bibitem{Arora-Crespo-Blanco-Winkert}
R. Arora, \'{A}. Crespo-Blanco and P. Winkert, {\it On logarithmic double phase problems}, J. Differential Equations {\bf 433} (2025), Paper No. 113247, 60 pp.

\bibitem{Arora-Giacomoni-Vaishnavi} R. Arora, J. Giacomoni and A. Vaishnavi, {\it The Brezis-Nirenberg and logistic problem for the Logarithmic Laplacian}, arXiv preprint arXiv:2504.18907 (2025). 

\bibitem{Beckner} W. Beckner, {\it Pitt’s inequality and the uncertainty principle}, Proc. Amer. Math. Soc. {\bf 123} (1995), no. 6, 1897–1905.

\bibitem{Biagi et. al} S. Biagi, S. Dipierro, E. Valdinoci and E. Vecchi, {\it A Brezis-Nirenberg type result for mixed local and nonlocal operators}, NoDEA, Nonlinear Differ. Equ. Appl. {\bf32} (2025), no. 4, Paper No. 62, 28 pp.



\bibitem{Foldes-Saldana} 
 D. Bonheure, J. F\"oldes, E. Moreira dos Santos, A. Salda\~na and H. Tavares, {\it Paths to uniqueness of critical points and applications to partial differential equations}, Trans. Amer. Math. Soc. {\bf 370} (2018), no. 10, 7081–7127. 

 \bibitem{Brasco-Parini}
 L. Brasco and E. Parini, {\it The second eigenvalue of the fractional p-Laplacian}, Adv. Calc. Var. {\bf 9} (2016), no. 4, 323–355.

\bibitem{Brezis-Nirenberg} H. Brezis and L. Nirenberg, {\it Positive solutions of nonlinear elliptic equations involving critical Sobolev exponents}, Comm. Pure Appl. Math. {\bf 36} (1983), no. 4, 437–477.


\bibitem{Caffarelli} L.~\'A. Caffarelli, {\it Non-local diffusions, drifts and games}, Nonlinear partial differential equations, 37--52, Abel Symp., 7, Springer, Heidelberg.

 \bibitem{Caffarelli-Dipierro-Valdinoci}  L.~\'A. Caffarelli, S. Dipierro and E. Valdinoci, {\it A logistic equation with nonlocal interactions}, Kinet. Relat. Models {\bf  10} (2017), no. 1, 141–170.

\bibitem{Caffarelli-Silvestre} L.~\'A. Caffarelli and L.~E. Silvestre, {\it An extension problem related to the fractional Laplacian}, Comm. Partial Differential Equations {\bf 32} (2007), no. 7-9, 1245-1260.

 \bibitem{Castro-Kuusi-Palatucci}
 A. Di Castro, T. Kuusi and G. Palatucci, {\it Local behavior of fractional p-minimizers}, Ann. Inst. H. Poincar\`e, Anal. Non Lin\`eaire {\bf 33} (2016), no. 5, 1279–1299.


\bibitem{Chen-Hauer-Weth} H. Chen, D. Hauer and T. Weth, {\it An extension problem for the logarithmic Laplacian}, arXiv preprint, 	arXiv:2312.15689  (2023).

\bibitem{Chen-Veron} H. Chen and L. V\'eron, { \it Bounds for eigenvalues of the Dirichlet problem for the logarithmic Laplacian}, Adv. Calc. Var. {\bf 16} (2023), no. 3, 541-558.

\bibitem{Chen-Veron-1} H. Chen and L. V\'eron, {\it The Cauchy problem associated to the logarithmic Laplacian with an application to the fundamental solution}, J. Funct. Anal. {\bf 287} (2024), no.~3, Paper No. 110470, 72 pp.


\bibitem{Chen-Weth} H. Chen and T. Weth, {\it The Dirichlet problem for the logarithmic Laplacian}, Comm. Partial Differential Equations {\bf 44} (2019), no. 11, 1100-1139. 


\bibitem{Chen-Zhou} H. Chen and F. Zhou, {\it On positive solutions of critical semilinear equations involving the logarithmic Laplacian}, arXiv preprint arXiv:2409.04797 (2024).



\bibitem{Correa-DePablo} E. Correa and A. de Pablo, {\it Nonlocal operators of order near zero}, J. Math. Anal. Appl. {\bf 461} (2018), no. 1, 837–867.

\bibitem{Costa-Drabek-Tehrani} D. G. Costa, P. Dr\'abek and H. T. Tehrani, {\it Positive solutions to semilinear elliptic equations with logistic type nonlinearities and constant yield
 harvesting in $\R^N$}, Comm. Partial Differential Equations {\bf33} (2008), no. 7-9, 1597–1610.

\bibitem{Crismale et. al}V. Crismale, L. De Luca, A. Kubin, A. Ninno and M. Ponsiglione, {\it The variational approach to $s$-fractional heat flows and the limit cases $s\to0^+$ and $s\to1^-$}, J. Funct. Anal. {\bf 284} (2023), no.~8, Paper No. 109851, 38 pp.

\bibitem{Nezza-Palatucci-Valdinoci}
 E. Di Nezza, G. Palatucci and E. Valdinoci, {\it Hitchhiker’s guide to the fractional Sobolev spaces}, Bull. Sci. Math. {\bf136} (2012), no. 5, 521-573.
 
 \bibitem{Drabek-Pohozaev} P. Drabek and S. I. Pohozaev, {\it Positive solutions for the p-Laplacian: application of the fibering method}, Proc. Roy. Soc. of Edinburgh Sec. A {\bf 127} (1997), 703-726.
 
\bibitem{Dyda-Jarohs-Sk} B. Dyda, S. Jarohs and F. SK, {\it The Dirichlet problem for the logarithmic $p$-Laplacian}, accepted in Trans. Amer. Math. Soc. (2025).

 \bibitem{Ekeland} I. Ekeland,  {\it On the variational principle,} J. Math. Anal. Appl. {\bf47} (1974), 324–353.


\bibitem{Felsinger-Kassmann-Voigt} M. Felsinger, M. Kassmann and P. Voigt, {\it The Dirichlet problem for nonlocal operators}, Math. Z. {\bf 279} (2015), 779–809.

\bibitem{Fernandez-Saldana} J. C. Fern\`andez and A. Salda\~na, {\it The conformal logarithmic Laplacian on the sphere: Yamabe-type problems and Sobolev spaces}, arXiv preprint 	arXiv:2507.21779 (2025)

\bibitem{Feulefack} P. A. Feulefack, {\it The logarithmic Schr\"odinger operator and associated Dirichlet problems}, J. Math. Anal. Appl. {\bf 517} (2023), no. 2, 126656.

\bibitem{Feulefack-Jarohs} P.A. Feulefack and S. Jarohs, {\it Nonlocal operators of small order}, Ann. Mat. Pura Appl.(4) {\bf 202} (2023), no. 4, 1501-1529.

\bibitem{Feulefack-Jarohs-Weth} P.A. Feulefack, S. Jarohs and T. Weth, {\it Small order asymptotics of the Dirichlet eigenvalue problem for the fractional Laplacian}, J. Fourier Anal. Appl. {\bf 28} (2022), no. 2, Paper No. 18, 44 pp.

\bibitem{Foghem} G. Foghem, {\it Stability of complement value problems for p-L\`evy operators}, NODEA Nonlinear Differential Equations Appl. {\bf 32} (2025), no. 1, Paper No. 1, 106 pp.

\bibitem{Frank-Konig-Tang} R.L. Frank, T. K\"onig and H. Tang, {\it Classification of solutions of an equation related to a conformal log Sobolev inequality}, Adv. Math. {\bf 375} (2020), 107395, 27 pp.

\bibitem{Frank-Seiringer} R. L. Frank and R. Seiringer, {\it Non-linear ground state representations and sharp Hardy inequalities,} J. Funct. Anal. {\bf 255} (2008), no. 12, 3407–3430.

 \bibitem{Franzina-Palatucci} G. Franzina and G. Palatucci, {\it Fractional p-eigenvalues}, Riv. Mat. Univ. Parma (N.S.) {\bf 5} (2014), no. 2, 373-386.



 \bibitem{Harjulehto-Hasto}
P. Harjulehto and P. H\"{a}st\"{o}, Orlicz Spaces and generalized Orlicz Spaces, Lecture Notes in Math., 2236, Springer, Cham, 2019. x+167 pp.

\bibitem{Harrach-Lin-Weth} B. Harrach, Y.-H. Lin and T. Weth, {\it The Calder\'on problem for the logarithmic Schr\"odinger equation}, J. Differential Equations {\bf 444} (2025), Paper No. 113665, 25 pp.


\bibitem{Santamaria et. al}V. Hern\'andez-Santamar\'ia, S. Jarohs, A. Salda\~na and L. Sinsch, {\it FEM for 1D-problems involving the logarithmic Laplacian: error estimates and numerical implementation}, Comput. Math. Appl. {\bf 192} (2025), 189-211.


\bibitem{Santamaria-Rios-Saldana} V. Hern\'andez Santamar\'ia, L. F. L\'opez R\'ios and A. Salda\~na, {\it Optimal boundary regularity and a Hopf-type lemma for Dirichlet problems involving the logarithmic Laplacian}, Discrete Contin. Dyn. Syst. {\bf 45} (2025), no. 1, 1-36.

\bibitem{Santamaria-Saldana}
V. Hern\'andez Santamar\'ia and A. Salda\~na, 
{\it Small order asymptotics for nonlinear fractional problems}, Calc. Var. Partial Differential Equations {\bf 61} (2022), no. 3, Paper No. 92, 26 pp.

\bibitem{Ho et.al} K. Ho, K. Perera and I. Sim, {\it On the Brezis-Nirenberg problem for the $(p, q)$ Laplacian}, Ann. Mat. Pura Appl. {\bf202} (2023), no. 4, 1991-2005.

\bibitem{Iannizzotto-Mosconi-Papageorgiou} A. Iannizzotto, S. Mosconi and N. S. Papageorgiou, {\it  On the logistic equation for the fractional p-Laplacian},  Math. Nachr. {\bf 296} (2023), no. 4, 1451–1468.

\bibitem{Jarohs-Saldana-Weth} S. Jarohs, A. Salda\~na and T. Weth, {\it A new look at the fractional Poisson problem via the logarithmic Laplacian}, J. Funct. Anal. {\bf 279} (2020), no. 11, 108732, 50 pp.


\bibitem{Kassmann-Mimica} M. Kassmann and A. Mimica, {\it Intrinsic scaling properties for nonlocal operators}, J. Eur. Math. Soc. (JEMS) {\bf 19} (2017), no. 4, 983–1011.


\bibitem{Jarohs-Saldana-Weth-1} S. Jarohs, A. Salda\~na and T. Weth, {\it Differentiability of the nonlocal-to-local transition in fractional Poisson problems}, Potential Anal. {\bf 63} (2025), no. 1, 77-99.

\bibitem{Lara-Saldana} H. A. Chang-Lara and A. Salda\~na, {\it Classical solutions to integral equations with zero order kernels},  Math. Ann. {\bf 389} (2024), no. 2, 1463–1515.

\bibitem{Lindgren}
 E. Lindgren, {\it H\"older estimates for viscosity solutions of equations of fractional p-Laplace type}, NoDEA Nonlinear Differential Equations Appl. {\bf23} (2016), no. 5, Art. 55, 18 pp. 


\bibitem{Luca et. al} L. De Luca, M. Novaga and M. Ponsiglione, {\it The 0-fractional perimeter between fractional perimeters and Riesz potentials}, Ann. Sc. Norm. Super. Pisa Cl. Sci. (5) {\bf 22} (2021), no. 4, 1559-1596.

\bibitem{Maz'ya-Shaposhnikova} V. Maz’ya and T. Shaposhnikova, {\it On the Bourgain, Brezis, and Mironescu theorem concerning limiting embeddings of fractional Sobolev spaces}, J. Funct. Anal. {\bf195} (2002), no. 2, 230–238.


\bibitem{Pellacci-Verzini}  B. Pellacci and G. Verzini, {\it Best dispersal strategies in spatially heterogeneous environments: optimization of the principal eigenvalue for indefinite fractional Neumann problems}, J. Math. Biol. {\bf 76} (2018), no. 6, 1357–1386.

\bibitem{Pezzo-Quaas} L. M. del Pezzo and A. Quaas, {\it A Hopf’s lemma and a strong minimum principle for the fractional p-Laplacian}, J. Differential Equations {\bf 263} (2017), no. 1, 765-778.

\bibitem{Pollastro-Soave} L. Pollastro and N. Soave, {\it Antisymmetric maximum principles and Hopf's lemmas for the logarithmic Laplacian, with applications to symmetry results}, Ann. Mat. Pura Appl. {\bf 204} (2025), no. 4, 1827-1845.


\bibitem{Servadei-Valdinoci}  R. Servadei and E. Valdinoci, {\it The Brezis-Nirenberg result for the fractional Laplacian}, Trans.
Amer. Math. Soc. 367 (2015), no. 1, 67-102.


\bibitem{Sikic-Song-Vondracek} H. \u Siki\'c, R. Song and Z. Vondrac\u ek, {\it Potential theory of geometric stable processes}, Probab. Theory Related Fields {\bf 135} (2006), no. 4, 547–575.

\bibitem{Sprekels-Valdinoci} J. Sprekels and E. Valdinoci, {\it A new type of identification problems: optimizing the fractional order in a nonlocal evolution equation}, SIAM J. Control Optim. {\bf 55} (2017), no. 1, 70–93.

\bibitem{Szulkin-Weth} A. Szulkin and T. Weth,
The method of Nehari manifold, Handbook of nonconvex analysis and applications, 597–632, International Press, Somerville, MA, 2010.

 \bibitem{Temgoua-Weth} R.~Y. Temgoua and T. Weth, {\it The eigenvalue problem for the regional fractional Laplacian in the small order limit}, Potential Anal. {\bf 60} (2024), no. 1, 285-306.

\bibitem{Vazquez} J.~L. V\'azquez, {\it Recent progress in the theory of nonlinear diffusion with fractional Laplacian operators}, Discrete Contin. Dyn. Syst. Ser. S {\bf 7} (2014), no. 4, 857-885.

 \bibitem{Willem} M. Willem, Minimax theorems, Progr. Nonlinear Differential Equations Appl., {\bf24}, Birkhäuser Boston, Inc., Boston, MA, 1996. x+162 pp.


\end{thebibliography}
\end{document}